\let\oldtocsection=\tocsection 
\let\oldtocsubsection=\tocsubsection 
\renewcommand{\tocsection}[2]{\hspace{0mm}\oldtocsection{#1}{#2}}
\renewcommand{\tocsubsection}[2]{\hspace{2em}\oldtocsubsection{#1}{#2}}
\numberwithin{equation}{section}
\newtheorem{theorem}{Theorem}[section]
\newtheorem*{theorem*}{Theorem}
\newtheorem{corollary}[theorem]{Corollary}
\newtheorem*{corollary*}{Corollary}
\newtheorem{lemma}[theorem]{Lemma}
\newtheorem{proposition}[theorem]{Proposition}
\theoremstyle{definition}
\newtheorem{definition}[theorem]{Definition}
\newtheorem{example}[theorem]{Example}
\def\CC{\mathcal{C}}
\def\DD{\mathcal{D}}
\def\FF{\mathcal{F}}
\def\GG{\mathcal{G}}
\def\EE{\mathcal{E}}
\def\II{\mathcal{I}}
\def\SS{\mathcal{S}}
\def\RR{\mathcal{R}}
\def\KK{\mathcal{K}}
\def\QQ{\mathbb{Q}}
\def\ZZ{\mathbb{Z}}
\def\Fun{\mathsf{Fun}}
\def\Id{\mathsf{Id}}
\def\id{\mathsf{id}}
\def\Coaug{\mathsf{Coaug}}
\def\Hom{\mathsf{Hom}}
\def\sSet{\mathsf{sSet}}
\def\EE{\mathcal{E}}
\def\NN{\mathbb{N}}
\def\Map{\mathsf{Map}}
\def\Mon{\mathsf{Mon}}
\def\mon{\mathsf{mon}}
\def\1{\mathbb{1}}
\def\Set{\mathsf{Set}}
\def\sMon{\mathsf{sMon}}
\def\NN{\mathbb{N}}
\def\map{\mathsf{map}}
\def\holim{\mathsf{holim}}
\def\Tot{\mathsf{Tot}}
\def\AA{\mathcal{A}}
\def\End{\mathsf{End}}
\def\Coaug{\mathsf{Coaug}}
\def\sm{\wedge}
\def\term{\mathsf{term}}
\def\MM{\mathcal{M}}
\def\max{\mathrm{max}}
\def\op{\mathrm{op}}
\def\Tw{\mathsf{Tw}}
\def\Cat{\mathsf{Cat}}
\def\diag{\mathsf{diag}} 
\def\Kan{\mathsf{Kan}}
\def\Spc{\mathsf{Spc}}
\def\AAA{\mathbf{A}}
\def\CCC{\mathbf{C}}
\def\EEE{\mathbf{E}}
\def\Ner{\mathrm{N}}
\def\Nat{\mathsf{Nat}}
\def\hTot{\mathsf{hTot}}
\begin{document}

\title{Bousfield-Kan completion  as a codensity  $\mathlarger{\mathlarger{\mathlarger{\infty}}}$-monad}
\author{Emmanuel Dror Farjoun}
\email{edfarjoun@gmail.com}
\address{
Max Planck Institute for Mathematics, Bonn, Germany.}

\author{Sergei O. Ivanov} 
\email{ivanov.s.o.1986@gmail.com, ivanov.s.o.1986@bimsa.cn}
\address{
Beijing Key Laboratory of Topological Statistics and Applications for Complex Systems, Beijing Institute of Mathematical Sciences and Applications (BIMSA), Beijing, China.}

\begin{abstract}
Working in the setting of $\infty$-categories, we develop a general theory of the codensity monad  $T_\mathcal{D}$ associated with a full subcategory $\mathcal{D}\subseteq \mathcal{C}$.  
We show that $T_\mathcal{D}$  has a canonical monad structure (unique up to a contractible space of choices),  and characterize it as a terminal monad preserving all objects of $\mathcal{D}$. For a monad $\mathcal{M}$ on an $\infty$-category $\CC$, we consider the $\mathcal{M}$-completion functor defined as the totalization of the cosimplicial resolution associated with $\mathcal{M}$. We show that the $\mathcal{M}$-completion functor is the codensity monad associated with the full subcategory of $\mathcal{C}$ spanned by objects that admit a structure of $\mathcal{M}$-algebra.  
In particular, the $\mathcal{M}$-completion functor is the terminal monad preserving all objects that admit a structure of an $\mathcal{M}$-algebra.  
This gives a full $\infty$-categorical characterization of the classical Bousfield-Kan $R$-completion functor as the terminal monad on the category of spaces preserving the empty space and all products of Eilenberg-MacLane spaces $K(A,n)$, where $A$ is an $R$-module.
\end{abstract}

\maketitle

\section{\bf Introduction}

\subsection{Main results}
 
A codensity $\infty$-monad $T_\DD \colon \CC \to \CC$ associated with a full $\infty$-subcategory $\DD$ of an $\infty$-category $\CC$ is defined as the right Kan extension of the inclusion $\DD \hookrightarrow \CC$ along itself. Initially, $T_\DD$ is just a functor, but we prove that it admits a canonical structure of an $\infty$-monad, unique up to a contractible space of choices.
If the inclusion functor $\DD \hookrightarrow \CC$ is a right adjoint,  $\DD$ is called reflective, and the composition $L_\DD \colon \CC \to \DD \hookrightarrow  \CC$ of the left adjoint with the inclusion is called the localization functor associated with $\DD$. In this case, the codensity $\infty$-monad coincides with the localization functor: $T_\DD \simeq L_\DD$. Therefore, the notion of a codensity $\infty$-monad associated with a full $\infty$-subcategory is a natural generalization of the notion of a localization functor. 

An important source of codensity $\infty$-monads is arbitrary $\infty$-monads. Let  $\CC$ be an $\infty$-category  that admits  totalizations (i.e. limits of cosimplicial objects). An $\infty$-monad $\MM$ on $\CC$ defines a canonical cosimplicial object $\MM^\bullet$ in the $\infty$-category of coaugmented endofunctors $\Fun(\CC,\CC)_{\Id/}$. Its totalization is an endofunctor $\widehat{\MM}=\Tot\: \MM^\bullet$ that we call the $\MM$-completion. For example, if $\CC$ is an ordinary category and $M=(M,\mu,\eta)$ is an ordinary monad, then the $M$-completion is the equalizer of two natural transformations $M\eta, \eta M : M \to M^2$ studied in  \cite{fakir1970monade}. 

\vbox{\begin{theorem*}[Cor. \ref{cor:monad_completion}]
Let $\CC$ be an $\infty$-category that admits totalizations and $\MM$ be an $\infty$-monad on $\CC$. Then the $\MM$-completion $\widehat{\MM}=\Tot\ \MM^\bullet$ is equivalent to the codensity $\infty$-monad $T_{\AA(\MM)}$ associated with the full $\infty$-subcategory $\AA(\MM)\subseteq \CC$ spanned by objects that admit a structure of $\MM$-algebra.  
\end{theorem*}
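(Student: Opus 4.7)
The plan is to identify both sides as limits and prove they agree via cofinality. By the pointwise formula for right Kan extension along the fully faithful inclusion $\AA(\MM)\hookrightarrow\CC$,
$$T_{\AA(\MM)}(X)\ \simeq\ \lim_{(f\colon X\to A)\,\in\, X/\AA(\MM)}\,A,$$
while by definition $\widehat{\MM}(X)=\Tot(\MM^{\bullet+1}X)$. Every $\MM^{n+1}X$ is a free $\MM$-algebra (via the multiplication $\mu_{\MM^n X}$), hence lies in $\AA(\MM)$, and the unit $\eta_X$ assembles the standard cosimplicial resolution into a functor $F_X\colon\Delta\to X/\AA(\MM)$, $[n]\mapsto(X\to\MM^{n+1}X)$, whose limit in $\CC$ recovers $\widehat{\MM}(X)$. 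It thus suffices to show that $F_X$ is initial, i.e.\ that for every $(f\colon X\to A)\in X/\AA(\MM)$ the comma $\infty$-category $\KK_f:=\Delta\times_{X/\AA(\MM)}(X/\AA(\MM))_{/(f)}$ is weakly contractible.

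The key geometric input is the classical Barr--Beck splitting. Fixing an $\MM$-algebra structure $\alpha\colon\MM A\to A$ on $A$, the augmented cosimplicial object $A\to\MM^{\bullet+1}A$ admits a contracting simplicial homotopy given by $\eta_A$, so its augmentation is a split equivalence. Transporting this splitting along $f$ yields a coherent family of factorizations $X\to\MM^{n+1}X\xrightarrow{\MM^{n+1}f}\MM^{n+1}A\to A$ of $f$, defining a canonical section of the projection $\KK_f\to\Delta$. In one-categorical terms these factorizations form a terminal subsystem of $\KK_f$; in the $\infty$-setting, contractibility of $\KK_f$ should follow from the coherence of this splitting together with the fact that $\Delta$ itself is weakly contractible.

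The main obstacle is organising this argument coherently without a privileged choice of algebra structure on $A$, since the space of such structures need not be contractible. My strategy is to argue instead with the forgetful functor $U\colon\mathrm{Alg}_\MM(\CC)\to\CC$, whose essential image is $\AA(\MM)$: first identify $\mathrm{Ran}_U U\simeq\widehat{\MM}$ directly via the bar resolution (which here is fully coherent because algebra structures are part of the data), and then show that the essentially surjective projection $\mathrm{Alg}_\MM(\CC)\to\AA(\MM)$ is codensity-invariant, so that $\mathrm{Ran}_U U\simeq\mathrm{Ran}_i i\simeq T_{\AA(\MM)}$ for $i\colon\AA(\MM)\hookrightarrow\CC$. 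This reduces to a cofinality statement on slice $\infty$-categories whose contractibility witness is the given $A$ equipped with any algebra structure, now canonical in the enlarged context. Once the functor equivalence $\widehat{\MM}\simeq T_{\AA(\MM)}$ is established, the universal characterization of the canonical $\infty$-monad structure on the codensity proved earlier in the paper upgrades it to an equivalence of $\infty$-monads.
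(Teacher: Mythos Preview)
Your overall strategy---identify both sides as limits and match them by a cofinality argument---is exactly the paper's, and your first paragraph sets this up correctly. The problem is the detour through $\mathrm{Alg}_\MM(\CC)$. The claim $\mathrm{Ran}_U U\simeq\widehat{\MM}$ is false: since $U:\mathrm{Alg}_\MM(\CC)\to\CC$ admits a left adjoint (the free algebra functor $F$), the slice $X/\mathrm{Alg}_\MM$ has an initial object $(FX,\eta_X)$, so $\mathrm{Ran}_U U(X)\simeq U(FX)\simeq\MM_0(X)$. The codensity $\infty$-monad of $U$ is $\MM$ itself, not its completion. Your ``codensity-invariance'' step therefore cannot hold either: the essentially surjective projection $\mathrm{Alg}_\MM(\CC)\to\AA(\MM)$ is precisely where the codensity monad jumps from $\MM$ to $\widehat{\MM}$. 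Concretely, your proposed witness $(A,\alpha,f,\id_A)$ is not terminal in the relevant comma category, because an arbitrary map $h:B\to A$ under $X$ need not be a morphism of $\MM$-algebras for the chosen structure $\alpha$.

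The paper avoids this by never invoking $\mathrm{Alg}_\MM$. It proves directly that $\Ner(\Delta)\to\RR(\MM)_{c/}$ is left cofinal (the ``lemma of Bousfield--Kan'', Lemma~\ref{lemma:BK-cofinal}), where $\RR(\MM)$ is the full subcategory of retracts of objects in the image of $\MM_0$. The point is that for $x\in\RR(\MM)$ the unit $\eta_x:x\to\MM_0(x)$ is split mono, and this retraction---which requires no choice of algebra structure---suffices to build an explicit zig-zag of natural transformations witnessing contractibility of the relevant slices via the general cofinality criterion of Proposition~\ref{prop:cofinal}. Since $\II(\MM)\subseteq\AA(\MM)\subseteq\RR(\MM)$ all have the same retract closure, their codensity $\infty$-monads coincide by Proposition~\ref{prop:coden:retracts}, which is how $\AA(\MM)$ enters the final statement without ever needing to handle algebra structures coherently.
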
}

The main technical ingredient of the proof of this theorem is the fact that, for an object $c$ of $\CC$, the canonical cosimplicial object in the slice category 
\begin{equation}
\Ner(\Delta)\to \AA(\MM)_{c/}
\end{equation}
induced by $\MM^\bullet$ 
is a left cofinal functor
(Section \ref{section:a_lemma_of_BK}). This statement allows one to replace a limit over the  relatively complicated slice category $\AA(\MM)_{c/}$ by the totalization of a cosimplicial object.

The codensity $\infty$-monad $T_\DD$ associated with a full $\infty$-subcategory $\DD\subseteq \CC$ is sometimes referred to as the terminal $\infty$-monad, for the following reason.  We say that a coaugmented endofunctor $F:\CC\to \CC$ (an endofunctor equipped with a natural transformation $\Id_\CC \to F$) is $\DD$-preserving if the canonical map $d \to F(d)$ is an equivalence for all objects $d$ of  $\DD$. Then $T_\DD$ satisfies the following two universal properties (for the case of ordinary categories see  \cite[Prop. 5.4]{leinster2013codensity}).

\begin{theorem*}[Th. \ref{th:D-preserving-coaugmented}]
Let $\DD$ be a full $\infty$-subcategory of an $\infty$-category $\CC$ and $T_\DD$ be the associated codensity $\infty$-monad. Then  
\begin{itemize}
    \item[(a)] $T_\DD$ is the terminal $\DD$-preserving coaugmented functor; 
    \item[(b)] $T_\DD$ is the terminal $\DD$-preserving $\infty$-monad.
\end{itemize}
\end{theorem*}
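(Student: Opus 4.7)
The plan is to prove (a) directly using the universal property of right Kan extensions, and then to bootstrap (a) to deduce (b) by applying it to the iterated composites $\MM^n$.

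For (a), the key input is the defining adjunction $i^* \dashv i_* = \mathsf{Ran}_i$ of restriction and right Kan extension along $i \colon \DD \hookrightarrow \CC$, which yields a natural equivalence
\[
\Map_{\Fun(\CC,\CC)}(F, T_\DD) \simeq \Map_{\Fun(\DD,\CC)}(F \circ i,\, i).
\]
Under this equivalence the canonical coaugmentation of $T_\DD$ transposes to $\id_i$, so a morphism $F \to T_\DD$ is coaugmented if and only if its transpose $F \circ i \to i$ is a left inverse of $\eta_F \circ i \colon i \to F \circ i$. When $F$ is $\DD$-preserving the map $\eta_F \circ i$ is an equivalence, hence its space of left inverses is contractible; this gives (a).

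For (b), the central observation is that if $\MM$ is a $\DD$-preserving $\infty$-monad then each iterated composition $\MM^n$ acquires a canonical coaugmentation $\Id_\CC \to \MM^n$ built by iterating $\eta_\MM$ (well-defined by naturality of $\eta_\MM$), and is itself $\DD$-preserving because $\MM \circ i \simeq i$ implies $\MM^n \circ i \simeq i$ by induction. Consequently (a) applies to every $\MM^n$ and produces a contractible space of coaugmented morphisms $\MM^n \to T_\DD$. Identifying $\Mon(\CC)$ with $\mathsf{Alg}(\End(\CC))$, the mapping space $\Map_{\Mon(\CC)}(\MM, T_\DD)$ is computed as the totalization of a cosimplicial space whose $n$-th level records natural transformations $\MM^n \to T_\DD$ constrained by compatibility with units; this is precisely the space of coaugmented morphisms $\MM^n \to T_\DD$, which is contractible. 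A totalization of contractible spaces is contractible, yielding the terminality.

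As a concrete sanity check at the first nontrivial coherence, both composites $\alpha \circ \mu_\MM$ and $\mu_{T_\DD} \circ (\alpha * \alpha)$ from $\MM^2$ to $T_\DD$ turn out to be coaugmented morphisms (from the monad unit axioms together with the fact that $\alpha$ itself is coaugmented), so they are canonically equivalent in the contractible space of coaugmented morphisms $\MM^2 \to T_\DD$; this datum is the multiplication-compatibility for the would-be monad morphism $\alpha$, and the analogous argument on $\MM^n$ handles the higher coherences. The principal technical obstacle is formalizing the cosimplicial presentation of $\Map_{\Mon(\CC)}(\MM, T_\DD)$ in the $\infty$-categorical setting and verifying that the unit-compatibility constraints carve out precisely the coaugmented mapping spaces at each level; this draws on the standard theory of mapping spaces in $\infty$-categories of algebras over the associative operad.
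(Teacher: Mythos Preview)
Your argument for (a) is essentially correct, though framed differently from the paper. You use the Kan extension adjunction to compute the coaugmented mapping space directly as a space of left inverses; the paper instead shows that $T_\DD$ is terminal in the strict fiber $\End_{\DD/}(\CC)$ and then proves (Proposition~\ref{prop:coaug_equiv}) that the under-category $\End_{\DD/}(\CC)^{\Id_\CC/}$ is equivalent to $\Coaug^\DD(\CC)$. Both routes work; yours is more computational, the paper's is set up so that the strict monoidal structure on $\End_{\DD/}(\CC)$ is visible and can be reused for (b).

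Your argument for (b), however, has a genuine gap, and you identify it yourself: the assertion that $\Map_{\Mon_\infty(\CC)}(\MM,T_\DD)$ is the totalization of a cosimplicial space whose $n$-th term is the coaugmented mapping space $\Map_{\Coaug}(\MM^n,T_\DD)$ is neither standard nor established here. The usual bar-type resolutions for mapping spaces of associative algebras do not produce this particular cosimplicial object; the levels one obtains from the operadic description involve products of copies of $\Map(\MM,T_\DD)$ subject to unit and multiplication constraints, not maps out of $\MM^n$ with a single coaugmentation condition. Your sanity check at level $2$ is encouraging but does not pin down the full cosimplicial structure, and ``totalization of contractible spaces is contractible'' requires the cosimplicial object to be, say, Reedy fibrant, which you have not checked.

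The paper avoids this entirely. It constructs a strict monoidal $\infty$-subcategory $\EE\subseteq \End(\CC)$ (endofunctors preserving $\tilde\DD$, with morphisms the natural transformations that are equivalences on $\tilde\DD$) and verifies directly that $\Coaug^\DD(\CC)=\EE^{\1/}$ and $\Mon_\infty^\DD(\CC)=\mon_\infty(\EE)$ (the latter via the $2$-out-of-$3$ argument in Lemma~\ref{lemma:subcategory-pushout}). Part (b) then follows from the general structural result Theorem~\ref{th:terminal:coaug}: in any strict monoidal $\infty$-category, a terminal coaugmented object admits an essentially unique $\infty$-monoid structure, and this structure is terminal among $\infty$-monoids. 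That theorem is proved not by a bar resolution but by analyzing the free simplicial monoid $\Ner(\Delta_+)$ and showing that the forgetful functor $\mon_\infty(\EE)\to\EE^{\1/}$ is a conservative isofibration which creates terminal objects. This is the key idea you are missing: rather than computing a specific mapping space, the paper reduces (b) to a statement about lifting terminal objects along a well-controlled forgetful functor.
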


\noindent In particular, for an $\infty$-monad $\MM$, the $\MM$-completion is the terminal $\AA(\MM)$-preserving coaugmented functor, and the terminal $\AA(\MM)$-preserving $\infty$-monad. 

Many completion-like endofunctors can be presented as codensity monads of full subcategories in ordinary categories. For example, the profinite (resp. pronilpotent) completion functor on the category of groups is the codensity monad of the subcategory of finite (resp. nilpotent) groups. The double dual functor on the category of vector spaces is the codensity monad of the subcategory of finite-dimensional vector spaces. Let us now consider another example of a completion-like endofunctor on the category of spaces.

On the $\infty$-category of spaces, there is an $\infty$-monad $\MM^R$, called the reduced $R$-homology $\infty$-monad. It sends a space $X$ to a space, whose homotopy groups  are the reduced $R$-homology groups of $X$. The Bousfield–Kan $R$-completion functor 
\begin{equation}
R_\infty : \Spc \longrightarrow \Spc
\end{equation}
is defined as the $\MM^R$-completion. As an application of our results, we obtain the following description of this functor. We denote by 
\begin{equation}
\KK(R)\subseteq \Spc
\end{equation}
the full $\infty$-subcategory of the $\infty$-category of spaces spanned by the empty space and by spaces homotopy equivalent to products of Eilenberg–MacLane spaces of the form $K(A,n)$, where $A$ is an $R$-module and $n \geq 0$.

\begin{theorem*}[Th. \ref{th:BK}]
The  Bousfield-Kan $R$-completion 
is equivalent to the codensity $\infty$-monad $T_{\KK(R)}$ as a coaugmented functor.
\end{theorem*}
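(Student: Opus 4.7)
The argument has two stages. First, the Bousfield-Kan cosimplicial resolution of a space is by construction the canonical cosimplicial object in $\Fun(\Spc,\Spc)$ attached to the reduced $R$-homology $\infty$-monad $\MM^R$, so $R_\infty \simeq \widehat{\MM^R}$ as coaugmented functors. Applying Corollary \ref{cor:monad_completion} then yields
\begin{equation*}
    R_\infty \;\simeq\; T_{\AA(\MM^R)}
\end{equation*}
as coaugmented functors, so it suffices to produce an equivalence $T_{\AA(\MM^R)} \simeq T_{\KK(R)}$ of coaugmented functors.

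Since a codensity $\infty$-monad is a right Kan extension along a fully faithful inclusion, it depends only on the essential image of the subcategory, and the remaining task is to show that $\AA(\MM^R)$ and $\KK(R)$ have the same essential image in $\Spc$. The main input is the $\infty$-categorical Barr-Beck theorem applied to the free/forgetful adjunction between $\Spc$ and the $\infty$-category of simplicial $R$-modules, whose induced $\infty$-monad is exactly $\MM^R$: the comparison functor identifies simplicial $R$-modules with $\MM^R$-algebras. Consequently, an object of $\Spc$ admits an $\MM^R$-algebra structure if and only if it is the underlying space of a simplicial $R$-module, and by the classical splitting of simplicial $R$-modules (via Dold-Kan together with the vanishing of $k$-invariants for $H\ZZ$-modules) such underlying spaces are precisely the products $\prod_\alpha K(M_\alpha, n_\alpha)$ with each $M_\alpha$ an $R$-module. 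The empty space is handled separately: depending on the convention chosen for $\MM^R(\emptyset)$ it may or may not itself lie in $\AA(\MM^R)$, but since no nonempty space admits a map to $\emptyset$ and since $T_{\KK(R)}(\emptyset) \simeq \emptyset$ by direct inspection of the defining Kan extension, including or excluding $\emptyset$ leaves the codensity $\infty$-monad unchanged on every input. Upgrading from an equivalence of functors to one of coaugmented functors is automatic, as both sides carry canonical coaugmentations by $\Id_\Spc$ coming from their construction as limits.

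The main obstacle will be the $\infty$-categorical Barr-Beck step: verifying that the free simplicial $R$-module functor from $\Spc$ satisfies the monadicity hypotheses and that the resulting comparison functor is an equivalence identifying simplicial $R$-modules with $\MM^R$-algebras, together with the precise identification of the essential image of the forgetful functor with products of Eilenberg-MacLane spaces of $R$-modules. Once that step is in place, the theorem follows formally from Corollary \ref{cor:monad_completion} and the classical Eilenberg-MacLane decomposition of simplicial $R$-modules.
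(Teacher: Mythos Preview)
Your route through $T_{\AA(\MM^R)}$ and Barr--Beck is more circuitous than necessary and contains a concrete error. The paper applies Theorem~\ref{th:monad_completion} directly rather than its Corollary~\ref{cor:monad_completion}: that theorem gives $\widehat{\MM}\simeq T_{\DD}$ for \emph{any} full subcategory with $\II(\MM)\subseteq\DD\subseteq\RR(\MM)$, so one may take $\DD=\KK(R)$ from the start. The entire proof then reduces (after the identification $R_\infty\simeq\widehat{\MM^R}$ of Proposition~\ref{prop:BK-coden}) to the elementary sandwich $\II(\MM^R)\subseteq\KK(R)\subseteq\RR(\MM^R)$ of Lemma~\ref{lemma:K(R)}: the first inclusion holds because $R_{\sf a}X$ is either empty or, after choosing a basepoint, a simplicial $R$-module and hence a product of Eilenberg--MacLane spaces via Dold--Kan; the second holds because for an $R$-module $M$ the unit $M\to R\cdot M$ is split by the action map, so $\eta$ is left invertible on every object of $\KK(R)$. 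No identification of the $\infty$-category of $\MM^R$-algebras is needed.

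The specific gap in your argument is the claim that $\MM^R$ is the monad of the free/forgetful adjunction between $\Spc$ and simplicial $R$-modules. It is not: the underlying endofunctor of $\MM^R$ is $R_{\sf a}$ (formal combinations with coefficients summing to $1$), so that $\pi_*(\MM^R_0 X)$ is the \emph{reduced} $R$-homology of $X$, whereas the free $R$-module functor $R\cdot(-)$ computes unreduced homology. At the $1$-categorical level the algebras for $R_{\sf a}$ are affine $R$-spaces (empty, or $R$-modules without a chosen zero), not $R$-modules. One could repair the argument by running monadicity for the correct adjunction, and your conclusion that $\AA(\MM^R)$ and $\KK(R)$ have the same essential image may well be true, but establishing it is genuinely harder than Lemma~\ref{lemma:K(R)}; and if instead you pass to retract closures via Proposition~\ref{prop:coden:retracts}, you need precisely the inclusions of Lemma~\ref{lemma:K(R)} anyway, so the detour through $\AA(\MM^R)$ buys nothing.
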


\begin{corollary*}[Cor. \ref{cor:BK-universal}] 
The Bousfield-Kan $R$-completion  satisfies the following two universal properties.  
\begin{itemize}
\item[(a)] $R_\infty$ is the terminal object in the $\infty$-category of $\KK(R)$-preserving coaugmented functors. 
\item[(b)] The structure of a coaugmented functor on $R_\infty$ extends to an $\infty$-monad structure uniquely up to a contractible space of choices. This $\infty$-monad is the terminal object in the $\infty$-category of $\KK(R)$-preserving $\infty$-monads.
\end{itemize}
\end{corollary*}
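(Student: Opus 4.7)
The plan is to deduce the corollary as an essentially formal consequence of the two main theorems already stated in the introduction, namely Theorem \ref{th:BK} (identifying $R_\infty$ with $T_{\KK(R)}$ as coaugmented functors) and Theorem \ref{th:D-preserving-coaugmented} (the two universal properties of an arbitrary codensity $\infty$-monad). The entire argument amounts to transporting terminality along the established equivalence and then managing the passage between coaugmented functors and $\infty$-monads.

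For part (a), I would first invoke Theorem \ref{th:BK} to obtain an equivalence $R_\infty \simeq T_{\KK(R)}$ in the $\infty$-category of coaugmented endofunctors on $\Spc$. Since Theorem \ref{th:D-preserving-coaugmented}(a) asserts that $T_{\KK(R)}$ is a terminal object in the $\infty$-category of $\KK(R)$-preserving coaugmented functors, and terminality is preserved under equivalence, this immediately gives the desired universal property for $R_\infty$.

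For part (b), the existence of an $\infty$-monad structure on $R_\infty$ extending the coaugmented one is immediate, since $T_{\KK(R)}$ already carries an $\infty$-monad structure extending its coaugmented structure, and this can be transported along the equivalence of part (a). The uniqueness up to a contractible space of choices is where the argument needs a little care: the forgetful functor from the $\infty$-category of $\KK(R)$-preserving $\infty$-monads to the $\infty$-category of $\KK(R)$-preserving coaugmented functors sends the terminal object of the former (namely $T_{\KK(R)}$ with its monad structure, by Theorem \ref{th:D-preserving-coaugmented}(b)) to the terminal object of the latter (namely $T_{\KK(R)}$ as a coaugmented functor, by Theorem \ref{th:D-preserving-coaugmented}(a)). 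The fiber of a functor between $\infty$-categories with terminal object over a terminal object, when the functor preserves the terminal object, is contractible at the terminal object. Applied at $R_\infty \simeq T_{\KK(R)}$, this yields the asserted contractibility of the space of monad structures extending the coaugmented structure. The terminality of the resulting $\infty$-monad among $\KK(R)$-preserving $\infty$-monads is then a direct restatement of Theorem \ref{th:D-preserving-coaugmented}(b).

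There is no essential obstacle here, as both ingredients have already been established; the only mildly delicate point is to articulate the fiber argument for (b) in the $\infty$-categorical setting, which I would carry out by identifying the space of extensions with a suitable mapping space and then using the universal property of the terminal object to exhibit it as contractible.
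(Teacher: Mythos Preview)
Your approach is essentially identical to the paper's: invoke Theorem~\ref{th:BK} to identify $R_\infty \simeq T_{\KK(R)}$ as coaugmented functors, then apply Theorem~\ref{th:D-preserving-coaugmented} for both universal properties. The paper's proof is literally those two citations.

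Your elaboration on part (b), however, contains an imprecise general claim. You write that ``the fiber of a functor between $\infty$-categories with terminal object over a terminal object, when the functor preserves the terminal object, is contractible.'' This is false as stated (consider the unique functor $\CC \to \Delta^0$ for any $\CC$ with a terminal object). What makes the argument work in the paper is that the forgetful functor $\mon_\infty(\EE) \to \EE^{\1/}$ is a \emph{conservative isofibration} (Proposition~\ref{prop:F'-cons-isofib}), and under that hypothesis Lemma~\ref{lemma:term(C)} gives the contractible fiber. You do not need to redo this: the contractibility of the space of canonical $\infty$-monad structures extending the canonical coaugmentation on $T_\DD$ is already established in the paper's discussion of codensity monads (via Theorem~\ref{th:terminal:coaug}), so for the corollary you can simply cite that package along with the two theorems, exactly as the paper does.
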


Note that the latter structure of $\infty$-monad on $R_\infty$ is complicated to define in the classical terms. In the original book of Bousfield-Kan there is a mistake in the definition and Bousfield writes an erratum in \cite[\S 8.9]{bousfield2003cosimplicial} about the mistake (see also \cite{libman2003homotopy}). Later Bauer and Libman proved that there is a structure of an  $A_\infty$-monad on $R_\infty$ \cite{bauer2009simplicial}, \cite{bauer2010infinity}. In our theory, the $\infty$-monad structure arises as the canonical $\infty$-monad structure on a codensity $\infty$-monad. In order to define the canonical  $\infty$-monad structure on $T_\DD$, we prove the following general result.

\medskip 

\noindent {\bf Theorem} {(Th. \ref{th:terminal:coaug}).} {\it\ Let $\EE$ be a strict monoidal $\infty$-category and $\EE^{\1/}$ be the $\infty$-category of coaugmented objects. Then a terminal object of $\EE^{\1/}$ has a structure of $\infty$-monoid which is unique up to a contractible space of choices. Moreover, this $\infty$-monoid structure defines a terminal object in the $\infty$-category of $\infty$-monoids in $\EE$.}

\subsection{$\infty$-monads} 

Here, we discuss our approach to the notion of a monad on an $\infty$-category.
 There are several ways to define them. Lurie, in \cite{Lur07}, first defines a monoidal $\infty$-category as a cocartesian fibration over $\Ner(\Delta)^{\op}$ satisfying certain conditions, and defines an algebra object as a section of this fibration satisfying certain properties. An $\infty$-monad is then defined as an algebra object in the monoidal $\infty$-category of endofunctors of an $\infty$-category. Another approach is given by Riehl and Verity in \cite{riehl2016homotopy}. They consider a quasi-categorically enriched category $\mathcal{K}$ and define an $\infty$-monad as a quasi-categorically enriched functor from a certain one-object quasi-categorically enriched category $\underline{\sf Mnd}\to \mathcal{K}$. The equivalence between these two approaches was established by Haugseng \cite[\S 8]{haugseng2020lax}. Since our main examples of monoidal $\infty$-categories are subcategories of the $\infty$-category of endofunctors $\End(\CC)$, which carries a natural associative composition operation, we adopt the more elementary approach of Riehl and Verity, reformulated in a form more suitable for our purposes.

By an $\infty$-category, we always mean a quasi-category. A strict monoidal $\infty$-category $\mathcal{E}$ is defined as a simplicial monoid, whose underlying simplicial set is an $\infty$-category. For example, the nerve $\Ner(E)$ of an ordinary strict monoidal category $E$ is a strict monoidal $\infty$-category. One can also view a strict monoidal $\infty$-category as a quasi-categorically enriched category with a single object. An $\infty$-monoid in a strict monoidal $\infty$-category $\mathcal{E}$ is a morphism of simplicial monoids
\begin{equation}
\mathcal{M} \colon \Ner(\Delta_+) \longrightarrow \mathcal{E},
\end{equation}
where $\Delta_+$ is the augmented simplex category with its standard strict monoidal structure. The join-slice adjunction defines a cosimplicial object in the slice category of (the underlying $\infty$-category of) $\EE$ 
\begin{equation}
\MM^\bullet : \Ner(\Delta) \longrightarrow \EE_{\1/}
\end{equation}
that we call the associated cosimplicial coaugmented object. The $\infty$-category of $\infty$-monoids is defined as the mapping space in the simplicial category of simplicial monoids
\begin{equation}
\mon_\infty(\EE) = \Map_{\sMon}(N(\Delta_+),\EE).
\end{equation}
Given an $\infty$-category $\mathcal{C}$, the $\infty$-category of endofunctors $\End(\mathcal{C})=\Fun(\CC,\CC)$ carries a natural strict monoidal structure defined by the composition. An $\infty$-monad on $\mathcal{C}$ is defined as an $\infty$-monoid in $\End(\mathcal{C}).$

\subsection{Bousfield-Kan completion} Let us briefly recall the basic information about the Bousfield-Kan $R$-completion.
For a commutative ring $R$, Bousfield and Kan, in \cite{bousfield1972homotopy}, introduced the endofunctor of $R$-completion on the category of  spaces 
$R_\infty : \Spc \to \Spc.$
If $X$ is a nilpotent space, then $\QQ_\infty X \simeq X_\QQ$ is the classical rationalization of $X$, and $\ZZ_\infty X \simeq X$. If $X$ has $\ZZ/p$-homology of finite type, then $(\ZZ/p)_\infty X \simeq \widehat{X}_p$ is Sullivan’s $p$-adic completion of $X$. If $X$ is a connected space with perfect fundamental group, then $\ZZ_\infty X \simeq X^+$ is Quillen’s plus construction \cite[\S VII.3.2]{bousfield1972homotopy}, \cite{KT76}. Thus, the Bousfield–Kan completion is a generalization of the rationalization and $p$-adic completion functors to arbitrary spaces $X$ and commutative rings $R$, which  also provides a meaningful construction for $R = \ZZ$. On the other hand, for any $X$, there exists a natural Adams-like spectral sequence, whose first page depends naturally only on $R$-homology of $X$,  and, under some assumptions on $X$, converges to the homotopy groups of $R_\infty X$ (for a general approach to the Bousfield-Kan spectral sequence, see \cite{bendersky2000bousfield}). So, $R_\infty X$ can be treated as a natural target for the unstable Adams spectral sequence. Note that the Bousfield-Kan $R$-completion is not an idempotent functor,   $R_\infty(R_\infty X)$ is not homotopy equivalent to $R_\infty X$ even if  $X$ is the wedge of two circles and $R\subseteq \QQ$ \cite{ivanov2019finite} or $R=\ZZ/n$ \cite{bousfield1992p, ivanov2018discrete}. 

\subsection{Further implications and developments}

An important motivation and direction of further developments is the consideration of generalized homology. The monad associated with a ring spectrum $U,$ is written as:
$$
X\to U(X):= \Omega^\infty (X\otimes U).
$$
 It is often considered in works on generalized homology, chromatic homotopy, and general stable homotopy theory.
On homotopy groups, the coaugmentation  gives the associated Hurewicz map for $U$-homology. The question arises naturally regarding characterization and the basic properties of the associated completion monad  $U_\infty=\Tot \: U^\bullet $. By the general theorems above, this completion has an essentially unique $\infty$-monad structure, and is determined by the expected universal property of being the terminal monad preserving $U$-algebras. Moreover, it is essential to consider its behavior with respect to finite products and a principal fibration sequence.  Further, we
apply the above  appraoch to show that a  modification of
  $X\to  \Tot \,U^\bullet X,$ preserves, in a pro-sense, the
$U$-homology. A similar $U$-homology completion can be associated with the {\em Bredon homology} in the category  of $G$-spaces. A candidate monad  is  given by the  Elmendorf-like co-end:

$$X\to \int_{G/H} G/H\otimes U(X^H), $$
 yielding a $G$-space, which is a $U$-algebra  whose $H$-fixed points are  all the corresponding  $U$-algebras.

In the direction of Goodwillie calculus, one can consider the subcategory spanned by the $\infty$-category of polynomial functors in the functor category $\Spc \to \Spc,$ or $\Spc \to {\sf Sp}.$ By the above theorems, the codensity monad $T_\DD,$ of this sub-category is the terminal monad preserving all polynomial monads.
Its properties as a terminal  monad, e.g with respect to products of functors, are a natural part of the further development and employment of the above concepts.

\subsection{Conventions}  Throughout the paper by an $\infty$-category we mean a quasi-category. The category of simplicial monoids $\sMon$ is  important for us, and we reserve the notation $\Map(\cdot ,\cdot)$ with the capital letter ${\sf M}$ for the mapping space in the category $\sMon$ that we describe in Subsection \ref{subsec:simplicial_model_structure}.   The mapping space between two objects in an $\infty$-category $\CC$ is denoted by $\map_\CC(\cdot ,\cdot ),$ and the mapping space in the category of simplicial sets (resp. pointed simplicial sets) is denoted by $\map(X,Y)$ or $Y^X$ (resp. $\map_*(X,Y)$). For an object $c$ of an $\infty$-category $\CC$, we use three different models of over- and under-categories: ordinary model  $\CC_{c/}$, $\CC_{/c}$ \cite[\S 1.4]{land2021introduction}, fat model $\CC^{c/}$, $\CC^{/c}$ \cite[\S 2.5]{land2021introduction} and ``twisted arrow'' model  $\CC_{c|}$, $\CC_{|c}$ that we discuss in Section \ref{section:functorial-over-cat}. 
\medskip

\subsection{Acknowledgements} 
We are thankful for the contribution of Guy Kapon and Shaul Ragimov
to this project via helpful discussion and a written draft containing relevant results.

\tableofcontents

\section{\bf Simplicial monoids}\label{section:simplicial_monoids}

In this paper, we adopt an approach to the definition of monads in the setting of $\infty$-categories based on the language of simplicial monoids. To support this, we first develop some technical background on simplicial monoids. In this section, we begin by recalling the standard simplicial model category structure on the category of simplicial monoids, with particular emphasis on cofibrations, which are retracts of the so-called \emph{free maps}. We then study certain distinguished classes of free maps, which we refer to as (inner, left, right) anodyne free maps. These are analogous to the (inner, left, right) anodyne maps of simplicial sets. We show that a left anodyne free map of simplicial monoids has a left lifting property with respect to a left fibration of simplicial monoids (Proposition \ref{prop:lifting:anodyne}), which plays an important role in the next section.  We conclude by introducing the notion of a strict monoidal $\infty$-category and proving some properties of terminal objects in them (Lemma \ref{lemma:contractible_fiber}). 

\subsection{Simplicial model structure}
\label{subsec:simplicial_model_structure} In this subsection  we recall the definition of the standard simplicial model category structure on the category of simplicial monoids \cite[\S II.4]{quillen2006homotopical} (see also  \cite{dwyer1980simplicial} for the case $O=*$). We also recall the definition of the tensored and cotensored enrichment of the category of simplicial monoids over pointed simplicial sets. 

We consider the category of simplicial monoids $\sMon$ and the forgetful functor to the category of simplicial sets 
\begin{equation}
U: \sMon \longrightarrow \sSet.
\end{equation}
Then the category $\sMon$ is a  model category with the following definition of weak equivalences, fibrations, and cofibrations. 
\begin{itemize}
    \item[(W)]  A map of simplicial monoids $f$ is a weak equivalence if  $Uf$ is a weak equivalence of simplicial sets.
    \item[(F)] A map of simplicial monoids $f$ is a fibration if the map $Uf$ is a fibration of simplicial sets.
    \item[(C)] A map is a cofibration if it has a left lifting property with respect to those fibrations which are week equivalences. 
\end{itemize}

On the next subsection, we give a more explicit description of the class of cofibrations in terms of \emph{free maps}.

Let us define tensored and cotensored simplicial enrichment of $\sMon$, making it a simplicial model category. If $S$ is a set and $M$ is a monoid, we set 
$S \otimes M = \coprod_{s\in S} M.$ 
If $S$ is a simplicial set and $M$ is a simplicial monoid, the product $S\otimes M$ is defined component-wise $(S\otimes M)(J) =S(J)\otimes M(J)$. Then the enrichment is defined by 
\begin{equation}\label{eq:Map}
  \Map(M,N)(J) = \Hom_{\sMon}(\Delta^J\otimes M,N).  
\end{equation}
If $S$ is a simplicial set and $M$ is a simplicial monoid, the simplicial monoid $M^S$ is defined as the ordinary function complex of simplicial sets $(UM)^S$ equipped with the structure of a simplicial monoid defined point-wise (see \cite[II.1.11]{quillen2006homotopical}). In particular we have the identity 
\begin{equation}
U(M^S) = (UM)^S
\end{equation}
and the adjunction isomorphisms
\begin{equation}\label{eq:isom:map-map}
\Map(S\otimes M,N) \cong \Map(M,N)^S \cong \Map(M, N^S)
\end{equation}
for any simplicial monoids $M,N$ and a simplicial set $S.$
In particular, we have the isomorphism
\begin{equation}
\Map(S\otimes \NN,M) \cong (UM)^S,
\end{equation}
where $\NN$ is the monoid of non-negative integers treated as the constant simplicial monoid.

The category $\sMon$ can also be treated as a category enriched over pointed simplicial sets, because $\Map(M,N)$ has a natural base point. This enrichment is also tensored and cotensored. If $S$ is a pointed set with a base point $s_0$ and $M$ is a monoid, we consider the quotient monoid 
\begin{equation}
S \sm M = {\sf Coker}(\{s_0\}\otimes M \to  S\otimes M).
\end{equation}
Note that there is an isomorphism
$S \sm M \cong  (S\setminus \{s_0\})\otimes M.$ If $S$ is a simplicial set and $M$ is a simplicial monoid then $S\sm M$ is defined component-wise. The pointed cotensoring is  defined as the kernel 
\begin{equation}
\map_*(S,M) = {\sf Ker}(M^S \to M^{\{s_0\}} ).
\end{equation}
It is easy to see that 
\begin{equation}
U(\map_*(S,M)) = \map_*(S,UM)
\end{equation}
and that there are adjunction isomorphisms 
\begin{equation}\label{eq:adjunction_pointed}
\Map(S\sm M,N) \cong \map_*(S,\Map(M,N)) \cong \Map(M, \map_*(S,N)).
\end{equation}
In particular, we have 
\begin{equation}
\Map(S \sm \NN,M) \cong \map_*(S,UM).
\end{equation}

For any simplicial monoids $M$ and $N$, there is an isomorphism 
\begin{equation}\label{eq:Map-power}
\Map(M,N)(J) \cong \Hom_{\sMon}(M,N^{\Delta^J}).
\end{equation}
For any two pointed simplicial sets $X$ and $Y$, there is an isomorphism 
\begin{equation}\label{eq:map*-power}
\map_*(X,Y)(J)\cong \Hom_{\sSet_*}(X,Y^{\Delta^J}).
\end{equation}
Therefore, there is a monomorphism 
\begin{equation}
\Map(M,N)\hookrightarrow  \map_*(UM,UN). 
\end{equation}

\begin{lemma}\label{lemma:Map_eq} 
For any simplicial monoids $M,N$, the simplicial set $\Map(M,N)$ is isomorphic is the equaliser of two maps
\begin{equation}
\Map(M,N) \to \map_*(UM,UN) \rightrightarrows \map_*(UM^2,UN),
\end{equation}
where the first map is induced by the multiplication map $UM^2\to UM,$ and the second map is the composition of the square map $\map_*(UM,UN)\to \map_*(UM^2,UN^2)$, and the map induced by the multiplication $UN^2 \to UN$.
\end{lemma}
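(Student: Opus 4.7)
The plan is to reduce the proposed isomorphism to a levelwise statement and then recognise the equaliser as capturing precisely the multiplication-preservation condition. The elementary fact driving the proof is that a pointed map between underlying simplicial sets of simplicial monoids is a monoid homomorphism if and only if it commutes with the multiplication maps.

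First I would use formulas \eqref{eq:Map-power} and \eqref{eq:map*-power}, together with the identity $U(N^{\Delta^J}) = (UN)^{\Delta^J}$, to identify at simplicial level $J$:
\begin{align*}
\Map(M,N)(J) &\cong \Hom_{\sMon}(M, N^{\Delta^J}), \\
\map_*(UM, UN)(J) &\cong \Hom_{\sSet_*}(UM, U(N^{\Delta^J})), \\
\map_*(UM^2, UN)(J) &\cong \Hom_{\sSet_*}(UM^2, U(N^{\Delta^J})).
\end{align*}
Since limits in $\sSet$ are computed levelwise and the forgetful functor $\sSet_* \to \sSet$ creates limits, it suffices to verify, for each $[J]\in\Delta$, that $\Hom_{\sMon}(M, N^{\Delta^J})$ is the equaliser in $\Set$ of the corresponding pair of maps
$$\Hom_{\sSet_*}(UM, UN') \rightrightarrows \Hom_{\sSet_*}(UM^2, UN'),$$
where I abbreviate $N' := N^{\Delta^J}$.

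Next, I would unpack the two maps: the first sends a pointed map $f\colon UM \to UN'$ to $f \circ U\mu_M$, and the second sends $f$ to $U\mu_{N'} \circ (f\times f)$, with $\mu_M$ and $\mu_{N'}$ the multiplications. A pointed simplicial map $f$ equalising this pair satisfies $f\circ \mu_M = \mu_{N'}\circ (f\times f)$ and already preserves units (by being pointed, since the base point of the underlying simplicial set of a simplicial monoid is its unit). Such an $f$ is therefore exactly a homomorphism of simplicial monoids $M\to N'$, and conversely any such homomorphism obviously equalises the pair. This identifies $\Hom_{\sMon}(M, N^{\Delta^J})$ with the desired equaliser and, together with naturality in $[J]$, completes the proof.

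The main obstacle is purely notational bookkeeping: one must carefully check that the two maps in the statement coincide, at simplicial level $J$, with precomposition by $\mu_M$ and with postcomposition by $\mu_N$ applied to the square $f\times f$; in particular, that the ``square map'' $\map_*(UM,UN)\to \map_*(UM^2,UN^2)$ really corresponds under \eqref{eq:map*-power} to the assignment $f\mapsto f\times f$. Once this identification is settled, the equaliser condition becomes the defining equation of a monoid homomorphism, and the lemma follows.
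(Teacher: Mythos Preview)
Your proposal is correct and follows essentially the same approach as the paper: both reduce to a levelwise equaliser via the isomorphisms \eqref{eq:Map-power} and \eqref{eq:map*-power}, and then identify the equaliser condition as precisely the multiplication-preservation condition characterising monoid homomorphisms among pointed maps. The paper phrases the reduction slightly differently (first observing the equaliser for ordinary monoids, then passing to simplicial monoids degreewise), but the content is the same, and your explicit unwinding of the two maps at level $J$ is exactly what the paper's ``direct computation'' amounts to.
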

\begin{proof} For any two ordinary monoids $M,N$ the hom-set $\Hom_{\Mon}(M,N)$ is the equaliser of the two maps 
\begin{equation}
\Hom_{\Mon}(M,N) \to \Hom_{\Set_*}(UM,UN)\rightrightarrows \Hom_{\Set_*}(UM^2,UN).
\end{equation} 
Therefore, the similar equaliser diagram exists for the hom-set between simplicial monoids $M$ and $N$. Combining this with \eqref{eq:Map-power} and \eqref{eq:map*-power} we obtain  the equalizer diagram 
\begin{equation}
\Map(M,N)_n \to \map_*(UM,UN)_n \rightrightarrows \map_*(UM^2,UN)_n
\end{equation}
for any $n\geq 0.$
A direct computation shows that these two maps are the required maps. 
\end{proof}

\subsection{Free maps}

\begin{definition} \label{def:free_simp_monoid}
We say that a simplicial monoid $A$ is \emph{free} if there is a sequence of subsets $(X_n\subseteq A_n)_{n\geq 0}$ closed under degeneracy maps $s_i(X_n)\subseteq X_{n+1}$ such that $A_n$ is the free monoid generated by $X_n$. The sequence of subsets $(X_n)_{n\geq 0}$ is called a basis of $A$.
\end{definition}
\begin{definition} \label{def:free_map} 
An injective map of simplicial monoids $\alpha : B \to A$ is \emph{free} if is there is a sequence of subsets $(X_n\subseteq A_n)_{n\geq 0}$ closed under degeneracy maps $s_i(X_n)\subseteq X_{n+1}$ such that  $A_n = \alpha(B_n) \sqcup \langle X_n \rangle$, where $\langle X_n \rangle$ denotes the free monoid generated by $X_n$ (see \cite[§7.4]{dwyer1980simplicial}). The sequence of subsets $(X_n)_{n\geq 0}$ is called a \emph{basis} of the free map $\alpha$.
\end{definition}

For a basis $(X_n)_n$ of a free map $\alpha: B \to A$, we define its degenerate part as 
\begin{equation}
X_n^{\sf dgn} =  \bigcup_i s_i(X_{n-1}), \hspace{1cm} n\geq 1, 
\end{equation}
and $X_0^{\sf dgn} = \emptyset$,
and the non-degenerate part as  
\begin{equation}
X_n^\circ = X_n\setminus X_n^{\sf dgn}. 
\end{equation}

The class of cofibrations in $\sMon$ can be described as follows (see \cite[\S 7.6]{dwyer1980simplicial}). 
\begin{itemize}
    \item[(C')] A map of simplicial monoids is a cofibration if and only if it is a retract of a free map. 
\end{itemize}
In particular, free maps are cofibrations, and free simplicial monoids are cofibrant. 

Next, we prove a lemma on a filtration of a free map, which helps to prove statements about free maps by induction. This lemma can be regarded as a refinement of the fact that the class of cofibrations is the weakly saturated class generated by the maps $\partial \Delta^n \otimes \NN \hookrightarrow \Delta^n\otimes \NN$.

\begin{lemma}\label{lemma:free:pushout} 
Let $\alpha : B\to A$ be a free map with a basis $(X_n)_{n\in \mathbb{N}}$. Then there is an exhausting filtration of $A$ by  simplicial submonoids 
\begin{equation}
A^{(-1)} \subseteq A^{(0)}\subseteq A^{(1)} \subseteq \dots \subseteq A 
\end{equation}
such that $\alpha(B) = A^{(-1)},$ and, for any $n\geq 0$, we have $X_n^\circ \subseteq A_n^{(n)},$ the inclusion $A^{(n-1)}\hookrightarrow A^{(n)}$ is free and there is a pushout of the form 
\begin{equation}
\label{eq:pushout:free}
\begin{tikzcd}
\coprod\limits_{X_n^\circ} \partial \Delta^n \otimes \NN  
\ar[r]
\ar[d,hookrightarrow] 
& 
A^{(n-1)} 
\ar[d,hookrightarrow]
\\
\coprod\limits_{X_n^\circ} \Delta^n \otimes \NN  
\ar[r]
&
A^{(n)}
\end{tikzcd}
\end{equation}
where the maps $\Delta^n \otimes \NN \to A^{(n)}$ indexed by $X_n^\circ$ correspond to the simplices of  $X_n^\circ.$
\end{lemma}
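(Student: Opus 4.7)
The plan is to define the filtration $A^{(n)} \subseteq A$ as the simplicial submonoid generated by $\alpha(B)$ together with $\bigcup_{k \le n} X_k^\circ$ (with the convention $A^{(-1)} = \alpha(B)$), and then to verify all the claims of the lemma by a dimensionwise analysis that unpacks the free-product decomposition $A_m = \alpha(B_m) \sqcup \langle X_m \rangle$ of Definition \ref{def:free_map}.

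The technical heart is a dimensionwise identification. For $k \le m$ and a surjection $\sigma:[m]\twoheadrightarrow [k]$, write $s_\sigma$ for the corresponding iterated degeneracy, and set $(X_k^\circ)^{\sf dgn}_m = \{s_\sigma(x) : x \in X_k^\circ,\ \sigma:[m]\twoheadrightarrow [k]\}$. The degeneracy-closure hypothesis on $X_\bullet$ together with the free-basis structure gives the disjoint decomposition $X_m = \bigsqcup_{k \le m} (X_k^\circ)^{\sf dgn}_m$. I would then prove by induction on $n$ the formula
\begin{equation*}
A^{(n)}_m \ = \ \alpha(B_m) \ \sqcup \ \Big\langle\, \bigsqcup_{k \le \min(n,m)} (X_k^\circ)^{\sf dgn}_m \,\Big\rangle,
\end{equation*}
where $\sqcup$ denotes the free product of monoids. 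The inclusion $\supseteq$ is immediate; for $\subseteq$, the right-hand side is closed under multiplication and degeneracies by construction, and the key point is that it is also closed under face maps, because any face of $s_\sigma(x)$ with $x \in X_k^\circ$ lies in $A_{m-1} = \alpha(B_{m-1}) \sqcup \langle X_{m-1} \rangle$ and, after applying the simplicial identities to simplify $\sigma$, decomposes as a word in basis letters of level at most $k$.

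From this identification the three assertions follow cleanly. The filtration is exhausting since $A^{(n)}_m = A_m$ as soon as $n \ge m$. The inclusion $A^{(n-1)} \hookrightarrow A^{(n)}$ is free with basis $\{(X_n^\circ)^{\sf dgn}_m\}_{m \ge 0}$, because associativity of the free product gives $A^{(n)}_m \cong A^{(n-1)}_m \sqcup \langle (X_n^\circ)^{\sf dgn}_m \rangle$, and this basis is visibly degeneracy-closed with non-degenerate part exactly $X_n^\circ$ in dimension $n$. For the pushout square, I would use that $\Delta^n \otimes \NN$ is the free simplicial monoid on $\Delta^n$ (by the adjunction $\Map(S \otimes \NN, M) \cong (UM)^S$), so each $x \in X_n^\circ$ determines a canonical map $\Delta^n \otimes \NN \to A$ whose restriction to $\partial \Delta^n \otimes \NN$ lands in $A^{(n-1)}$ by the face-closure step. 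Checking the universal property dimensionwise: for $m < n$ one has $(\partial \Delta^n)_m = (\Delta^n)_m$, so the pushout agrees with $A^{(n-1)}_m = A^{(n)}_m$; for $m \ge n$ the set $(\Delta^n)_m \setminus (\partial \Delta^n)_m$ consists exactly of the surjections $[m]\twoheadrightarrow [n]$, and by the elementary identity $C \sqcup_R (R \sqcup E) \cong C \sqcup E$ for amalgamated free products of monoids, the pushout collapses to $A^{(n-1)}_m \sqcup \langle (X_n^\circ)^{\sf dgn}_m \rangle$, matching the formula for $A^{(n)}_m$ on the nose.

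The main obstacle is the face-closure step in the key identification: it encodes the statement that faces of an iterated degeneracy of a non-degenerate level-$k$ basis element remain expressible as a word in basis letters of level at most $k$. This is a combinatorial consequence of the simplicial identities together with the free-product structure on each $A_{m-1}$, and is where the real content of the lemma sits; once it is in hand, the freeness and pushout verifications reduce to bookkeeping with the universal property of free products of monoids.
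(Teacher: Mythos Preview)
Your proposal is correct and follows essentially the same route as the paper. The only organizational difference is that you define $A^{(n)}$ top-down as the simplicial submonoid generated by $\alpha(B)\cup\bigcup_{k\le n}X_k^\circ$ and then derive the dimensionwise formula, whereas the paper writes down the formula $A^{(k)}_m = B_m \sqcup \langle X_m^{(k)}\rangle$ directly and then checks closure under faces and degeneracies; in both cases the substantive step is exactly your ``face-closure'' verification (the paper's inclusion $d_i(X_{n+1}^{(k)})\subseteq B_n\sqcup\langle X_n^{(k)}\rangle$), and the pushout is checked dimensionwise via the identification $(\Delta^n)_m\setminus(\partial\Delta^n)_m=\{\sigma:[m]\twoheadrightarrow[n]\}$.
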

\begin{proof}
For simplicity, assume that $B\subseteq A$ and $\alpha$ is the canonical embedding.  Then 
\begin{equation}
A_n = B_n \sqcup \langle X^{\sf dgn}_n \rangle \sqcup \langle X_n^\circ \rangle.   
\end{equation}
Since $s_i(A_{n-1}) \subseteq B_n \sqcup \langle X_n^{\sf dgn} \rangle$, the elements of $X_n^\circ$ are non-degenerate in $A$. We claim that $X_n$ can be presented as the disjoint union  
\begin{equation}
X_n = \coprod_{\substack{0\leq k\leq n\\ \sigma:[n]\twoheadrightarrow [k]} } \sigma^*(X_k^\circ),
\end{equation}
where $\sigma$ runs over all epimorphisms in $\Delta$ from $[n]$ to $[k]$ for all $0\leq k\leq n$. The fact that $X_n$ is the union of $\sigma^*(X_k^\circ)$ is obvious. The fact that it is disjoint follows from the fact that any simplex of any simplicial set can be uniquely presented as $\sigma^*(y),$ where $\sigma$ is an epimorphism in  $\Delta$ and $y$ is a non-degenerate simplex \cite[\S II.3.1]{gabriel2012calculus}.

For any $n\geq 0$, we define a filtration by subsets 
\begin{equation}
X_n^{(0)} \subseteq X_n^{(1)} \subseteq \dots \subseteq X_n^{(n)} = X_n^{(n+1)} = \dots = X_n 
\end{equation}
by $X_n^{(k)}=X_n$ for $n\leq k,$ and 
\begin{equation}
X_n^{(k)} \  = \coprod_{\sigma: [n] \twoheadrightarrow [k]} \sigma^*(X_k) 
\end{equation}
for $k<n$.  It can also be represented as 
\begin{equation}\label{eq:X_n^k:2}
X_n^{(k)} \  = X_n^{(k-1)} \sqcup  \coprod_{\substack{ \sigma: [n] \twoheadrightarrow [k]}} \sigma^*(X^\circ_k). 
\end{equation}
Using the simplicial identities and the inclusion $d_i(X_k) \subseteq B_{k-1} \sqcup \langle X_{k-1} \rangle$, it is easy to check that, for any $n, k\geq 0$, we have  
\begin{equation}\label{eq:d_ix^k}
d_i(X_{n+1}^{(k)}) \subseteq B_{n} \sqcup \langle X_{n}^{(k)} \rangle, \hspace{1cm} s_i(X_n^{(k)}) \subseteq X_{n+1}^{(k)}. 
\end{equation}

Consider the monoids 
$A_n^{(k)} = B_{n} \sqcup \langle X_{n}^{(k)} \rangle.$ The inclusions  \eqref{eq:d_ix^k} implies that the collection $(A_n^{(k)})$ forms a simplicial submonoid $A^{(k)}$. 
Then the formula \eqref{eq:X_n^k:2} implies that
\begin{equation}
A_n^{(k)} = A_n^{(k-1)} \sqcup \left(\coprod_{\sigma:[n]\twoheadrightarrow [k]} \langle \sigma^*(X_n^\circ)  \rangle \right).
\end{equation} 
Note that 
\begin{equation}
(\Delta^k)_n\setminus(\partial \Delta^k)_n = \{ \sigma:[n]\twoheadrightarrow [k]\}   
\end{equation}
and the inclusions $d_i(X_k)\subseteq  A^{(k-1)}_{k-1}$ for any $i$, imply the inclusions  $f^*(X_k^\circ) \subseteq A^{(k-1)}_n$ for any $f\in (\partial \Delta^k)_n.$
Therefore, we have a pushout of monoids

\begin{equation}
\begin{tikzcd}
\coprod\limits_{x\in X_k^\circ} \coprod\limits_{f\in (\partial \Delta^k)_n}   \NN 
\ar[r]
\ar[d]
& 
A^{(k-1)}_n 
\ar[d]
\\
\coprod\limits_{x\in X^\circ_k}  \coprod\limits_{f\in  (\Delta^k)_n} \NN 
\ar[r,"\varphi"]
&
A_n^{(k)},
\end{tikzcd}
\end{equation}
where $\varphi_{x,f}:\NN\to  A_n^{(k)}$ is defined by the element $f^*(x)$.
Since the pushout of simplicial monoids is defined component-wise, we obtain the required pushout. 
\end{proof}

\subsection{Anodyne free maps}

Let $\alpha:B\to A$ be a free map with a basis $(X_n)_{n\in \mathbb{N}}$. A collection of subsets $(H_n)_{n\in \mathbb{N}}$ in the non-degenerate part of the basis $H_n\subseteq X^\circ_n$ is called a collection of \emph{horn generators} if we can choose numbers $0\leq i(h)\leq n,$ for all $n\geq 1$ and $h\in H_n$, such that:
\begin{itemize}
\item $d_{i(h)}(h)\in X^\circ_{n-1} \setminus H_{n-1};$
\item for any $x\in X_{n-1}^\circ\setminus H_{n-1}$  there exists a unique $h \in H_{n}$ such that $x=d_{i(h)}(h)$. 
\end{itemize}
In other words, the inclusion $H_n \hookrightarrow X_n^\circ$ and the map $H_{n+1}\to X_n^\circ,$ $h \mapsto d_{i(h)}(h)$ define a bijection
\begin{equation}
H_{n+1} \sqcup  H_n \cong  X_n^\circ   
\end{equation}
for any $n\geq 0.$
The numbers $i(h)$ will be referred as  \emph{horn indices} of the horn generators. 

A free map is called an \emph{anodyne free map} if it has a basis that has a collection of horn generators. An anodyne free map is called an \emph{inner (resp. left; right) anodyne free map}, if it has a basis $(X_n)$ that has horn generators $(H_n)$ whose horn indices satisfy $0<i(h)<n$ (resp. $0\leq i(h)<n$; $0<i(h)\leq n$) for $h\in H_n.$ 

A free monoid $A$ is called anodyne free (resp. inner anodyne free; left anodyne free; right anodyne free) if the map $1\to A$ is so. In this case a collection of horn generators of $1\to A$ is called a collection of horn generators of $A$.

\begin{example}
Consider the quotient simplicial set $\Delta^n/\Lambda^n_i$, which is defined as the pushout.
\begin{equation}
\begin{tikzcd} 
\Lambda^n_i
\ar[r] \ar[d] 
& 
\Delta^n 
\ar[d]
\\
\Delta^0 
\ar[r]
&
\Delta^n/\Lambda^n_i 
\end{tikzcd}
\end{equation} 
The simplicial set $\Delta^n/\Lambda^n_i$ can be viewed as a simplicial model of an $n$-disk with $3$ non-degenerate simplices: the base-point,  the $n$-simplex that we denote by  $h$, and its $i$-th face $d_i(h)$. If we denote by $X_k$ the set of all $k$-simplices of $\Delta^n/\Lambda^n_i,$ except the degeneracy of the base-point, we obtain that $(X_k)_k$ is a basis of 
$(\Delta^n/\Lambda^n_i)\sm \NN$. The non-degenerated part of this basis consists of two simplices: $X_n^\circ = \{h\}$ and $X^\circ_{n-1} =\{d_i(h)\},$ and $X^\circ_k=\emptyset$ for $k\notin \{n,n-1\}.$  Therefore, the collection $(H_k)_k$ defined by  $H_n=\{h\}$ and $H_k=\emptyset, k\neq n$ is a collection of horn generators of $(\Delta^n/\Lambda^n_i)\sm \NN$ with the horn index $i(h)=i$. It follows that  $(\Delta^n/\Lambda^n_i)\sm \NN$ is an inner anodyne free (resp. left anodyne free; right anodyne free) simplicial monoid if $0<i<n$ (resp. $0\leq i<n$; $0<i\leq n$). 
\end{example}

The following lemma says that an anodyne free map $\alpha:B\to A$ can be obtained as an infinite composition of maps obtained by attaching simplices along horns.

\begin{lemma}\label{lemma:inner-pushout}
Let $\alpha : B\to A$ be an anodyne  free map with a basis $(X_n)_{n}$ and its horn generators $(H_n)_{n}$. Then there is an exhausting filtration of $A$ by  simplicial submonoids 
\begin{equation}
\tilde A^{(-1)} \subseteq \tilde A^{(0)}\subseteq \tilde A^{(1)} \subseteq \dots \subseteq A 
\end{equation}
such that $\alpha(B) = \tilde A^{(-1)},$  and, for any $n\geq 0$, we have $H_n\subseteq \tilde A_n^{(n)}$, and there is a pushout of the form 
\begin{equation}
\begin{tikzcd}
\coprod\limits_{h\in H_n} \Lambda^n_{i(h)} \otimes \NN  
\ar[r]
\ar[d,hookrightarrow]
&
\tilde A^{(n-1)}
\ar[d,hookrightarrow]
\\
\coprod\limits_{h\in H_n} \Delta^n \otimes \NN 
\ar[r]
&
\tilde A^{(n)},
\end{tikzcd}
\end{equation}
where the maps $\Delta^n \otimes \NN \to \tilde A^{(n)}$ correspond to elements  of $h\in H_n,$ and $i(h)$ denotes the horn index of $h.$
\end{lemma}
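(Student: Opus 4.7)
The plan is to mimic the proof of Lemma~\ref{lemma:free:pushout}, replacing attachment along full boundaries by attachment along horns. The horn generator bijection $H_{n+1}\sqcup H_n \cong X_n^\circ$, given by $h\mapsto d_{i(h)}(h)$ on $H_{n+1}$ and by the inclusion on $H_n$, shows that every non-degenerate cell of dimension $n$ is either a horn generator in $H_n$ or the matched face of a horn generator in $H_{n+1}$. This lets me pair each $h\in H_n$ with its matched face $d_{i(h)}(h)\in X_{n-1}^\circ\setminus H_{n-1}$ and attach both at the same filtration stage via $\Lambda^n_{i(h)}\otimes \NN \hookrightarrow \Delta^n\otimes\NN$, thereby collapsing two consecutive stages of the filtration of Lemma~\ref{lemma:free:pushout} into a single stage.

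Concretely, for $n\geq -1$ I would define $\tilde A^{(n)}$ to be the simplicial submonoid of $A$ whose non-degenerate basis consists of $X_k^\circ$ for $k\leq n-1$ together with $H_n$ in dimension $n$ (and is empty in higher dimensions), with $\tilde A^{(-1)}=\alpha(B)$. As in the proof of Lemma~\ref{lemma:free:pushout}, one extends this to a full basis $\tilde X^{(n)}_k$ in every dimension via the unique $\sigma^*(y)$-decomposition of simplices and sets $\tilde A^{(n)}_k=B_k\sqcup\langle\tilde X^{(n)}_k\rangle$. Closure under degeneracies is automatic, and closure under faces uses that $\tilde X^{(n)}_k=X_k$ for $k<n$ together with the inclusion $d_{i(h)}(h)\in X_{n-1}^\circ\subseteq \tilde X^{(n)}_{n-1}$. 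Exhaustion is immediate, since $\tilde X^{(n)}_k=X_k$ as soon as $n>k$, and the required containment $H_n\subseteq \tilde A^{(n)}_n$ is built into the definition.

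The pushout square is then verified component-wise, exactly as in Lemma~\ref{lemma:free:pushout}. For each $h\in H_n$ the simplicial set $\Delta^n/\Lambda^n_{i(h)}$ has precisely two non-basepoint non-degenerate simplices, namely $h$ and $d_{i(h)}(h)$, together with their degeneracies. These match the new non-degenerate basis cells of $\tilde A^{(n)}$ relative to $\tilde A^{(n-1)}$, namely $H_n$ in dimension $n$ and $X_{n-1}^\circ\setminus H_{n-1}$ in dimension $n-1$, paired by the horn generator bijection. A dimension-by-dimension comparison of the resulting free monoids, parallel to the final step of the proof of Lemma~\ref{lemma:free:pushout}, then identifies $\tilde A^{(n)}$ with the pushout asserted in the statement.

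The main obstacle is checking that the attaching map $\coprod_{h\in H_n}\Lambda^n_{i(h)}\otimes \NN \to \tilde A^{(n-1)}$ is well-defined, i.e.\ that $d_j(h)\in \tilde A^{(n-1)}_{n-1}$ for every $h\in H_n$ and every $j\neq i(h)$. Since $\tilde X^{(n-1)}_{n-1}$ consists of $H_{n-1}$ together with the degeneracies of cells in $X^\circ_k$ for $k\leq n-2$, this amounts to showing that no matched face $d_{i(h')}(h')$ with $h'\in H_n$ appears in the free-monoid decomposition of $d_j(h)$ in $A_{n-1}$. I expect to establish this by induction on $n$, using the uniqueness clause in the horn generator bijection together with the simplicial identities: the appearance of such a matched face inside $d_j(h)$ for $j\neq i(h)$ would ultimately contradict either the uniqueness of the pairing $h'\leftrightarrow d_{i(h')}(h')$ or the non-degeneracy of $d_{i(h')}(h')$.
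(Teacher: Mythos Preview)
Your overall strategy matches the paper's exactly: set $\tilde X^{(k)}_n=\bigl(\coprod_{\sigma:[n]\twoheadrightarrow[k]}\sigma^*(H_k)\bigr)\sqcup X^{(k-1)}_n$, verify closure under faces and degeneracies, and then identify the pushout degree by degree using $(\Delta^k)_n\setminus(\Lambda^k_{i})_n=\{\sigma:[n]\twoheadrightarrow[k]\}\sqcup\{d^{i}\tau\mid\tau:[n]\twoheadrightarrow[k-1]\}$. The paper, like you, must check that each horn $\Lambda^k_{i(h)}$ lands in $\tilde A^{(k-1)}$; it simply asserts ``for any $h\in H_k$ and $f\in(\Lambda^k_{i(h)})_n$ we have $f^*(h)\subseteq\tilde A^{(k-1)}_n$'' and moves on.

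The inductive argument you sketch for this step --- via uniqueness of the pairing $h'\leftrightarrow d_{i(h')}(h')$ and non-degeneracy of matched faces --- does not go through. Take two non-degenerate $2$-cells $\sigma_1,\sigma_2\in H_2$ with $i(\sigma_1)=i(\sigma_2)=1$, distinct matched faces $d_1(\sigma_1)=e_4$ and $d_1(\sigma_2)=e_5$ in $X_1^\circ$, $H_1=\emptyset$, and all remaining faces of $\sigma_1,\sigma_2$ in $B$ \emph{except} that $d_0(\sigma_1)=e_5$. Every horn-generator axiom is satisfied, $e_5$ is non-degenerate, and $\sigma_2$ is the unique generator with matched face $e_5$; yet $d_0(\sigma_1)=e_5\notin\tilde A^{(1)}_1=B_1$, so the attaching map for $\sigma_1$ is undefined. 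Thus the dimension-indexed filtration cannot work here, and the statement needs either an additional hypothesis on the horn generators (for instance $d_j(h)\in B_{n-1}\sqcup\langle X^{\sf dgn}_{n-1}\cup H_{n-1}\rangle$ for all $j\neq i(h)$) or a finer, possibly transfinite, filtration within each dimension. For the paper's actual applications --- $\Ner(\Delta_+)$ with the basis of Lemma~\ref{lemma:Delta_is_free} --- this extra condition does hold (every $d_j(h)$ with $j\neq 0$ decomposes into basis factors $g$ with $g(0)=[-1]$, hence in $H_{n-1}\cup X_{n-1}^{\sf dgn}$), but it is not a consequence of the stated axioms alone.
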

\begin{proof} 
In this proof we use the notations from the proof of Lemma \ref{lemma:free:pushout}. So we have a filtrations $X_n^{(k)}$ and $A^{(k)}$  and want to construct new filtrations $\tilde X_n^{(k)}$ and $\tilde A^{(k)}.$ Set 
\begin{equation}
H_{n+1}'=X_n^\circ\setminus H_n.    
\end{equation}
Then there is a bijection $H_{n+1} \cong H_{n+1}'$ defined by $h\mapsto d_{i(h)}(h)$. For $-1\leq k\leq n$, we set 
\begin{equation}
\tilde X_n^{(k)} \  = \left( \coprod_{\substack{ 0\leq l\leq k\\ \sigma: [n] \twoheadrightarrow [l]}} \sigma^*(H_l) \right) 
\sqcup
\left( \coprod_{\substack{ 1\leq l\leq k\\ \tau: [n] \twoheadrightarrow [l-1]}} \tau^*(H'_l) \right).
\end{equation}
Recursively, it can be presented as
\begin{equation}
\tilde X_n^{(k)} = \tilde X_n^{(k-1)} \sqcup  \left( \coprod_{ \sigma: [n] \twoheadrightarrow [k]} \sigma^*(H_k) \right) 
\sqcup
\left( \coprod_{ \tau: [n] \twoheadrightarrow [k-1]} \tau^*(H'_k) \right),
\end{equation}
where $\tilde X_n^{(-1)}=\emptyset$. Since $X^\circ_l=H_l \sqcup H'_{l+1},$ we can also present it as 
\begin{equation}
\tilde X^{(k)}_n = \left( \coprod_{ \sigma: [n] \twoheadrightarrow [k]} \sigma^*(H_k) \right) 
\sqcup X^{(k-1)}_n.
\end{equation}
In particular, we have the inclusions $ 
X^{(k-1)}_n \subseteq  \tilde X_n^{(k)} \subseteq  X^{(k)}_n$ and the equality  $\tilde X^{(n)}_n = H_n \sqcup X_n^{(n-1)}.$  For $k > n$, we set  
$\tilde X_n^{(k)} = X_n.$ Using the inclusions \eqref{eq:d_ix^k}, it is easy to check that, for any $n,k\geq 0$, we have  
\begin{equation}
d_i(\tilde X_{n+1}^{(k)}) \subseteq B_{n} \sqcup \langle \tilde X_{n}^{(k)} \rangle, \hspace{1cm} s_i( \tilde X_n^{(k)}) \subseteq \tilde X_{n+1}^{(k)}. 
\end{equation}
Therefore, if we define $\tilde A^{(k)}_n = B_n \sqcup \langle \tilde X_n^{(k)} \rangle,$ we obtain a filtration by simplicial submonoids $\tilde A^{(k)}.$ We also have a recursive formula 
\begin{equation}
\tilde A_n^{(k)} = \tilde A_n^{(k-1)} \sqcup  \left( \coprod_{ \sigma: [n] \twoheadrightarrow [k]} \langle \sigma^*(H_k) \rangle \right) 
\sqcup
\left( \coprod_{ \tau: [n] \twoheadrightarrow [k-1]} \langle \tau^*(H'_k) \rangle \right),
\end{equation}
where $\tilde A^{(-1)}_n=B_n.$ Note that 
\begin{equation}
(\Delta^k)_n\setminus(\Lambda^k_i)_n = \{ \sigma\mid \sigma:[n]\twoheadrightarrow [k]\} \sqcup \{ d^i\tau \mid \tau:[n]\twoheadrightarrow [k-1] \} 
\end{equation}
and, for any $h\in H_k$ and $f\in (\Lambda^k_{i(h)})_n$, we have $f^*(h) \subseteq \tilde A^{(k-1)}_n.$ Therefore, we have a pushout of monoids.
\begin{equation}
\begin{tikzcd}
\coprod\limits_{h\in H_k} \coprod\limits_{f\in (\Lambda^k_{i(h)})_n}   \NN  
\ar[r]
\ar[d,hookrightarrow]
& 
\tilde A^{(k-1)}_n 
\ar[d,hookrightarrow]
\\
\coprod\limits_{h\in H_k} \coprod\limits_{f\in (\Delta^k)_n}  \NN
\ar[r,"\varphi"]
&
\tilde A_n^{(k)},
\end{tikzcd}
\end{equation}
where $\varphi_{h,f}:\NN\to  \tilde A_n^{(k)}$ is defined by the element $f^*(h)$. Since pushout of simplicial monoids is taken component-wise, we obtain the required pushout of simplicial monoids. 
\end{proof}

\begin{proposition}\label{prop:lifting:anodyne} Let $\varphi:M\to N$ be a map of simplicial monoids such that $U\varphi$ is a Kan (resp. inner; left; right) fibration and $\alpha:B\to A$ an anodyne (resp. inner anodyne; left anodyne; right anodyne) free map. Then $\alpha$ has the left lifting property with respect to $\varphi$. 
\end{proposition}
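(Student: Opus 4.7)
The plan is to reduce the lifting problem in $\sMon$ to a sequence of lifting problems for horn inclusions of simplicial sets against $U\varphi$, via Lemma~\ref{lemma:inner-pushout} and the free--forgetful adjunction $(-\otimes \NN) \dashv U$ between $\sSet$ and $\sMon$.

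Given an arbitrary commuting square with left edge $\alpha$ and right edge $\varphi$, I would first apply Lemma~\ref{lemma:inner-pushout} to present $\alpha$ as a transfinite composition
\[
\alpha(B) = \tilde A^{(-1)} \subseteq \tilde A^{(0)} \subseteq \tilde A^{(1)} \subseteq \cdots
\]
with $\operatorname{colim}_n \tilde A^{(n)} = A$, in which each stage $\tilde A^{(n-1)} \hookrightarrow \tilde A^{(n)}$ is a pushout in $\sMon$ along the coproduct of horn inclusions
\[
\coprod_{h \in H_n} \Lambda^n_{i(h)} \otimes \NN \;\hookrightarrow\; \coprod_{h \in H_n} \Delta^n \otimes \NN.
\]
The class of maps in $\sMon$ with the left lifting property against $\varphi$ is closed under pushouts, coproducts, and transfinite composition, so it suffices to verify LLP for each single horn-type inclusion $\Lambda^n_i \otimes \NN \hookrightarrow \Delta^n \otimes \NN$ with $i$ in the appropriate range: $0 < i < n$ in the inner case, $0 \le i < n$ in the left case, $0 < i \le n$ in the right case, and $0 \le i \le n$ in the general anodyne case.

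Next, I would invoke the adjunction $(-\otimes \NN) \dashv U$. A commuting square in $\sMon$ with left edge $\Lambda^n_i \otimes \NN \hookrightarrow \Delta^n \otimes \NN$ and right edge $\varphi$ transposes bijectively to a commuting square in $\sSet$ with left edge $\Lambda^n_i \hookrightarrow \Delta^n$ and right edge $U\varphi$, and solutions on the two sides correspond. By hypothesis $U\varphi$ is a Kan, inner, left, or right fibration according to the case, so it has the right lifting property against $\Lambda^n_i \hookrightarrow \Delta^n$ for precisely the range of $i$ supplied by Lemma~\ref{lemma:inner-pushout}. Transposing the lift back across the adjunction solves the original lifting problem. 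Iterating along the filtration, with the initial stage given by the top edge of the square and the forgetful functor $U$ preserving sequential colimits of inclusions, the partial lifts assemble into a morphism of simplicial monoids $A \to M$ of the required form.

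The main obstacle is essentially bookkeeping rather than genuine difficulty: one must check that the pushouts of Lemma~\ref{lemma:inner-pushout} are genuinely pushouts in $\sMon$ (not merely in $\sSet$), so that the closure of LLP under pushout, coproduct, and transfinite composition applies verbatim, and one must confirm that the four parallel hypotheses (Kan, inner, left, right) are matched by the four constraints on horn indices produced by the lemma. Once the adjunction and the pushout presentation are in place, the argument becomes formal.
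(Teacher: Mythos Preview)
Your proposal is correct and follows essentially the same argument as the paper: reduce via Lemma~\ref{lemma:inner-pushout} and the saturation of the LLP class to the single horn inclusion $\Lambda^n_i\otimes\NN\hookrightarrow\Delta^n\otimes\NN$, then use the adjunction $(-\otimes\NN)\dashv U$ to transpose to a horn-lifting problem against $U\varphi$ in $\sSet$.
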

\begin{proof}
The class of maps having left lifting property with respect to $\varphi$ is saturated. Therefore, it is closed with respect to coproducts and stable under pushouts and transfinite compositions.  Combining this with Lemma \ref{lemma:inner-pushout} we see that the problem is reduced to the case when $\alpha$ is equal to the inclusion $\Lambda^n_i\otimes \NN \to \Delta^n \otimes \NN$ for any $0\leq i\leq n$ (resp. $0< i< n$, $0\leq i< n$, $0<i\leq n$). By the adjunction the map of simplicial monoids $\Lambda^n_i\otimes \NN \to \Delta^n \otimes \NN$ has the left lifting property with respect to $\varphi$ if and only if the map of simplicial sets  $\Lambda^n_i \to \Delta^n$ has the left lifting property with respect to $U\varphi$, which follows from the assumption.  
\end{proof}

\subsection{Pullback power}

\begin{proposition}\label{prop:pullback_power}
Let $\alpha:B\to A$ be a cofibration of simplicial monoids, $\varphi :M\to N$ be a morphism of simplicial monoids and
\begin{equation}
\varphi^{\Box \alpha} : \Map(A,M) \longrightarrow \Map(B,M) \times_{\Map(B,N)} \Map(A,N)
\end{equation}
be the pullback power. Then the following holds. 
\begin{itemize}
    \item[(a)] If $U\varphi$ is a Kan (resp. inner; left; right) fibration, then $\varphi^{\Box \alpha}$ is a Kan (resp. inner; left; right) fibration.  
    \item[(b)] If, in addition, $\alpha$ is an anodyne (resp. inner anodyne; left anodyne; right anodyne) free map, then $\varphi^{\Box \alpha}$ is a trivial fibration. 
\end{itemize} 
\end{proposition}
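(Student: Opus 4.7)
The plan is a standard Leibniz pushout-product argument, reducing both parts to the lifting statement of Proposition~\ref{prop:lifting:anodyne}. For a map $j\colon K\to L$ of simplicial sets and a map $\alpha\colon B\to A$ of simplicial monoids, introduce the Leibniz product
\[
j\,\widehat{\otimes}\,\alpha\colon (L\otimes B)\sqcup_{K\otimes B}(K\otimes A)\longrightarrow L\otimes A.
\]
The enrichment isomorphisms \eqref{eq:isom:map-map} yield the following adjunction: $\varphi^{\Box\alpha}$ has the right lifting property against $j$ in $\sSet$ if and only if $\varphi$ has the right lifting property against $j\,\widehat{\otimes}\,\alpha$ in $\sMon$. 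Hence (a) reduces to showing that $(\Lambda^n_i\hookrightarrow\Delta^n)\,\widehat{\otimes}\,\alpha$ has the left lifting property against $\varphi$ for the appropriate range of $i$, and (b) reduces to the same conclusion with $\partial\Delta^n\hookrightarrow\Delta^n$ in place of the horn inclusion and with $\alpha$ already anodyne free of the corresponding type.

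Next I would reduce $\alpha$ to the generators. Since $\,\widehat{\otimes}\,$ preserves colimits in each variable and the class of maps with the LLP against $\varphi$ is weakly saturated, Lemma~\ref{lemma:free:pushout} together with the characterisation of cofibrations as retracts of free maps lets one replace a general cofibration $\alpha$ in (a) by the generator $\partial\Delta^m\otimes\NN\hookrightarrow\Delta^m\otimes\NN$; analogously, Lemma~\ref{lemma:inner-pushout} reduces (b) to the case $\alpha=\bigl(\Lambda^m_l\otimes\NN\hookrightarrow\Delta^m\otimes\NN\bigr)$ with $l$ in the range matching the anodyne type. The key computation is then the identification
\[
j\,\widehat{\otimes}\,(i\otimes\NN)\ \cong\ (j\,\widehat{\times}\,i)\otimes\NN,
\]
for any maps $i,j$ of simplicial sets, where $\widehat{\times}$ denotes the Leibniz product with respect to the cartesian product on $\sSet$. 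This follows from the cocontinuity of $-\otimes\NN\colon\sSet\to\sMon$, which is left adjoint to the forgetful functor, combined with the natural isomorphism $S\otimes(T\otimes\NN)\cong(S\times T)\otimes\NN$; both sides compute levelwise the free monoid on $S_n\times T_n$.

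Under this identification the remaining question is purely simplicial: the pushout-product of a monomorphism of simplicial sets with an anodyne (resp.\ inner, left, right anodyne) map is again of the same type. This is the classical pushout-product lemma of Quillen and Joyal. Consequently $(j\,\widehat{\times}\,i)\otimes\NN$ lies in the weakly saturated class generated by the maps $\Lambda^k_l\otimes\NN\hookrightarrow\Delta^k\otimes\NN$ of the appropriate type, and Proposition~\ref{prop:lifting:anodyne} then yields the required lift against $\varphi$. The main technical obstacle I anticipate is the careful verification of the Leibniz adjunction and of the isomorphism $j\,\widehat{\otimes}\,(i\otimes\NN)\cong(j\,\widehat{\times}\,i)\otimes\NN$, since $\otimes$ on $\sMon$ is not a cartesian or even symmetric monoidal structure on simplicial sets; once these are in place, the rest of the argument is a formal saturation and bookkeeping exercise.
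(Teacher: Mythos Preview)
Your argument is correct but takes a different route from the paper. You transpose to the \emph{tensoring} side, forming the Leibniz product $j\,\widehat{\otimes}\,\alpha$ in $\sMon$, reducing $\alpha$ to the generating (anodyne) free maps via Lemmas~\ref{lemma:free:pushout} and~\ref{lemma:inner-pushout}, and then computing $j\,\widehat{\otimes}\,(i\otimes\NN)\cong(j\,\widehat{\times}\,i)\otimes\NN$ to land in a purely simplicial pushout-product question. The paper instead transposes to the \emph{cotensoring} side: the lifting problem for $j$ against $\varphi^{\Box\alpha}$ is rewritten as a lifting problem for $\alpha$ against $\varphi^{\Box j}\colon M^{X}\to M^{Y}\times_{N^{Y}}N^{X}$, and the key point is the identity $U(\varphi^{\Box j})=(U\varphi)^{\Box j}$, which follows immediately from $U(M^{S})=(UM)^{S}$ and the fact that $U$ preserves limits. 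For part~(a) this makes $\varphi^{\Box j}$ a trivial fibration in $\sMon$, so any cofibration $\alpha$ lifts with no reduction to generators at all; for part~(b) it makes $U(\varphi^{\Box j})$ a fibration of the matching type, and Proposition~\ref{prop:lifting:anodyne} applies directly to the anodyne free map $\alpha$.

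The paper's route is shorter: it avoids both the cell-reduction of $\alpha$ and the explicit Leibniz computation, since the single identity $U(M^{S})=(UM)^{S}$ does all the work. Your route has the minor virtue of using only the tensored enrichment. One small clean-up: at the end you appeal to Proposition~\ref{prop:lifting:anodyne} for a map merely in the weakly saturated class generated by $\Lambda^k_l\otimes\NN\hookrightarrow\Delta^k\otimes\NN$, whereas that proposition is stated for anodyne free maps. It is more direct to finish via the adjunction $(-\otimes\NN)\dashv U$: the map $(j\,\widehat{\times}\,i)\otimes\NN$ has the left lifting property against $\varphi$ if and only if $j\,\widehat{\times}\,i$ has it against $U\varphi$, which is exactly the simplicial pushout-product statement you already invoked.
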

\begin{proof} (a) Take an anodyne (resp. inner anodyne; left anodyne; right anodyne) map of simplicial sets $i: Y\to X$. We need to prove that the lifting problem 
\begin{equation}\label{eq:lift1}
\begin{tikzcd}
Y \ar[r]\ar[d,"i"] & \Map(A,M) \ar[d,"\varphi^{\Box \alpha}"] \\
X\ar[ru,dashed] \ar[r] & \Map(B,M) \times_{\Map(B,N)} \Map(A,N) 
\end{tikzcd}
\end{equation}
has a solution. This lifting problem is equivalent to the lifting problem 
\begin{equation}\label{eq:lift2}
\begin{tikzcd}
B \ar[r]\ar[d,"\alpha"] & M^{X} \ar[d,"\varphi^{\Box i}"] \\
A\ar[ru,dashed] \ar[r] & M^{Y} \times_{N^{Y} } N^{X}
\end{tikzcd}
\end{equation}
(see \cite[Lemma 9.3.6]{hirschhorn2003model}). Since $U(M^X)=(UM)^X$ and $U$ is a right adjoint, we obtain $U(\varphi^{\Box i}) = (U\varphi)^{\Box i}$. The map $(U\varphi)^{\Box i}$ is a trivial fibration by \cite[Th.1.3.37]{land2021introduction}. Therefore, the lifting problem \eqref{eq:lift2} has a solution. 

(b). The proof is similar to (a). We need to take any monomorphism $i: Y\to X$ and prove that the lifting problem  \eqref{eq:lift1} has a solution. The lifting problem is equivalent to the lifting problem   \eqref{eq:lift2}. By \cite[Th.1.3.37]{land2021introduction} we obtain that $(U\varphi)^{\Box i}=U( \varphi^{\Box i})$ is a Kan (resp. inner; left; right) fibration. Then the lifting problem  \eqref{eq:lift2} has a solution by Proposition \ref{prop:lifting:anodyne}.    
\end{proof}

In the next section we use the following special case of Proposition \ref{prop:pullback_power} to prove that the trivial $\infty$-monoid is the initial object in the $\infty$-category of $\infty$-monoids  (Corollary \ref{cor:trivial_monoid_is_initial}).

\begin{corollary}\label{cor:left_anodyne_free_monoid}
If $A$ is a left anodyne free simplicial monoid and $\varphi:M\to N$ is a morphism of simplicial monoids such that $U\varphi$ is a left fibration, then 
\begin{equation}
\varphi_* : \Map(A,M) \longrightarrow \Map(A,N)
\end{equation}
is a trivial fibration. 
\end{corollary}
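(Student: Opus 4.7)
The plan is to obtain this corollary as an immediate specialization of Proposition \ref{prop:pullback_power}(b), taking $B$ to be the trivial (one-element) simplicial monoid $\1$ and $\alpha$ to be the unique unit map $\alpha:\1\to A$. By the very definition of a left anodyne free simplicial monoid, the hypothesis that $A$ is left anodyne free is exactly the statement that $\alpha:\1\to A$ is a left anodyne free map, so the hypotheses of Proposition \ref{prop:pullback_power}(b) are satisfied provided we can identify the associated pullback power with $\varphi_*$.

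The second step is precisely this identification. Since $\1$ is the initial object of $\sMon$, for every simplicial monoid $M$ the mapping space $\Map(\1,M)$ is a point: using the formula $\Map(\1,M)(J)=\Hom_{\sMon}(\Delta^J\otimes \1,M)$ together with $\Delta^J\otimes \1\cong \1$, one gets $\Map(\1,M)\cong \Delta^0$, and similarly $\Map(\1,N)\cong \Delta^0$. Therefore the pullback
\begin{equation*}
\Map(\1,M)\times_{\Map(\1,N)}\Map(A,N)\ \cong\ \Map(A,N),
\end{equation*}
and under this canonical isomorphism the pullback power $\varphi^{\Box\alpha}$ coincides with the post-composition map $\varphi_*:\Map(A,M)\to\Map(A,N)$.

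The final step is simply to invoke Proposition \ref{prop:pullback_power}(b): since $U\varphi$ is a left fibration and $\alpha$ is a left anodyne free map, $\varphi^{\Box\alpha}$ is a trivial fibration, hence so is $\varphi_*$. There is no real obstacle here; the only point requiring any care is the identification $\Map(\1,M)\cong \Delta^0$, which is a direct consequence of $\1$ being initial in $\sMon$ together with the explicit description \eqref{eq:Map} of the simplicial enrichment.
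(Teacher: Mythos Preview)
Your proof is correct and follows exactly the approach the paper intends: the corollary is stated immediately after Proposition~\ref{prop:pullback_power} as a direct specialization with $B=\1$, and your identification of $\varphi^{\Box\alpha}$ with $\varphi_*$ via $\Map(\1,M)\cong\Delta^0$ is precisely what is needed.
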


\subsection{Strict monoidal \texorpdfstring{$\infty$}{}-categories} 

We define a \emph{strict monoidal $\infty$-category} as a simplicial monoid $\EE$ such that $U\EE$ is an $\infty$-category.  A strict monoidal $\infty$-category can be viewed as a quasi-categorically enriched category with one object, which were considered in the paper of Riehl-Verity \cite{riehl2016homotopy}. We use the usual abuse of notation identifying a strict $\infty$-category $\EE$ with the underlying $\infty$-category $U\EE$. For example $\map_\EE(\cdot,\cdot)$ denotes the mapping space in $U\EE.$

Let us give some examples of strict monoidal $\infty$-categories. The main example for us is the category of endofunctors $\End(\CC)=\Fun(\CC,\CC)$ of an $\infty$-category $\CC$, whose strict monoidal structure is defined by the composition. Another example is the nerve $N(E)$ of an ordinary strict monoidal category $E$. More generally, if $\EEE$ is a strict monoidal simplicial category, its homotopy coherent nerve $\Ner(\EEE)$ has a natural structure of a simplicial monoid (see Subsection \ref{subsection:monoida_and_monads_in _simplicial_cats} for details). So, if $\EEE$ is, in addition, locally Kan, then $\Ner(\EEE)$ is a strict monoidal $\infty$-category.

Let us prove some properties of of strict monoidal $\infty$-categories. An inner fibration between $\infty$-categories $F:\mathcal{C}\to \mathcal{D}$ is called an isofibration if any equivalence $\alpha : F(c)\to d'$ in $\mathcal{D}$ can be lifted to an equivalence $\tilde \alpha:c\to c'$. A functor $F:\mathcal{C}\to \mathcal{D}$ is called conservative, if for a morphism $\alpha$ in $\CC$, the morphism $F(\alpha)$ is an equivalence only if $\alpha$ is an equivalence.

\begin{proposition}\label{prop:hom_from_free_monoid1}
 Let $\alpha:B\to A$ be a free map simplicial monoids and $\EE$ be a strict monoidal $\infty$-category. Consider the pre-composition map 
 \begin{equation}\label{eq:pre-comp}
\alpha^* : \Map(A,\EE)\longrightarrow \Map(B,\EE).
\end{equation}
 Then the following holds. 
\begin{itemize}
    \item[(a)] The map $\alpha^*$ is an inner fibration. In particular, if $A$ is a free simplicial monoid, $\Map(A,\EE)$ is an $\infty$-category. 
    \item[(b)] If $A$ and $B$ are free simplicial monoids and $\alpha$ induces an isomorphism of monoids of $0$-simplices $A_0 \cong B_0$, then $\alpha^*$ is a conservative isofibration. 
    \item[(c)] If $\alpha$ is an inner anodyne free map, then $\alpha^*$ is a trivial fibration.  
\end{itemize}
\end{proposition}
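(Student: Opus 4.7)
The plan is to derive (a) and (c) as direct specializations of Proposition \ref{prop:pullback_power}, and to handle (b) by an explicit construction using the free-cell filtration of $\alpha$.

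For (a) and (c), I would apply Proposition \ref{prop:pullback_power} to the terminal map $\varphi : \EE \to \1$, where $\1$ denotes the terminal simplicial monoid. The map $U\varphi$ is an inner fibration because $U\EE$ is a quasi-category. Since $\Map(A, \1) = \Map(B, \1) = \1$, the pullback power $\varphi^{\Box \alpha}$ collapses to the pre-composition map $\alpha^*$. Part (a) is then immediate from Proposition \ref{prop:pullback_power}(a); the parenthetical remark follows by specializing to $B = \1$. Part (c) follows from Proposition \ref{prop:pullback_power}(b) applied in the trivial-fibration case, triggered by $\alpha$ being inner anodyne free.

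For (b), the key consequence of the hypothesis $\alpha_0 : B_0 \cong A_0$ is that any basis $(X_n)$ of $\alpha$ satisfies $X_0 = \emptyset$, so $A$ is built from $B$ by attaching free cells only in dimensions $n \geq 1$. I would first establish a componentwise detection of equivalences: a $1$-simplex $\beta : f \to g$ in $\Map(A, \EE)$ is an equivalence in the $\infty$-category $\Map(A,\EE)$ if and only if, for every $a \in A_0$, the $1$-simplex of $U\EE$ obtained from $\beta$ by evaluating at $s_0(a)$ is an equivalence in $U\EE$. This is a routine translation to the strict monoidal setting of the standard fact for functor $\infty$-categories. Conservativity of $\alpha^*$ is then immediate, because via the bijection $\alpha_0$ the components of $\beta$ and $\alpha^*(\beta)$ coincide up to relabeling.

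For the isofibration property, given $f \in \Map(A, \EE)$ and an equivalence $\gamma : \alpha^*(f) \to g$ in $\Map(B, \EE)$, we must construct an equivalence $\tilde\gamma : f \to \tilde g$ in $\Map(A, \EE)$ with $\alpha^*(\tilde\gamma) = \gamma$. By the adjunction \eqref{eq:isom:map-map}, this amounts to the lifting problem
\begin{equation*}
\begin{tikzcd}
(\{0\}\otimes A)\cup_{\{0\}\otimes B}(\Delta^1\otimes B) \ar[r] \ar[d,hookrightarrow] & \EE \\
\Delta^1\otimes A \ar[ur,dashed] &
\end{tikzcd}
\end{equation*}
subject to the constraint that the resulting map is an equivalence in $U\EE$ in the $\Delta^1$-direction at every $a \in A_0$. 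I would construct the lift inductively along the filtration of $\alpha$ from Lemma \ref{lemma:free:pushout}. Because $X_0 = \emptyset$, there is nothing to add in dimension zero, and the components of $\tilde\gamma$ are forced by $\gamma$ via $\alpha_0$, hence are already equivalences in $\EE$. At each subsequent stage $n \geq 1$, the cell-attachment reduces, via the pushout in Lemma \ref{lemma:free:pushout}, to an extension problem in the quasi-category $U\EE$ whose left edge is an already-constructed equivalence, which can be filled by the standard horn-filling argument for equivalences in $\infty$-categories. The hard part will be organizing this induction so that each local lifting problem in $U\EE$ really has the shape for which the horn-filling-plus-equivalence trick applies; this is in essence the strict monoidal incarnation of the Riehl--Verity fact that pre-composition along an enriched functor that is the identity on objects is an isofibration between enriched functor $\infty$-categories.
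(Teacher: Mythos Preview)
Your arguments for (a) and (c) are correct and coincide with the paper's: both simply invoke Proposition~\ref{prop:pullback_power} with $\varphi$ the map to the terminal simplicial monoid.

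For (b), your strategy differs from the paper's. You propose to prove conservativity via a componentwise detection lemma and then build the isofibration lift by hand along the cell filtration. The paper instead observes that ``conservative isofibration'' can be characterized, even for maps between arbitrary simplicial sets, by a right lifting property: against the inner horn inclusions together with $\Delta^1 \hookrightarrow J$ and $\Delta^0 \hookrightarrow J$ (where $J = \Ner([1]^{\sf grpd})$). This class $\mathcal{CI}$ is therefore cosaturated, so the class of free maps $\alpha$ for which $\alpha^*$ lies in $\mathcal{CI}$ is saturated. Lemma~\ref{lemma:free:pushout} then reduces the problem to the generators $\partial\Delta^n \otimes \NN \hookrightarrow \Delta^n \otimes \NN$ for $n \geq 1$, and the adjunction $\Map(S \otimes \NN, \EE) \cong (U\EE)^S$ converts this into the known fact that $(U\EE)^{\Delta^n} \to (U\EE)^{\partial\Delta^n}$ is a conservative isofibration (Land, Th.~2.2.1 and Prop.~2.2.5).

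The paper's route is cleaner because it handles conservativity and the isofibration lifting simultaneously and never has to organize the inductive extension you flag as ``the hard part.'' Your approach is not wrong, but your componentwise detection lemma is itself most naturally proved by the same filtration-plus-saturation reduction, at which point one may as well treat the isofibration property the same way and recover the paper's argument.
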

\begin{proof}
(a) and (c) follow from Proposition \ref{prop:pullback_power}. Let us prove (b). 

(b) First, we introduce some notation. We treat the ordinal $[n]=\{0,\dots,n\}$ as a category. In particular, $[1]$ is the category with two objects and one non-identity morphism. Then  $[1]^{\sf grpd}$ is the localization of this category by this one morphism.  Its nerve is denoted by $J=\Ner([1]^{\sf grpd})$. We consider the class of maps of simplicial sets $\mathcal{CI}\subseteq {\sf Mor}(\sSet),$ which are inner fibrations  having the right lifting property with respect to the maps $\Delta^1 \to J$ and $\Delta^0\to J.$ A functor between $\infty$-categories is a conservative isofibration if and only if it is from the class $\mathcal{CI}$ (see \cite[Cor. 2.1.18, Prop. 2.1.21]{land2021introduction}). Therefore, the maps from the class $\mathcal{CI}$ are generalizations of conservative isofibrations to the case of all simplicial sets. 

It is easy to see that the class $\mathcal{CI}$ is cosaturated (saturated class in $\sSet^{\sf op}$), because it is defined as a class of maps having the right lifting property with respect to some set of maps.  Therefore, the class of maps $\alpha$ such that $\alpha^*$ is from $\mathcal{CI}$ is saturated. Then Lemma \ref{lemma:free:pushout} allows us to reduce the statement to the case of $\alpha$ equal to the inclusion $\partial\Delta^n \otimes \NN \to \Delta^n \otimes \NN$ for $n\geq 1.$ Using the adjunction $\Map(S\otimes \NN,\EE)\cong (U\EE)^S$ we obtain that that we need to prove that $\alpha^*: (U\EE)^{\Delta^n} \to (U\EE)^{\partial \Delta^n}$ is a conservative isofibration. This follows from \cite[Th.2.2.1, Prop.2.2.5]{land2021introduction}. 
\end{proof}

Let us finish the section by the following technical lemma about terminal objects in strict monoidal $\infty$-categories that we use in the following section.

\begin{lemma}\label{lemma:contractible_fiber}
Let $A$ be a free simplicial monoid, $B\subseteq A$ be its free submonoid such that $B_0=A_0$, and let  $\EE$  be a strict monoidal $\infty$-category with a terminal object $t$ and $\varphi: B\to \EE$ be a map of simplicial monoids. Assume that the inclusion $\alpha:B\hookrightarrow A$ is a free map with a basis $(X_n)_n$, and that $\varphi_0(d_0^n(x))=t$ for any $x\in X_n$ and $n\geq 0.$ Then the fiber of the conservative isofibration  
\begin{equation}
\alpha^* : \Map(A,\EE)\longrightarrow \Map(B,\EE) 
\end{equation}
over $\varphi$ is a contractible Kan complex. 
\end{lemma}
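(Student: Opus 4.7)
I will prove the lemma by induction along the filtration of $A$ over $B$ from Lemma \ref{lemma:free:pushout}, showing that the fiber over $\varphi$ remains a contractible Kan complex at each stage because extensions of boundaries whose last vertex is $t$ form contractible spaces.

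Consider the filtration $B = A^{(-1)} \subseteq A^{(0)} \subseteq A^{(1)} \subseteq \dots \subseteq A$ provided by Lemma \ref{lemma:free:pushout}. Since $B_0 = A_0$, the basis satisfies $X_0 = \emptyset$ and so $A^{(0)} = B$; moreover, attaching along cells $\Delta^n \otimes \NN$ of positive dimension does not change the monoid of $0$-simplices, so $A^{(n)}_0 = B_0$ for all $n$. By Proposition \ref{prop:hom_from_free_monoid1}(b), each restriction $\Map(A^{(n)}, \EE) \to \Map(A^{(n-1)}, \EE)$ is then a conservative isofibration between $\infty$-categories. Writing $F_n$ for the fiber of $\Map(A^{(n)}, \EE) \to \Map(B, \EE)$ over $\varphi$, each $F_n$ is a Kan complex (every morphism in a fiber of a conservative isofibration maps to an identity, hence to an equivalence, and so is itself an equivalence) and each $F_n \to F_{n-1}$ is a Kan fibration. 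Since $\Map(A, \EE) = \lim_n \Map(A^{(n)}, \EE)$, the desired fiber equals $F = \lim_n F_n$, with $F_0 = \{\varphi\}$ a point.

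Applying $\Map(-, \EE)$ to the pushout square \eqref{eq:pushout:free} yields a pullback square in $\sSet$. Using the identity $\Map(S \otimes \NN, \EE) = (U\EE)^S$, the fiber of $F_n \to F_{n-1}$ over a point $\psi$ is
\[
\prod_{x \in X_n^\circ} G_x(\psi), \qquad G_x(\psi) := \mathrm{fib}_{\sigma_x^\psi}\bigl( (U\EE)^{\Delta^n} \to (U\EE)^{\partial \Delta^n} \bigr),
\]
where $\sigma_x^\psi: \partial \Delta^n \to U\EE$ is the boundary determined by $\psi$ and the attaching map of $x$. The hypothesis on $\varphi$ gives $\sigma_x^\psi(n) = \psi_0(d_0^n(x)) = \varphi_0(d_0^n(x)) = t$; that is, the last vertex of every such boundary is $t$.

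The crucial ingredient is the following standard fact about terminal objects in an $\infty$-category: the space of fillings of any $\partial \Delta^n \to U\EE$ whose last vertex is $t$ is a contractible Kan complex (for $n \geq 1$). This follows from $U\EE_{/t} \to U\EE$ being a trivial Kan fibration together with the slice identifications $\Delta^n = \Delta^{n-1} \star \Delta^0$ and $\Lambda^n_n = \partial \Delta^{n-1} \star \Delta^0$: up to natural equivalence, the filling space is a fiber of the pullback-power of $U\EE_{/t} \to U\EE$ against the cofibration $\partial \Delta^{n-1} \hookrightarrow \Delta^{n-1}$, and hence is contractible. Granted this, each $G_x(\psi)$ is contractible, so $F_n \to F_{n-1}$ is a trivial Kan fibration, and the inverse limit $F = \lim_n F_n$ of this tower, starting from the point $F_0 = \{\varphi\}$, is a contractible Kan complex. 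The main obstacle I anticipate is justifying the invoked standard fact at the level of mapping simplicial sets: because joins and products of simplicial sets do not distribute, equating the filling space with a pullback-power fiber requires either citing the well-known lifting characterization of terminal objects against $\partial \Delta^n \hookrightarrow \Delta^n$ or carefully comparing thin and fat slice models.
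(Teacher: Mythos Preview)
Your proof is correct and follows essentially the same route as the paper's: filter via Lemma \ref{lemma:free:pushout}, identify the successive fibers with products of fibers of $(U\EE)^{\Delta^n}\to(U\EE)^{\partial\Delta^n}$ over boundaries whose last vertex is $t$, and conclude by the contractibility of such filling spaces. The ``standard fact'' you flag as the main obstacle is exactly what the paper isolates as Lemma \ref{lemma:fiber:terminal} in the appendix, proved by the same join/slice identifications and pullback-power argument you sketch.
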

\begin{proof}
By Lemma \ref{lemma:free:pushout}, we obtain that there is an exhaustive filtration by free  simplicial monoids 
\begin{equation}
B =A^{(0)} \subseteq A^{(1)} \subseteq \dots \subseteq A
\end{equation}
such that the inclusions $A^{(n-1)}\hookrightarrow A^{(n)}$ are free and  there is a pushout of the form
\begin{equation}
\begin{tikzcd}
\coprod\limits_{X_n^\circ} \partial \Delta^n \otimes \NN  
\ar[r]
\ar[d]
&
A^{(n-1)}
\ar[d] 
\\
\coprod\limits_{X_n^\circ} \Delta^n \otimes \NN 
\ar[r]
&
A^{(n)},
\end{tikzcd}
\end{equation}
where the maps $\Delta^n\otimes \NN\to A^{(n)}$ are defined by the elements of $X_n^\circ$. We denote by  $F^{(n)}_\varphi \subseteq \Map(A^{(n)},\EE)$ the fiber of the map $\Map(A^{(n)},\EE)\to \Map(B,\EE)$ over $\varphi$. Then 
\begin{equation}
F_\varphi=\lim F^{(n)}_\varphi, 
\end{equation}
where $F_\varphi$ is the fiber of the map $\Map(A,\EE)\to \Map(B,\EE).$ 

Using the isomorphism $\Map(X\otimes \NN,\EE)\cong \EE^X,$ we obtain that for each $n$ and a point $\theta$ of $F_\varphi^{(n-1)}$ there is a diagram consisting of three pullbacks
\begin{equation}\label{eq:three_pullbacks}
\begin{tikzcd}
G^{(n)}_{\varphi,\theta} \ar[r,rightarrowtail] \ar[d] & F^{(n)}_\varphi \ar[r,rightarrowtail] \ar[d] & \Map(A^{(n)},\EE) \ar[r] \ar[d] & \prod\limits_{X_n^\circ}  \EE^{\Delta^n} \ar[d] \\
\Delta^0 \ar[r,"\theta"] & F^{(n-1)}_\varphi \ar[r,rightarrowtail] & \Map(A^{(n-1)},\EE) \ar[r] & \prod\limits_{X_n^\circ} \EE^{\partial \Delta^n}.
\end{tikzcd}
\end{equation}
Since the map 
$\Map(A^{(n)} , \EE) \to \Map(A^{(n-1)},\EE) $ is a conservative isofibration (Proposition \ref{prop:hom_from_free_monoid1}), the maps $F^{(n)}_\varphi\to F^{(n-1)}_\varphi$ are conservative isofibrations as well. Using that $F^{(0)}_\varphi\cong \Delta^0$,  we obtain by induction that $F^{(n)}_\varphi$ is a Kan complex and $F^{(n)}_\varphi\to F^{(n-1)}_\varphi$ is a Kan fibration for any $n$ \cite[Prop.2.1.20]{land2021introduction}. Hence, it is sufficient to prove that $F^{(n)}_\varphi$ is contractible. 

Let us prove that $F^{(n)}_\varphi$ is contractible by induction. The Kan complex $F^{(0)}_\varphi\cong \Delta^0$ is obviously contractible. Assume that $n\geq 1,$ and the Kan complex $F^{(n-1)}_\varphi$ is contractible.  
It is sufficient to prove that for any point $\theta$ of $F^{(n-1)}_\varphi$ the fiber $G^{(n)}_{\varphi,\theta}$ over $\theta$ of the map $F^{(n)}_\varphi\to F^{(n-1)}_\varphi$ is contractible. 
Since the composite square \eqref{eq:three_pullbacks} is a pullback, the fiber $G^{(n)}_{\varphi,\theta}$ coincides with the fiber of the map $\prod_{X_n^\circ} \EE^{\Delta^n}\to \prod_{X_n^\circ} \EE^{\partial \Delta^n}$ over the image of $\varphi$. Denote by $\varphi_x \in \EE^{\partial \Delta^n}, x\in X_n^\circ$ the component of the image of $\varphi$ in the product $ \prod_{X_n^\circ} \EE^{\partial \Delta^n}$. Then it is sufficient to prove that the fiber of the map $\EE^{\Delta^n}\to \EE^{\partial \Delta^n}$ over $\varphi_x$ is contractible for any $x\in X_n^\circ$.  

Since $\varphi_0(d^n_0(x))=t$ for any $x\in X_n^\circ$, the composition $\Delta^{\{n\}} \to \Delta^n \overset{\varphi_x}\to \EE$ is defined by the terminal object $t$. Therefore, the fiber of $\EE^{\Delta^n}\to \EE^{\partial \Delta^n}$ over $\varphi_x$ is a contractile Kan complex 
(Lemma 
\ref{lemma:fiber:terminal} in the appendix).
\end{proof}

\section{\bf \texorpdfstring{$\infty$}{}-monoids}

In this section, we introduce the notion of an $\infty$-monoid in a strict monoidal $\infty$-category $\EE$. We study the forgetful functors from the category of $\infty$-monoids in $\EE$ to the original category $\EE$ and to the category of coaugmented objects $\EE^{\1/}$. The two main results of this section state that a terminal object of $\EE$ (respectively, of $\EE^{\1/}$) admits an essentially unique structure of an $\infty$-monoid (unique up to a contractible space of choices) and that this structure defines a terminal object in the category of $\infty$-monoids (Theorem \ref{th:Structures_of_monad_on_terminal}, Theorem \ref{th:terminal:coaug}). The main example of a strict $\infty$-category for us is the $\infty$-category of endofunctors $\End(\CC)$, and its $\infty$-subcategories closed under composition. The main examples of $\infty$-monoids are $\infty$-monads, which are defined as $\infty$-monoids in $\End(\CC)$. 
Later we use the results of this section to show that the codensity $\infty$-monad, which is initially just a functor, has a canonical structure of an $\infty$-monad defined uniquely up to contractible space of choices. 

Our approach to $\infty$-monoids is essentially a reformulation of the approach of Riehl and Verity \cite{riehl2016homotopy}, rewritten in terms that are more convenient for our purposes. Essentially, a strict monoidal $\infty$-category is a quasi-categorically enriched category with one object. Then an $\infty$-monoid is a homotopy coherent monad in the sense of Riehl and Verity \cite[Def. 6.1.1]{riehl2016homotopy}. The equivalence of this approach with that of Lurie \cite{luriehigheralgebra} was established by Haugseng \cite[\S 8]{haugseng2020lax}.

\subsection{\texorpdfstring{$\infty$}{}-monoids}
Consider the augmented simplex category $\Delta_+,$ whose objects are finite ordinals $[n]=\{0,\dots , n\}$,
including the empty ordinal $[-1]=\emptyset,$
and morphisms are monotone maps. We also treat the well ordered set $[n]$ as a posetal  category, and $\Delta_+$ as a full subcategory of the category of small categories ${\sf Cat}.$ Using the identification 
\begin{equation}
[n] \star [m] \cong [n+m+1],
\end{equation}
we obtain that the join defines a structure of strict monoidal category
\begin{equation}
  \star :\Delta_+ \times \Delta_+ \longrightarrow \Delta_+  
\end{equation}
where $[-1]$ is the unit object. 
This strict monoidal structure defines a structure of a strict monoidal $\infty$-category on its nerve $\Ner(\Delta_+)$.

\begin{lemma}\label{lemma:Delta_is_free} 
The simplicial monoid $\Ner(\Delta_+)$ is free, with the basis  given by functors 
 $f:[n]\to \Delta_+$ such that $f(n)=[0].$ Moreover, $\Ner(\Delta_+)$ is left anodyne free with horn generators given by functors $f:[n]\to \Delta_+$ which are non-degenerate, and satisfy:  $f(n)=[0]$ and $f(0)=[-1]$; and all the horn indices are equal to $0$.   
\end{lemma}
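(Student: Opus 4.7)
The plan is to exhibit the claimed basis explicitly and then verify the horn conditions by a direct combinatorial check.

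For freeness, set $X_n=\{f:[n]\to \Delta_+\mid f(n)=[0]\}$; closure under degeneracies follows from $(s_if)(n+1)=f(n)$. The key point is a unique factorisation for an arbitrary $n$-simplex $f:[n]\to\Delta_+$ with $f(n)=[m]$. For each $k\in [m]$, the fibre of the monotone map $f(i)\to f(n)$ over $k$ is a convex (possibly empty) sub-interval of $f(i)$, defining an object $f_k(i)\in\Delta_+$; compatibility with the chain $f(0)\to\cdots\to f(n)$ makes these into a functor $f_k:[n]\to\Delta_+$ with $f_k(n)=[0]$, so $f_k\in X_n$. Joining the fibres recovers $f(i)=f_0(i)\star\cdots\star f_m(i)$ levelwise, hence $f=f_0\star\cdots\star f_m$. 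Uniqueness follows because any decomposition $f=g_1\star\cdots\star g_l$ with $g_j(n)=[0]$ forces $l=m+1$ from the equality $f(n)=[l-1]$, and then $g_{j+1}=f_j$ by identifying the $j$-th join summand at each level. The empty join corresponds to the constant functor at $[-1]$, which is the monoid unit; therefore $\Ner(\Delta_+)_n$ is the free monoid on $X_n$.

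For the left anodyne structure, take $H_n=\{f\in X_n^\circ\mid f(0)=[-1]\}$ with every horn index $i(h)=0$. Observe first that an element of $X_n$ lies in $X_n^\circ$ precisely when it is non-degenerate as an $n$-simplex of $\Ner(\Delta_+)$, i.e.\ no consecutive arrow $f(i)\to f(i+1)$ is an identity: indeed, if $f=g\circ s^j$ with $g:[n-1]\to\Delta_+$ then $g(n-1)=f(n)=[0]$ forces $g\in X_{n-1}$. For $h\in H_n$ we have $d_0(h)(i)=h(i+1)$, so $d_0(h)(n-1)=[0]$ and $d_0(h)(0)=h(1)$; the latter cannot be $[-1]$, for otherwise $h(0)\to h(1)$ would be the identity on $[-1]$, violating non-degeneracy of $h$. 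Hence $d_0(h)\in X_{n-1}^\circ\setminus H_{n-1}$, establishing the first horn condition.

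Conversely, given $x\in X_{n-1}^\circ\setminus H_{n-1}$, I would define $h:[n]\to\Delta_+$ by $h(0)=[-1]$ and $h(i)=x(i-1)$ for $i\ge 1$, inserting the unique map $[-1]\to x(0)$ in the first slot. Then $h(n)=[0]$; the condition $x(0)\ne[-1]$ prevents $h(0)\to h(1)$ from being an identity; and non-degeneracy of $x$ supplies non-degeneracy of the remaining chain, so $h\in H_n$ and $d_0(h)=x$. The recipe for $h$ is forced by the requirements, so $h$ is the unique such element. Every horn index equals $0$, which satisfies $0\le i(h)<n$, so $(X_n)$ together with $(H_n)$ exhibits $\Ner(\Delta_+)$ as left anodyne free. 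The only conceptually non-trivial point is the unique join decomposition in the freeness argument; once that is in hand, the horn verification is direct bookkeeping using the definitions of $d_0$ and non-degeneracy.
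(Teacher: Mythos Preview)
Your proof is correct and follows essentially the same route as the paper: the unique join decomposition via fibres over $f(n)$ is exactly the paper's argument for freeness, and the horn generators $H_n$ with index $0$ are identified and verified in the same way. You give a bit more detail than the paper (which simply asserts that $d_0$ is a bijection $H_{n+1}\to X_n^\circ\setminus H_n$), in particular spelling out the inverse $x\mapsto h$ explicitly; the only step you leave implicit is that $d_0(h)$ is itself non-degenerate, but this is immediate from your characterisation of non-degeneracy as having no identity arrows in the chain.
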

\begin{proof}  First we prove that $\Ner(\Delta_+)_n$ is a free monoid generated by functors $f:[n]\to \Delta_+$ such that $f(n)=[0]$. 
For $n=0$ we have $\Ner(\Delta_+)_0=\NN,$ where $\NN$ is the monoid of natural numbers, and the statement is obvious. Assume that $n\geq 1.$ Then a functor $f:[n]\to \Delta_+$ is defined by a sequence of ordinals and monotone maps 
\begin{equation}
f(0) \to f(1) \to \dots \to f(n).
\end{equation}
Now we just take all one-point subsets of the last set $f(n)$ and decompose $f$ into their preimages. For each $k\in f(n)$ and $0\leq i\leq n,$  we denote by $A_{i,k}\subseteq f(i)$ the preimage of $k$ with respect to the map $f(i)\to f(n).$ 
Then $f(i)=\coprod_{k\in f(n)} A_{i,k}$ and a map $f(i)\to f(i+1)$ sends $A_{i,k}$ to $A_{i+1,k}.$ 
Therefore $f=\bigstar_{k\in f(n)} f_{k},$ where $f_k(i)\cong A_{i,k},$ and it is easy to see that this decomposition is unique. Hence, the functors $f:[n]\to \Delta_+$ such that $f(n)=[0]$ form a basis of $\Ner(\Delta_+)$. Let us denote it by $(X_n).$ 

The non-degenerate part of the basis $X_n^\circ$ is given by non-degenerate functors $f:[n]\to \Delta_+$ such that $f(n)=[0].$ Consider a subset $H_n\subseteq X_n^\circ$ consisting of functors $f$ such that $f(0)=[-1].$ Note that $H_0=\emptyset$ and for $n\geq 1$ and $f\in H_n$, since $f$ is non-degenerate, we have $f(1)\neq [-1].$ Using this, it is easy to check that $d_0$ defines a bijection $H_{n+1} \to X_n^\circ\setminus H_n.$ Therefore, $H_n$ is a collection of horn generators  with all horn indices equal to $0$.   
\end{proof}

Let $\EE$ be a strict monoidal $\infty$-category. An $\infty$-monoid in $\EE$ is defined as a morphism of simplicial monoids
\begin{equation}\label{eq:def:mon}
\MM : \Ner(\Delta_+) \longrightarrow \EE.
\end{equation}
The underlying object is denoted by $\MM_0=\MM([0]).$
The simplicial set of $\infty$-monoids in $\EE$ is defined as the simplicial mapping space in the category of simplicial monoids 
\begin{equation}
\mon_\infty(\EE) = \Map( \Ner(\Delta_+), \EE).
\end{equation}
Since $\Ner(\Delta_+)$ is a free simplicial monoid, 
Proposition   \ref{prop:hom_from_free_monoid1} implies that $\mon_\infty(\EE)$ is an $\infty$-category.

Since $\Ner(\Delta_+)_0=\mathbb{N}$ is the monoid of natural numbers, we have a map of simplicial monoids 
\begin{equation}
\mathbb{N} \longrightarrow \Ner(\Delta_+),   
\end{equation}
which is the identity on $0$-simplices, where $\mathbb{N}$ is treated as a constant simplicial monoid. Since $\Map(\mathbb{N},\EE)\cong \EE,$  we obtain a functor  
$\FF_\EE : \mon_\infty(\EE) \to \EE$ that we call the  forgetful functor. For an object $x$ of $\EE$, we define the $\infty$-category of $\infty$-monoid structures on $x$ as  the fiber  
\begin{equation}
\begin{tikzcd}
\mon_\infty(\EE,x) \ar[r] \ar[d] & \mon_\infty(\EE) \ar[d]\\
\Delta^0\ar[r,"x"] & \EE.
\end{tikzcd}
\end{equation}

\begin{proposition}\label{prop:structures_of_monads_is_Kan} For a strict monoidal $\infty$-category $\EE,$ the forgetful functor 
\begin{equation}\label{eq:forg_funct}
\FF_\EE : \mon_\infty(\EE) \longrightarrow \EE
\end{equation}
is a conservative isofibration of $\infty$-categories. 
In particular, for any object $x$ of $\EE$, the simplicial set $\mon_\infty(\EE,x)$ is a Kan complex.     
\end{proposition}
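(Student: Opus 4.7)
The plan is to realise $\FF_\EE$ as a pre-composition map and invoke Proposition \ref{prop:hom_from_free_monoid1}. Under the identification $\Map(\NN,\EE)\cong \EE$, the forgetful functor $\FF_\EE$ is by construction $\alpha^*$ for the inclusion $\alpha:\NN\hookrightarrow \Ner(\Delta_+)$ of simplicial monoids that is the identity on $0$-simplices. So it suffices to verify that $\alpha$ fits the hypotheses of part (b) of that proposition: it is a free map between free simplicial monoids, and an isomorphism on $0$-simplices (which is immediate).

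That $\NN$ is free, with the constant singleton basis $\{1\}$, is clear; that $\Ner(\Delta_+)$ is free is Lemma \ref{lemma:Delta_is_free}, with basis $Y_n$ given by the functors $f:[n]\to\Delta_+$ satisfying $f(n)=[0]$. The main technical step is to extract a complementary basis for $\alpha$. The image $\alpha(\NN_n)$ is the free submonoid generated by the constant-at-$[0]$ functor $c_n\in Y_n$; decomposing the free monoid on $Y_n$ as the coproduct (in monoids) of the free monoids on $\{c_n\}$ and on $X_n := Y_n\setminus\{c_n\}$ yields $\Ner(\Delta_+)_n = \alpha(\NN_n)\sqcup \langle X_n\rangle$. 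The family $(X_n)_n$ is closed under degeneracies because a degeneracy of a functor that is not constant at $[0]$ is still not constant at $[0]$. Hence $\alpha$ is a free map. Parts (a) and (b) of Proposition \ref{prop:hom_from_free_monoid1} then deliver that $\mon_\infty(\EE)$ is an $\infty$-category and that $\FF_\EE$ is a conservative isofibration.

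For the fiber statement, both conservativity and the isofibration property are stable under pullback, so the induced map $\mon_\infty(\EE,x)\to\Delta^0$ is a conservative isofibration. Every morphism of $\Delta^0$ is an equivalence, so by conservativity every morphism of $\mon_\infty(\EE,x)$ is an equivalence, and an $\infty$-category with this property is a Kan complex. The only substantive point is the explicit identification of a basis complementary to $\alpha$; everything else is a formal application of the machinery developed in Section \ref{section:simplicial_monoids}.
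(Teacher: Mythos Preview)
Your proof is correct and follows the same approach as the paper: both identify $\FF_\EE$ as the pre-composition map along $\NN\hookrightarrow\Ner(\Delta_+)$ and appeal to Proposition~\ref{prop:hom_from_free_monoid1}(b), then use pullback stability of conservative isofibrations for the fiber statement. The paper simply asserts that Lemma~\ref{lemma:Delta_is_free} implies $\NN\to\Ner(\Delta_+)$ is a free map, whereas you spell out the complementary basis $X_n=Y_n\setminus\{c_n\}$ explicitly; this is a welcome clarification rather than a different argument.
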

\begin{proof} Lemma \ref{lemma:Delta_is_free} implies that  $\NN \to \Ner(\Delta_+)$ is a free map  inducing an isomorphism on $0$-simplices. Hence Proposition  \ref{prop:hom_from_free_monoid1} implies that the forgetful functor 
$\mon_\infty(\EE) \longrightarrow \EE$
is a conservative isofibration.
Since conservative isofibrations are stable under pullbacks, we obtain that  $\mon_\infty(\EE,x)\to \Delta^0$ is also a conservative isofibration. Therefore $\mon_\infty(\EE,x)$ is a Kan complex. 
\end{proof}

The following theorem is a basis of what follows. It ensures the uniqueness of the $\infty$-monad structure on the codensity $\infty$-monad.

\begin{theorem}\label{th:Structures_of_monad_on_terminal} Assume that $\EE$ is a strict $\infty$-category with a terminal object $t$. Then any structure of an  $\infty$-monoid on $t$ defines a terminal object of $\mon_\infty(\EE)$ and the Kan complex of $\infty$-monoid structures   $\mon_\infty(\EE,t)$ is (non-empty and) contractible. 
\end{theorem}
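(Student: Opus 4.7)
The strategy is to apply Lemma \ref{lemma:contractible_fiber} in two different settings, one for each claim.

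For contractibility of $\mon_\infty(\EE,t)$, observe that by definition this is the fiber over the map $\varphi \colon \NN \to \EE$ picking out $t$ of the pre-composition map $\Map(\Ner(\Delta_+),\EE) \to \Map(\NN,\EE) = \EE$, induced by the inclusion $\NN \hookrightarrow \Ner(\Delta_+)$. As noted in the proof of Proposition \ref{prop:structures_of_monads_is_Kan}, this inclusion is a free map inducing an isomorphism on $0$-simplices; using the basis of $\Ner(\Delta_+)$ from Lemma \ref{lemma:Delta_is_free}, the basis $(X_k)$ of the free map at level $k$ consists of functors $f \colon [k] \to \Delta_+$ with $f(k) = [0]$, excluding the constant functor at $[0]$ (which belongs to the image of $\NN$). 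For any such $x = f$, the iterated $0$-face gives $d_0^k(x) = f(k) = [0]$, i.e., the generator of $\NN$, so $\varphi_0(d_0^k(x)) = t$; Lemma \ref{lemma:contractible_fiber} then yields contractibility (hence non-emptiness) of $\mon_\infty(\EE,t)$.

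Fix any $\MM \in \mon_\infty(\EE,t)$. To show $\MM$ is terminal in $\mon_\infty(\EE)$, it suffices by the standard characterization of terminal objects to show that for every $m \geq 1$ and every $\sigma \colon \partial \Delta^m \to \mon_\infty(\EE)$ with $\sigma|_{\{m\}} = \MM$, an extension to $\Delta^m$ exists. By the adjunction \eqref{eq:isom:map-map}, such a $\sigma$ corresponds to a map of simplicial monoids $\partial \Delta^m \otimes \Ner(\Delta_+) \to \EE$, and we seek an extension to $\Delta^m \otimes \Ner(\Delta_+)$. We apply Lemma \ref{lemma:contractible_fiber} with $A = \Delta^m \otimes \Ner(\Delta_+)$ and $B = \partial \Delta^m \otimes \Ner(\Delta_+)$: since $(S \otimes \Ner(\Delta_+))_k$ is the free monoid on $S_k \times Y_k$ (with $Y_k$ the basis of $\Ner(\Delta_+)_k$ from Lemma \ref{lemma:Delta_is_free}), both $A$ and $B$ are free, and $B_0 = A_0 = \coprod_{i=0}^m \NN$ because $\partial \Delta^m$ and $\Delta^m$ share the same vertices. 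The basis of the inclusion $B \hookrightarrow A$ at level $k$ is $X_k = ((\Delta^m)_k \setminus (\partial \Delta^m)_k) \times Y_k$, where the non-boundary $k$-simplices of $\Delta^m$ are precisely the surjective monotone maps $s \colon [k] \twoheadrightarrow [m]$. For $x = (s,f) \in X_k$, surjectivity and monotonicity force $s(k) = m$, so $d_0^k(x) = (s(k), f(k)) = (m, [0])$ is the generator of the $m$-th copy of $\NN$ in $B_0$. The hypothesis $\sigma|_{\{m\} \otimes \Ner(\Delta_+)} = \MM$ together with $\MM_0 = t$ ensures $\sigma_0(d_0^k(x)) = t$. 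Hence Lemma \ref{lemma:contractible_fiber} applies, the fiber of $\Map(A,\EE) \to \Map(B,\EE)$ over $\sigma$ is contractible, and in particular the required extension exists.

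The main bookkeeping lies in the second part: identifying the basis of the free map between the tensor products, and observing that the iterated $0$-face $d_0^k$ always lands at the distinguished vertex $m$, precisely where the constraint $\sigma|_{\{m\}} = \MM$ forces the value $t$. The rest is a routine application of the fiber contractibility lemma already established for free maps targeting simplicial monoids of strict monoidal $\infty$-categories.
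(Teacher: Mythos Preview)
Your proof is correct and follows essentially the same approach as the paper. The first part is identical. For terminality, the paper uses the mapping-space characterization (showing the fiber of $\Map(\Delta^1\otimes \Ner(\Delta_+),\EE)\to \Map(\partial\Delta^1\otimes \Ner(\Delta_+),\EE)$ over $(\MM,T)$ is contractible), whereas you use the equivalent boundary-extension characterization for all $m\geq 1$; in both cases the verification reduces to Lemma~\ref{lemma:contractible_fiber} via the same computation $d_0^k(s,f)=(m,[0])$, so the difference is purely cosmetic.
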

\begin{proof}  
The forgetful functor $\mon_\infty(\EE)\to \EE$ is induced by the free map $\alpha:\NN\to \Ner(\Delta_+)$. By Lemma \ref{lemma:Delta_is_free}, the sets $X_n=\{f:[n]\to \Delta_+\mid f(n)=[0], f\not\equiv [0] \}$  form a basis of $\alpha$.  Denote by $\varphi:\NN\to \EE$ the map of simplicial monoids  defined by $\varphi(1)=t,$ the terminal object. Here we identify $1=[0]$. Therefore, for any $f\in X_n,$ we have  $d^n_0(f)=[0]$ and $\varphi(d^n_0(f))=t$. Hence  Lemma  \ref{lemma:contractible_fiber} 
implies that $\mon_\infty(\EE,t)$ is contractible. In particular, $\mon_\infty(\EE,t)$ is non-empty. 

Now assume that $T$ is an object of  $\mon_\infty(\EE,t)$, we prove that $T$ is a terminal object of $\mon_\infty(\EE)$. Take an object $\MM$ of $\mon_\infty(\EE)$ we prove that the mapping space  $\map_{\mon_\infty(\EE)}(\MM,T)$ is contractible. The mapping space is defined as the fiber of the map 
$\mon_\infty(\EE)^{\Delta^1} \to \mon_\infty(\EE)^2$
over $(\MM,T)$. Since we have 
\begin{equation}
\mon_\infty(\EE)^{\Delta^1} \cong \Map(\Delta^1\otimes \Ner(\Delta_+),\EE),     
\end{equation}
and 
\begin{equation}
 \mon_\infty(\EE)^2 \cong \Map( \partial \Delta^1\otimes \Ner(\Delta_+) ,\EE),   
\end{equation}
we need to prove that the fiber of the map 
\begin{equation}
\Map(\Delta^1\otimes \Ner(\Delta_+),\EE) \to \Map(\partial \Delta^1\otimes \Ner(\Delta_+),\EE)
\end{equation}
over $(\MM,T)$ is contractible. Therefore, it is sufficient to check the assumptions of Lemma  \ref{lemma:contractible_fiber}. 

The $n$-th component of the basis of $\Delta^1\otimes \Ner(\Delta_+)$ consists of pairs $(\alpha,f),$ where $\alpha:[n]\to [1]$ and $f\in X_n.$ The basis of $\partial \Delta^1\otimes \Ner(\Delta_+)$ consists of such pairs $(\alpha,f),$ where $\alpha \equiv 0$ or $\alpha\equiv 1.$ Therefore, the basis of the inclusion $\partial\Delta^1\otimes \Ner(\Delta_+) \hookrightarrow \Delta^1\otimes \Ner(\Delta_+)$ is defined by pairs $(\alpha,f)$ such that $\alpha:[n]\to [1]$ is surjective. It follows that $d_0^n(\alpha,f)=(1,[0]).$ Therefore $(\MM,T)(d_0^n(\alpha,f))=T([0])=t$ and the conditions of Lemma  \ref{lemma:contractible_fiber} are satisfied.   
\end{proof}

\begin{proposition}\label{prop:monoids_left_fibr}
Let $\varphi:\EE\to \EE'$ be a morphism of strict monoidal $\infty$-categories such that $U\varphi$ is a left fibration. Then the induced map 
\begin{equation}
\mon_\infty(\EE) \longrightarrow \mon_\infty(\EE')
\end{equation}
is a trivial fibration. 
\end{proposition}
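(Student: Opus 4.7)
The plan is to reduce this to a direct application of Corollary \ref{cor:left_anodyne_free_monoid} together with the structural result Lemma \ref{lemma:Delta_is_free}. Unwinding the definition, the induced map is precisely the post-composition map
\begin{equation*}
\varphi_* : \Map(\Ner(\Delta_+), \EE) \longrightarrow \Map(\Ner(\Delta_+), \EE'),
\end{equation*}
since $\mon_\infty(\EE) = \Map(\Ner(\Delta_+), \EE)$ by definition, and similarly for $\EE'$.

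By Lemma \ref{lemma:Delta_is_free}, the simplicial monoid $\Ner(\Delta_+)$ is left anodyne free, with an explicit basis consisting of functors $f:[n]\to \Delta_+$ with $f(n)=[0]$ and horn generators those $f$ that are non-degenerate and satisfy $f(0)=[-1]$, all with horn index $0$. Thus $A = \Ner(\Delta_+)$ satisfies the hypothesis of Corollary \ref{cor:left_anodyne_free_monoid}. Since by assumption $U\varphi$ is a left fibration of simplicial sets, the map $\varphi : \EE \to \EE'$ of simplicial monoids meets the remaining hypothesis of that corollary. Applying the corollary with $M=\EE$ and $N=\EE'$ directly yields that $\varphi_*$ is a trivial fibration of simplicial sets, which is the desired conclusion.

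There is essentially no obstacle here beyond recognizing that the statement is a specialization of a lifting property already established: the heavy lifting was done in Proposition \ref{prop:pullback_power} (from which Corollary \ref{cor:left_anodyne_free_monoid} follows by taking $\alpha : \1 \to A$) and in the combinatorial identification of horn generators of $\Ner(\Delta_+)$ carried out in Lemma \ref{lemma:Delta_is_free}. The only thing the proof needs to articulate is the matching of hypotheses and the identification of the forgetful map on $\mon_\infty$ with the functorial pushforward along $\Ner(\Delta_+)$.
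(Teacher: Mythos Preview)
The proposal is correct and follows essentially the same approach as the paper: both invoke Lemma \ref{lemma:Delta_is_free} to see that $\Ner(\Delta_+)$ is left anodyne free, and then apply Corollary \ref{cor:left_anodyne_free_monoid} to conclude that the post-composition map is a trivial fibration. Your version is simply more explicit about unwinding the definition of $\mon_\infty$ and about where the underlying lifting property comes from.
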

\begin{proof} By Lemma \ref{lemma:Delta_is_free} we know that $\Ner(\Delta_+)$ is a left anodyne simplicial monoid. Then  Corollary \ref{cor:left_anodyne_free_monoid} implies that $\Map(\Ner(\Delta_+), \EE) \to \Map(\Ner(\Delta_+),\EE')$ is a trivial fibration. The statement follows. 
\end{proof}

\subsection{Coaugmented objects} 

A coaugmentation of an object $x$ in a strict monoidal $\infty$-category $\EE$ is a morphism $\1  \to x$ from the unit. Each $\infty$-monoid has a natural  coaugmentation (the unit map). In this subsection we study the forgetful functor from the $\infty$-category of $\infty$-monoids to the $\infty$-category of coaugmented objects. 

Let $\EE$ be a strict monoidal $\infty$-category with the unit $\1$. The fat slice $\infty$-category $\EE^{\1/}$ is called the $\infty$-category of coaugmented objects. It can be defined as a pointed cotensoring with $\Delta^1$
\begin{equation}
\EE^{\1/} = \map_*(\Delta^1,\EE),
\end{equation}
where $\Delta^1$ is treated as a pointed simplicial set with the basepoint $0.$ In particular, we see that $\EE^{\1/}$ is also a strict monoidal $\infty$-category. It turns  out that the $\infty$-category of $\infty$-monoids in $\EE^{\1/}$ is equivalent to the $\infty$-category of $\infty$-monoids in $\EE.$ 

\begin{proposition}\label{prop:monoids_of_coaug}
The map 
\begin{equation}
\mon_\infty(\EE^{\1/}) \longrightarrow \mon_\infty(\EE)
\end{equation}
induced by the map  $\EE^{\1/}\to \EE$ 
is a trivial fibration.
\end{proposition}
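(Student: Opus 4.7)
The plan is to verify that $\EE^{\1/}\to\EE$ satisfies the hypotheses of Proposition \ref{prop:monoids_left_fibr}---that it is a morphism of strict monoidal $\infty$-categories whose underlying map of simplicial sets is a left fibration---after which the conclusion is immediate. To make the map explicit, I would observe that the two inclusions $\Delta^0\rightrightarrows \Delta^1$ induce morphisms of simplicial monoids $\mathsf{ev}_0,\mathsf{ev}_1 : \EE^{\Delta^1}\to\EE$ via the cotensoring, and that, by the definition of the pointed cotensor recalled in Subsection \ref{subsec:simplicial_model_structure},
\begin{equation*}
\EE^{\1/} \;=\; \map_*(\Delta^1,\EE) \;=\; \mathsf{Ker}\bigl(\mathsf{ev}_0:\EE^{\Delta^1}\to\EE\bigr)
\end{equation*}
with $\Delta^1$ pointed at $0$. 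Thus $\EE^{\1/}$ is a simplicial submonoid of $\EE^{\Delta^1}$, and the restriction of $\mathsf{ev}_1$ yields a morphism of simplicial monoids $\mathsf{ev}_1:\EE^{\1/}\to\EE$ sending on $0$-simplices a coaugmentation $\1\to x$ to $x$; this is the map appearing in the statement.

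Next I would check that $U\mathsf{ev}_1$ is a left fibration. Since the forgetful functor $U$ preserves pointed cotensors by \eqref{eq:adjunction_pointed}, $U\EE^{\1/}$ fits into the pullback
\begin{equation*}
\begin{tikzcd}
U\EE^{\1/} \ar[r]\ar[d] & U\EE^{\Delta^1} \ar[d,"\mathsf{ev}_0"] \\
\Delta^0 \ar[r,"\1"] & U\EE,
\end{tikzcd}
\end{equation*}
which is precisely the fat slice of the $\infty$-category $U\EE$ over the object $\1$ in the sense of \cite[\S 2.5]{land2021introduction}, and the underlying map of $\mathsf{ev}_1$ is the canonical projection from this fat slice. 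By the standard result cited there, this projection is a left fibration; in particular $U\EE^{\1/}$ is an $\infty$-category, so $\EE^{\1/}$ is a strict monoidal $\infty$-category, consistent with the assertion made just before the proposition.

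With both hypotheses checked, Proposition \ref{prop:monoids_left_fibr} applied to $\mathsf{ev}_1:\EE^{\1/}\to\EE$ immediately yields that $\mon_\infty(\EE^{\1/})\to\mon_\infty(\EE)$ is a trivial fibration. The only conceptual step is identifying $U\EE^{\1/}$ with the fat slice so that the standard left-fibration result applies; beyond this the argument is a direct appeal to the earlier proposition, and I do not anticipate any further obstacle.
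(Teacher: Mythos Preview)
Your proof is correct and follows exactly the same approach as the paper: apply Proposition \ref{prop:monoids_left_fibr} after noting that $\EE^{\1/}\to\EE$ is a morphism of strict monoidal $\infty$-categories whose underlying map is the fat-slice projection, hence a left fibration. The paper's proof is a one-line citation of Proposition \ref{prop:monoids_left_fibr} together with \cite[Lemma 2.5.24]{land2021introduction}; you have simply unpacked the verification that the hypotheses are met.
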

\begin{proof}
It follows from Proposition \ref{prop:monoids_left_fibr} and the fact that $\EE^{\1/}\to \EE$ is a left fibration \cite[Lemma 2.5.24]{land2021introduction}. 
\end{proof}

We define the trivial $\infty$-monoid in $\EE$ as a constant map $\tilde \1:\Ner(\Delta_+)\to \EE$ defined by $\1$. 

\begin{corollary}\label{cor:trivial_monoid_is_initial}
The trivial monoid $\tilde \1$ is the initial object of $\mon_\infty(\EE)$. 
\end{corollary}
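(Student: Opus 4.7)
The plan is to exhibit the universal property of an initial object for $\tilde{\1}$ by identifying the fat slice under $\tilde{\1}$ with $\mon_\infty(\EE^{\1/})$, so that the slice projection becomes the forgetful map of Proposition \ref{prop:monoids_of_coaug}, which is already known to be a trivial fibration.

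First, I would note that the basepoint of $\mon_\infty(\EE) = \Map(\Ner(\Delta_+),\EE)$ in the pointed enrichment of $\sMon$ is the constant simplicial monoid map at $\1$, which is exactly $\tilde{\1}$. Viewing $\Delta^1$ as pointed at $0$, the fat slice can therefore be written as $\mon_\infty(\EE)^{\tilde{\1}/} = \map_*(\Delta^1, \mon_\infty(\EE))$. Applying the pointed adjunction \eqref{eq:adjunction_pointed} together with the defining identity $\EE^{\1/} = \map_*(\Delta^1,\EE)$ then gives a natural isomorphism
\[
\mon_\infty(\EE)^{\tilde{\1}/} \;\cong\; \Map(\Ner(\Delta_+), \map_*(\Delta^1,\EE)) \;=\; \mon_\infty(\EE^{\1/}),
\]
under which the slice projection (evaluation at $1 \in \Delta^1$) corresponds to the forgetful map $\mon_\infty(\EE^{\1/}) \to \mon_\infty(\EE)$ from Proposition \ref{prop:monoids_of_coaug}.

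With this identification in hand, the conclusion is immediate. By Proposition \ref{prop:monoids_of_coaug}, the slice projection is a trivial fibration and therefore has contractible fibers. The fiber over an $\infty$-monoid $\MM$ is by definition $\map_{\mon_\infty(\EE)}(\tilde{\1},\MM)$, so this mapping space is contractible for every $\MM$, which is the statement that $\tilde{\1}$ is initial in $\mon_\infty(\EE)$.

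The only obstacle, which is not conceptually deep but deserves care, is verifying that the chain of isomorphisms above respects both the basepoints and the evaluation-at-$1$ projections, so that the fat slice projection truly coincides with the forgetful map of Proposition \ref{prop:monoids_of_coaug}. This amounts to a direct unravelling of the definitions of the pointed enrichment of $\sMon$ and of the adjunction \eqref{eq:adjunction_pointed}, and once done the statement follows.
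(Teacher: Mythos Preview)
Your proof is correct and is essentially identical to the paper's own argument: both identify $\mon_\infty(\EE)^{\tilde\1/}$ with $\mon_\infty(\EE^{\1/})$ via the pointed adjunction \eqref{eq:adjunction_pointed} and then invoke Proposition \ref{prop:monoids_of_coaug} to conclude that the slice projection is a trivial fibration. The paper is terser and simply uses the characterization of initial objects via trivial fibrations of the slice, whereas you spell out the contractible-mapping-space consequence and flag the basepoint/projection compatibility, but the substance is the same.
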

\begin{proof}
We need to prove that the map $\mon_\infty(\EE)^{\1/} \to \mon_\infty(\EE)$ is a trivial fibration. Using the isomorphisms   \eqref{eq:adjunction_pointed}, we obtain  
\begin{align}
\mon_\infty(\EE)^{\tilde \1/}&= \map_*(\Delta^1,\Map(\Ner(\Delta_+),\EE))  \\
& \cong \Map(\Ner(\Delta_+),\map_*(\Delta^1,\EE)) \\
& = \mon_\infty(\EE^{\1/}).
\end{align}
Then the result follows from Proposition \ref{prop:monoids_of_coaug}. 
\end{proof}

We denote by $\Delta_+^{\sf inj}$ the wide subcategory of $\Delta_+$ whose morphisms are injective morphisms of $\Delta_+.$  Then $\Delta_+^{\sf inj}$ is a strict monoidal subcategory of $\Delta_+.$ Therefore its nerve is a simplicial submonoid of $\Ner(\Delta_+)$
\begin{equation}
\Ner(\Delta_+^{\sf inj}) \subseteq \Ner(\Delta_+).
\end{equation}

\begin{lemma}\label{lemma:Delta^inj}
The simplicial monoid $\Ner(\Delta^{\sf inj}_+)$ is free with the basis consisting of functors $f_{k,n}:[n] \to \Delta^{\sf inj}_+, 0\leq k\leq n$ defined by the  formula 
\begin{equation}\label{eq:f_{k,n}}
f_{k,n}(i) = \begin{cases}
[-1], & 0\leq i< k,\\
[0], &  k\leq i\leq n.
\end{cases}
\end{equation}
Moreover, the map $\Delta^1 \to \Ner(\Delta^{\sf inj}_+)$ corresponding to the $1$-simplex $f_{1,1}$ defines an isomorphism of simplicial monoids 
\begin{equation} \label{eq:F(Delta^1)}
\Delta^1 \sm \NN  \cong \Ner(\Delta^{\sf inj}_+),
\end{equation}
where $\Delta^1$ is treated as a pointed simplicial with the basepoint $0$. 
\end{lemma}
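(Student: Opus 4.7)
The plan is to establish the moreover clause (the isomorphism $\Delta^1 \sm \NN \cong \Ner(\Delta_+^{\sf inj})$) directly, and to deduce the first statement as an immediate consequence. The starting point is a general observation: for any pointed simplicial set $S$, the simplicial monoid $S \sm \NN$ is free, with basis at level $n$ given by the non-basepoint $n$-simplices of $S$ (which are closed under degeneracies). This follows at once from the identification $T \sm \NN \cong (T \setminus \{t_0\}) \otimes \NN$ for pointed sets. Applied to $S = \Delta^1$ pointed at $0$, the basis of $(\Delta^1 \sm \NN)_n$ consists of the monotone maps $\alpha_k \colon [n] \to [1]$ sending $i \mapsto 0$ for $i < k$ and $i \mapsto 1$ for $i \ge k$, for $0 \le k \le n$ (the constant map $\alpha_{n+1} \equiv 0$ is the basepoint).

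Next, I would construct the map $\phi \colon \Delta^1 \sm \NN \to \Ner(\Delta_+^{\sf inj})$ in question. Since $f_{1,1}(0) = [-1]$ is the unit of $\Ner(\Delta_+^{\sf inj})$, the simplicial map $\Delta^1 \to U\Ner(\Delta_+^{\sf inj})$ corresponding to the $1$-simplex $f_{1,1}$ is pointed, so the adjunction $\Map(S \sm \NN, M) \cong \map_*(S, UM)$ from \eqref{eq:adjunction_pointed} yields a simplicial monoid map $\phi$. By naturality, $\phi$ sends $\alpha_k \in (\Delta^1)_n$ to $\alpha_k^*(f_{1,1}) = f_{1,1} \circ \alpha_k$, and unravelling the definitions identifies this composition with $f_{k,n}$.

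The crux is then to show that $\phi_n$ is a bijection of monoids for every $n$. Since $(\Delta^1 \sm \NN)_n$ is freely generated by $\{\alpha_0, \dots, \alpha_n\}$, this reduces to showing every $n$-simplex $f = ([m_0] \hookrightarrow \cdots \hookrightarrow [m_n])$ of $\Ner(\Delta_+^{\sf inj})$ admits a unique factorization as a $\star$-product of $f_{k,n}$'s. For existence, I decompose $f$ along the threads of its terminal ordinal: for each $j \in [m_n]$ in the natural order, since the maps $f(i) \to f(n)$ are injective and compose compatibly, the set $\{i : j \in \mathrm{Im}(f(i) \to f(n))\}$ is upward-closed in $[n]$ with a minimum $k_j$. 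Setting $f^{(j)} = f_{k_j, n}$, I claim $f = f^{(0)} \star \cdots \star f^{(m_n)}$: at each level $i$ this reproduces $[m_i] = \bigstar_{j : k_j \le i} [0]$, and the transition $f(i) \hookrightarrow f(i+1)$ is the componentwise join of identities on $[0]$ (for existing threads) and inclusions $[-1] \hookrightarrow [0]$ (for threads born at time $i+1$). Uniqueness is automatic, since $(k_j)$ can be recovered from any such factorization by the same trajectory reading.

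Once $\phi$ is established as an isomorphism, both assertions of the lemma follow: $\Ner(\Delta_+^{\sf inj})$ is free with basis $\{f_{k,n}\}$ (closure under degeneracies being inherited via $\phi$ from $\Delta^1 \sm \NN$), and $\phi$ itself is the required isomorphism from the moreover clause. The main obstacle is verifying that the factorization $f = \bigstar_j f^{(j)}$ reproduces not only the ordinals $[m_i]$ but also the injective monotone transition maps between them; this hinges on the observation that the natural order on $[m_n]$ induces, via the monotonicity of the transition maps, a compatible ordering of the threads at every earlier level, so that each thread with $k_j = i+1$ contributes precisely the new element introduced by $f(i) \hookrightarrow f(i+1)$ in the correct position.
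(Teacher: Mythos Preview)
Your proof is correct and follows essentially the same idea as the paper, though organized in reverse order. The paper first obtains the basis description of $\Ner(\Delta^{\sf inj}_+)$ by invoking Lemma~\ref{lemma:Delta_is_free} (the basis of $\Ner(\Delta_+)$ consists of functors with $f(n)=[0]$; in the injective subcategory these are forced to be the $f_{k,n}$), then constructs the map $\Delta^1\sm\NN\to\Ner(\Delta^{\sf inj}_+)$ via the adjunction and notes that it bijects the two bases. You instead construct the map first and verify directly that it is an isomorphism via the thread decomposition, deducing the basis description afterward. Your thread argument is exactly the specialization to $\Delta^{\sf inj}_+$ of the decomposition used in the proof of Lemma~\ref{lemma:Delta_is_free}, so the mathematical content is the same; your version is more self-contained but slightly less economical since it does not reuse the earlier lemma.
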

\begin{proof}
The description of the basis of $\Ner(\Delta^{\sf inj}_+)$ follows from the description of the basis of $\Ner(\Delta_+)$ (Lemma \ref{lemma:Delta_is_free}). Since $d_1(f_{1,1})=0,$ the $1$-simplex $f_{1,1}$ defines a morphism of pointed simplicial sets $\Delta^1 \to U\Ner(\Delta^{\sf inj}_+)$. By adjunction, we obtain a morphism of simplicial monoids $\Delta^1\sm \NN \to \Ner(\Delta^{\sf inj}_+).$ It is easy to check that this morphism defines a bijection on the bases.  
\end{proof}

\begin{lemma}\label{lemma:coaug-Delta^inj}
The isomorphism \eqref{eq:F(Delta^1)} induces an isomorphism  
\begin{equation}
  \EE^{\1/} \cong \Map(\Ner(\Delta^{\sf inj}_+),\EE). 
\end{equation}
\end{lemma}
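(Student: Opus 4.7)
The plan is to deduce the claimed isomorphism almost immediately from Lemma \ref{lemma:Delta^inj} combined with the pointed adjunction formula $\Map(S \sm \NN, M) \cong \map_*(S, UM)$ stated just after \eqref{eq:adjunction_pointed}. There is essentially no obstacle: the whole content is bookkeeping of the adjunctions already set up in Section \ref{section:simplicial_monoids}.

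First, I would apply Lemma \ref{lemma:Delta^inj} to rewrite
\begin{equation*}
\Map(\Ner(\Delta^{\sf inj}_+),\EE) \cong \Map(\Delta^1 \sm \NN, \EE),
\end{equation*}
where $\Delta^1$ carries the basepoint $0$. Then I would invoke the special case $\Map(S \sm \NN, M) \cong \map_*(S, UM)$ of the adjunction \eqref{eq:adjunction_pointed} with $S=\Delta^1$ and $M = \EE$, which yields
\begin{equation*}
\Map(\Delta^1 \sm \NN, \EE) \cong \map_*(\Delta^1, U\EE).
\end{equation*}

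Finally, by the definition $\EE^{\1/} = \map_*(\Delta^1,\EE)$ and the identity $U(\map_*(\Delta^1,\EE)) = \map_*(\Delta^1, U\EE)$ (which is precisely the equality $U\map_*(S,M) = \map_*(S,UM)$ recorded after the pointed cotensoring is introduced), the right-hand side is identified with the underlying simplicial set of $\EE^{\1/}$. Under the standing abuse of notation identifying the simplicial monoid $\EE^{\1/}$ with its underlying simplicial set, this gives the required isomorphism. Naturality in the $1$-simplex $f_{1,1}$ ensures that this isomorphism is the one induced by \eqref{eq:F(Delta^1)}.
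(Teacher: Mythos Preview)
Your proof is correct and follows exactly the same route as the paper: apply Lemma~\ref{lemma:Delta^inj} to replace $\Ner(\Delta^{\sf inj}_+)$ by $\Delta^1\sm\NN$, invoke the adjunction $\Map(\Delta^1\sm\NN,\EE)\cong\map_*(\Delta^1,U\EE)$, and identify the result with the definition $\EE^{\1/}=\map_*(\Delta^1,\EE)$. Your added remark about the abuse of notation (identifying $\EE^{\1/}$ with its underlying simplicial set) is a helpful clarification that the paper leaves implicit.
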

\begin{proof}
It follows from the isomorphisms $\EE^{\1/} \cong  \map_*(\Delta^1,\EE)$ and    $\Map(\Delta^1\sm \NN,\EE)\cong  \map_*(\Delta^1,\EE)$, and Lemma \ref{lemma:Delta^inj}.
\end{proof}

\begin{proposition}\label{prop:F'-cons-isofib}
For a strict monoidal $\infty$-category $\EE$, the inclusion  $\Ner(\Delta^{\sf inj}_+) \to \Ner(\Delta_+)$ defines a conservative isofibration 
\begin{equation}\label{eq:forg_to_coaug}
\FF'_\EE :\mon_\infty(\EE) \longrightarrow \EE^{\1/}.
\end{equation}  
\end{proposition}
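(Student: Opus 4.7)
The plan is to deduce this directly from Proposition~\ref{prop:hom_from_free_monoid1}(b), once I identify the target $\EE^{\1/}$ with a mapping space out of a free simplicial monoid. Proposition~\ref{prop:hom_from_free_monoid1}(b) says that for a free map $\alpha\colon B\to A$ between free simplicial monoids inducing an isomorphism on $0$-simplices, the precomposition map $\alpha^*\colon \Map(A,\EE)\to \Map(B,\EE)$ is a conservative isofibration. Combining this with Lemma~\ref{lemma:coaug-Delta^inj}, which rewrites the target as $\Map(\Ner(\Delta_+^{\sf inj}),\EE)\cong \EE^{\1/}$, yields the proposition. So the only thing to check is that the inclusion $\iota\colon \Ner(\Delta_+^{\sf inj}) \hookrightarrow \Ner(\Delta_+)$ satisfies the hypotheses of that proposition.

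First, both simplicial monoids are free with the bases described in Lemma~\ref{lemma:Delta_is_free} and Lemma~\ref{lemma:Delta^inj}: namely $X_n = \{f\colon [n]\to \Delta_+\mid f(n)=[0]\}$ and $X_n^{\sf inj} = \{f\colon [n]\to \Delta_+^{\sf inj}\mid f(n)=[0]\}\subseteq X_n$. On $0$-simplices, both bases reduce to the single element $[0]$, so $\Ner(\Delta_+^{\sf inj})_0 = \NN = \Ner(\Delta_+)_0$. To see that $\iota$ is a free map, I take as basis the complement
\begin{equation}
Y_n = X_n\setminus X_n^{\sf inj} = \{f\colon [n]\to \Delta_+\mid f(n)=[0],\ f\text{ does not factor through }\Delta_+^{\sf inj}\},
\end{equation}
i.e.\ functors $f$ with $f(n)=[0]$ such that at least one of the arrows $f(i)\to f(i+1)$ is non-injective. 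This $Y_n$ is closed under degeneracies: if $\sigma\colon [n+1]\twoheadrightarrow [n]$ is a degeneracy, then $(f\circ\sigma)(n+1)=f(n)=[0]$ and the sequence of arrows of $f\circ\sigma$ contains all arrows of $f$ (together with some identities), so a non-injective arrow in $f$ persists in $f\circ\sigma$. The decomposition $X_n = X_n^{\sf inj}\sqcup Y_n$ then promotes to a free decomposition $\Ner(\Delta_+)_n = \Ner(\Delta_+^{\sf inj})_n \sqcup \langle Y_n\rangle$, because both sides are free monoids, each generated by the elements of $X_n$ partitioned in the same way, so $\iota$ is indeed a free map with basis $(Y_n)$.

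Applying Proposition~\ref{prop:hom_from_free_monoid1}(b) to $\iota$ yields that $\iota^*\colon \Map(\Ner(\Delta_+),\EE)\to \Map(\Ner(\Delta_+^{\sf inj}),\EE)$ is a conservative isofibration, and composing with the isomorphism of Lemma~\ref{lemma:coaug-Delta^inj} gives the desired map $\FF'_\EE\colon \mon_\infty(\EE)\to \EE^{\1/}$. I expect the only nontrivial piece of bookkeeping to be confirming that $Y_n$ really is the basis of a free map (as opposed to a set of generators that might accidentally satisfy relations with elements of $\Ner(\Delta_+^{\sf inj})_n$). This is not really a hard point, but it is the only step where one must look past the formalism of bases and check the underlying combinatorics of $\Delta_+$ — the key observation being that the basis elements of $X_n$ themselves are indecomposable generators, so partitioning them into two disjoint sets automatically yields a free decomposition of the ambient free monoid $\Ner(\Delta_+)_n$.
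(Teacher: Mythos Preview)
Your proof is correct and follows essentially the same route as the paper: both reduce to Proposition~\ref{prop:hom_from_free_monoid1}(b) by showing that the inclusion $\Ner(\Delta_+^{\sf inj})\hookrightarrow \Ner(\Delta_+)$ is a free map between free simplicial monoids inducing an isomorphism on $0$-simplices, and then identify the target via Lemma~\ref{lemma:coaug-Delta^inj}. You give more detail than the paper (which simply cites Lemmas~\ref{lemma:Delta_is_free} and~\ref{lemma:Delta^inj} and asserts the conclusion), in particular spelling out the basis $Y_n=X_n\setminus X_n^{\sf inj}$ and checking closure under degeneracies, but the argument is the same.
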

\begin{proof}
Combining Lemmas  \ref{lemma:Delta^inj} and \ref{lemma:Delta_is_free}, we see that the map $\Ner(\Delta^{\sf inj}_+) \to \Ner(\Delta_+)$ is a free map inducing an isomorphism on monoids of $0$-simplices. Then the statement follows from Proposition \ref{prop:hom_from_free_monoid1}. 
\end{proof}

For an object $x$ of $\EE$, the mapping space $\map_\EE(\1,x)$ is a fiber of the map $\EE^{\1/}\to \EE$ over $x$. Therefore, we can restrict the forgetful functor \eqref{eq:forg_to_coaug} to a map $\mon_\infty(\EE,x)\to \map_{\EE}(\1,x)$ of Kan  complexes. For a coaugmentation $\eta: \1 \to x$, we denote by $\mon_{\infty}(\EE,x,\eta)$ the fiber of this map over $\eta$ 
\begin{equation}
\mon_\infty(\EE,x,\eta) \longrightarrow \mon_\infty(\EE,x) \longrightarrow \map_\EE(\1,x). 
\end{equation}
In object of $\mon_\infty(\EE,x,\eta)$ is called an $\infty$-monad structure on the coaugmented object of $(x,\eta)$. 

\begin{corollary}\label{cor:Kan_fib_mon_coaug}
For an object $x$ of a strict monoidal $\infty$-category $\EE,$ the forgetful map 
\begin{equation}
 \mon_\infty(\EE,x) \longrightarrow \map_{\EE}(\1,x)    
\end{equation}
is a Kan fibration of Kan complexes. In particular, for any $\eta:\1\to x$, the fiber  $\mon_\infty(\EE,x,\eta)$ is a Kan complex. 
\end{corollary}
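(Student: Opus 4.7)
The approach is to realise the forgetful map $\mon_\infty(\EE,x) \to \map_\EE(\1,x)$ as a pullback of the conservative isofibration $\FF'_\EE$ from Proposition \ref{prop:F'-cons-isofib}, and then to invoke the fact that a conservative isofibration between Kan complexes is a Kan fibration.

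First I would verify the pullback identification. The inclusions of simplicial monoids $\NN \hookrightarrow \Ner(\Delta^{\sf inj}_+) \hookrightarrow \Ner(\Delta_+)$, together with Lemma \ref{lemma:coaug-Delta^inj}, express $\FF_\EE$ as the composition $\mon_\infty(\EE) \to \EE^{\1/} \to \EE$ whose first arrow is $\FF'_\EE$. Passing to fibers over $x \in \EE$, one obtains
\begin{equation*}
\mon_\infty(\EE,x) = \mon_\infty(\EE) \times_\EE \Delta^0 \cong \mon_\infty(\EE) \times_{\EE^{\1/}} \bigl(\EE^{\1/} \times_\EE \Delta^0\bigr) \cong \mon_\infty(\EE) \times_{\EE^{\1/}} \map_\EE(\1,x),
\end{equation*}
so the map $\mon_\infty(\EE,x) \to \map_\EE(\1,x)$ is exactly the pullback of $\FF'_\EE$ along the inclusion $\map_\EE(\1,x) \hookrightarrow \EE^{\1/}$.

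Next, $\map_\EE(\1,x)$ is a Kan complex, being the fiber over $x$ of the left fibration $\EE^{\1/} \to \EE$ (this left fibration property was recalled in the proof of Proposition \ref{prop:monoids_of_coaug}). Since the class of conservative isofibrations is defined by right lifting properties against a set of maps, it is stable under pullback, and so $\mon_\infty(\EE,x) \to \map_\EE(\1,x)$ is itself a conservative isofibration. Its source is a Kan complex by Proposition \ref{prop:structures_of_monads_is_Kan}, and a conservative isofibration between Kan complexes is a Kan fibration by \cite[Prop. 2.1.20]{land2021introduction}, exactly as invoked in the proof of Lemma \ref{lemma:contractible_fiber}. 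This establishes the first claim; the ``in particular'' clause then follows, since the fiber of a Kan fibration over a vertex is a Kan complex.

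No step presents a genuine obstacle: all the content is concentrated in the pullback identification of the first paragraph, after which the result is a formal consequence of facts already established earlier in the paper.
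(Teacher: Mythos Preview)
Your proposal is correct and follows essentially the same approach as the paper: both identify the map $\mon_\infty(\EE,x)\to \map_\EE(\1,x)$ as the pullback of the conservative isofibration $\FF'_\EE$ along $\map_\EE(\1,x)\hookrightarrow \EE^{\1/}$, invoke stability of (conservative) isofibrations under pullback, and then use that an isofibration between Kan complexes is a Kan fibration. Your version spells out the pullback identification and the reasons why source and target are Kan complexes in slightly more detail than the paper, but the argument is the same.
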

\begin{proof} Note that the square 
\begin{equation}
\begin{tikzcd}
 \mon_\infty(\EE,x) \ar[r] \ar[d] & \mon_\infty(\EE) \ar[d] \\ 
\map_{\EE}(\1,x) \ar[r] & \EE^{\1/}    
\end{tikzcd}
\end{equation}
is a pullback. Hence, the statement follows from the fact that isofibrations are stable under pullbacks \cite[Cor.2.1.22]{land2021introduction} and the fact that an isofibration between Kan complexes is a Kan fibration \cite[Prop.2.1.20]{land2021introduction}. 
\end{proof}  

For an $\infty$-category $\CC$, we denote by $\term(\CC)$ the full subcategory of its terminal objects.  

\begin{lemma}\label{lemma:term(C)}
Let $\FF:\CC\to \DD$ be a conservative isofibration of $\infty$-categories, and assume that there exists a terminal object $t$ of $\CC$  such that $\FF(t)$ is a terminal object of $\DD$. Then an object of $\CC$ is terminal if and only if its image in $\DD$ is terminal and $\FF$ restricts to a trivial fibration of contractible Kan complexes
\begin{equation}
\term(\CC) \longrightarrow \term(\DD).
\end{equation}
In particular, the fiber over any terminal object of $\DD$ is a contractible Kan complex.
\end{lemma}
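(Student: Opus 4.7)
The plan is to split the lemma into two claims: (i) an object $c$ of $\CC$ is terminal if and only if $\FF(c)$ is terminal in $\DD$; and (ii) the induced map $\term(\CC)\to\term(\DD)$ is a trivial fibration of contractible Kan complexes.

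For (i), if $c$ is terminal then, since $\term(\CC)$ is a contractible $\infty$-groupoid containing $t$, there is an equivalence $t\to c$ in $\CC$; any functor preserves equivalences and terminality is invariant under equivalence, so $\FF(c)$ is terminal. Conversely, suppose $\FF(c)$ is terminal. The essentially unique morphism $\beta\colon c\to t$ existing because $t$ is terminal is sent by $\FF$ to a morphism between terminal objects of $\DD$, which is automatically an equivalence; conservativity of $\FF$ then forces $\beta$ to be an equivalence, hence $c$ is terminal.

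For (ii), the subcomplexes $\term(\CC)$ and $\term(\DD)$ are full sub-$\infty$-categories in which every morphism is an equivalence, so they are $\infty$-groupoids, i.e.\ Kan complexes; they are contractible since they are non-empty (containing $t$ and $\FF(t)$) with all mapping spaces between terminal objects contractible. I would then verify that the restriction $\term(\CC)\to\term(\DD)$ is an isofibration. Inner-horn fillers provided by $\FF$ automatically lie in the full sub-$\infty$-category $\term(\CC)$ because all their vertices are forced to be terminal, so the restriction is an inner fibration. Any equivalence $\alpha\colon \FF(c)\to d$ in $\term(\DD)$ lifts through $\FF$ to an equivalence $\tilde\alpha\colon c\to c'$ in $\CC$ with $\FF(c')=d$; since $c$ is terminal and $\tilde\alpha$ is an equivalence, $c'$ is terminal, so $\tilde\alpha$ lies in $\term(\CC)$. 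Hence the restriction is an isofibration between Kan complexes, and therefore a Kan fibration by \cite[Prop.\,2.1.20]{land2021introduction}; a Kan fibration between contractible Kan complexes is a trivial fibration.

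The final assertion about fibers is then immediate, as the fiber of a trivial fibration over any vertex of the target is a contractible Kan complex. The main technical point to pin down carefully is the verification of the isofibration condition for the restriction—specifically, that equivalence-lifts along $\FF$ cannot escape $\term(\CC)$—after which the conclusion is a formal consequence of the standard hierarchy of fibrations between $\infty$-categories recalled in \cite[\S 2]{land2021introduction}.
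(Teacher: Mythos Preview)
Your proof is correct and follows essentially the same strategy as the paper's: characterize terminal objects via conservativity, show the restriction is an isofibration, and conclude it is a trivial Kan fibration between contractible Kan complexes. The one place where the paper is slightly more economical is in establishing that $\term(\CC)\to\term(\DD)$ is an isofibration: rather than checking inner-horn filling and equivalence-lifting by hand as you do, the paper observes that part (i) yields a pullback square
\[
\begin{tikzcd}
\term(\CC)\ar[r] \ar[d]  & \CC \ar[d,"\FF"] \\
\term(\DD) \ar[r] & \DD
\end{tikzcd}
\]
and then invokes stability of isofibrations under pullback \cite[Cor.~2.1.22]{land2021introduction}.
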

\begin{proof}
Take a terminal object $s'$ of $\DD$ and any object $t'$ of $\CC$ such that $\FF(t')=s'$. Since $t$ is terminal, there is a morphism $t'\to t$. Its image $s'\to \FF(t)$ is a morphism between terminal objects, so, it is an equivalence. Since $\FF$ is conservative, $t'\to t$ is also an equivalence. Therefore, $t'$ is terminal. So a preimage of any terminal object is terminal. On the other hand, the images of equivalent objects are equivalent, and hence, the image of any terminal object of $\CC$ is a terminal object of $\DD$. It follows that the map $\term(\CC)\to \term(\DD)$ is well defined, and there is a pullback square.
\begin{equation}
\begin{tikzcd}
\term(\CC)\ar[r] \ar[d]  & \CC \ar[d,"\FF"] \\
\term(\DD) \ar[r] & \DD
\end{tikzcd}
\end{equation}
Since $\FF$ is an isofibration, the functor $\term(\CC)\to \term(\DD)$ is also an isofibration \cite[Cor.2.1.22]{land2021introduction}. An isofibration between Kan complexes is a Kan fibration. Since both of the Kan complexes are contractible, the Kan fibration is a trivial Kan fibration. 
\end{proof}

\begin{theorem}\label{th:terminal:coaug}
Let $\EE$ be a strict monoidal $\infty$-category and $(x,\eta_t)$ be a terminal object of the $\infty$-category of coaugmented objects $\EE^{\1/}$. Then any  $\infty$-monoid structure on the coaugmented object $(x,\eta_t)$ defines a terminal object of $\mon_\infty(\EE)$ and the Kan complex of  $\infty$-monoid structures  $\mon_\infty(\EE,x,\eta_t)$ is (non-empty and) contractible.   
\end{theorem}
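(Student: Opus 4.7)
The plan is to reduce the statement to Theorem \ref{th:Structures_of_monad_on_terminal} applied to the strict monoidal $\infty$-category $\EE^{\1/}$, in which $(x,\eta_t)$ is a terminal object by hypothesis, and then transfer the resulting contractibility back to $\EE$ using the trivial fibration $\mon_\infty(\EE^{\1/}) \to \mon_\infty(\EE)$ of Proposition \ref{prop:monoids_of_coaug}.

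The central bookkeeping step is to set up the commutative square
\[
\begin{tikzcd}
\mon_\infty(\EE^{\1/}) \ar[r] \ar[d, "\FF_{\EE^{\1/}}"'] & \mon_\infty(\EE) \ar[d, "\FF'_\EE"] \\
\EE^{\1/} \ar[r,equal] & \EE^{\1/},
\end{tikzcd}
\]
in which the top map is the trivial fibration from Proposition \ref{prop:monoids_of_coaug} and the right vertical map is the conservative isofibration from Proposition \ref{prop:F'-cons-isofib}. Both vertical maps read off the underlying coaugmented object of an $\infty$-monoid; once this compatibility is unwound via Lemma \ref{lemma:coaug-Delta^inj} and the inclusion $\Ner(\Delta_+^{\sf inj})\hookrightarrow \Ner(\Delta_+)$, commutativity is immediate. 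Taking fibers of the square over $(x,\eta_t)$, the induced map
\[
\mon_\infty(\EE^{\1/}, (x,\eta_t)) \longrightarrow \mon_\infty(\EE, x, \eta_t)
\]
remains a trivial fibration, since trivial fibrations are stable under pullback.

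Now apply Theorem \ref{th:Structures_of_monad_on_terminal} to the strict monoidal $\infty$-category $\EE^{\1/}$ and its terminal object $(x,\eta_t)$. This yields that the Kan complex $\mon_\infty(\EE^{\1/}, (x,\eta_t))$ is non-empty and contractible, and moreover that every $\infty$-monoid structure on $(x,\eta_t)$ inside $\EE^{\1/}$ furnishes a terminal object of $\mon_\infty(\EE^{\1/})$. Via the trivial fibration above, the target $\mon_\infty(\EE, x, \eta_t)$ is therefore non-empty and contractible as well, proving the contractibility assertion of the theorem.

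For the terminal-object statement, pick any $\infty$-monoid structure on $(x,\eta_t)$ in $\EE^{\1/}$ and push it across the trivial fibration $\mon_\infty(\EE^{\1/}) \to \mon_\infty(\EE)$ to obtain a terminal object $T$ of $\mon_\infty(\EE)$ lying over $(x,\eta_t)$. Then Lemma \ref{lemma:term(C)}, applied to the conservative isofibration $\FF'_\EE$ together with the witness $T$, asserts that an object of $\mon_\infty(\EE)$ is terminal if and only if its image in $\EE^{\1/}$ is terminal. Consequently every object of the fiber $\mon_\infty(\EE, x, \eta_t)$, and hence every $\infty$-monoid structure on the coaugmented object $(x,\eta_t)$, is a terminal object of $\mon_\infty(\EE)$. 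The main obstacle is the commutativity check for the square of forgetful functors; once that is in place, the rest is formal manipulation of fibers and trivial fibrations.
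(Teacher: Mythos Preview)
Your overall strategy---lift the terminal object to $\mon_\infty(\EE^{\1/})$ via Theorem~\ref{th:Structures_of_monad_on_terminal}, push across the trivial fibration of Proposition~\ref{prop:monoids_of_coaug}, and invoke Lemma~\ref{lemma:term(C)}---is exactly the paper's. The gap is precisely the step you flag as the ``main obstacle'': the square
\[
\begin{tikzcd}
\mon_\infty(\EE^{\1/}) \ar[r,"\mon_\infty(\varphi_\EE)"] \ar[d, "\FF_{\EE^{\1/}}"'] & \mon_\infty(\EE) \ar[d, "\FF'_\EE"] \\
\EE^{\1/} \ar[r,equal] & \EE^{\1/}
\end{tikzcd}
\]
does \emph{not} commute as a diagram of simplicial sets, so you cannot pass to fibers and conclude that $\mon_\infty(\EE^{\1/},(x,\eta_t))\to\mon_\infty(\EE,x,\eta_t)$ is a trivial fibration. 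Tracing both composites through the adjunction \eqref{eq:adjunction_pointed} and Lemma~\ref{lemma:Delta^inj}, they are induced by two \emph{different} maps of simplicial monoids $\Delta^1\sm\NN\to\Delta^1\sm\Ner(\Delta_+)$: one sends the generating $1$-simplex to $(0\!\to\!1,\;s_0[0])$, the other to $(s_0 1,\;[-1]\!\to\![0])$. Equivalently, for an $\infty$-monoid $\MM$ in $\EE^{\1/}$ the left-hand path returns the coaugmented object $\MM([0])\in\EE^{\1/}$, while the right-hand path returns $\varphi_\EE$ applied to the \emph{unit} morphism $\MM([-1]\!\to\![0])$; these share a target but are not equal.

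The paper handles this by replacing your square with the strictly commuting one
\[
\begin{tikzcd}
\mon_\infty(\EE^{\1/}) \ar[rr,"\mon_\infty(\varphi_\EE)"] \ar[d,"\FF'_{\EE^{\1/}}"'] & & \mon_\infty(\EE) \ar[d,"\FF'_\EE"] \\
(\EE^{\1/})^{\id_\1/}\ar[rr,"(\varphi_\EE)_*"]  & & \EE^{\1/}
\end{tikzcd}
\]
and then proving, via an explicit $\Delta^1\sm\Delta^1$ argument, that the two functors $(\EE^{\1/})^{\id_\1/}\to\EE^{\1/}$ given by $(\varphi_\EE)_*$ and by the forgetful $\varphi_{\EE^{\1/}}$ send every object to \emph{equivalent} objects. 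That is enough to show the image of a terminal $\tilde\theta\in\mon_\infty(\EE^{\1/})$ under $\FF'_\EE\circ\mon_\infty(\varphi_\EE)$ is terminal in $\EE^{\1/}$, after which Lemma~\ref{lemma:term(C)} (not a fiberwise argument) gives both the contractibility of $\mon_\infty(\EE,x,\eta_t)$ and the terminal-object statement. Your argument becomes correct once you replace ``commutativity is immediate'' by this equivalence-on-objects analysis and drop the claim about the induced map on fibers.
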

\begin{proof}
Combining Lemma \ref{lemma:term(C)} and Proposition \ref{prop:F'-cons-isofib} we see that it is sufficient to prove that there is a terminal object  of $\mon_\infty(\EE)$ whose image in $\EE^{\1/}$ is also terminal.

First, let us make some general remarks about a general $\infty$-category $\CC$ and its object $c$. First of all the functor $\varphi_{\CC} : \CC^{c/}\to \CC$ is a left fibration \cite[Lemma 2.5.24]{land2021introduction}, and hence conservative \cite[Prop. 2.1.3]{land2021introduction}. Therefore, if we have a morphism $\alpha : (\eta:c\to x) \to (\eta':c\to x')$  in $\CC^{c/}$ that induces an equivalence $x\to x'$ in $\CC$, then $\alpha$ is an equivalence. Using this one can show that if two morphisms $\eta,\eta':c\to x$ are equivalent morphisms in $\CC$, then they are equivalent  objects in $\CC^{c/}$.  

The identity map ${\sf id}_c$ is an initial object of the slice category $\CC^{c/}$   \cite[Exercise 135]{land2021introduction}.  Consider the double slice category $(\CC^{c/})^{{\sf id}_c/}.$ There are two different functors
\begin{equation}
\varphi_{\CC^{c/}}, (\varphi_{\CC})_* : (\CC^{c/})^{{\sf id}_c/} \longrightarrow \CC^{c/}. 
\end{equation}
We claim that the images $\varphi_{\CC^{c/}}(x)$ and $(\varphi_{\CC})_*(x)$ of any object $x$ of $(\CC^{c/})^{{\sf id}_c/}$ are equivalent in $\CC^{c/}$. We can present the category $(\CC^{c/})^{{\sf id}_c/}$ as 
\begin{equation}
(\CC^{\1/})^{{\sf id}_c/}=\map_*(\Delta^1,\map_*(\Delta^1,\CC))\cong \map_*(\Delta^1\sm \Delta^1,\CC). 
\end{equation}
Then the functors $\varphi_{\CC^{c/}}$ and $(\varphi_{\CC})_*$ are induced by the maps  $i_1: \Delta^1 \times \{1\} \to  \Delta^1 \sm \Delta^1$ and $i_2:\{1\}\times \Delta^1 \to \Delta^1 \sm \Delta^1.$ It is easy to see that for any  $x:\Delta^1 \sm \Delta^1 \to \CC$  the morphisms $\varphi_{\CC^{c/}}(x),(\varphi_{\CC})_*(x):x(0)\to x(1,1)$ are equivalent morphisms in $\CC$
\begin{equation}
\begin{tikzcd}
x(0)\ar[r,"{\sf id}"] \ar[d,"{\sf id}"] \ar[rd] & x(0) \ar[d,"\varphi_{\CC^{c/}}(x)"] \\
x(0)\ar[r,"(\varphi_{\CC})_*(x)"'] & x(1,1).
\end{tikzcd} 
\end{equation}
Therefore, $\varphi_{\CC^{c/}}(x)$ and $(\varphi_{\CC})_*(x)$ are equivalent objects in $\CC^{c/}.$ 

Now we assume that $\EE$ is a strict monoidal $\infty$-category. Then $\EE^{\1/}$ is also a strict monoidal category with a the unit object ${\sf id}_\1$ and we have a morphism of strict monoidal $\infty$-categories  
$\varphi_\EE:\EE^{\1/}\to \EE$. 
It induces a functor $\varphi_*:\mon_\infty(\EE^{\1/})\to \mon_\infty(\EE)$. Therefore, we obtain two functors 
\begin{equation}
\FF_{\EE^{\1/}} , (\varphi_\EE)_* \circ \FF'_{\EE^{\1/}}: \mon_\infty(\EE^{\1/}) \longrightarrow \EE^{\1/}. 
\end{equation} 
It is easy to check that  $\FF_{\EE^{\1/}} = \varphi_{\EE^{\1/}} \circ \FF'_{\EE^{\1/}}$. Therefore, we obtain that the images of any object  under these functors are equivalent. 

Denote by $\theta \in \EE^{\1/}$ a terminal object. Theorem \ref{th:Structures_of_monad_on_terminal} says that we can lift $\theta$ to a terminal object $\tilde \theta \in \mon_\infty(\EE^{\1/})$ along $\FF_{\EE^{\1/}}$. The object  $(\varphi_\EE)_* ( \FF'_{\EE^{\1/}}(\tilde \theta))$ is equivalent to $\theta$, so it is also terminal. Then using the commutative diagram 
\begin{equation}
\begin{tikzcd}
\mon_\infty(\EE^{\1/}) \ar[rr,"\mon_\infty(\varphi_\EE)"] \ar[d,"\FF'_{\EE^{\1/}}"] & & \mon_\infty(\EE) \ar[d,"\FF'_\EE"] \\
(\EE^{\1/})^{{\sf id}_c/}\ar[rr,"(\varphi_\EE)_*"]  & & \EE^{\1/}
\end{tikzcd}
\end{equation}
and the fact that $\mon_\infty(\varphi_\EE)$ is a trivial fibration (Proposition \ref{prop:monoids_of_coaug}), we obtain that $\mon_\infty(\varphi_\EE)(\tilde \theta)$ is a terminal object of $\mon_\infty(\EE)$ whose image in $\EE^{\1/}$  is terminal. 
\end{proof}

\subsection{Monoids and monads in simplicial categories} 
\label{subsection:monoida_and_monads_in _simplicial_cats}

In this subsection we show how to pass from monoids and monads in simplicial categories to $\infty$-monoids and $\infty$-monads in $\infty$-categories. First, we recall the definition of the internal hom in the category of simplicial categories ${\sf Cat}_\Delta$ and the fact that the homotopy coherent nerve $\Ner:{\sf Cat}_\Delta \to \sSet$ is a closed monoidal functor.

The category of simplicial categories ${\sf Cat}_\Delta$ is a cartesian closed category. The internal hom $\Fun_\Delta(\CCC,\CCC')$ is a simplicial category, whose objects are simplicial functors from $\CCC$ to $\CCC'$, and the simplicial set of natural transformations $\Nat({\bf F},{\bf G})$  is defined as the enriched end of the mapping spaces $\int_{c\in \CCC} \map_{\CCC'}({\bf F}(c),{\bf G}(c))$ (see \cite[\S 2.2]{kelly1982basic},  \cite[Prop.6.6.9]{borceux1994handbook}, \cite[\S 7.3]{riehl2014categorical}).  Then we have a natural adjunction  
\begin{equation}
\Fun_\Delta(\CCC \times \CCC', \CCC'') \cong  \Fun_\Delta(\CCC,\Fun_{\Delta}(\CCC',\CCC''))
\end{equation} 
(see \cite[\S 2.3]{kelly1982basic}). Moreover, for any tree simplicial categories $\CCC,\CCC',\CCC''$, there is a natural associative composition functor 
\begin{equation}
\label{eq:comp_simp}
\circ : \Fun_\Delta (\CCC',\CCC'') \times  \Fun_\Delta(\CCC,\CCC') \longrightarrow \Fun_\Delta(\CCC,\CCC'')
\end{equation}
(see \cite[\S II.3]{eilenberg1966closed}). Since Kan complexes are closed under limits, they are also closed under ends. Therefore,  if $\CCC'$ is locally Kan, then $\Fun_\Delta(\CCC,\CCC')$ is also locally Kan.

The homotopy coherent nerve 
\begin{equation}
\Ner : \Cat_\Delta \longrightarrow \sSet
\end{equation}
is a monoidal functor, where both categories are considered as monoidal categories with respect to the product. Therefore, it is a closed monoidal functor  \cite[\S II, Prop. 4.3]{eilenberg1966closed}. It follows that, for any two simplicial categories $\CCC,\CCC'$, the map 
\begin{equation}\label{eq:Ner_CC'}
\Ner_{\CCC,\CCC'} : \Ner( \Fun_\Delta(\CCC,\CCC') ) \longrightarrow \map(\Ner(\CCC),\Ner(\CCC'))
\end{equation}
induced by the nerve of the map $\CCC\times \Fun_\Delta(\CCC,\CCC') \to \CCC',$ respects the composition functor \eqref{eq:comp_simp}. 

For simplicial categories $\CCC$ and $\CCC'$ equipped with some chosen base-objects $c_0$ and $c_0',$ we define the simplicial category of simplicial functors preserving the base-object $\Fun_\Delta^*(\CCC,\CCC')$ as the equaliser of two simplicial functors
\begin{equation}
\Fun_\Delta^*(\CCC,\CCC') \to \Fun_\Delta(\CCC,\CCC') \rightrightarrows  \CCC',
\end{equation} 
where the first one is defined by the evaluation at $c_0$, and the second one is the constant functor defined by the object $c_0'.$ Using that $\Ner:\Cat_\Delta \to \sSet$ commutes with equalizers, we obtain that $\Ner_{\CCC,\CCC'}$ induces a map 
\begin{equation}
\Ner^*_{\CCC,\CCC'} : \Fun_\Delta^*(\CCC,\CCC') \longrightarrow \map_*(\Ner(\CCC),\Ner(\CCC')).
\end{equation}

A strict monoidal simplicial category $\EEE$ is a simplicial category equipped with a simplicial functor of product and the unit object
\begin{equation}
\otimes : \EEE\times \EEE \to \EEE, \hspace{1cm} \1\in {\sf Ob}(\EEE)
\end{equation}
satisfying the usual strict associativity and unit axioms. Then the underlying ordinary category $\EEE_0$ is an ordinary strict monoidal category. Monoidal functors of strict monoidal simplicial categories are defined in the obvious way. The simplicial category of monoidal functors $\Fun^\otimes_\Delta(\EEE,\EEE')$ 
 is 
defined as the equaliser 
\begin{equation}
\Fun^\otimes_\Delta(\EEE,\EEE') \to \Fun^*_\Delta(\EEE,\EEE') \rightrightarrows \Fun^*_\Delta(\EEE^2,\EEE'),
\end{equation}
where the first map is induced by the map $\otimes : \EEE^2\to \EEE,$ and the second map is defined as the composition of the square map $\Fun^*_\Delta(\EEE,\EEE')\to \Fun^*_\Delta(\EEE^2,(\EEE')^2)$ and the map induced by $\otimes':(\EEE')^2 \to \EEE'$ (compare with Lemma \ref{lemma:Map_eq}). Here, we assume that the base-objects of $\EEE$ and $\EEE'$ are the unit objects $\1$ and $\1'$. Note that, if $\EEE'$ is locally Kan, then $\Fun^\otimes_\Delta(\EEE,\EEE')$ is also locally Kan. 

The homotopy coherent nerve $\Ner(\EEE)$ of a strict monoidal simplicial category $\EEE$ has a natural structure of a simplicial monoid defined by the map $\Ner(\EEE) \times \Ner(\EEE) \cong \Ner(\EEE\times \EEE) \to  \Ner(\EEE).$ 

\begin{proposition}
For any strict monoidal simplicial categories $\EEE$ and $\EEE'$, the map $\Ner_{\EEE,\EEE'}$ induces a map 
\begin{equation}
\Ner^\otimes_{\EEE,\EEE'} : \Ner(\Fun_\Delta^\otimes(\EEE,\EEE')) \longrightarrow \Map(\Ner(\EEE),\Ner(\EEE')), 
\end{equation} 
where $\Map(-,=)$ denotes the mapping space in the category of simplicial monoids. 
\end{proposition}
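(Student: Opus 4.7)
The plan is to present both sides as equalizers and obtain the desired map from the universal property. By Lemma \ref{lemma:Map_eq}, the target can be identified with the equalizer
\begin{equation}
\Map(\Ner(\EEE),\Ner(\EEE')) \to \map_*(\Ner(\EEE),\Ner(\EEE')) \rightrightarrows \map_*(\Ner(\EEE)^2,\Ner(\EEE')),
\end{equation}
whose parallel arrows are, respectively, pre-composition with the multiplication $\mu: \Ner(\EEE)^2 \to \Ner(\EEE)$, and the squaring map $\map_*(\Ner(\EEE),\Ner(\EEE'))\to \map_*(\Ner(\EEE)^2,\Ner(\EEE')^2)$ followed by post-composition with the multiplication $\mu':\Ner(\EEE')^2\to \Ner(\EEE')$. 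By the very definition of the simplicial monoid structure on the nerve of a strict monoidal simplicial category, $\mu$ and $\mu'$ are $\Ner(\otimes)$ and $\Ner(\otimes')$ composed with the product-preservation isomorphism $\Ner(\EEE)^2\cong \Ner(\EEE^2)$, which is available because $\Ner$ is monoidal with respect to cartesian products.

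Since $\Ner$ is a right adjoint (to the rigidification functor), it preserves all limits. Applying it to the defining equalizer of $\Fun_\Delta^\otimes(\EEE,\EEE')$ therefore yields an equalizer
\begin{equation}
\Ner(\Fun_\Delta^\otimes(\EEE,\EEE')) \to \Ner(\Fun^*_\Delta(\EEE,\EEE')) \rightrightarrows \Ner(\Fun^*_\Delta(\EEE^2,\EEE')).
\end{equation}
The maps $\Ner^*_{\EEE,\EEE'}$ and $\Ner^*_{\EEE^2,\EEE'}$ then provide natural candidates for the middle and right vertical arrows of a morphism between these two equalizer diagrams, using the isomorphism $\Ner(\EEE^2)\cong \Ner(\EEE)^2$ to identify the relevant source on the right. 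It remains only to check that both pairs of parallel arrows commute with these vertical maps.

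Commutativity with the pre-composition arrows is precisely the compatibility of $\Ner_{-,-}$ with the composition functor from \eqref{eq:comp_simp}, noted immediately after \eqref{eq:Ner_CC'}: pre-composition with $\otimes:\EEE^2\to \EEE$ at the level of simplicial categories corresponds under $\Ner^*$ to pre-composition with $\Ner(\otimes)$, which by construction is $\mu$. Commutativity with the ``square, then post-compose'' arrow follows by the same principle once one observes that the simplicial squaring map on $\Fun^*_\Delta(\EEE,\EEE')$ is sent by $\Ner^*$ to the squaring map of pointed simplicial sets (using that $\Ner$ preserves products and unit objects), and that post-composition with $\otimes'$ is taken to post-composition with $\Ner(\otimes')=\mu'$. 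The substantive content of the argument is thus the compatibility of $\Ner$ with the closed monoidal structures on $\Cat_\Delta$ and $\sSet$, which has already been established. I do not expect any serious obstacle beyond this bookkeeping; once the two squares commute, the map $\Ner^\otimes_{\EEE,\EEE'}$ arises directly from the universal property of the equalizer.
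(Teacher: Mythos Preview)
Your proposal is correct and follows essentially the same approach as the paper: both present source and target as equalizers (via $\Ner$ preserving limits and Lemma~\ref{lemma:Map_eq}, respectively), set up the same morphism of parallel pairs using $\Ner^*_{\EEE,\EEE'}$ and $\Ner^*_{\EEE^2,\EEE'}$, and extract the desired map from the universal property. You spell out the commutativity verification in somewhat more detail than the paper does, but the argument is the same.
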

\begin{proof}
Consider the morphism of parallel pairs, 
\begin{equation}
\begin{tikzcd}[column sep = 20mm]
\Ner(\Fun^*_\Delta(\EEE,\EEE')) 
\ar[r,"\Ner^*_{\EEE,\EEE'}"]
\ar[d,shift right =2mm]
\ar[d,shift left =2mm]
& 
\map_*(\Ner(\EEE),\Ner(\EEE'))
\ar[d,shift right = 2mm]
\ar[d,shift left =2mm]
\\
\Ner(\Fun^*_\Delta(\EEE^2,\EEE'))
\ar[r,"\Ner^*_{\EEE^2,\EEE}"]
& 
\map_*(\Ner(\EEE)^2,\Ner(\EEE'))
\end{tikzcd}
\end{equation}
where in each pair  the first map is induced by the map $\otimes: \EEE^2\to \EEE$, and the second map is the composition of the square map and the map induced by $\otimes' : (\EEE')^2\to \EEE'$. 
The equalizer of the left-hand pair of vertical maps is $\Ner(\Fun^\otimes_\Delta(\EEE,\EEE'))$. By Lemma \ref{lemma:Map_eq}, the equalizer of the right-hand pair of vertical maps is $\Map(\Ner(\EEE),\Ner(\EEE'))$. 
Then the map induced on the equalizers is the required map $\Ner^\otimes_{\EEE,\EEE'}$.  
\end{proof}

By a monoid $M$ in a strict monoidal simplicial category $\EEE$ we mean a monoid in $\EEE_0$. The cobar construction of $M$ defines a strict monoidal functor $\tilde M: \Delta_+ \to \EEE_0.$ If we treat $\Delta_+$ as a simplicial category with discrete hom-sets, we obtain a morphism of strict monoidal simplicial categories
\begin{equation}
\tilde M : \Delta_+ \longrightarrow \EEE.
\end{equation}
This motivates the following definition of the simplicial category of monoids in $\EEE$
\begin{equation}
\mon_\Delta(\EEE) = \Fun_\Delta^\otimes(\Delta_+,\EEE). 
\end{equation} 
Objects of this simplicial category are identified with monoids in $\EEE$.

\begin{corollary}
For a strict monoidal locally Kan simplicial category $\EEE$, there is a  functor of $\infty$-categories 
\begin{equation}
\Ner^\otimes_{\Delta_+,\EEE} :\Ner(\mon_\Delta(\EEE)) \longrightarrow \mon_\infty(\Ner(\EEE)). 
\end{equation}
that sends a monoid $M$ in $\EEE$ to the $\infty$-monoid $\Ner(\tilde M)$ in $\Ner(\EEE).$
\end{corollary}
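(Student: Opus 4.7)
The plan is to obtain the corollary as a direct specialization of the preceding proposition, with $\EEE$ there replaced by $\Delta_+$ (regarded as a strict monoidal simplicial category with discrete hom-sets under the fully faithful inclusion $\Cat \hookrightarrow \Cat_\Delta$) and with $\EEE'$ there replaced by $\EEE$. First I would note that under this inclusion the homotopy coherent nerve of the discrete simplicial category $\Delta_+$ agrees strictly with the ordinary nerve $\Ner(\Delta_+)$, and that its structure of simplicial monoid coming from the join matches the one used throughout the section. Applying the previous proposition then produces a morphism of simplicial monoids
\[
\Ner^\otimes_{\Delta_+,\EEE} : \Ner(\Fun^\otimes_\Delta(\Delta_+,\EEE)) \longrightarrow \Map(\Ner(\Delta_+),\Ner(\EEE)),
\]
whose source is $\Ner(\mon_\Delta(\EEE))$ by the definition of $\mon_\Delta(\EEE)$ and whose target is $\mon_\infty(\Ner(\EEE))$ by the definition of $\mon_\infty$.

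Next, I would verify that both ends are $\infty$-categories, so that this map of simplicial sets is legitimately a functor of $\infty$-categories. For the source, the remark made just before the previous proposition records that $\Fun^\otimes_\Delta(\EEE,\EEE')$ is locally Kan whenever $\EEE'$ is; applied to the pair $(\Delta_+,\EEE)$ and using the hypothesis that $\EEE$ is locally Kan, this gives that $\mon_\Delta(\EEE)$ is locally Kan, and hence $\Ner(\mon_\Delta(\EEE))$ is a quasi-category. For the target, $\Ner(\EEE)$ is a strict monoidal $\infty$-category, and $\Ner(\Delta_+)$ is a free simplicial monoid by Lemma \ref{lemma:Delta_is_free}, so Proposition \ref{prop:hom_from_free_monoid1}(a) gives that $\mon_\infty(\Ner(\EEE))$ is an $\infty$-category.

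Finally, to check the description on objects: a $0$-simplex of $\Ner(\mon_\Delta(\EEE))$ is an object of $\Fun^\otimes_\Delta(\Delta_+,\EEE)$, which, because $\Delta_+$ is discrete, is the same data as an ordinary strict monoidal functor $\Delta_+ \to \EEE_0$. Such a functor is in turn equivalent to the choice of a monoid $M$ in $\EEE_0$ together with its cobar strict monoidal extension $M^\bullet : \Delta_+ \to \EEE$. Unwinding the construction of $\Ner^\otimes_{\EEE,\EEE'}$ from the proof of the previous proposition, the image of $M^\bullet$ under $\Ner^\otimes_{\Delta_+,\EEE}$ is exactly the homotopy coherent nerve $\Ner(M^\bullet) : \Ner(\Delta_+) \to \Ner(\EEE)$, which is the $\infty$-monoid associated to $M$.

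The corollary is essentially bookkeeping on top of the preceding proposition, so no substantive new obstacle arises; the one point that needs genuine care is the strict (not merely equivalent) identification of the homotopy coherent nerve of the discrete simplicial category $\Delta_+$ with the ordinary nerve $\Ner(\Delta_+)$, together with compatibility of the join monoidal structure on both sides, so that the output of the previous proposition literally lands in $\mon_\infty(\Ner(\EEE))$ rather than in a simplicial set equivalent to it.
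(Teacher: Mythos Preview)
Your approach is correct and matches the paper's intent: the corollary is stated without proof in the paper, as it is an immediate specialization of the preceding proposition with $\EEE$ replaced by $\Delta_+$ and $\EEE'$ replaced by $\EEE$, together with the definitions $\mon_\Delta(\EEE) = \Fun^\otimes_\Delta(\Delta_+,\EEE)$ and $\mon_\infty(\Ner(\EEE)) = \Map(\Ner(\Delta_+),\Ner(\EEE))$. Your added verification that both sides are $\infty$-categories and your unwinding of the object-level description are appropriate.

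One small slip: you write that the proposition ``produces a morphism of simplicial monoids,'' but neither $\Ner(\mon_\Delta(\EEE))$ nor $\mon_\infty(\Ner(\EEE))$ carries a simplicial monoid structure here; the proposition yields only a map of simplicial sets. This does not affect your argument, since you correctly proceed to identify it as a functor of $\infty$-categories.
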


A monad $M$ on a simplicial category $\CCC$ is defined as a monoid in the strict monoidal simplicial category $\End_\Delta(\CCC)$. The simplicial category of monads is defined as follows 
\begin{equation}
\Mon_\Delta(\CCC) = \mon_\Delta(\End_\Delta(\CCC)).
\end{equation}
For an $\infty$-category $\CC$, we denote the $\infty$-category of $\infty$-monads by 
\begin{equation}
\Mon_\infty(\CC) = \mon_\infty(\End(\CC)).
\end{equation}

\begin{corollary} Let $\CCC$ be a locally Kan simplicial category. Then the composition of $\Ner^\otimes_{\Delta_+,\End(\CCC)}$ and  
$\mon_\infty(
\Ner_{\CCC,\CCC})
$
is a functor 
\begin{equation}
\Ner(\Mon_\Delta(\CCC)) \longrightarrow \Mon_\infty(\Ner(\CCC)),
\end{equation}
that sends a monad $M$ on $\CCC$ to the $\infty$-monad on $\Ner(\CCC)$ defined by the composition 
\begin{equation}
\Ner(\Delta_+) \xrightarrow{ \Ner(\tilde M)} \Ner(\End_\Delta(\CCC)) \xrightarrow{\ \Ner_{\CCC,\CCC}\ } \End(\Ner(\CCC)).
\end{equation}
\end{corollary}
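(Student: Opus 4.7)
The plan is to obtain this corollary as a direct consequence of the previous corollary, applied to $\EEE = \End_\Delta(\CCC)$, and then post-composed with a map of $\infty$-monoids induced by $\Ner_{\CCC,\CCC}$. First I would note that $\End_\Delta(\CCC) = \Fun_\Delta(\CCC,\CCC)$ carries a strict monoidal simplicial structure (with product given by composition and unit $\Id_\CCC$), and is locally Kan because $\CCC$ is locally Kan, as noted right after the definition of the internal hom. Applying the previous corollary to $\EEE = \End_\Delta(\CCC)$ therefore yields a functor
\begin{equation}
\Ner^\otimes_{\Delta_+,\End_\Delta(\CCC)} \colon \Ner(\mon_\Delta(\End_\Delta(\CCC))) \longrightarrow \mon_\infty(\Ner(\End_\Delta(\CCC))),
\end{equation}
and by the very definition $\mon_\Delta(\End_\Delta(\CCC)) = \Mon_\Delta(\CCC)$, so this already rewrites as a functor out of $\Ner(\Mon_\Delta(\CCC))$.

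Next, I would check that the nerve comparison map $\Ner_{\CCC,\CCC} \colon \Ner(\End_\Delta(\CCC)) \to \End(\Ner(\CCC))$ is actually a morphism of simplicial monoids, not merely of simplicial sets. Compatibility with the composition multiplication is already observed in the text after \eqref{eq:Ner_CC'}: the map $\Ner_{\CCC,\CCC}$ respects the composition functor \eqref{eq:comp_simp}. Compatibility with the unit is automatic, since $\Ner$ preserves products and sends $\Id_\CCC$ to $\id_{\Ner(\CCC)}$. Hence $\Ner_{\CCC,\CCC}$ is a morphism in $\sMon$, so applying $\mon_\infty(-) = \Map(\Ner(\Delta_+),-)$ gives a functor
\begin{equation}
\mon_\infty(\Ner_{\CCC,\CCC}) \colon \mon_\infty(\Ner(\End_\Delta(\CCC))) \longrightarrow \mon_\infty(\End(\Ner(\CCC))) = \Mon_\infty(\Ner(\CCC)).
\end{equation}
Composing this with $\Ner^\otimes_{\Delta_+,\End_\Delta(\CCC)}$ produces the desired functor $\Ner(\Mon_\Delta(\CCC)) \to \Mon_\infty(\Ner(\CCC))$.

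Finally, I would unwind the two stages on objects to justify the stated formula. Under the previous corollary, a monad $M$ on $\CCC$ (i.e.\ a monoid in $\End_\Delta(\CCC)$) is sent to the $\infty$-monoid $\Ner(M^\bullet) \colon \Ner(\Delta_+) \to \Ner(\End_\Delta(\CCC))$, and post-composition with $\Ner_{\CCC,\CCC}$ then yields exactly the map $\Ner(\Delta_+) \xrightarrow{\Ner(M^\bullet)} \Ner(\End_\Delta(\CCC)) \xrightarrow{\Ner_{\CCC,\CCC}} \End(\Ner(\CCC))$ appearing in the statement.

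Since the bulk of the work is already packaged in the previous corollary and the definition of $\Ner^\otimes_{\EEE,\EEE'}$, the only genuine content is the verification that $\Ner_{\CCC,\CCC}$ lifts to $\sMon$. This is the step I expect to be the main (mild) obstacle, but it follows from the closed monoidality of the homotopy coherent nerve together with the fact that the unit of $\End_\Delta(\CCC)$ is the identity endofunctor, so no separate calculation is required.
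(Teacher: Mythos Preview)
Your proposal is correct and matches the paper's intended approach: the paper states this corollary without proof, as it follows immediately from the previous corollary applied to $\EEE = \End_\Delta(\CCC)$ together with the fact (recorded after \eqref{eq:Ner_CC'}) that $\Ner_{\CCC,\CCC}$ respects composition and hence is a morphism of simplicial monoids. Your unpacking of the object-level description is exactly the expected one.
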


\section{\bf Actions of \texorpdfstring{$\infty$}{}-monoids} 
\label{section:actions}

The goal of this subsection is to introduce the notion of an $\infty$-monoid acting on an object, and an algebra over an $\infty$-monad, and prove some of their properties. 

\subsection{Actions of monoids in ordinary categories}
\label{subsec:actions}
First, we give the definition in the setting of ordinary categories and reformulate it in a form suitable for generalization.

Let $E = (E, \otimes, \1)$ be an ordinary strict monoidal category, and let $C$ be an ordinary category.
A strict $E$-action on $C$ is a functor
\begin{equation} \oslash : E \times C \to C \end{equation}
such that the functor $\tilde \oslash : E \to \End(C)$ obtained by the adjunction is a strict monoidal functor.
A strict action category is a pair $(E, C)$, where $E$ is a strict monoidal category and $C$ is a category equipped with a strict $E$-action.
A morphism of strict action categories $(E, C) \to (E', C')$ is a pair $(\varphi, \psi)$, where $\varphi: E \to E'$ is a strict monoidal functor and $\psi: C \to C'$ is a functor such that the diagram
\begin{equation}\label{diag:morphism_action}
\begin{tikzcd}
E\times C\ar[rr,"\varphi\times \psi"] \ar[d,"\oslash"] && E' \times C' \ar[d,"\oslash'"] \\
C \ar[rr,"\psi"]  && C' 
\end{tikzcd}
\end{equation}
commutes. 

Let $(E,C)$ be a strict action category,   $M = (M, \mu, \eta)$ be a monoid in $E$, and let $x$ be an object of $C$.
An $M$-action on $x$ is a morphism $a: M \oslash x \to x$ such that the following diagrams commute.
\begin{equation}\label{eq:action:axioms}
\begin{tikzcd}
M\otimes M \oslash \ar[rr,"\mu\oslash x"] \ar[d,"M \oslash a "] x && M \oslash x \ar[d,"a"] \\
M \oslash x \ar[rr,"a"] && x
\end{tikzcd}
\hspace{1cm}
\begin{tikzcd}
x \ar[r,"\eta\oslash x"] \ar[rd,equal] & \MM\oslash x\ar[d,"a"] \\ 
& x
\end{tikzcd}
\end{equation}
An object with an $M$-action is called an $M$-action object. A morphism $M$-action objects $f:(x,a)\to (y,b)$ is defined as a map $f:x\to y$ such that $f\circ a=b \circ ({\sf id}\oslash f).$  

\begin{example}[Free $M$-action]
Let $(E,C)$ be a strict action category, $M$ be a monoid in $E$ and $c$ be an object in $C$. Then the $M$-action on $M \oslash c$ defined by the map 
$\mu\oslash c : (M \otimes M) \oslash c \to M \oslash c$ 
is called the free $M$-action.
\end{example}

An action object in a strict action category $(E,C)$ is a pair $(M,x),$ where $M=(M,\mu,\eta)$ is a monoid in $E$ and $x=(x,a)$ is an object with an $M$-action. Note that any strict monoidal functor $\varphi:E\to E'$ sends a monoid $M=(M,\mu,\eta)$ to a monoid $\varphi(M)=(\varphi(M),\varphi(\mu),\varphi(\eta))$. Moreover, any morphism of strict action categories $(\varphi,\psi):(E,C)\to (E',C')$ sends an action object $(M,x)$ to an action object $(\varphi(M),\psi(x)),$ where $\psi(x)=(\psi(x),\psi(a)).$

It is well known that $[0]\in \Delta_+$ is the walking monoid in a strict monoidal category i.e. for any monoid $M$ in a strict monoidal category $E$, there exists a unique strict monoidal functor $ \Delta_+ \to E$, called the cobar construction of $M$, sending the monoid $[0]$ to the monoid $M$  \cite[\S VII.5]{mac1998categories}. Let us define the walking action object. 

Consider a wide subcategory of the simplex category 
\begin{equation}
 \Delta_\max \subseteq \Delta,   
\end{equation}
whose morphisms $f:[n]\to [m]$ are the monotone maps preserving the largest element $f(n)=m$ (see \cite[\href{https://kerodon.net/tag/04S6}{Subsection 04S6}]{kerodon}). The join operation defines a strict $\Delta_+$-action on $\Delta_\max$ 
\begin{equation}\label{eq:Delta-action}
\star : \Delta_+ \times \Delta_\max \longrightarrow \Delta_\max.
\end{equation}
The object $[0]$ of the category $\Delta_+$ has a unique structure of a monoid defined by the unique maps $\mu_{[0]}: [1]\to [0]$ and $\eta_{[0]}: [-1]\to [0]$. Consider the $[0]$-action of the monoid $[0]\in \Delta_+$ on the object $[0]\in \Delta_\max$ defined by the unique map in $\Delta_\max$
\begin{equation}
a_{[0]} : [0]\star [0] \longrightarrow [0].
\end{equation}

\begin{proposition}\label{prop:walking_action}
Let $(E,C)$ be a strict action category and $(M,x)$ be an action object in $(E,C)$. Then there exists a unique morphism of strict action categories 
\begin{equation}
(\Delta_+,\Delta_\max)\longrightarrow (E,C)   
\end{equation}
sending the action object $([0],[0])$ to $(M,x)$.
\end{proposition}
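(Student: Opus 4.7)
The plan is to construct $(\varphi, \psi)$ in two steps, using the classical universal property of $\Delta_+$ as the walking monoid, and then verify uniqueness.

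First, by the universal property of $\Delta_+$ as the walking monoid \cite[\S VII.5]{mac1998categories}, there is a unique strict monoidal functor $\varphi : \Delta_+ \to E$ sending the monoid $([0], \mu_{[0]}, \eta_{[0]})$ to $(M, \mu, \eta)$; on objects, $\varphi([n]) = M^{\otimes(n+1)}$ with $\varphi([-1]) = \1$.

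Next, I would define $\psi : \Delta_\max \to C$ on objects by $\psi([n]) = M^{\otimes n} \oslash x$, which gives $\psi([0]) = x$ and automatically satisfies the action-compatibility on objects: $\psi([n-1] \star [0]) = \varphi([n-1]) \oslash \psi([0])$. For morphisms, the crucial combinatorial observation is that every $f : [n] \to [m]$ in $\Delta_\max$ (i.e., $f(n) = m$) admits a canonical decomposition inside $\Delta_\max$,
\[
f = (f_1 \star \id_{[0]}) \circ (\id_{[j-1]} \star c_{n-j}),
\]
where $j = \min f^{-1}(m)$, the map $f_1 : [j-1] \to [m-1]$ is the corresponding restriction of $f$ (a morphism in $\Delta_+$), and $c_{n-j} : [n-j] \to [0]$ is the unique (constant) morphism in $\Delta_+$. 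I then set
\[
\psi(f) = (\varphi(f_1) \oslash \id_x) \circ (\id_{\varphi([j-1])} \oslash a^{\circ(n-j)}),
\]
where $a^{\circ k} : M^{\otimes k} \oslash x \to x$ is the $k$-fold iterated action, well-defined by associativity of $a$ (with $a^{\circ 0} = \id_x$). A direct computation in the case $j = 0$, $n = 1$ yields $\psi(a_{[0]}) = a$, confirming that the walking action object $([0], [0])$ is sent to $(M, x)$ with action $a$.

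The main technical obstacle is verifying that $\psi$ is functorial and that $(\varphi, \psi)$ satisfies the morphism-of-actions square \eqref{diag:morphism_action}. Both come down to comparing the canonical decomposition of a composite $f' \circ f$ with the composition of the individual decompositions of $f$ and $f'$. The resulting combinatorial identities are controlled exactly by the associativity and unit axioms for the monoid $(M, \mu, \eta)$ and the action axioms \eqref{eq:action:axioms} for $a$: associativity of $\mu$ and of $a$ matches the concatenation of constant maps to $[0]$ in $\Delta_\max$, while the unit axioms match the insertion of $\eta_{[0]} : [-1] \to [0]$.

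Uniqueness then follows essentially formally. Any lift $(\varphi', \psi')$ must have $\varphi' = \varphi$ by the uniqueness in the universal property of $\Delta_+$; compatibility with the action forces $\psi'([n]) = \varphi([n-1]) \oslash x$ on objects and $\psi'(g \star \id_{[0]}) = \varphi(g) \oslash \id_x$ on morphisms of that form, while the condition $\psi'(a_{[0]}) = a$, together with functoriality and associativity, forces the values on the iterated action morphisms $c_k$. Since the canonical decomposition above exhibits every morphism of $\Delta_\max$ as a composite of these two types, $\psi' = \psi$.
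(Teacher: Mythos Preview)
Your proposal is correct and takes essentially the same approach as the paper. Your decomposition $f = (f_1 \star \id_{[0]}) \circ (\id_{[j-1]} \star c_{n-j})$ is exactly the paper's unique presentation of a morphism in $\Delta_\max$ as $f_1 \star a^{n-j}$ (the two expressions coincide by functoriality of the join), and your formula for $\psi(f)$ agrees with the paper's $\psi(f_1 \star a^k) = \varphi(f_1) \oslash a_x^k$; the paper then carries out in full the functoriality verification that you correctly identify as the main technical obstacle, via a chain of reductions terminating in the monoid and action axioms.
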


The proof of Proposition \ref{prop:walking_action} is moved to the appendix (Subsection \ref{subsection:walking_monoid}).

\subsection{Actions of \texorpdfstring{$\infty$}{}-monoids}

We now generalize the definition of a strict action category to the setting of $\infty$-categories.

\begin{definition}[Strict action $\infty$-category]
Let $\EE$ be a strict monoidal $\infty$-category and $\CC$ be an $\infty$-category. A strict (left) $\EE$-action on $\CC$ is a functor of $\infty$-categories
\begin{equation}
\oslash : \EE \times \CC \to \CC 
\end{equation}
such that the functor $\EE\to \End(\CC)$ obtained by the adjunction is a morphism of simplicial monoids. In other words, $\oslash : \EE \times \CC \to \CC$ is a functor such that for any $n$ the map $\oslash_n : \EE_n \times \CC_n \to \CC_n$ defines a left action of the monoid $\EE_n$ on the set $\CC_n.$

A strict action $\infty$-category is a pair $(\EE,\CC)$, where $\EE$ is a strict monoidal $\infty$-category and $\CC$ is an $\infty$-category equipped with a strict $\EE$-action. A morphism of strict action $\infty$-categories $(\EE,\CC)\to (\EE',\CC')$ is a pair $(\varphi,\psi),$ where $\varphi:\EE\to \EE'$ is a morphism of simplicial monoids and $\psi:\CC\to \CC'$ is a functor such that the diagram \begin{equation}\label{diag:morphism_action2}
\begin{tikzcd}
\EE\times \CC\ar[rr,"\varphi\times \psi"] \ar[d,"\oslash"] && \EE' \times \CC' \ar[d,"\oslash'"] \\
\CC \ar[rr,"\psi"]  && \CC' 
\end{tikzcd}
\end{equation} 
commutes. 
\end{definition}

\begin{example}
Any strict monoidal $\infty$-category $\EE$ has an action on itself defined by $\oslash=\otimes$, and a morphism of strict monoidal $\infty$-categories $\varphi: \EE \to \EE'$ defines a morphism of strict action categories 
\begin{equation}
(\varphi,\varphi): (\EE,\EE) \to (\EE',\EE').
\end{equation}
Moreover, if $(\EE,\CC)$ is a strict action $\infty$-category, and $c$ is an object of $\CC$,  we have a functor $\cdot \oslash c:\EE\to \CC$ given by the composition $\EE\to \End(\CC) \xrightarrow{{\sf ev}_c} \CC.$ This functor defines a morphism of strict action categories 
\begin{equation}
(\Id_\EE, \cdot \oslash c): (\EE,\EE) \longrightarrow (\EE,\CC). 
\end{equation}
\end{example}

 The strict $\Delta_+$-action \eqref{eq:Delta-action} induces a strict  $\Ner(\Delta_+)$-action 
\begin{equation}
    \Ner(\Delta_+) \times \Ner(\Delta_\max) \longrightarrow \Ner(\Delta_\max).
\end{equation}
Therefore, we obtain that $(\Ner(\Delta_+),\Ner(\Delta_\max))$ is a strict action $\infty$-category. Then Proposition \ref{prop:walking_action} motivates the following definition. 

\begin{definition}[Action of an $\infty$-monoid]\label{def:action}
Let $(\EE,\CC)$ be a strict action $\infty$-category and $\MM:\Ner(\Delta_+)\to \EE$ be an $\infty$-monoid. An $\MM$-action on an object $x$ of $\CC$ is a functor 
\begin{equation}
\AA : \Ner(\Delta_\max) \longrightarrow \CC
\end{equation}
such that $\AA([0])=x$ and $(\MM,\AA):(\Ner(\Delta_+),\Ner(\Delta_\max))\to (\EE,\CC)$ is a morphism of strict action $\infty$-categories.
\end{definition}

\begin{example}[Free action of an $\infty$-monoid]
\label{example:free_action}
For a strict action $\infty$-category $(\EE,\CC)$, an $\infty$-monoid $\MM$ in $\EE$ and an object $c\in \CC$, we can define a structure of $\MM$-action on $\MM_0 \oslash c$ given by the composition 
\begin{equation}
\AA^{\sf free}_{\MM,c} : \Ner(\Delta_\max) \hookrightarrow \Ner(\Delta_+) \xrightarrow{ \ \MM \ } \EE \xrightarrow{\ \cdot \oslash c\ } \CC.
\end{equation}
This $\MM$-action is called free $\MM$-action on $\MM_0\oslash c$. 
The corresponding morphism of strict action $\infty$-categories is defined by the composition
\begin{equation}
(\Ner(\Delta_+),\Ner(\Delta_\max)) \hookrightarrow (\Ner(\Delta_+),\Ner(\Delta_+)) \xrightarrow{(\MM,\MM)} (\EE,\EE) \xrightarrow{(\Id,\cdot \oslash c)} (\EE,\CC).
\end{equation}
\end{example}

\subsection{Splitting of the cosimplicial resolution} 

A coaugmented cosimplicial object $X:\Delta_+\to C$ in an ordinary category $C$ is \emph{right split} if it has extra degeneracy maps $s^n:X^n\to X^{n-1}$ for $n\geq 0$ satisfying the simplicial relations. Dually, it is called \emph{left split} if it has extra degeneracy maps $s^{-1}:X^n\to X^{n-1}$ satisfying the cosimplicial relations. It is easy to see that $X$ is left split if and only if $X^\op$ is right split. So we consider only right split cosimplicial objects. 

Let us reformulate the property of being right split in terms of the category $\Delta_\max$.  The inclusion functor $ \Delta_\max  \hookrightarrow \Delta_+$ admits a left adjoint 
\begin{equation}
{\sf c}_+ : \Delta_+ \longrightarrow \Delta_\max, \hspace{1cm} [n] \mapsto [n] \star [0]  
\end{equation}
\cite[\href{https://kerodon.net/tag/04S8}{Rem.04S8}]{kerodon}. Then a coaugmented cosimplicial object $X:\Delta_+\to C$ is right split if and only if it can be presented as a composition 
\begin{equation}
 \Delta_+ \xrightarrow{{\sf c}_+} \Delta_\max \to C.   
\end{equation}

Assume $(E,C)$ is an ordinary strict action category and $M=(M,\eta,\mu)$ is a monoid in $E$. Then for any object $c$ of $C$, we obtain a coaugmented cosimplicial object $M^\bullet_c : \Delta_+ \to C$ 
\begin{equation} M^\bullet_c: \hspace{1cm}
\begin{tikzcd}[column sep = 12mm]
c \ar[r,"\eta_c"] & 
M \oslash  c 
\ar[r,"\eta_{M\oslash c} ", shift left = 4mm] 
\ar[r,"M \oslash \eta_c",shift left = -5mm ] 
& 
M^{\otimes 2} \oslash c 
\ar[l, "\mu_c"' ] 
& 
\dots 
\end{tikzcd}
\end{equation}
If $(x,a)$ is an $M$-action object, $M^\bullet_x$ is right split 
\begin{equation}
\begin{tikzcd}[column sep = 12mm]
x \ar[r,"\eta_x"]  
& 
M \oslash x 
\ar[l,"a"',shift left = 3mm] 
\ar[r,"\eta_{M\oslash x} ", shift left = 3mm] 
\ar[r,"M\oslash \eta_x",shift left = -5mm ] 
& 
M^{\otimes 2} \oslash x 
\ar[l, "\mu_x"' ] 
\ar[l, "M\oslash a"', shift left =9mm ] 
& 
\dots 
\end{tikzcd}
\end{equation}
In particular, if $x$ admits an $M$-action, $x$ is a retract of $M(x)$. Let us generalize this picture to the setting of $\infty$-categories.

Let $\CC$ be an $\infty$-category. A coaugmented cosimplicial object $X:\Ner(\Delta_+) \to \CC$ is \emph{right split}, if it factors as a composition 
\begin{equation}
\Ner(\Delta_+) \xrightarrow{\Ner({\sf c}_+)} \Ner(\Delta_\max) \longrightarrow \CC. 
\end{equation}

\begin{proposition}[{\cite[\href{https://kerodon.net/tag/04SH}{Prop. 04SH}]{kerodon}}]
\label{prop:split_cosimplicial}
Let $X:\Ner(\Delta_+)\to \CC$ be a right split coaugmented cosimplicial object and let $\bar X:\Ner(\Delta) \to \CC$ be its restriction. Then $X$ is a limit diagram of $\bar X$ and the natural map to the  totalization of $\bar X$
\begin{equation}
X(0) \xrightarrow{\sim} \lim \bar X
\end{equation}
is an equivalence. 
\end{proposition}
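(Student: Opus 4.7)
The plan is to reduce the statement to two facts: (i) $[0]$ is an initial object of $\Delta_{\max}$, and (ii) the restricted functor ${\sf c}_+|_\Delta : \Delta \to \Delta_{\max}$, $[n] \mapsto [n+1]$, is initial (left cofinal) as a functor of $\infty$-categories. Granted these, any $Y : \Ner(\Delta_{\max}) \to \CC$ satisfies $\lim Y \simeq Y([0])$ (by (i)) and $\lim (Y \circ \Ner({\sf c}_+|_\Delta)) \simeq \lim Y$ (by (ii)). Applying this to the factorisation $X = Y \circ \Ner({\sf c}_+)$ provided by the right splitting, and noting that $X([-1]) = Y({\sf c}_+([-1])) = Y([-1]\star[0]) = Y([0])$ while $\bar X = Y \circ \Ner({\sf c}_+|_\Delta)$, the chain of equivalences
\begin{equation}
X([-1]) \simeq Y([0]) \simeq \lim Y \simeq \lim \bar X
\end{equation}
exhibits $X$ as a limit diagram and realises $X([-1])$ as $\lim \bar X$.

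Fact (i) is immediate: since every morphism in $\Delta_{\max}$ preserves the maximum, the only morphism $[0] \to [n]$ in $\Delta_{\max}$ is $0 \mapsto n$, so $[0]$ is initial and an initial object represents the limit in any $\infty$-category.

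Fact (ii) is the main work. I would verify it via the $\infty$-categorical Quillen's Theorem A, which says that a functor between $\infty$-categories is initial exactly when for each target object the associated comma $\infty$-category is weakly contractible. Concretely, for each $[m] \in \Delta_{\max}$ I would unwind the comma
\begin{equation}
\Ner(\Delta) \times_{\Ner(\Delta_{\max})} \Ner(\Delta_{\max})_{/[m]}:
\end{equation}
its objects are pairs $([n], h : [n+1] \to [m])$ with $h$ in $\Delta_{\max}$, i.e.\ $h(n+1) = m$, and restriction $h \mapsto h|_{[n]}$ gives a bijection between such $h$ and arbitrary monotone maps $[n] \to [m]$. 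Tracking morphisms through the same restriction shows that the comma is isomorphic, as an ordinary category, to the slice $\Delta/[m]$, which carries the terminal object $\id_{[m]}$ and therefore has weakly contractible nerve.

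The main obstacle I anticipate is the bookkeeping involved in identifying the comma category with $\Delta/[m]$ and confirming that the morphism-level compatibility really does reduce to ordinary commutative triangles in $\Delta$; once that identification is established, the existence of the terminal object $\id_{[m]}$ and Quillen's Theorem A deliver initiality of ${\sf c}_+|_\Delta$, and the limit-diagram statement follows immediately.
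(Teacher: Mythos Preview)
The paper does not supply its own proof of this proposition; it is quoted as a result from Kerodon and no argument is given in the text. So there is nothing to compare your approach against directly.

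Your argument is correct and is essentially the standard one. The identification of the comma category $({\sf c}_+|_\Delta)/[m]$ with $\Delta/[m]$ via restriction $h \mapsto h|_{[n]}$ is clean and accurate, and the terminal object $\id_{[m]}$ gives contractibility as you say. One small point worth tightening: the chain $X([-1]) \simeq Y([0]) \simeq \lim Y \simeq \lim \bar X$ establishes only that the \emph{objects} agree; the claim is that the specific cone $X$ is a limit cone. To close this, observe that ${\sf c}_+$ factors as
\[
\Delta^{\triangleleft} \xrightarrow{({\sf c}_+|_\Delta)^{\triangleleft}} \Delta_{\max}^{\triangleleft} \xrightarrow{\,r\,} \Delta_{\max},
\]
where $r$ sends the cone point to the initial object $[0]$. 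Since $[0]$ is initial, $Y \circ r$ is a limit cone for $Y$, and since left cofinal functors carry limit cones to limit cones (this is the formulation of left cofinality used in the paper), the composite $X = (Y \circ r) \circ ({\sf c}_+|_\Delta)^{\triangleleft}$ is a limit cone for $\bar X$. This is implicit in what you wrote but deserves to be said explicitly.
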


Assume that $(\EE,\CC)$ is a strict action $\infty$-category and $\MM$ is an $\infty$-monoid in $\EE$.  For an object $c$ of $\CC$, the composition of $\MM$ with the  functor $ - \oslash  c:\EE \to \CC$  is denoted by 
\begin{equation}
\MM_c : \Ner(\Delta_+) \longrightarrow \CC.
\end{equation} 
Consider the following full subcategories of $\CC:$ 
\begin{itemize}
    \item $\II(\MM)$ is spanned by objects of the form $\MM_0\oslash c$;  
    \item $\AA(\MM)$ is spanned by objects that admit an $\MM$-action; 
    \item $\SS(\MM)$ is spanned by objects $c$ such that the coaugmented cosimplicial object $\MM_c:\Ner(\Delta_+)\to \CC$ is right split; 
    \item $\RR(\MM)$ is spanned by retracts of objects of $\II(\MM)$.
\end{itemize}

Here we say that an object of an $\infty$-category $\CC$ is a retract of another object if it is a retract in the homotopy category $h\CC$.

\begin{lemma}\label{lemma:R(M)} For any object $c$ of  
$\RR(\MM)$, the morphism $\eta_c:c\to \MM_0 \oslash c$ is left invertible. 
\end{lemma}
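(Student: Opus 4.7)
The plan is to pass to the homotopy category $h\CC$, establish left invertibility of $\eta_x$ for $x\in \II(\MM)$ using the free action, and then propagate the result to retracts via naturality of $\eta$.

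For $x = \MM_0 \oslash c''\in \II(\MM)$, the free action $\AA^{\sf free}_{\MM,c''}\colon \Ner(\Delta_\max) \to \CC$ from Example \ref{example:free_action} evaluates the unique map $a_{[0]}\colon [0]\star[0]\to [0]$ in $\Delta_\max$ to a morphism $a_x\colon \MM_0 \oslash x \to x$. Since the composition $[0] \cong [-1]\star[0] \xrightarrow{\eta_{[0]}\star [0]} [0]\star[0] \xrightarrow{a_{[0]}} [0]$ is literally $\id_{[0]}$ in $\Delta_\max$, the functor $\AA^{\sf free}_{\MM,c''}$ transports this identity to a homotopy $a_x \circ \eta_x \simeq \id_x$ in $\CC$. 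Thus $a_x$ is a left inverse of $\eta_x$ in $\CC$.

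Next, I would record the naturality of $\eta$. The composite $\Ner(\Delta_+) \times \CC \xrightarrow{\MM \times \id} \EE \times \CC \xrightarrow{\oslash} \CC$ adjuncts to a functor $\CC \to \Fun(\Ner(\Delta_+), \CC)$, and the unique arrow $[-1]\to [0]$ in $\Delta_+$ selects a natural transformation $\eta\colon \Id_\CC \to \MM_0 \oslash (-)$. In $h\CC$ this yields a strictly commutative square $(\MM_0\oslash f)\circ \eta_c = \eta_{c'}\circ f$ for every morphism $f\colon c\to c'$ of $\CC$.

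Finally, given $c \in \RR(\MM)$, I would choose $x\in \II(\MM)$ together with morphisms $i\colon c\to x$ and $r\colon x\to c$ in $\CC$ whose classes satisfy $r\circ i = \id_c$ in $h\CC$, and set $b_c = r\circ a_x\circ (\MM_0 \oslash i)$. Using naturality to identify $(\MM_0\oslash i)\circ \eta_c$ with $\eta_x\circ i$ in $h\CC$, together with the identity $a_x\circ \eta_x = \id_x$ from the previous step, we compute $b_c \circ \eta_c = r\circ a_x\circ \eta_x\circ i = r\circ i = \id_c$ in $h\CC$, so $\eta_c$ is left invertible. The only mild technical obstacle is packaging the unit axiom of the free action and the $\infty$-categorical naturality of $\eta$ in a usable form; both, however, are immediate consequences of the strict action $\infty$-category structure on $(\Ner(\Delta_+), \Ner(\Delta_\max))$ and the functoriality of $\AA^{\sf free}_{\MM,c''}$, so no substantive difficulty remains.
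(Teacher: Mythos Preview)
Your proof is correct and follows essentially the same route as the paper's. The paper constructs the left inverse for $d=\MM_0\oslash c'$ directly as $\mu=\MM_{c'}([1]\to[0])$ and verifies $\mu\circ\eta_d=\id_d$ via functoriality of $\MM_{c'}$, whereas you phrase the same step through the free action $\AA^{\sf free}_{\MM,c''}$ of Example~\ref{example:free_action}; since $\AA^{\sf free}_{\MM,c''}$ is by definition the restriction of $\MM_{c''}$ to $\Ner(\Delta_\max)$, the two arguments are literally the same computation, and the retract step is identical in both.
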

\begin{proof}
Assume that $c$ is an object of $\RR(\MM)$. Then there is an object $d=\MM \oslash c'$ of and maps $\iota :c \to d$ and $\rho : d \to c$ such that $\rho \circ \iota =\id_c$ in the homotopy category $h\CC$. There is also a map $\mu: \MM_0 \oslash d \to d$, defined as $\mu=\MM_{c'}([1]\to [0]),$ such that 
$\mu \circ \eta_{d} = \id_d$ in the homotopy category. Therefore we have  $ \rho \circ \mu \circ (\MM \oslash \iota) \circ \eta_c = \rho \circ \mu  \circ \eta_d \circ \iota = \rho \circ \iota = \id_c $.    
\end{proof}

\begin{proposition}\label{prop:ISAR}
For a strict action $\infty$-category $(\EE,\CC)$ and an $\infty$-monoid $\MM$ in $\EE$, there are inclusions
\begin{equation}
    \II(\MM) \subseteq \AA(\MM) \subseteq \SS(\MM) \subseteq  \RR(\MM) \subseteq \CC. 
\end{equation}
\end{proposition}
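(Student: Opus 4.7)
The plan is to verify each of the four inclusions separately. The last one, $\RR(\MM) \subseteq \CC$, holds by definition. For the first, $\II(\MM) \subseteq \AA(\MM)$, it suffices to quote Example \ref{example:free_action}: every object of the form $\MM_0 \oslash c$ is equipped with the free $\MM$-action $\AA^{\sf free}_{\MM,c}$, and hence lies in $\AA(\MM)$.

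For $\AA(\MM) \subseteq \SS(\MM)$, the plan is to show that the datum of an $\MM$-action $\AA : \Ner(\Delta_\max) \to \CC$ on an object $x = \AA([0])$ already provides a splitting of the coaugmented cosimplicial object $\MM_x$. The key observation is that the left adjoint ${\sf c}_+ : \Delta_+ \to \Delta_\max$ is canonically identified with the functor $\sigma \mapsto \sigma \star [0]$, and so factors as
\begin{equation}
\Delta_+ \xrightarrow{\ \Id \times \{[0]\}\ } \Delta_+ \times \Delta_\max \xrightarrow{\ \star\ } \Delta_\max.
\end{equation}
Postcomposing with $\AA$ and invoking the commutativity of diagram \eqref{diag:morphism_action2} for the morphism of strict action $\infty$-categories $(\MM,\AA)$, one obtains an equality of functors $\AA \circ \Ner({\sf c}_+) = (\cdot \oslash x) \circ \MM = \MM_x$, which exhibits $\MM_x$ as right split. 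Hence $x \in \SS(\MM)$.

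For $\SS(\MM) \subseteq \RR(\MM)$, the plan is to extract retract data from any splitting $\AA : \Ner(\Delta_\max) \to \CC$ of $\MM_x$. From ${\sf c}_+([-1]) = [0]$ and ${\sf c}_+([0]) = [1]$ one reads off $\AA([0]) = \MM_x([-1]) = x$ and $\AA([1]) = \MM_0 \oslash x \in \II(\MM)$. In $\Delta_\max$, the inclusion $\iota : [0] \to [1]$ with $0 \mapsto 1$ and the unique collapse $\rho : [1] \to [0]$ satisfy $\rho \circ \iota = \id_{[0]}$. The corresponding $2$-simplex of $\Ner(\Delta_\max)$ is sent by $\AA$ to a $2$-simplex of $\CC$ that exhibits $\AA(\rho) \circ \AA(\iota) = \id_x$ in $h\CC$. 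Thus $x$ is a retract of an object of $\II(\MM)$, i.e.\ $x \in \RR(\MM)$.

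The main bookkeeping obstacle is to pin down the functor ${\sf c}_+$ as $\sigma \mapsto \sigma \star [0]$ functorially and to deduce from the strict commuting square \eqref{diag:morphism_action2} an actual equality $\AA \circ \Ner({\sf c}_+) = \MM_x$, rather than merely an equivalence. This works on the nose because the definition of a morphism of strict action $\infty$-categories imposes a strict commutation of the action squares, so all the needed identifications hold at the level of maps of simplicial sets rather than only up to coherent homotopy.
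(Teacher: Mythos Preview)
Your proof is correct and follows essentially the same route as the paper's: the free action from Example~\ref{example:free_action} gives $\II(\MM)\subseteq\AA(\MM)$; factoring $\Ner({\sf c}_+)$ through the action square \eqref{diag:morphism_action2} gives $\AA(\MM)\subseteq\SS(\MM)$; and the retract $[0]\hookrightarrow[1]\twoheadrightarrow[0]$ in $\Delta_\max$ gives $\SS(\MM)\subseteq\RR(\MM)$. Your version simply unpacks the diagram chase and the retract data a bit more explicitly than the paper does.
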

\begin{proof} The inclusion 
$\II(\MM) \subseteq \AA(\MM)$ follows from the fact that any object of the form $\MM_0 \otimes c$ has the free  $\MM$-action.

Prove $\AA(\MM)\subseteq \SS(\MM)$. Assume that $c$ is an object of $\AA(\MM)$ and  $\AA:\Ner(\Delta_\max)\to \CC$ is an $\MM$-action on $c$. Then $c=\AA ([0])$ and the diagram 
\begin{equation}
\begin{tikzcd}
\Ner(\Delta_+) \times \Ner(\Delta_\max) 
\ar[rr,"{(\MM,\AA)}"] \ar[d,"\star"] && \EE \times \CC \ar[d,"\oslash"] \\
\Ner(\Delta_\max) \ar[rr,"{\AA}"] && \CC
\end{tikzcd}
\end{equation}
commutes. 
If we compose the vertical arrows with $\Ner(\Delta_+) \to \Ner(\Delta_+) \times \Ner(\Delta_\max), n \mapsto (n,[0])$ and $\EE \to \EE \times \CC, e \mapsto (e,c)$, we obtain a commutative diagram 
\begin{equation}\label{diag:m+0_1}
\begin{tikzcd}
 \Ner(\Delta_+)  
\ar[rr,"{\MM}"] \ar[d,"{\Ner({\sf c}_+)}"] 
&& \EE  \ar[d,"- \oslash c"] \\
\Ner(\Delta_\max) \ar[rr,"{\AA}"] && \CC
\end{tikzcd}
\end{equation}
Therefore, $\MM_c$ is right split.

Prove $\SS(\MM)\subseteq \RR(\MM).$ Assume that $\MM_c : \Ner(\Delta_+)\to \CC$ can be presented as a composition $\Ner(\Delta_+)\to  \Ner(\Delta_\max) \to \CC.$ Since $[0]$ is a retract of $[1]$ in $\Delta_\max,$ we obtain that $c$ is a retract of $\MM_0 \oslash  c$. 
\end{proof}

\section{\bf Codensity \texorpdfstring{$\infty$}{}-monads of full subcategories} 

In this section we introduce the notion of codensity $\infty$-monad $T_\DD$ associated with a full $\infty$-subcategory $\DD\subseteq \CC$.  We use the above theory of $\infty$-monoids in strict monoidal $\infty$-categories to define the canonical $\infty$-monad structure on $T_\DD$, and to prove that $T_\DD$ is the terminal $\DD$-preserving $\infty$-monad (Theorem \ref{th:D-preserving-coaugmented}).    We also show that if $\DD$ is a reflective $\infty$-subcategory, then $T_\DD$ is the localization functor associated with $\DD$ (Proposition  \ref{prop:localization}).

\subsection{Right Kan extensions} 

Let us recall the definition of a right Kan extension of a functor from a full $\infty$-subcategory of an $\infty$-category, as given in \cite[Def.~4.3.2.2]{lurie2009higher}.
Let $\DD$ be a full $\infty$-subcategory of an $\infty$-category $\CC$, and let $f\colon \DD \to \CC'$ be a functor to an $\infty$-category $\CC'$. A functor $\FF:\CC\to \CC'$ is called right Kan extension of $f$, if $\FF\mid_{\DD} = f$ and for any object $c$ of $\CC$ the induced triangle 
\begin{equation}
\begin{tikzcd}
\DD_{c/} \ar[r,"f_c"] \ar[d,hookrightarrow]   & \CC' \\ 
(\DD_{c/})^{\triangleleft} \ar[ru]  & 
\end{tikzcd}
\end{equation}
exhibits $\FF(c)$ as a limit of $f_c$, where $f_c$ is defined as the composition $\DD_{c/} \to \DD \xrightarrow{f} \CC',$ and, in addition, the diagonal functor in the diagram above is obtained from the composition $\DD_{c/} \hookrightarrow \CC_{c/} \to \CC'_{\FF(c)/}$ by the join-slice adjunction. 

\begin{proposition}\label{proposition:Kan_extension_along_inclusion}
Let $\DD$ be a full $\infty$-subcategory of an $\infty$-category $\CC$ and $\FF:\CC\to \CC'$ be a right Kan extension of $f:\DD\to \CC'$ to $\CC$. Then $\FF$ is a terminal object of the $\infty$-category $\Fun_{\DD/}(\CC,\CC')$ of functors under $\DD$. 
\end{proposition}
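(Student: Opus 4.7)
The plan is to prove that for every $\GG\in \Fun_{\DD/}(\CC,\CC')$, the mapping space $\map_{\Fun_{\DD/}(\CC,\CC')}(\GG,\FF)$ is contractible. First, I would model the under-category as a strict pullback
\begin{equation*}
\Fun_{\DD/}(\CC,\CC') = \Fun(\CC,\CC') \times_{\Fun(\DD,\CC')} \{f\},
\end{equation*}
which is homotopy-meaningful because restriction along the inclusion $\DD\hookrightarrow \CC$ of simplicial sets is an isofibration in the Joyal model structure.

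Next, I would invoke the standard universal property of a right Kan extension: since $\FF$ exhibits $f$ as a right Kan extension, for any functor $\GG\colon \CC \to \CC'$ the restriction map
\begin{equation*}
\map_{\Fun(\CC,\CC')}(\GG,\FF) \longrightarrow \map_{\Fun(\DD,\CC')}(\GG|_{\DD}, f)
\end{equation*}
is an equivalence. This is the passage from the pointwise definition of right Kan extension used in the paper to the adjoint-style characterization; it follows by assembling, over all objects $c\in \CC$, the defining universal property $\FF(c) \simeq \lim f_c$ together with the description of maps into a limit as compatible systems of maps into the diagram.

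Finally, for an object $\GG\in \Fun_{\DD/}(\CC,\CC')$, the pullback model yields a fiber sequence
\begin{equation*}
\map_{\Fun_{\DD/}(\CC,\CC')}(\GG,\FF) \longrightarrow \map_{\Fun(\CC,\CC')}(\GG,\FF) \longrightarrow \map_{\Fun(\DD,\CC')}(\GG|_\DD, f)
\end{equation*}
based at $\id_f$, using the identification $\GG|_\DD = f$ built into the under-category structure on $\GG$. Since the second arrow is an equivalence by the previous step, its homotopy fiber over $\id_f$ is contractible, and hence $\FF$ is a terminal object of $\Fun_{\DD/}(\CC,\CC')$.

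The main obstacle will be making the second step rigorous: the paper uses the pointwise formulation of a right Kan extension, and one needs to lift the pointwise universal property to a global equivalence of mapping spaces in $\Fun(\CC,\CC')$. A clean way to do this is to reduce, via the enriched Yoneda lemma, to the statement for corepresentable functors and then appeal directly to the pointwise formula together with the join-slice adjunction used in the definition.
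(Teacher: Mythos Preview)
Your approach is correct but differs from the paper's. The paper gives a one-line proof: it unwinds terminality of $\FF$ in the strict fiber $\Fun_{\DD/}(\CC,\CC')=\Fun(\CC,\CC')\times_{\Fun(\DD,\CC')}\{f\}$ into a lifting problem for $\partial\Delta^n\hookrightarrow\Delta^n$ against the restriction map $\Fun(\CC,\CC')\to\Fun(\DD,\CC')$ (with the bottom map constant at $f$ and the last vertex going to $\FF$), and then cites \cite[Lemma~4.3.2.12]{luriehigheralgebra} directly for the existence of a lift. No passage through the global universal property of Kan extensions is needed.

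Your route instead factors through the adjoint-style characterization: you want $\map_{\Fun(\CC,\CC')}(\GG,\FF)\xrightarrow{\simeq}\map_{\Fun(\DD,\CC')}(\GG|_\DD,f)$ for all $\GG$, and then conclude by taking the fiber over $\id_f$. This is legitimate, and the fiber-sequence step is fine since restriction is an isofibration. The tradeoff is exactly what you identify as the obstacle: promoting the pointwise limit description to the global mapping-space equivalence is itself a nontrivial theorem (essentially the equivalence between Lurie's pointwise definition and the universal-property definition of Kan extensions, as in HTT~4.3.2 or Kerodon). So your argument is more conceptual but imports a heavier result, whereas the paper's proof cites a single technical lemma tailored to precisely this lifting problem and avoids the detour.
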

\begin{proof}
The statement is equivalent to the fact that the lifting problem 
\begin{equation}
\begin{tikzcd}
\Delta^{\{n\}} \ar[r,hookrightarrow] \ar[rr,bend left=8mm,"\FF"] & \partial \Delta^n \ar[r] \ar[d] & \Fun(\CC,\CC') \ar[d] \\
& \Delta^n \ar[r,"\tilde f"] \ar[ru,dashed] & \Fun(\DD,\CC')
\end{tikzcd}
\end{equation}
has a solution
for $n\geq 1,$ where $\tilde f$ is the composition $\Delta^n\to \Delta^0 \overset{f}\to \Fun(\CC,\CC')$. 
This lifting problem has a solution by \cite[Lemma 4.3.2.12]{luriehigheralgebra}. 
\end{proof}

\subsection{Codensity \texorpdfstring{$\infty$}{}-monads} 
Assume that $\CC$ is an $\infty$-category and $\DD\subseteq \CC$ is its full $\infty$-subcategory. 
Then a functor
\begin{equation}
T_\DD:\CC \to \CC
\end{equation}
is called codensity $\infty$-monad associated with $\DD$, if it is the right Kan extension of the inclusion $\DD\hookrightarrow \CC$ along itself. 
Proposition \ref{proposition:Kan_extension_along_inclusion} implies that $T_\DD$ is a terminal object of the $\infty$-category of endofunctors under $\DD$
\begin{equation}
\End_{\DD/}(\CC)=\Fun_{\DD/}(\CC,\CC). 
\end{equation}

For an $\infty$-category $\CC$ and a functor $\FF:\CC\to \CC,$ we use the following notations 
\begin{equation}
\Mon_\infty(\CC) = \mon_\infty(\End(\CC)), \hspace{5mm} \Mon_\infty(\CC,\FF) = \mon_\infty(\End(\CC),\FF)
\end{equation}
for the $\infty$-category of $\infty$-monads on $\CC$, and the Kan complex of $\infty$-monad structures on $\FF.$
Let us define a Kan complex of canonical $\infty$-monad structures on $T_\DD$.

It is easy to see that  $\End_{\DD/}(\CC)$ is a simplicial submonoid of $\End(\CC)$, so it is a strict monoidal $\infty$-category. 
Proposition  \ref{proposition:Kan_extension_along_inclusion} says that $T_\DD$ is a terminal object of $\End_{\DD/}(\CC)$. 
Therefore Theorem  \ref{th:Structures_of_monad_on_terminal} implies that the Kan complex of structures of $\infty$-monoid on $T_\DD$ in the strict monoidal category $\End_{\DD/}(\CC)$ is contractible. 
The inclusion $\End_{\DD/}(\CC)\subseteq \End(\CC)$ defines an inclusion of this contractible Kan complex to the Kan complex of all $\infty$-monad structures on $T_\DD:$
\begin{equation}
* \sim  \mon_\infty(\End_{\DD/}(\CC), T_\DD) \subseteq  \Mon_\infty(\CC, T_\DD).
\end{equation}  
Objects of this contractible Kan complex are called canonical $\infty$-monad structures on the codensity monad $T_\DD$.  

Sometimes we do not need a structure of an $\infty$-monad on $T_\DD$ but we consider it as a coaugmented functor. 
Since $T_\DD$ is a terminal object of $\End_{\DD/}(\CC)$, we have the similar situation with the coaugmentations  
\begin{equation}
* \sim \map_{\End_{\DD/}(\CC)}({\sf Id}_\CC,T_\DD) \subseteq {\sf Nat}({\sf Id}_\CC,T_\DD).
\end{equation}
So, we have a contractible Kan complex of natural transformations $\eta:{\sf Id}_\CC \to T_\DD$ that we call canonical coaugmentations of $T_\DD$. 
For any canonical coaugmentation $\eta$ the coaugmented object  $(T_\DD,\eta)$ is a terminal object of $\End_{\DD/}(\CC)^{\Id_\CC/}$ (Lemma  \ref{lemma:terminal_object_of_slice} in the appendix). 
Then Theorem \ref{th:terminal:coaug}  implies that any such a canonical coaugmentation $\eta:{\sf Id}_\CC\to T_\DD$ can be lifted to a canonical structure of $\infty$-monad on $T_\DD$ uniquely up to a contractible space of choices.    

It turns out that the codensity $\infty$-monad $T_\DD$ depends only on the ``retract closure'' of $\DD.$ More precisely, we prove the following.

\begin{proposition}
\label{prop:coden:retracts}
Let $\CC$ be an $\infty$-category, $\DD\subseteq \DD'$ be its full $\infty$-subcategories such that any object of $\DD'$ is a retract of an object of $\DD$. Then  $T_\DD$ exists if and only if $T_{\DD'}$ exists, and in this case there is a natural equivalence   
\begin{equation}
    T_{\DD'} \xrightarrow{\simeq} T_{\DD}.
\end{equation}
\end{proposition}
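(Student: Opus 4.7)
The plan is to reduce the statement to the following key observation: the property of being a $\DD$-preserving coaugmented endofunctor of $\CC$ coincides with the property of being $\DD'$-preserving, under the retract hypothesis. Once this is established, the conclusion follows at once from Theorem~\ref{th:D-preserving-coaugmented}(a), which characterizes codensity $\infty$-monads as terminal $\DD$-preserving coaugmented endofunctors.

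The key lemma I would prove first is: a coaugmented endofunctor $(F,\eta\colon \Id_\CC \to F)$ is $\DD$-preserving if and only if it is $\DD'$-preserving. One direction is immediate from $\DD \subseteq \DD'$. Conversely, suppose $\eta_d$ is an equivalence for every $d \in \DD$, and let $d' \in \DD'$. By hypothesis there exist $d \in \DD$ and morphisms $\iota\colon d' \to d$ and $\rho\colon d \to d'$ in $\CC$ with $\rho\circ\iota \simeq \id_{d'}$. Naturality of $\eta$ applied to this retract data yields the diagram
\begin{equation*}
\begin{tikzcd}
d' \ar[r,"\eta_{d'}"] \ar[d,"\iota"'] & F(d') \ar[d,"F(\iota)"] \\
d \ar[r,"\eta_d"] \ar[d,"\rho"'] & F(d) \ar[d,"F(\rho)"] \\
d' \ar[r,"\eta_{d'}"] & F(d')
\end{tikzcd}
\end{equation*}
in the arrow $\infty$-category $\Fun(\Delta^1,\CC)$ whose two vertical composites are each equivalent to the identity. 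This exhibits $\eta_{d'}$ as a retract of $\eta_d$, and since equivalences in $\CC$ are closed under retracts in $\Fun(\Delta^1,\CC)$, we conclude that $\eta_{d'}$ is an equivalence.

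Given the lemma, the $\infty$-category of $\DD$-preserving coaugmented endofunctors of $\CC$ coincides with the $\infty$-category of $\DD'$-preserving ones. By Theorem~\ref{th:D-preserving-coaugmented}(a), when $T_\DD$ exists it is a terminal object of the former, and when $T_{\DD'}$ exists it is a terminal object of the latter. Since these $\infty$-categories are equal, $T_\DD$ exists if and only if $T_{\DD'}$ does, and when both exist they must agree up to a contractible space of choices, yielding the desired natural equivalence $T_{\DD'} \xrightarrow{\simeq} T_\DD$ directly from the universal property.

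The only potential obstacle is the reliance on Theorem~\ref{th:D-preserving-coaugmented}; if that result is established only later in the paper, one can instead argue directly from Proposition~\ref{proposition:Kan_extension_along_inclusion} by showing that the restriction functor $\End_{\DD'/}(\CC) \to \End_{\DD/}(\CC)$ identifies the terminal objects on both sides, again using the key lemma to match the relevant slices. The underlying fact that equivalences in $\CC$ are closed under retracts in the arrow $\infty$-category is standard, so this step does not present a genuine difficulty.
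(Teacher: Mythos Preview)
Your key lemma (that $\DD$-preserving coincides with $\DD'$-preserving under the retract hypothesis) is correct and useful, and it does show that when both $T_\DD$ and $T_{\DD'}$ exist they are equivalent, since by Theorem~\ref{th:D-preserving-coaugmented}(a) each is terminal in the same $\infty$-category $\Coaug^\DD(\CC)=\Coaug^{\DD'}(\CC)$. There is no logical circularity: Theorem~\ref{th:D-preserving-coaugmented} does not use the present proposition.

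The genuine gap is in the existence claim. Theorem~\ref{th:D-preserving-coaugmented}(a) is a one-way implication: \emph{if} $T_\DD$ exists (as a pointwise right Kan extension in the sense of \cite[Def.~4.3.2.2]{lurie2009higher}), \emph{then} it is terminal in $\Coaug^\DD(\CC)$. You are using the converse when you write ``Since these $\infty$-categories are equal, $T_\DD$ exists if and only if $T_{\DD'}$ does.'' Knowing that $\Coaug^{\DD'}(\CC)$ has a terminal object does not tell you that the pointwise limits defining $T_{\DD'}(c)=\lim_{\DD'_{c/}}\iota'$ exist for every $c$; a terminal object in the under-category is only a \emph{weak} Kan extension, not a priori a pointwise one. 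Your alternative via Proposition~\ref{proposition:Kan_extension_along_inclusion} has the same defect, since that proposition is again only the forward implication.

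The paper closes exactly this gap by working directly with the pointwise definition: it invokes \cite[\href{https://kerodon.net/tag/03YQ}{Prop.~03YQ}]{kerodon} to see that every functor $\DD'\to\CC$ is automatically a right Kan extension of its restriction to $\DD$, and then uses transitivity of pointwise Kan extensions \cite[\href{https://kerodon.net/tag/0314}{Cor.~0314}]{kerodon} to conclude that a functor $\CC\to\CC$ is a right Kan extension of $\iota$ if and only if it is a right Kan extension of $\iota'$. This yields both the existence equivalence and the identification $T_\DD\simeq T_{\DD'}$ simultaneously. If you want to rescue your approach, you would need to supply an independent argument that the inclusion $\DD_{c/}\hookrightarrow\DD'_{c/}$ is left cofinal for every $c$ (equivalently, that $\DD_{d'/}$ is weakly contractible for each $d'\in\DD'$), which is essentially what the cited Kerodon result provides.
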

\begin{proof} 
By \cite[\href{https://kerodon.net/tag/03YQ}{Prop.03YQ}]{kerodon} we know that any functor $\FF:\DD' \to \CC$ is a right Kan extension of the restriction $\FF|_\DD.$ Then the inclusion $\iota':\DD'\hookrightarrow \CC$ is a right Kan extension of the inclusion $\iota:\DD \hookrightarrow \CC$ along the inclusion $\DD \hookrightarrow \DD'$. 
It is also known that a functor $\FF:\CC\to \CC$ is a right Kan extension of $\FF|_{\DD}$ if and only if it is a right Kan extension of $\FF|_{\DD'}$ and $\FF|_{\DD'}$ is a right Kan extension of $\FF|_{\DD}$ \cite[\href{https://kerodon.net/tag/0314}{Cor.0314}]{kerodon}. It follows that, if $T_{\DD'}$ exists, then it is equal to $T_\DD$. On the other hand, if $T_\DD$ exists, then it is a right Kan extension $T_\DD|_{\DD'}$, and $T_\DD|_{\DD'}$ is a right Kan extension of $\iota:\DD\hookrightarrow \CC.$ Then $T_\DD|_{\DD'}$ is equivalent to $\iota':\DD'\hookrightarrow \CC$, and $T_\DD$ is equivalent to $T_{\DD'}$. 
\end{proof}

\subsection{\texorpdfstring{$\infty$}{}-monads preserving a full subcategory}

For a full $\infty$-subcategory $\DD$ of an $\infty$-category $\CC$, we can consider two associated $\infty$-categories: the $\infty$-category of coaugmented functors that preserve all objects of $\DD$, and the $\infty$-category of $\infty$-monads that preserve all objects of $\DD$. This subsection is devoted to the statement that the codensity $\infty$-monad $T_\DD$ is the terminal object in both of these $\infty$-categories (Theorem~\ref{th:D-preserving-coaugmented}). In the setting of ordinary categories, this result appears in \cite[Prop.~5.4]{leinster2013codensity}.

The $\infty$-category of 
coaugmented functors is defined as the fat under-category of 
endofunctors under the identity functor
$\End(\CC)^{\Id_\CC/}.$
The category of 
$\DD$-preserving coaugmented functors $\Coaug^\DD(\CC)$ is defined as a pullback.
\begin{equation}
\begin{tikzcd}
\Coaug^\DD(\CC)\ar[r,hookrightarrow] \ar[d] & \End(\CC)^{\Id_\CC/} \ar[d] \\
(\Fun(\DD,\CC)^{\simeq})^{\iota/}\ar[r,hookrightarrow] & \Fun(\DD,\CC)^{\iota/}
\end{tikzcd}
\end{equation} 
Since equivalences in $\Fun(\DD,\CC)$ satisfy 2-out-of-3 property, 
we obtain that $(\Fun(\DD,\CC)^{\simeq})^{\iota/}$ is a full $\infty$-subcategory of $\Fun(\DD,\CC)^{\iota/}$, whose objects are equivalences $\iota \overset{\simeq}\to \FF$.
Therefore, $\Coaug^\DD(\CC)$ is also a full $\infty$-subcategory of $\End(\CC)^{\Id_\CC/}$ consisting of coaugmented functors $(\FF,\eta)$ such that $\eta_d:d\to \FF (d)$ is an equivalence for any object $d$ of $\DD$.

Note that $\End_{\DD/}(\CC)^{\Id_\CC/}$ and $\Coaug^\DD(\CC)$ are two simplicial subsets of $\End(\CC)^{\Id_\CC/}.$ It is easy to see that $\End_{\DD/}(\CC)^{\Id_\CC/} $ is a simplicial subset of  $ \Coaug^\DD(\CC).$

\begin{proposition}\label{prop:coaug_equiv}
The monomorphism 
\begin{equation}
\End_{\DD/}(\CC)^{\Id_\CC/} \overset{\simeq}\longrightarrow \Coaug^\DD(\CC)   
\end{equation}
is an equivalence of $\infty$-categories. 
\end{proposition}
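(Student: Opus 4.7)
The plan is to realize both $\End_{\DD/}(\CC)^{\Id_\CC/}$ and $\Coaug^\DD(\CC)$ as pullbacks of one and the same isofibration along parallel maps into $\Fun(\DD,\CC)^{\iota/}$, and then to deduce the claim from right properness of the Joyal model structure. First I would consider the restriction functor $p\colon \End(\CC)\to \Fun(\DD,\CC)$ induced by the inclusion $\iota\colon \DD\hookrightarrow\CC$. Since $\iota$ is a monomorphism of simplicial sets and $\CC$ is an $\infty$-category, $p$ is an isofibration; taking fat under-categories preserves isofibrations (cf.\ \cite[\S 2.5]{land2021introduction}), so the induced functor $p^{\Id_\CC/}\colon \End(\CC)^{\Id_\CC/}\to \Fun(\DD,\CC)^{\iota/}$ is again an isofibration.

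Next I would unpack the two pullback decompositions. As used in the proof of Proposition~\ref{proposition:Kan_extension_along_inclusion}, $\End_{\DD/}(\CC)$ is the strict fiber $\End(\CC)\times_{\Fun(\DD,\CC)}\{\iota\}$. Since the fat under-category commutes with pullbacks of simplicial sets, there is a canonical identification
\begin{equation*}
\End_{\DD/}(\CC)^{\Id_\CC/}\;=\;\End(\CC)^{\Id_\CC/}\times_{\Fun(\DD,\CC)^{\iota/}}\{\id_\iota\},
\end{equation*}
while the pullback square in the excerpt exhibits $\Coaug^\DD(\CC)$ as the pullback of $p^{\Id_\CC/}$ along $(\Fun(\DD,\CC)^\simeq)^{\iota/}\hookrightarrow \Fun(\DD,\CC)^{\iota/}$. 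The given monomorphism $\End_{\DD/}(\CC)^{\Id_\CC/}\hookrightarrow \Coaug^\DD(\CC)$ is therefore obtained by base-changing the inclusion $\{\id_\iota\}\hookrightarrow(\Fun(\DD,\CC)^\simeq)^{\iota/}$ along the isofibration $\Coaug^\DD(\CC)\to(\Fun(\DD,\CC)^\simeq)^{\iota/}$ induced by $p^{\Id_\CC/}$. Now $(\Fun(\DD,\CC)^\simeq)^{\iota/}$ is the fat under-category of the Kan complex $\Fun(\DD,\CC)^\simeq$ at the vertex $\iota$, so it is itself a Kan complex with initial object $\id_\iota$, hence contractible; the inclusion $\{\id_\iota\}\hookrightarrow(\Fun(\DD,\CC)^\simeq)^{\iota/}$ is thus a trivial Joyal equivalence between contractible Kan complexes. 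By right properness of the Joyal model structure, its pullback along the isofibration $\Coaug^\DD(\CC)\to(\Fun(\DD,\CC)^\simeq)^{\iota/}$ is itself a Joyal equivalence, giving the required equivalence $\End_{\DD/}(\CC)^{\Id_\CC/}\overset{\simeq}{\longrightarrow}\Coaug^\DD(\CC)$.

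The main obstacle I expect is the bookkeeping around the fat slice: one must verify that the particular fat under-category model used throughout the paper both commutes with the relevant pullbacks of simplicial sets and sends isofibrations to isofibrations. Both statements are standard (e.g.\ from \cite[\S 2.5]{land2021introduction}), but they must be invoked in exactly the right variant. A smaller but genuine check is to confirm that the resulting pullback comparison map agrees with the given simplicial-subset inclusion $\End_{\DD/}(\CC)^{\Id_\CC/}\subseteq \Coaug^\DD(\CC)$; this is a formal universal-property argument once both sides are presented as pullbacks.
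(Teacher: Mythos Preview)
Your proposal is correct and follows essentially the same route as the paper: both arguments present $\End_{\DD/}(\CC)^{\Id_\CC/}$ and $\Coaug^\DD(\CC)$ as pullbacks of the isofibration $\End(\CC)^{\Id_\CC/}\to\Fun(\DD,\CC)^{\iota/}$ along $\{\id_\iota\}$ and $(\Fun(\DD,\CC)^\simeq)^{\iota/}$ respectively, observe that the latter is a contractible Kan complex, and conclude via right properness of the Joyal model structure. The paper organizes the pullback comparison via a pasting argument and records the needed fact that fat slices preserve isofibrations as a separate appendix lemma (Lemma~\ref{lemma:slice_of_isofibrations}), but the substance is the same.
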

\begin{proof} 
Consider the following diagram,  
\begin{equation}
\begin{tikzcd}
\Fun_{\DD/}(\CC,\CC)^{\Id_\CC/}\ar[r] \ar[d] & \Coaug^\DD(\CC) \ar[r]\ar[d] & \Fun(\CC,\CC)^{\Id_\CC/}\ar[d] \\
\Delta^0 \ar[r,"{\sf id}_\iota"] & (\Fun(\DD,\CC)^{\simeq})^{\iota/} \ar[r] & \Fun(\DD,\CC)^{\iota/}
\end{tikzcd}
\end{equation}
where $\iota : \DD \hookrightarrow \CC$ is the inclusion functor. 
The right hand square is a pullback by the definition of $\Coaug^\DD(\CC)$. 
The composite square is a pullback because the functor of fat slice is a right adjoint, and hence, it preserves pullbacks.  
So, the left-hand square is also a pullback. 
Note that the functor $\Fun(\CC,\CC)\to \Fun(\DD,\CC)$ is an isofibration \cite[Prop.2.2.5]{land2021introduction}. 
Then the functor $\Fun(\CC,\CC)^{\Id_\CC/} \to \Fun(\DD,\CC)^{\iota/}$ is also an isofibration (Lemma \ref{lemma:slice_of_isofibrations} in the appendix). Hence the functor ${\sf Coaug}^\DD(\CC) \to (\Fun(\DD,\CC)^{\simeq})^{\iota/}$ is also an isofibration. 
Therefore, all these pullbacks are Joyal homotopy pullbacks. 

We claim that the simplicial set $(\Fun(\DD,\CC)^\simeq)^{\iota/}$ is a contractible Kan complex. Indeed, $\Fun(\DD,\CC)^\simeq$ is a Kan complex, and for any Kan complex $K$ and any its object $x$, $K_{x/}$ is the path complex of $K$ \cite[(2.9)]{curtis1971simplicial}, and $K^{x/}$ is equivalent to $K_{x/}$. Therefore ${\sf id}_\iota : \Delta^0\to (\Fun(\DD,\CC)^\simeq)^{\iota/}$ is a categorical equivalence. It follows that the functor $ \Fun_{\DD/}(\CC,\CC)^{\Id_\CC/}\to {\sf Coaug}^\DD(\CC)$ is also a categorical equivalence. 
\end{proof}

Recall that if $\CC$ is an $\infty$-category, then a simplicial subset $\CC'$ of $\CC$ is called an $\infty$-subcategory, if its $1$-simplices are closed with respect to the equivalences and compositions, and an $n$-simplex of $\CC$ is in $\CC'$ if and only if all its $1$-simplices are in $\CC'$. In other words, an $\infty$-subcategory is a preimage of an ordinary subcategory of the homotopy category $h\CC$ (see \cite[Lemma 1.2.77]{land2021introduction}).  

Let $\EE$ be a strict monoidal $\infty$-category. A simplicial submonoid $\EE'$ of $\EE$ is called a strict monoidal $\infty$-subcategory, if  $U\EE'$ is an $\infty$-subcategory of $U\EE$.  

\begin{lemma}\label{lemma:subcategory-pushout}
Let $\EE'$ be a strict monoidal $\infty$-subcategory of a strict monoidal $\infty$-category $\EE.$ Assume that morphisms of $\EE'$ satisfy the following $2$-out-of-$3$ property: for any $2$-simplex  $\sigma$ of $\EE$, if any two of its edges are in $\EE'$, then $\sigma$ is in $\EE'$. Then the square 
\begin{equation}
\begin{tikzcd}
\mon_\infty(\EE') \ar[r] \ar[d,"\FF'_{\EE'}"] & \mon_\infty(\EE) \ar[d,"\FF'_\EE"] \\
(\EE')^{\1/}\ar[r] & \EE^{\1/}
\end{tikzcd}
\end{equation}
is a pullback.
\end{lemma}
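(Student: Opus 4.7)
The plan is to verify the pullback condition directly at the level of $n$-simplices. Using the identification $\EE^{\1/}\cong \Map(\Ner(\Delta_+^{\sf inj}),\EE)$ from Lemma \ref{lemma:coaug-Delta^inj}, the forgetful map $\FF'_\EE$ becomes pre-composition along the inclusion of simplicial monoids $\Ner(\Delta_+^{\sf inj})\hookrightarrow \Ner(\Delta_+)$. An $n$-simplex of $\mon_\infty(\EE)$ is then a simplicial monoid map $g\colon \Delta^n \otimes \Ner(\Delta_+) \to \EE$, and because $\EE'\hookrightarrow \EE$ is a monomorphism the canonical map from $\mon_\infty(\EE')_n$ into the pullback is automatically injective. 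The lemma thus reduces to the following lifting claim: whenever the restriction $g|_{\Delta^n \otimes \Ner(\Delta_+^{\sf inj})}$ factors through $\EE'$, so does $g$ itself.

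To verify this, I would first reduce to the $1$-skeleton. Since $\EE'$ is an $\infty$-subcategory of $\EE$, a simplex of $\EE$ lies in $\EE'$ if and only if all of its $1$-edges do, so it suffices to show that $g_1$ takes values in $\EE'_1$. As $\EE'_1 \subseteq \EE_1$ is a submonoid and $g_1$ is a monoid homomorphism, it is enough to check this on a monoid generating set of $(\Delta^n \otimes \Ner(\Delta_+))_1$. By Lemma \ref{lemma:Delta_is_free}, such generators are the pairs $(\alpha, \beta_k)$ with $\alpha \in \Delta^n_1$ and $\beta_k\colon [k] \to [0]$ the unique morphism in $\Delta_+$, for $k \geq -1$. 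For $k \in \{-1,0\}$ the morphism $\beta_k$ is injective, so $(\alpha,\beta_k)$ already lies in $\Delta^n \otimes \Ner(\Delta_+^{\sf inj})$ and the hypothesis on $g$ handles these cases; only the range $k \geq 1$ remains.

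For $k \geq 1$ the strategy is to exhibit $(\alpha,\beta_k)$ as the $d_0$-face of an explicit $2$-simplex of $\Delta^n \otimes \Ner(\Delta_+)$ whose other two faces already lie in $\Delta^n \otimes \Ner(\Delta_+^{\sf inj})$, and then invoke the $2$-out-of-$3$ hypothesis on $\EE'$. Let $\sigma\colon [2]\to \Delta_+$ be the functor defined by $\sigma(0)=[-1]$, $\sigma(1)=[k]$, $\sigma(2)=[0]$; since $\sigma(2)=[0]$, it is a basis $2$-simplex of $\Ner(\Delta_+)$ in the sense of Lemma \ref{lemma:Delta_is_free}. Setting $\tau = (s_0 \alpha, \sigma) \in (\Delta^n \otimes \Ner(\Delta_+))_2$ and using the simplicial identities $d_0 s_0 = d_1 s_0 = \id$ and $d_2 s_0 = s_0 d_1$, a direct computation yields
\begin{equation}
d_0\tau = (\alpha,\, [k]\to[0]),\qquad d_1\tau = (\alpha,\, [-1]\to[0]),\qquad d_2\tau = (s_0 d_1\alpha,\, [-1]\to[k]).
\end{equation}
The $\Ner(\Delta_+)$-components of $d_1\tau$ and $d_2\tau$ are injective, so these two edges lie in $\Delta^n \otimes \Ner(\Delta_+^{\sf inj})$ and $g$ maps them into $\EE'_1$ by hypothesis. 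The $2$-out-of-$3$ assumption, applied to the $2$-simplex $g(\tau)$ of $\EE$, then forces the remaining edge $g(\alpha,\beta_k)$ into $\EE'_1$, as desired.

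The main obstacle I anticipate is purely combinatorial: locating the correct $2$-simplex filler. The key observation is that every non-injective basis $1$-simplex $\beta_k\colon [k]\to[0]$ of $\Ner(\Delta_+)$ sits inside the $2$-simplex $[-1]\to[k]\to[0]$ of $\Delta_+$, whose remaining two edges $[-1]\to[0]$ and $[-1]\to[k]$ are vacuously injective; degenerating via $s_0\alpha$ on the $\Delta^n$-side then preserves the $\alpha$-component of the ``bad'' edge, making the $2$-out-of-$3$ hypothesis directly applicable and completing the argument.
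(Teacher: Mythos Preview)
Your proof is correct. The paper arrives at the same 2-out-of-3 triangle by a slightly different reduction: it first observes that the 2-out-of-3 hypothesis makes $(\EE')^{\1/}$ a \emph{full} $\infty$-subcategory of $\EE^{\1/}$, so the pullback is automatically a full $\infty$-subcategory of $\mon_\infty(\EE)$ and one only has to match objects. For a single $\infty$-monoid $\MM$ with unit $\eta^\MM\in\EE'$, the paper then notes that $\MM([-1]\to[n])$ is the $(n{+}1)$-fold monoid power of $\eta^\MM$, hence in $\EE'$, and applies 2-out-of-3 to the triangle $[-1]\to[n]\to[m]$ to force every $\MM(\alpha)$ into $\EE'$. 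You instead work uniformly on $n$-simplices of $\mon_\infty$ and reduce via the free-monoid generators of $(\Delta^n\otimes\Ner(\Delta_+))_1$ to the same triangle $[-1]\to[k]\to[0]$. The paper's fullness step is a cleaner reduction to the object level, while your explicit $s_0\alpha$ padding in the $\Delta^n$-direction makes the simplicial bookkeeping fully visible; the key idea is the same in both.
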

\begin{proof} Since $\EE'$ satisfies the above $2$-out-of-$3$ property, $(\EE')^{\1/}$ is a full $\infty$-subcategory of $\EE^{\1/}$, whose objects are coaugmented objects $\eta:\1\to x$ such that $\eta$ is a morphism of $\EE'$. Therefore, the pullback of $(\EE')^{\1/}$ along $\FF'_\EE$ is a full $\infty$-subcategory of 
$\mon_\infty(\EE)$ spanned by $\infty$-monoids $\MM$, whose unit $\eta^\MM: \1 \to \MM$ is in $(\EE')^{\1/}$. So, it is sufficient to prove that if $\MM:\Ner(\Delta_+)\to \EE$ is an $\infty$-monoid in $\EE$ such that its unit $\eta^\MM :\1\to \MM_0$ is a morphism of $\EE',$ then $\MM(\Ner(\Delta_+))\subseteq \EE'$. 

Take an $\infty$-monoid $\MM:\Ner(\Delta_+)\to \EE$ such that  $\eta^\MM:\1 \to \MM_0$ is in $\EE'$. Since $\EE'$ is an $\infty$-subcategory, it is sufficient to prove that for any morphism $\alpha:[n]\to [m]$ of $\Delta_+$, we have $\mu(\alpha)\in \EE'.$ Note that the map $[-1]\to [n]$  in $\Delta_+$ is the join of $n+1$ copies of the map $\eta^{0}:[-1] \to [0]$. It follows that $\MM([-1]\to [n])$ is in $\EE'$. For any morphism $\alpha:[n]\to [m]$, the triangle 
\begin{equation}
\begin{tikzcd}
& {[-1]}
\ar[rd] 
\ar[ld] & \\
{[n]}
\ar[rr,"\alpha"] & & 
{[m]}
\end{tikzcd}
\end{equation}
is commutative. Then, using the $2$-out-of-$3$ property of $\EE',$ we obtain that $\MM(\alpha)$ is in $\EE'$. 
\end{proof}

The $\infty$-category of $\DD$-preserving $\infty$-monads is defined as the pullback 
\begin{equation}\label{diag:def-mon^D}
\begin{tikzcd}
\Mon^\DD_\infty(\CC) \ar[r,hookrightarrow] \ar[d] & \Mon_\infty(\CC) \ar[d] \\ 
\Coaug^\DD(\CC)\ar[r,hookrightarrow] & \End(\CC)^{\Id_\CC/}. 
\end{tikzcd}
\end{equation}
This pullback is the full $\infty$-subcategory of $\Mon_\infty(\CC)$ consisting of $\infty$-monads, whose underlying coaugmented functor is $\DD$-preserving.

\begin{theorem}\label{th:D-preserving-coaugmented}
Let $\DD$ be a full $\infty$-subcategory of an $\infty$-category $\CC$ and $T_\DD$ be the  associated codensity $\infty$-monad. Then the following holds. 
\begin{itemize}
    \item[(a)] Any canonical coaugmentation on $T_\DD$ defines a terminal object  of the $\infty$-category of $\DD$-preserving coaugmented functors;
    \item[(b)] Any canonical $\infty$-monad structure on $T_\DD$  defines a terminal object of the $\infty$-category of $\DD$-preserving $\infty$-monads.
\end{itemize}
\end{theorem}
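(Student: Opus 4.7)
Part (a) is essentially a transport argument. By Proposition~\ref{proposition:Kan_extension_along_inclusion}, the codensity $\infty$-monad $T_\DD$ is terminal in $\End_{\DD/}(\CC)$; Lemma~\ref{lemma:terminal_object_of_slice} (appendix) then upgrades this to show that for any canonical coaugmentation $\eta$, the pair $(T_\DD,\eta)$ is terminal in the under-slice $\End_{\DD/}(\CC)^{\Id_\CC/}$. The categorical equivalence $\End_{\DD/}(\CC)^{\Id_\CC/}\xrightarrow{\simeq}\Coaug^\DD(\CC)$ of Proposition~\ref{prop:coaug_equiv} transports this terminal object to $\Coaug^\DD(\CC)$, which is (a).

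For part (b), the plan is to apply Theorem~\ref{th:terminal:coaug} directly to the strict monoidal $\infty$-category $\Coaug^\DD(\CC)$, after identifying $\mon_\infty(\Coaug^\DD(\CC))$ with $\Mon^\DD_\infty(\CC)$. First I would verify that $\Coaug^\DD(\CC)$ is closed under the strict monoidal structure inherited from $\End(\CC)^{\Id_\CC/}$: if $\eta_F,\eta_G$ are $\DD$-preserving, the induced coaugmentation of $\FF\GG$ restricts on $\DD$ to a composite of equivalences and so remains $\DD$-preserving, and the unit $\1=(\Id_\CC,\id_{\Id_\CC})$ is trivially in $\Coaug^\DD(\CC)$. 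Proposition~\ref{prop:monoids_of_coaug} supplies the trivial fibration $\mon_\infty(\End(\CC)^{\Id_\CC/})\xrightarrow{\sim}\Mon_\infty(\CC)$, and the preimage of the full sub $\Mon^\DD_\infty(\CC)\hookrightarrow\Mon_\infty(\CC)$ is exactly $\mon_\infty(\Coaug^\DD(\CC))$: an $\infty$-monoid $\MM$ in $\End(\CC)^{\Id_\CC/}$ has $\DD$-preserving unit iff $\MM([0])\in\Coaug^\DD(\CC)$, and closure under $\otimes$ forces every $\MM([n])$ into $\Coaug^\DD(\CC)$ so that $\MM$ factors through it. Restricting the trivial fibration then yields
\begin{equation}
\mon_\infty(\Coaug^\DD(\CC))\xrightarrow{\sim}\Mon^\DD_\infty(\CC).
\end{equation}

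With this identification, I apply Theorem~\ref{th:terminal:coaug} to $\EE=\Coaug^\DD(\CC)$: by part~(a) combined with Lemma~\ref{lemma:terminal_object_of_slice}, the pair $(T_\DD,\eta)$ together with the unique morphism from $\1$ is terminal in $\Coaug^\DD(\CC)^{\1/}$, so the theorem yields that every $\infty$-monoid structure on $(T_\DD,\eta)$ in $\Coaug^\DD(\CC)$ is a terminal object of $\mon_\infty(\Coaug^\DD(\CC))$ and that the Kan complex of such structures is contractible. Transporting along the trivial fibration shows that every $\infty$-monad structure on $T_\DD$ with unit $\eta$ is terminal in $\Mon^\DD_\infty(\CC)$; in particular, the canonical $\infty$-monad structures---which by definition lie in $\mon_\infty(\End_{\DD/}(\CC),T_\DD,\eta)\subseteq \Mon_\infty(\CC,T_\DD,\eta)$---define terminal objects of $\Mon^\DD_\infty(\CC)$.

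The main obstacle is the identification $\mon_\infty(\Coaug^\DD(\CC))\simeq \Mon^\DD_\infty(\CC)$. The crucial point is the closure of $\Coaug^\DD(\CC)$ under the strict monoidal structure of $\End(\CC)^{\Id_\CC/}$, which forces an $\infty$-monoid $\MM$ in $\End(\CC)^{\Id_\CC/}$ to factor through $\Coaug^\DD(\CC)$ as soon as $\MM([0])$ is $\DD$-preserving---matching exactly the defining condition of $\Mon^\DD_\infty(\CC)$.
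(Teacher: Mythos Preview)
Your proof of (a) matches the paper's exactly.

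For (b) your route is genuinely different from the paper's, and it works, though with a couple of points worth flagging. The paper does not work inside $\Coaug^\DD(\CC)$; instead it constructs a \emph{non-full} strict monoidal $\infty$-subcategory $\EE\subseteq\End(\CC)$ (objects: endofunctors preserving $\tilde\DD$; morphisms: natural transformations whose components on $\tilde\DD$ are equivalences), checks that $\EE$ satisfies a 2-out-of-3 property, and proves the on-the-nose identities $\Coaug^\DD(\CC)=\EE^{\Id_\CC/}$ and $\Mon^\DD_\infty(\CC)=\mon_\infty(\EE)$ via Lemma~\ref{lemma:subcategory-pushout}. It then applies Theorem~\ref{th:terminal:coaug} to $\EE$. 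Your approach stays one level up, treating $\Coaug^\DD(\CC)$ as a full strict monoidal sub of $\End(\CC)^{\Id_\CC/}$ and passing through the trivial fibration of Proposition~\ref{prop:monoids_of_coaug}. This avoids Lemma~\ref{lemma:subcategory-pushout} and the somewhat delicate verification that the non-full $\EE$ is a simplicial submonoid, at the cost of an equivalence rather than a strict identification.

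Two small caveats. First, since part~(a) already gives $(T_\DD,\eta)$ terminal in $\Coaug^\DD(\CC)$, you can invoke Theorem~\ref{th:Structures_of_monad_on_terminal} directly rather than passing through $\Coaug^\DD(\CC)^{\1/}$ and Theorem~\ref{th:terminal:coaug}; the latter adds an unnecessary slice. Second, when you say the preimage of $\Mon^\DD_\infty(\CC)$ under $\mon_\infty(\End(\CC)^{\Id_\CC/})\to\Mon_\infty(\CC)$ is $\mon_\infty(\Coaug^\DD(\CC))$, note that for $\tilde\MM\in\mon_\infty(\End(\CC)^{\Id_\CC/})$ the underlying object $\tilde\MM([0])\in\End(\CC)^{\Id_\CC/}$ and the coaugmented functor $\FF'_{\End(\CC)}(\varphi\circ\tilde\MM)$ are \emph{equivalent but not equal} (this is exactly the $\varphi_{\CC^{c/}}$ vs.\ $(\varphi_\CC)_*$ issue treated inside the proof of Theorem~\ref{th:terminal:coaug}). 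Your identification still holds because $\Coaug^\DD(\CC)$ is closed under equivalence, but you should say so. Similarly, to conclude that a \emph{given} canonical $\infty$-monad structure $\theta$ is terminal, lift it along the trivial fibration to $\tilde\theta$, observe that $\tilde\theta([0])$ is equivalent to $(T_\DD,\eta)$ and hence terminal in $\Coaug^\DD(\CC)$, then apply Theorem~\ref{th:Structures_of_monad_on_terminal}.
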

\begin{proof} (a). Since $T_\DD$ is a terminal object of $\End_{\DD/}(\CC)$ (Proposition \ref{proposition:Kan_extension_along_inclusion}), the coaugmented endofunctor $(T_\DD,\eta)$ is a terminal object of  $\End_{\DD/}(\CC)^{\Id_\CC/}$ (Lemma \ref{lemma:terminal_object_of_slice}). Then the statement follows from Proposition \ref{prop:coaug_equiv}.

(b).  If we have two functors $\FF,\GG:\CC\to \CC$ and a natural transformation $\varphi:\Delta^1 \times \CC \to \CC$ from $\FF$ to $\GG$, for a morphism $\alpha:c\to c'$ in $\CC$, we denote by $\varphi_\alpha:\FF (c)\to \GG(c')$ the diagonal morphism defined by $\varphi_\alpha=\varphi(\delta,\alpha),$ where $\delta:0\to 1$ is the only non-identity morphism of $\Delta^1.$ The ordinary components $\varphi_c:\FF(c)\to \GG(c)$ can be described as $\varphi_c=\varphi_{{\sf id}_c}$. 
\begin{equation}
\begin{tikzcd}
\FF(c)\ar[r,"\FF(\alpha)"] \ar[d,"\varphi_c"'] \ar[rd,"\varphi_\alpha"] & \FF(c') \ar[d,"\varphi_{c'}"] \\
\GG(c) \ar[r,"\GG(\alpha)"'] & \GG(c')
\end{tikzcd}
\end{equation}
It is easy to check that the product on $\End(\CC)_1$ satisfies the equation $(\psi\cdot \varphi)_\alpha=\psi_{\varphi_\alpha}.$ 

Denote by $\tilde \DD$ the full $\infty$-subcategory of $\CC$ consisting of objects equivalent to objects of $\DD.$ Consider an $\infty$-subcategory $\EE$ of $\End(\CC)$ whose objects are endofunctors $\FF$ such that $\FF(\tilde \DD)\subseteq \tilde \DD$  and morphisms are natural transformations $\varphi:\FF\to \GG$ such that the component $\varphi_d:\FF(d)\to \GG(d)$ is an equivalence for any object $d$ of $\tilde \DD$. It is easy to see that, for any morphism $\varphi$ of $\EE$, and any equivalence $\alpha:d\to d'$ in $\tilde \DD$, the morphism $\varphi_\alpha:\FF(d)\to \GG(d')$ is also an equivalence in $\tilde \DD.$ Using this, it is easy to see that $\EE_0$ is a submonoid of $\End(\CC)_0$ and $\EE_1$ is a submonoid of $\End(\CC)_1$. Therefore, $\EE$ is a strict monoidal $\infty$-subcategory of $\End(\CC).$ Also note that the class of morphisms of $\EE$ satisfies $2$-out-of-$3$ property in $\End(\CC)$.

We claim that 
\begin{equation}
\Coaug^\DD(\CC) = \EE^{\Id_\CC/}. 
\end{equation}
It is easy to see that $\EE^{\Id_\CC/} \subseteq \Coaug^\DD(\CC)$. On the other hand, 
any $n$-simplex  $\Delta^n\to \Coaug^\DD(\CC)$ corresponds to a map 
$\varphi : \Delta^1\times  \Delta^n \to \End(\CC)$ such that $\{0\}\times \Delta^n$ goes 
to the degeneracy of  $\Id_\CC$ and, for $0\leq i\leq n,$ $\Delta^1\times \{i\}$ 
goes to a natural transformation 
$\varphi|_{\Delta^1\times \{i\}} :\Id \to \FF_i$ such that $(\varphi|_{\Delta^1\times \{i\}})_d:d\to \FF(d)$ is an equivalence for any object $d$ of $\DD$. In particular all the $1$-simplices of the form $\{0\}\times \Delta^{\{i,j\}}, 0\leq i\leq j\leq n$ go to the identity natural transformation $\Id_\CC\to \Id_\CC.$ Using the $2$-out-of-$3$ property of equivalences, one can check that this implies that all the $1$-simplices of $\Delta^1\times \Delta^n$ go to natural transformations whose $d$-components are equivalences for any object $d$ of $\DD.$ And hence $\varphi$ lies in $\EE^{\Id_\CC/}.$

We also claim that 
\begin{equation}
\Mon^\DD_\infty(\CC) = \mon_\infty(\EE). 
\end{equation}
Indeed, it follows from the definition \eqref{diag:def-mon^D} and Lemma \ref{lemma:subcategory-pushout}. Therefore, any canonical $\infty$-monad structure $\theta$ on $T_\DD$ defines a object in $\mon_\infty(\EE),$ whose image in $\EE^{\Id_\CC/}$ is terminal by (a). Then Theorem \ref{th:terminal:coaug} implies that $\theta$ is terminal in $\mon_\infty(\EE)$.   
\end{proof}

\subsection{Localizations as codensity 
\texorpdfstring{$\infty$}{}-monads} 

A full $\infty$-subcategory $\DD$ of an $\infty$-category $\CC$ is called reflective, if the inclusion functor $\DD \hookrightarrow \CC$ admits a left adjoint functor. The composition of the left adjoint functor with the inclusion $L_\DD : \CC \to \CC$ is called the localization functor associated with $\DD$.

\begin{proposition}\label{prop:localization}
Let $\DD$ be a reflective $\infty$-subcategory of an $\infty$-category $\CC$ with the associated localization functor $L:\CC\to \CC$. Then $L$ is equivalent to the codensity $\infty$-monad associated with $\DD$
\begin{equation}
L_\DD \simeq T_\DD. 
\end{equation}
\end{proposition}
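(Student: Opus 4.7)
The plan is to identify $L_\DD$, equipped with its unit $\eta : \Id_\CC \to L_\DD$, as a right Kan extension of the inclusion $\iota : \DD \hookrightarrow \CC$ along itself; this immediately yields $L_\DD \simeq T_\DD$ as coaugmented functors. I would establish this via the pointwise formula together with the universal property already developed in the paper.

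The pointwise step relies on a single observation: for every object $c$ of $\CC$, the slice $\DD_{c/}$ admits $(L_\DD(c), \eta_c)$ as an initial object. This is immediate from the adjunction $L_\DD \dashv \iota$: the mapping space in $\DD_{c/}$ from $(L_\DD(c), \eta_c)$ to an arbitrary $(d, f : c \to d)$ is the fiber over $f$ of the precomposition map $\map_\DD(L_\DD(c), d) \to \map_\CC(c, d)$, which is an equivalence by the adjunction, so the fiber is contractible. Since the limit of any diagram over an $\infty$-category with initial object is the value of the diagram at that object, the pointwise Kan extension formula gives $T_\DD(c) \simeq L_\DD(c)$.

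To upgrade this to a natural equivalence of coaugmented endofunctors, I would invoke the universal property. Because $\iota$ is fully faithful, the unit $\eta_d : d \to L_\DD(d)$ is an equivalence for each $d \in \DD$, so $L_\DD$ (with the induced map $\iota \to L_\DD|_\DD$) defines an object of $\End_{\DD/}(\CC)$. By Proposition \ref{proposition:Kan_extension_along_inclusion}, $T_\DD$ is terminal in $\End_{\DD/}(\CC)$, producing an essentially unique morphism $L_\DD \to T_\DD$ there; the pointwise step shows it is an equivalence. The main obstacle is purely bookkeeping, namely keeping track of the passage between $\End_{\DD/}(\CC)$ and its undercategory over $\Id_\CC$ so that the equivalence respects the canonical coaugmentations, which Lemma \ref{lemma:terminal_object_of_slice} handles.
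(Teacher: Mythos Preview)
Your approach is essentially the same as the paper's: both show that $(L_\DD(c),\eta_c)$ is an initial object of $\DD_{c/}$ using the adjunction equivalence $\map_\CC(L_\DD(c),d)\xrightarrow{\simeq}\map_\CC(c,d)$, and then conclude via the pointwise formula for right Kan extensions.

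One bookkeeping point to be careful about: in this paper $\End_{\DD/}(\CC)=\Fun_{\DD/}(\CC,\CC)$ is the \emph{strict fiber} of the restriction map $\Fun(\CC,\CC)\to\Fun(\DD,\CC)$ over $\iota$ (see the proof of Proposition~\ref{prop:coaug_equiv}), not an under-category. So the sentence ``$L_\DD$ (with the induced map $\iota \to L_\DD|_\DD$) defines an object of $\End_{\DD/}(\CC)$'' is not quite right: a priori $L_\DD|_\DD$ is only \emph{equivalent} to $\iota$, not equal. The paper handles this by invoking Proposition~\ref{prop:coaug_equiv} to replace $(L_\DD,\eta)$ by an equivalent coaugmented functor satisfying $L_\DD(d)=d$ and $\eta_d=\id_d$ strictly for all $d\in\DD$, and then directly verifies the pointwise Kan extension condition. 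Your detour through the terminal property of $T_\DD$ in $\End_{\DD/}(\CC)$ would also need this strictification step first; after that it works, though it is slightly less direct than the paper's route of simply recognizing the strictified $L_\DD$ as a right Kan extension.
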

\begin{proof}
The unit of the adjunction defines a coaugmentation $\eta:\Id \to L_\DD$ such that for any object $c$ of $\CC$ and an object $d$ of $\DD$ the map 
\begin{equation}
\eta^*_c : \map_{\CC}(L_\DD(c),d) \xrightarrow{\sim} \map_\CC(c,d)
\end{equation}
is a homotopy equivalence. In particular, for any object $d$ of $\DD$, the map $\eta_d:d \to L_\DD(d)$ is an equivalence in $\DD$. By Proposition \ref{prop:coaug_equiv}, we can assume (replacing $L_\DD$ by an equivalent coaugmented functor) that $L_\DD(d)=d$ and $\eta_d={\sf id}_d$ for any object $d$ of $\DD$. For any object $c$ of $\CC$, the unit $\eta_c: c\to L_\DD(c)$ defines an initial object of $\DD_{c/}$ (Lemma \ref{lemma:local_initial} in the appendix). Therefore, the limit of the functor $\DD_{c/}\to \DD \hookrightarrow \CC$ is given by $L_\DD(c)$ and $\eta_c:c\to L_\DD(c)$ defines the limit cone. It follows that $L_\DD$ is the right Kan extension of the inclusion $\DD \hookrightarrow \CC$ along itself.   
\end{proof}

\section{\bf Functorial over- and under-categories}  
\label{section:functorial-over-cat}

This section is a preparation to the next section, where we describe the $\MM$-completion as a codensity $\infty$-monad. In the proof of this description we use models for the over-category $\CC_{/c}$ and the under-category $\CC_{c/}$ which are functorial by $c$. We define them using the twisted arrow category $\Tw(\CC)$ \cite[Def. 4.2.3]{land2021introduction}, \cite[\S 5.2.1]{luriehigheralgebra}. 

In this section, we recall the definition of the twisted arrow category $\Tw(\CC)$, define the functorial versions of the over- and under-categories that we denote by $\CC_{|c}$ and $\CC_{c|}$ and prove several technical results about them. At the end of the section, we apply the developed technique to prove a sufficient condition for a functor to be left cofinal (Proposition \ref{prop:cofinal}). The results of this section, including the sufficient condition for left cofinality, are used in the next section, in the proof of Lemma \ref{lemma:BK-cofinal} that we call ``a lemma of Bousfield-Kan''. 

\subsection{Twisted arrow category} 

For an ordinary category $C$, the twisted arrow category $\Tw(C)$ is a category, whose objects are morphisms of $C$, and a morphism $\alpha \to \alpha'$ consists of two morphisms $(\gamma_0,\gamma_1)$ such that $\alpha = \gamma_1 \circ \alpha' \circ \gamma_0.$
\begin{equation}
\begin{tikzcd}
x \ar[r,"\alpha"] \ar[d,"\gamma_0"] 
& 
y \
\\
x' \ar[r,"\alpha'"] 
& 
y' \ar[u,"\gamma_1"]
\end{tikzcd}
\end{equation}
The obvious functor from $\Tw(C)$ to $C\times C^\op$ is denoted by 
\begin{equation}
P : \Tw(C) \longrightarrow C\times C^\op.
\end{equation}

Now we generalize the definition to the $\infty$-categorical setting. A functor $\Delta\to \Delta$ defined by $J \mapsto J \star J^\op$, defines a cosimplicial simplicial set 
\begin{equation}
\Delta \to \sSet, \hspace{1cm}  J \mapsto \Delta^J \star (\Delta^J)^\op.   
\end{equation}
For the sake of shortness, we use the notation 
$\Delta^{J\star J^\op} = \Delta^J \star (\Delta^{J})^\op.$
The twisted arrow category $\Tw(\CC)$ of an $\infty$-category $\CC$ is a simplicial set defined by 
\begin{equation}\label{eq:Tw}
  \Tw(\CC)_J = \Hom( \Delta^{J\star J^\op} , \CC).
\end{equation} 
This construction is natural by $\CC$:  a functor $\FF:\CC\to \DD$ defines the obvious functor 
$\Tw(\FF) : \Tw(\CC) \to \Tw(\DD).$ It follows from \eqref{eq:Tw}  that $\Tw$ preserves products 
$\Tw(\prod_i \CC_i) \cong \prod_i \Tw(\CC_i).$

The inclusion $\Delta^J \sqcup (\Delta^J)^\op \hookrightarrow \Delta^{J\star J^\op}$ defines a map 
\begin{equation}
P: \Tw(\CC) \longrightarrow \CC \times \CC^\op,
\end{equation}
which is a right fibration \cite[Prop.4.2.4]{land2021introduction}, \cite[Prop.5.2.1.3]{luriehigheralgebra}. In particular, $\Tw(\CC)$ is an $\infty$-category. If $C$ is an ordinary category, there is an isomorphism 
$\Tw(\Ner(C)) \cong \Ner(\Tw(C)) $
\cite[Ex.5.2.1.2]{luriehigheralgebra}.

The map $P$ is defined by two maps $P=(p,q)$
\begin{equation}
p : \Tw(\CC)\longrightarrow \CC, \hspace{1cm} q :\Tw(\CC)\longrightarrow \CC^\op.
\end{equation}
Since a composition of cartesian fibrations is a cartesian fibration, the maps $p$ and $q$ are cartesian fibrations. Note that $p$ and $q$ are induced by the inclusions 
$\Delta^J \hookrightarrow \Delta^{J\star J^\op}$ and  $\Delta^{J^\op} \hookrightarrow \Delta^{J\star J^\op}$
respectively. 

Note that a morphism $\alpha$ of $\Tw(\CC)$ is $q$-cartesian if and only if $p(\alpha)$ is an equivalence. It follows from the fact that all morphisms of $\Tw(\CC)$ are $P$-cartesian, and the fact that a morphism of $\CC\times \CC^\op$ is cartesian with respect to the projection $\CC\times \CC^\op\to \CC^\op$ if and only if its first component is an equivalence.  

\subsection{Functorial over- and under-categories}

In this subsection we define the functorial model for over- and under-categories, and list some basic properties of them that we use in the next section. 

The category $\CC_{|c}$ is defined as  the fiber of the cartesian fibration $q: \Tw(\CC)\to \CC^\op$ over an object $c$. The map $\Delta^{J\star J^\op} \to \Delta^J \star \Delta^0$ defines an equivalence 
\begin{equation}\label{eq:C_c-equiv}
\CC_{/c} \xrightarrow{\simeq} \CC_{|c}
\end{equation}
\cite[Lemma 4.2.7]{land2021introduction}.
The straightening of $q$ defines a functor to the $\infty$-category of $\infty$-categories 
\begin{equation}
\CC \longrightarrow \Cat_\infty, \hspace{1cm} c \mapsto \CC_{|c}.
\end{equation}
In particular, any morphism $f:c\to c'$ in $\CC$ defines a functor that we denote by 
\begin{equation}
f_* : \CC_{|c} \to \CC_{|c'}.
\end{equation}
It is easy to see that $f_*$ sends a morphism $\alpha : x\to c$ of $\CC$  treated as an object of $\CC_{|c}$ to a composition of $\alpha$ and $f$ treated as an object of $\CC_{c'|}$.

Since the square  
\begin{equation}
\begin{tikzcd}[column sep = 15mm]
\CC_{|c} 
\ar[r]
\ar[d]
& 
\Tw(\CC)
\ar[d,"P"]
\\
\CC
\ar[r,"\id\times \{c\}"]
& 
\CC \times \CC^\op
\end{tikzcd}
\end{equation}
is a pullback, the map $\CC_{|c} \to \CC$ is a right fibration. The map $P$ defines a morphism of cartesian fibrations. 
\begin{equation}
    \begin{tikzcd}
        \Tw(\CC)\ar[rr,"P"] \ar[rd,"q"'] && \CC \times \CC^\op \ar[ld] \\
        & \CC^\op &
    \end{tikzcd}
\end{equation}
The straightening of this morphism shows that the map $\CC_{|c} \to \CC$ is natural by $c$ in the following sense: for any $f:c\to c'$ the triangle. 
\begin{equation}\label{eq:f_*triangle}
    \begin{tikzcd}
        \CC_{|c} \ar[rr,"f_*"] \ar[rd] && \CC_{|c'} \ar[ld] \\
        &\CC &
    \end{tikzcd}
\end{equation}
is commutative up to a natural equivalence. 

For a functor $\FF:\CC\to \DD$ and an object $d$ of $\DD$, we denote by $\CC_{|d}$ the pullback 
\begin{equation}
\begin{tikzcd}
\CC_{|d} 
\ar[r]
\ar[d]
& 
\DD_{|d} 
\ar[d]
\\
\CC 
\ar[r,"\FF"]
& 
\DD
\end{tikzcd}
\end{equation}
Using that the right-hand vertical map is a right fibration, we obtain that this is a Joyal homotopy pullback. Therefore, the equivalence \eqref{eq:C_c-equiv} defines an equivalence 
\begin{equation}\label{eq:C_d-equiv}
\CC_{/d} \xrightarrow{\simeq} \CC_{|d}.
\end{equation}
The $\infty$-category $\CC_{|d}$ is also natural by $d$. Indeed, we can take the pullback 
\begin{equation}
\begin{tikzcd}[column sep=15mm]
T_\FF
\ar[r]
\ar[d,"q_\FF"]
& 
\Tw(\DD)
\ar[d,"P"]
\\
\CC \times \DD^\op 
\ar[r,"\FF\times \Id"]
& 
\DD \times \DD^\op
\end{tikzcd}
\end{equation}
and obtain a right fibration $q_\FF$. Then the straightening of the composition  $T_\FF \to \CC\times \DD^\op \to \DD^\op$ defines a functor 
\begin{equation}
\DD^\op \longrightarrow \Cat_\infty, \hspace{1cm} d \mapsto \CC_{|d}. 
\end{equation}
Similar to \eqref{eq:f_*triangle}, we can show that for a morphism $f:d\to d'$ of $\DD$ the diagram 
\begin{equation}\label{eq:f_*triangle'}
    \begin{tikzcd}
        \CC_{|d} \ar[rr,"f_*"] \ar[rd] && \CC_{|d'} \ar[ld] \\
        &\CC &
    \end{tikzcd}
\end{equation}
is commutative up to a natural equivalence. 

A functor $\FF:\CC\to \DD$ induces a functor 
\begin{equation}\label{eq:F|c}
\FF_{|c} : \CC_{|c} \longrightarrow \DD_{|\FF(c)},
\end{equation}
which is equal to the restriction of $\Tw(\FF)$. 
Consider a pullback 
\begin{equation}
\begin{tikzcd}[column sep = 15mm]
S_\FF 
\ar[r]
\ar[d,"r_\FF"]
& 
\Tw(\DD)
\ar[d,"q"]
\\
\CC^\op 
\ar[r,"\FF^\op"]
& 
 \DD^\op
\end{tikzcd}
\end{equation}
Then the straightening of $r_\FF$ defines the composition  functor 
\begin{equation}
    \CC \overset{\FF}\longrightarrow \DD \longrightarrow {\sf Cat}_\infty, \hspace{1cm} c\mapsto \DD_{|\FF(c)}.
\end{equation}
The map $\Tw(\FF) : \Tw(\CC)\to \Tw(\DD)$ defines a morphism of cartesian fibrations $\Tw(\CC)\to S_\FF$ over $\CC^\op.$ It follows that the map \eqref{eq:F|c} is is a natural transformation of functors $\CC\to {\sf Cat}_\infty$. In particular, for any $f:c\to c'$ the square 
\begin{equation}\label{eq:F|c:nat}
\begin{tikzcd}
\CC_{|c} 
\ar[d,"f_*"]
\ar[r,"\FF_{|c}"]
& 
\DD_{|\FF(c)} 
\ar[d,"\FF(f)_*"]
\\
\CC_{|c'} 
\ar[r,"\FF_{|c'
}"]
& 
\DD_{|\FF(c')}
\end{tikzcd}
\end{equation}
is commutative up to a natural equivalence. 

\begin{proposition}\label{prop:functorial:over-cat}
Let $f:c\to c'$ be a morphism of $\CC,$ $K$ be a simplicial set and 
$
\FF:K\to \CC_{|c}$ and  $\GG: K \to \CC_{|c'}$
be two functors. Assume that there exists a natural transformation $\varphi$ from $\iota_{c'} \circ \GG$ to $\iota_{c} \circ \FF$ 
\[\varphi : \Delta^1 \times K \longrightarrow \Tw(\CC) \]
such that for any object $k$ of $K$ the morphism $p(\varphi_k)$ is an equivalence, and $q(\varphi_k)=f$. Then the diagram 
\[
\begin{tikzcd}
& K\ar[rd,"\GG"] \ar[ld,"\FF"'] & \\
\CC_{|c} \ar[rr,"f_*"] & & \CC_{|c'}
\end{tikzcd}
\]
is commutative up to a natural equivalence.   
\end{proposition}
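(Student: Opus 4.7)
The strategy is to recognise $\varphi$ as a pointwise $q$-cartesian natural transformation and then invoke the universal property of the straightening of the cartesian fibration $q \colon \Tw(\CC) \to \CC^{\op}$ to extract the desired natural equivalence.

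First I would unpack the hypothesis. The subsection has already recorded that a morphism of $\Tw(\CC)$ is $q$-cartesian if and only if its image under $p$ is an equivalence. Therefore the hypothesis on $\varphi$ says exactly that each $\varphi_k \colon \GG(k)\to \FF(k)$ is a $q$-cartesian lift of $f$ at $\FF(k)$; by the very construction of $f_*$ as the transport functor of $q$ along $f$, this yields a pointwise equivalence $\GG(k)\simeq f_*(\FF(k))$ in $\CC_{|c'}$. The remaining content of the proposition is to upgrade this pointwise statement into a natural equivalence of functors $K \to \CC_{|c'}$.

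To do so, I would form the pullback cartesian fibration
\[
\widetilde{\Tw} \;:=\; \Tw(\CC)\times_{\CC^{\op}}\Delta^1 \;\longrightarrow\; \Delta^1
\]
along $f\colon \Delta^1 \to \CC^{\op}$. Its fibers are $\CC_{|c'}$ (over $0$) and $\CC_{|c}$ (over $1$), and its straightening is, by construction, the arrow $f_*\colon \CC_{|c}\to \CC_{|c'}$. The composite $q\circ \varphi\colon \Delta^1\times K \to \CC^{\op}$ is a natural transformation between the constant diagrams at $c'$ and $c$ whose vertex values are all $f$, hence is homotopic to the constant natural transformation at $f$; this allows one to lift $\varphi$ to a functor $\tilde\varphi\colon \Delta^1 \times K \to \widetilde{\Tw}$ over $\Delta^1$, which is pointwise $\widetilde{\Tw}$-cartesian by the first step.

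Finally, I would invoke the standard principle that for a cartesian fibration $X\to \Delta^1$ with straightening arrow $\phi\colon X_1 \to X_0$, a pointwise-cartesian lift $\Delta^1\times K \to X$ of the projection corresponds to an equivalence $\GG\simeq \phi\circ \FF$ in $\Fun(K,X_0)$. Applied to $\widetilde{\Tw}\to \Delta^1$, the lift $\tilde\varphi$ produces the required equivalence $\GG\simeq f_*\circ \FF$. The main obstacle is making this last principle rigorous in the chosen quasi-categorical model: the correspondence between pointwise-cartesian natural transformations and equivalences of straightened functors is standard but technical, and I would appeal to the cited references (Lurie, \emph{HTT}, or Land) rather than reprove it here.
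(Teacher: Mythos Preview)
Your strategy is essentially the paper's: recognise $\varphi$ as a pointwise $q$-cartesian transformation and then invoke the general fact that such a transformation exhibits one leg as the cartesian transport of the other. The paper isolates this fact as a standalone appendix lemma about an arbitrary cartesian fibration, proved via marked simplicial sets (the inclusion $\{1\}\times K^\flat \hookrightarrow (\Delta^1)^\sharp \times K^\flat$ is marked anodyne, so the restriction map on marked sections is a trivial fibration, forcing $\varphi$ to be equivalent to the canonical cartesian lift that defines $f_*$). Your pullback to $\Delta^1$ followed by an appeal to the literature amounts to the same argument, just with the general lemma specialised rather than stated.

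One step in your version needs more care. To form the lift $\tilde\varphi$ into $\widetilde{\Tw}=\Tw(\CC)\times_{\CC^{\op}}\Delta^1$ you need $q\circ\varphi$ to \emph{equal} the composite $\Delta^1\times K\to\Delta^1\xrightarrow{f}\CC^{\op}$, not merely to be homotopic to it. The hypothesis only pins down $q(\varphi_k)=f$ on vertices $k$, and together with the constancy on $\{0\}\times K$ and $\{1\}\times K$ this does not force a strict factorisation through $\Delta^1$ for higher simplices of $K$. You can repair this by using that $q$ is a categorical fibration to strictify the homotopy, but you must then verify that the corrected $\varphi$ still restricts to $\FF$ and $\GG$ on the endpoints. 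The marked-anodyne argument sidesteps this entirely: it compares $\varphi$ and the canonical cartesian lift inside the fibre of a trivial fibration over the \emph{same} restriction to $\{1\}\times K$, so no strictification of the base map is needed.
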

\begin{proof}
The result follows from Lemma \ref{lemma:straightening} in the appendix and the fact that a morphism $\alpha$ of $\Tw(\CC)$  is $q$-cartesian if and only if $p(\alpha)$ is an equivalence. 
\end{proof}

Dually we can define 
\begin{equation}
\CC_{c|} = (\CC^\op_{|c})^\op    
\end{equation}
and the functors $f^* : \CC_{c'|}\to \CC_{c|}$
are obtained from the straightening of the cocartesian fibration 
\begin{equation}
q^\op : \Tw(\CC^\op)^\op\longrightarrow \CC^\op.
\end{equation}
Obviously there is a dual version of Proposition \ref{prop:functorial:over-cat}. 

\begin{proposition}
Let $f:c\to c'$ be a morphism of $\CC,$ $K$ be a simplicial set and 
$
\FF:K\to \CC_{c'|}$ and  $\GG: K \to \CC_{c|}$
be two functors. Assume that there exists a natural transformation $\varphi$ from $\iota_{c} \circ \GG$ to $\iota_{c'} \circ \FF$ 
\[\varphi : \Delta^1 \times K \longrightarrow \Tw(\CC) \]
such that for any object $k$ of $K$ the morphism $q(\varphi_k)$ is an equivalence, and $p(\varphi_k)=f$. Then the diagram 
\[
\begin{tikzcd}
& K\ar[rd,"\GG"] \ar[ld,"\FF"'] & \\
\CC_{c'|} \ar[rr,"f^*"] & & \CC_{c|}
\end{tikzcd}
\]
is commutative up to a natural equivalence.   
\end{proposition}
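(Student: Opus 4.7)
The plan is to invoke the dual of the proof of Proposition~\ref{prop:functorial:over-cat}, with the cartesian fibration $q : \Tw(\CC) \to \CC^\op$ replaced by the cocartesian fibration $q^\op : \Tw(\CC^\op)^\op \to \CC^\op$, whose straightening yields the functors $f^*$. Recall that the proof of Proposition~\ref{prop:functorial:over-cat} uses Lemma~\ref{lemma:straightening} together with the characterization of $q$-cartesian morphisms in $\Tw(\CC)$ as those whose $p$-component is an equivalence; here we will use the dual versions of both.

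To apply the dual straightening lemma, we must exhibit, for each $k \in K$, a $q^\op$-cocartesian morphism in $\Tw(\CC^\op)^\op$ lifting $f^\op : c' \to c$. We obtain this from $\varphi$ via two translations: first, the canonical isomorphism $\Tw(\CC^\op) \cong \Tw(\CC)$ arising from the self-duality $(J \star J^\op)^\op \cong J \star J^\op$ of the join (which swaps the two halves and hence interchanges the projections, $p_{\CC^\op} \leftrightarrow q_\CC$ and $q_{\CC^\op} \leftrightarrow p_\CC$); and second, taking opposites. Under this composite translation, $\varphi$ yields a natural transformation $\tilde \varphi$ in $\Tw(\CC^\op)^\op$. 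The $q^\op$-cocartesianness of $\tilde \varphi_k$ amounts to the $q_{\CC^\op}$-cartesianness of the corresponding morphism in $\Tw(\CC^\op)$, which by the dual characterization holds iff $p_{\CC^\op}$ of that morphism is an equivalence; via the $p$/$q$ swap, this is exactly the hypothesis that $q(\varphi_k)$ is an equivalence. Analogously, the condition $q^\op(\tilde\varphi_k) = f^\op$ translates back to $p(\varphi_k) = f$.

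The dual of Lemma~\ref{lemma:straightening}, applied to the cocartesian fibration $q^\op$, now converts the collection of cocartesian lifts $\tilde \varphi_k$ into the desired natural equivalence $f^* \circ \FF \simeq \GG$. The main obstacle is the bookkeeping around the composite translation --- the isomorphism $\Tw(\CC^\op) \cong \Tw(\CC)$ combined with taking opposites, together with the definition $\CC_{c|} = (\CC^\op_{|c})^\op$ --- so that source, target, and direction of $\tilde \varphi$ as well as the $p$/$q$ roles line up correctly with the hypotheses of the cocartesian straightening argument. No new mathematical content beyond what is developed for Proposition~\ref{prop:functorial:over-cat} is required.
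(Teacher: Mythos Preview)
Your proposal is correct and follows the same approach as the paper. The paper gives no explicit proof of this proposition, merely stating it as the dual of Proposition~\ref{prop:functorial:over-cat}; your write-up correctly unpacks what this duality entails---translating along the isomorphism $\Tw(\CC^\op)\cong\Tw(\CC)$ (which swaps $p$ and $q$), passing to opposites, and invoking the cocartesian version of Lemma~\ref{lemma:straightening}---which is exactly the intended argument.
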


\subsection{Natural transformations induced by natural transformations}

Assume that $\FF,\GG:\CC\to \DD$ are functors and $\varphi$ is a natural transformation from $\FF$ to $\GG$
\begin{equation}
\varphi : \Delta^1 \times \CC \longrightarrow \DD. 
\end{equation}
Since $\Tw(\Delta^1\times \CC)\cong \Tw(\Delta^1)\times \Tw(\CC),$ the natural transformation induces a functor 
\begin{equation}
 \Tw(\Delta^1) \times \Tw(\CC) \longrightarrow \Tw(\DD).
\end{equation}
Therefore, any object of $\Tw([1])$ defines a functor $\Tw(\CC)\to \Tw(\DD)$ and any morphism between objects of $\Tw([1])$ defines a natural transformation. There are three objects in $\Tw([1])$: the identity morphisms $\id_0,\id_1$ and the only non-identity morphism that we denote by $\delta:0\to 1.$ The objects $\id_0,\id_1$ define $\Tw(\FF)$ and $\Tw(\GG)$ and the object $\delta$ defines a functor that we denote by
\begin{equation}
 \varphi^\diag : \Tw(\CC) \longrightarrow \Tw(\DD).
\end{equation}
The value of $\varphi^{\sf diag}$ on an object $\alpha:c'\to c$ of $\Tw(\CC)$ can be depicted as follows. 
\begin{equation}
\begin{tikzcd}[column sep = 10mm, row sep = 10mm] 
\FF(c') 
\ar[r]
\ar[d]
\ar[rd,"\varphi^{\sf diag}(\alpha)" description]
&
\GG(c') 
\ar[d]
\\ 
\FF(c)
\ar[r]
&
\GG(c)
\end{tikzcd}
\end{equation}
It is easy to check that for a map $\alpha:\Delta^{J\star J^\op}\to \CC$ treated as a $J$-simplex of $\Tw(\CC)$ we have 
\begin{equation}\label{eq:varphi^diag-u}
\varphi^\diag(\alpha) = \varphi\circ (u_J, \alpha),
\end{equation}
where $u_J:\Delta^{J\star J^\op}\to \Delta^1$ is the map sending $\Delta^J$ to $0$ and $\Delta^{J^\op}$ to $1$.

There are two non-identity morphisms in $\Tw([1])$
\begin{equation}
\id_0\leftarrow \delta \to  \id_1
\end{equation}
defined by the diagram.
\begin{equation}
\begin{tikzcd}
0 
\ar[d,"\id_0"]
& 
0 
\ar[r] 
\ar[l] 
\ar[d,"\delta"] 
& 
1 
\ar[d,"\id_1"]
\\
0 
\ar[r]
& 
1 
& 
1
\ar[l]
\end{tikzcd}
\end{equation}
They define natural transformations. 
\begin{equation}
\Tw(\FF) \overset{\sigma}\Longleftarrow \varphi^\diag \overset{\tau}\Longrightarrow  \Tw(\GG)
\end{equation}
For an object $\alpha:c'\to c$ of $\Tw(\CC)$ the $\alpha$-components of these natural transformations can be depicted as follows 
\begin{equation}\label{eq:sigmatau} 
\begin{tikzcd}[column sep=15mm, row sep =15mm]
\FF(c')
\ar[d,"\FF(\alpha)"']
\ar[rd,phantom,"\sigma_\alpha"]
&
\FF(c')
\ar[d,"\varphi^\diag(\alpha)" description]
\ar[l,"\id"']
\ar[r,"\varphi_{c'}"]
\ar[rd,phantom,"\tau_\alpha"]
&
\GG(c')
\ar[d,"\GG(\alpha)"]
\\
\FF(c)
\ar[r,"\varphi_c"]
&
\GG(c)
&
\GG(c)
\ar[l,"\id"']
\end{tikzcd}
\end{equation}

For a functor $\FF: \CC \to \DD$, we denote by 
\begin{equation}\label{eq:F_[//c]}
\FF_{|c} : \CC_{|c} \longrightarrow \DD_{|\FF(c)}  
\end{equation}
the restriction of $\Tw(\FF)$. Using the formula \eqref{eq:varphi^diag-u}, we see that, for a natural transformation $\varphi$ from $\FF$ to $\GG$, the functor $\varphi^\diag$ restricts to a functor $\CC_{|c}\to \DD_{|\GG(c)}$ that we denote by 
\begin{equation} 
(\varphi_c)_* \: \tilde \circ\: \FF_{|c} : \CC_{|c} \longrightarrow \DD_{|\GG(c)}. 
\end{equation}

\begin{lemma}\label{lemma:phi-diag}
There is a natural equivalence 
\begin{equation}
(\varphi_c)_* \: \tilde \circ\: \FF_{|c} \simeq \varphi_c  \circ \FF_{|c}.
\end{equation}
\end{lemma}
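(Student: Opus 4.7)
The plan is to invoke Proposition~\ref{prop:functorial:over-cat}, applied to the $\infty$-category $\DD$ in place of $\CC$, with the morphism $f := \varphi_c \colon \FF(c) \to \GG(c)$, the indexing simplicial set $K := \CC_{|c}$, and the two functors $\FF_{|c} \colon \CC_{|c} \to \DD_{|\FF(c)}$ and $(\varphi_c)_* \: \tilde\circ\: \FF_{|c} \colon \CC_{|c} \to \DD_{|\GG(c)}$. What is needed in order to apply that proposition is a natural transformation $\Delta^1 \times \CC_{|c} \to \Tw(\DD)$ whose $p$-component is a pointwise equivalence and whose $q$-component is constantly equal to $\varphi_c$.

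Such a natural transformation is provided for free by restricting $\sigma \colon \varphi^\diag \Rightarrow \Tw(\FF)$ of \eqref{eq:sigmatau} to the subcomplex $\CC_{|c} \subseteq \Tw(\CC)$. Indeed, in view of \eqref{eq:varphi^diag-u} the restriction $\varphi^\diag|_{\CC_{|c}}$ is by definition $(\varphi_c)_* \: \tilde\circ\: \FF_{|c}$, and $\Tw(\FF)|_{\CC_{|c}} = \FF_{|c}$, so $\sigma|_{\CC_{|c}}$ is a natural transformation from $\iota_{\GG(c)} \circ \bigl((\varphi_c)_* \: \tilde\circ\: \FF_{|c}\bigr)$ to $\iota_{\FF(c)} \circ \FF_{|c}$, as required by the shape of the hypothesis (with $c' = \GG(c)$ and $c = \FF(c)$ in the notation of the proposition). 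For every object $\alpha \colon c' \to c$ of $\CC_{|c}$, the square displayed for $\sigma_\alpha$ in \eqref{eq:sigmatau} reads off at once $p(\sigma_\alpha) = \id_{\FF(c')}$ and $q(\sigma_\alpha) = \varphi_c$, so both hypotheses of Proposition~\ref{prop:functorial:over-cat} are satisfied tautologically. Its conclusion is precisely the sought-after natural equivalence
\[
(\varphi_c)_* \circ \FF_{|c} \;\simeq\; (\varphi_c)_* \: \tilde\circ\: \FF_{|c}.
\]

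No substantive obstacle is expected; the proof is a repackaging of the natural transformation $\sigma$ constructed between $\varphi^\diag$ and $\Tw(\FF)$ through Proposition~\ref{prop:functorial:over-cat}. The only points to check with a little care are the identifications of the restrictions $\varphi^\diag|_{\CC_{|c}}$ and $\Tw(\FF)|_{\CC_{|c}}$ with the two functors appearing in the statement of the lemma, both of which are immediate from the definitions recorded in \eqref{eq:varphi^diag-u} and \eqref{eq:F_[//c]}, together with the observation that restricting the square for $\sigma_\alpha$ to objects $\alpha$ living in $\CC_{|c}$ does not affect its $p$- and $q$-components.
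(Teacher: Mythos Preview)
Your proof is correct and follows essentially the same approach as the paper: both apply Proposition~\ref{prop:functorial:over-cat} by restricting the natural transformation $\sigma \colon \varphi^\diag \Rightarrow \Tw(\FF)$ to $\CC_{|c}$ and reading off the required conditions $p(\sigma_\alpha) = \id_{\FF(c')}$ and $q(\sigma_\alpha) = \varphi_c$ from the diagram~\eqref{eq:sigmatau}.
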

\begin{proof} The proof is based on Proposition  \ref{prop:functorial:over-cat}. So, it is sufficient to construct a natural transformation
$\sigma' : \Delta^1 \times \CC_{|c} \to \Tw(\DD),$
from $\iota_{\GG(c)}\circ ((\varphi_c)_* \: \tilde \circ\: \FF_{|c})$ to  $\iota_{\FF(c)} \circ  \FF_{|c}$, such that for any object $\alpha:c'\to c$ of  $\CC_{|c}$ the morphism $p(\sigma'_\alpha)$ is an identity and $q(\sigma'_\alpha)=\varphi_c$. We define $\sigma'$ as the restriction of $\sigma : \varphi^\diag \Rightarrow \Tw(\FF)$. 
It is easy to see that this restriction is a natural transformation from $\iota_{\GG(c)}\circ ((\varphi_c)_* \: \tilde \circ\: \FF_{|c})$ to  $\iota_{\FF(c)} \circ  \FF_{|c}$ and  $P( \sigma'_\alpha ) = (\id_{\FF(c')}, \varphi_c)$  (see \eqref{eq:sigmatau}) because in $\Tw([1])$ we have  $P(\delta \to \id_0) = (\id_0, \delta)$.  
\end{proof}

\begin{proposition}\label{prop:natual-to-natural}
For a pair of functors $\FF,\GG:\CC\to \DD$ and a natural transformation $\varphi : \FF\Rightarrow \GG$, we can associate a natural transformation $\tilde \varphi : (\varphi_c)_* \circ \FF_{|c} \Rightarrow \GG_{|c}$ such that the diagram 
\begin{equation}\label{eq:tildevaphi-proj}
\begin{tikzcd}
\Delta^1 \times \CC_{|c} 
\ar[r]
\ar[d,"\tilde \varphi"]
& 
\Delta^1 \times \CC 
\ar[d,"\varphi"]
\\ 
\DD_{|\GG(c)} 
\ar[r]
& \DD 
\end{tikzcd}
\end{equation}
commutes up to a natural equivalence. Moreover, if we have a commutative diagram 
\begin{equation}\label{eq:diag-S-T}
\begin{tikzcd}[column sep = 15mm]
\Delta^1 \times \CC 
\ar[d,"\varphi"]
\ar[r,"\Delta^1\times S"]
& 
\Delta^1 \times \CC' 
\ar[d,"\varphi'"]
\\
\DD 
\ar[r,"T"]
& 
\DD'
\end{tikzcd}
\end{equation}
where $\varphi : \FF \Rightarrow \GG,$ and $\varphi': \FF' \Rightarrow \GG'$ are natural transformations, then the diagram 
\begin{equation}\label{eq:diag-S-T'}
\begin{tikzcd}[column sep = 15mm]
\Delta^1 \times \CC_{|c} 
\ar[d,"\tilde \varphi"]
\ar[r,"\Delta^1\times S_{|c}"]
& 
\Delta^1 \times \CC'_{|c'} 
\ar[d,"\tilde \varphi'"]
\\
\DD_{|\GG(c)} 
\ar[r,"T_{|\GG(c)}"]
& 
\DD'_{|\GG(c')}
\end{tikzcd}
\end{equation}
commutes up to a natural equivalence, where $c'=S(c).$ 
\end{proposition}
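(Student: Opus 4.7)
The plan is to construct $\tilde\varphi$ as a suitable restriction of the natural transformation $\tau\colon \varphi^\diag \Rightarrow \Tw(\GG)$ from \eqref{eq:sigmatau}, and then to verify both commutativities by direct unwinding of the definitions.

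First, I would restrict $\tau$ along the inclusion $\CC_{|c}\hookrightarrow \Tw(\CC)$ (the fiber of $q$ over $c$). Recall from \eqref{eq:sigmatau} that for any object $\alpha\colon c'\to c$ of $\Tw(\CC)$ we have $P(\tau_\alpha) = (\varphi_{c'},\id_{\GG(c)})$. In particular $q(\tau_\alpha) = \id_{\GG(c)}$, so $\tau_\alpha$ lies in the fiber $\DD_{|\GG(c)}$ of $q$ over $\GG(c)$. An analogous check on higher simplices (using that the morphism $\delta\to \id_1$ in $\Tw([1])$ maps to the identity of $1$ under $q\colon \Tw([1])\to [1]^\op$, so that $q\circ \tau$ is degenerate in the $\Delta^1$-direction) shows that the restriction $\tau|_{\CC_{|c}}$ factors through $\DD_{|\GG(c)}$. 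This yields a natural transformation
\[
\tau|_{\CC_{|c}}\colon (\varphi_c)_*\,\tilde\circ\,\FF_{|c}\ \Longrightarrow\ \GG_{|c}.
\]
Composing with the natural equivalence $(\varphi_c)_*\,\tilde\circ\,\FF_{|c}\simeq (\varphi_c)_*\circ \FF_{|c}$ from Lemma \ref{lemma:phi-diag} produces the desired $\tilde\varphi$.

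Second, to verify that \eqref{eq:tildevaphi-proj} commutes up to natural equivalence, I would compose $\tilde\varphi$ with the projection $\DD_{|\GG(c)}\hookrightarrow \Tw(\DD)\xrightarrow{p}\DD$. Since $p(\tau_\alpha)=\varphi_{c'}$ is exactly the $c'$-component of $\varphi$ at the object $c'=p(\alpha)$ of $\CC$, composing $p$ with $\tau|_{\CC_{|c}}$ gives, up to the equivalence of Lemma \ref{lemma:phi-diag}, the natural transformation obtained by first projecting $\CC_{|c}\to \CC$ and then applying $\varphi$, which is precisely the other composition in the square.

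Third, for the moreover part, I would invoke the evident functoriality of the whole construction. From the commutative square \eqref{eq:diag-S-T} together with $\Tw(\Delta^1\times X)\cong \Tw(\Delta^1)\times \Tw(X)$, one obtains the relation $\Tw(T)\circ \tau = \tau'\circ(\Delta^1\times \Tw(S))$ of natural transformations $\Delta^1\times \Tw(\CC)\to \Tw(\DD')$. Since $\Tw(S)$ restricts (along $q$) to $S_{|c}\colon \CC_{|c}\to \CC'_{|c'}$ when $c'=S(c)$, and $\Tw(T)$ restricts to $T_{|\GG(c)}\colon \DD_{|\GG(c)}\to \DD'_{|T(\GG(c))}=\DD'_{|\GG'(c')}$, the relation above restricts to the equality $T_{|\GG(c)}\circ \tau|_{\CC_{|c}} = \tau'|_{\CC'_{|c'}}\circ (\Delta^1\times S_{|c})$. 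Combining this with Lemma \ref{lemma:phi-diag} (which is natural in the input data in the same manner) produces the required commutativity of \eqref{eq:diag-S-T'} up to a natural equivalence.

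The main obstacle in filling in this sketch is the verification that $\tau|_{\CC_{|c}}$ indeed factors through the fiber $\DD_{|\GG(c)}$ at all simplicial levels, together with the compatibility of this factorization with Lemma \ref{lemma:phi-diag} in the second part. Once this is done carefully using the explicit description of $\varphi^\diag$ from \eqref{eq:varphi^diag-u} and of $\tau$ via the morphism $\delta\to \id_1$ in $\Tw([1])$, the rest of the argument is a bookkeeping exercise tracking the cartesian fibration $q\colon \Tw(\DD)\to \DD^\op$ and the functoriality of $\Tw$.
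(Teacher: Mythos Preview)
Your proposal is correct and follows essentially the same approach as the paper: both define $\tilde\varphi$ as the restriction $\tau'$ of $\tau\colon \varphi^{\diag}\Rightarrow \Tw(\GG)$ to $\CC_{|c}$, composed with the equivalence of Lemma~\ref{lemma:phi-diag}, and then verify the two commutativities by observing that $p\circ\tau=\varphi\circ(\Delta^1\times p)$ and $\Tw(T)\circ\tau=\tau'\circ(\Delta^1\times\Tw(S))$ on the ambient twisted arrow categories before restricting to fibers. The paper's proof is simply a more condensed version of your sketch.
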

\begin{proof}Consider the restriction $\tau' : (\varphi_c)_* \: \tilde \circ \: \FF_{|c} \Rightarrow \GG_{|c}$ of  
the natural transformation $\tau : \varphi^\diag \Rightarrow \Tw(\GG)$.  We define $\tilde \varphi$ as a composition of $\tau'$ and the equivalence $(\varphi_c)_* \: \tilde \circ \: \FF_{|c} \simeq (\varphi_c)_*  \circ  \FF_{|c}$ from Lemma \ref{lemma:phi-diag}. It is easy to check that the diagram 
\begin{equation}
\begin{tikzcd}[column sep=15mm]
\Delta^1 \times \Tw(\CC) 
\ar[r,"\Delta^1\times p"]
\ar[d,"\tau"]
& 
\Delta^1 \times \CC 
\ar[d,"\varphi"]
\\ 
\Tw(\DD) 
\ar[r,"p"]
& \DD 
\end{tikzcd}
\end{equation}
commutes. It follows that the diagram \eqref{eq:tildevaphi-proj} is commutative up to a natural equivalence. If we have a commutative diagram \eqref{eq:diag-S-T}, it is easy to check that the diagram 
\begin{equation}
\begin{tikzcd}[column sep = 15mm]
\Delta^1 \times \Tw(\CC) 
\ar[d,"\tau"]
\ar[r,"\Delta^1\times \Tw(S)"]
& 
\Delta^1 \times \Tw(\CC') 
\ar[d,"\tau'"]
\\
\Tw(\DD) 
\ar[r,"\Tw(T)"]
& 
\Tw(\DD')
\end{tikzcd}
\end{equation}
also commutes. It follows that the diagram \eqref{eq:diag-S-T'} commutes up to a natural equivalence. 
\end{proof}

The following proposition is dual to Proposition \ref{prop:natual-to-natural}.

\begin{proposition}\label{prop:natual-to-natural-dual}
For every pair of functors $\FF,\GG:\CC\to \DD$ and a natural transformation $\varphi : \FF\Rightarrow \GG$, we can associate a natural transformation $\bar  \varphi : (\varphi_c)^* \circ \GG_{c|} \Rightarrow \FF_{c|}$ such that the diagram 
\begin{equation}
\begin{tikzcd}
\Delta^1 \times \CC_{c|} 
\ar[r]
\ar[d,"\bar \varphi"]
& 
\Delta^1 \times \CC 
\ar[d,"\varphi"]
\\ 
\DD_{\GG(c)|} 
\ar[r]
& \DD 
\end{tikzcd}
\end{equation}
commutes up to a natural equivalence. Moreover, if we have a commutative diagram 
\begin{equation}
\begin{tikzcd}[column sep = 15mm]
\Delta^1 \times \CC 
\ar[d,"\varphi"]
\ar[r,"\Delta^1\times S"]
& 
\Delta^1 \times \CC' 
\ar[d,"\varphi'"]
\\
\DD 
\ar[r,"T"]
& 
\DD',
\end{tikzcd}
\end{equation}
where $\varphi : \FF \Rightarrow \GG,$ and $\varphi': \FF' \Rightarrow \GG'$ are natural transformations, then the diagram 
\begin{equation}
\begin{tikzcd}[column sep = 15mm]
\Delta^1 \times \CC_{c|} 
\ar[d,"\bar \varphi"]
\ar[r,"\Delta^1\times S_{c|}"]
& 
\Delta^1 \times \CC'_{c'|} 
\ar[d,"\tilde \varphi'"]
\\
\DD_{\FF(c)|} 
\ar[r,"T_{\FF(c)|}"]
& 
\DD'_{\FF(c')|}
\end{tikzcd}
\end{equation}
commutes up to a natural equivalence, where $c'=S(c).$ 
\end{proposition}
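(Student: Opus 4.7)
The plan is to derive this proposition formally from Proposition~\ref{prop:natual-to-natural} by passing to opposite categories. Recall from the preceding subsection the definition $\CC_{c|} = (\CC^{\op}_{|c})^{\op}$, together with the identifications $\FF_{c|} = ((\FF^{\op})_{|c})^{\op}$ and $(\varphi_c)^* = (((\varphi^{\op})_c)_*)^{\op}$; these are compatible under the canonical isomorphism $\Tw(\CC^{\op}) \cong \Tw(\CC)^{\op}$ (which follows from $(\Delta^{J \star J^{\op}})^{\op} \cong \Delta^{J^{\op} \star J}$). Moreover, the natural transformation $\varphi : \FF \Rightarrow \GG$ corresponds to a natural transformation $\varphi^{\op} : \GG^{\op} \Rightarrow \FF^{\op}$ between functors $\CC^{\op} \to \DD^{\op}$.

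First apply Proposition~\ref{prop:natual-to-natural} to the natural transformation $\varphi^{\op}$, with $\GG^{\op}$ playing the role of ``$\FF$'' and $\FF^{\op}$ playing the role of ``$\GG$''. This produces a natural transformation
\begin{equation*}
\widetilde{\varphi^{\op}} : ((\varphi^{\op})_c)_* \circ (\GG^{\op})_{|c} \Longrightarrow (\FF^{\op})_{|c}
\end{equation*}
together with a square commutative up to a natural equivalence expressing compatibility with the forgetful functors to $\CC^{\op}$ and $\DD^{\op}$. Next, apply $(\cdot)^{\op}$ and translate through the identifications above; this converts $\widetilde{\varphi^{\op}}$ into the desired natural transformation $\bar \varphi$ between the appropriate composites of $\GG_{c|}$, $\FF_{c|}$ and $(\varphi_c)^*$, and converts the commutative square into the first commutativity claim in the statement.

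For the second assertion concerning naturality in $S$ and $T$: the given commutative diagram involving $\varphi$, $\varphi'$, $S$, $T$ produces, upon taking opposites, a commutative diagram of the same shape involving $\varphi^{\op}$, $\varphi'^{\op}$, $S^{\op}$, and $T^{\op}$. The naturality part of Proposition~\ref{prop:natual-to-natural} applies to this opposite diagram and yields a commutative square for $\widetilde{\varphi^{\op}}$ and $\widetilde{\varphi'^{\op}}$, which translates back to the desired square for $\bar \varphi$ and $\bar \varphi'$ after taking opposites a second time.

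There is no conceptual obstacle. The only technical point is bookkeeping: one must carefully verify that the identification $\CC_{c|} = (\CC^{\op}_{|c})^{\op}$ intertwines the functors $\FF_{c|}$ and $(\varphi_c)^*$ with the corresponding constructions applied in the opposite setting, and that the direction-reversal of natural transformations under $(\cdot)^{\op}$ is consistent throughout. Once these compatibilities are in hand---all of which follow directly from the definitions and the identification $\Tw(\CC^{\op}) \cong \Tw(\CC)^{\op}$---the proposition is a formal consequence of its partner.
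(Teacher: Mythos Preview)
Your approach is exactly what the paper intends: the paper offers no proof beyond the remark that this proposition is dual to Proposition~\ref{prop:natual-to-natural}, and you have spelled out that duality by passing to opposites via the definition $\CC_{c|}=(\CC^{\op}_{|c})^{\op}$ and applying Proposition~\ref{prop:natual-to-natural} to $\varphi^{\op}:\GG^{\op}\Rightarrow\FF^{\op}$.

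One small correction: the identification $\Tw(\CC^{\op})\cong\Tw(\CC)^{\op}$ you invoke is not correct in the paper's convention; with $\Tw(\CC)_J=\Hom(\Delta^{J\star J^{\op}},\CC)$ one has instead $\Tw(\CC^{\op})\cong\Tw(\CC)$, with the two projections $p$ and $q$ interchanged. Fortunately you do not actually need this: the identifications $\FF_{c|}=((\FF^{\op})_{|c})^{\op}$ and $(\varphi_c)^*=(((\varphi^{\op})_c)_*)^{\op}$ follow directly from the \emph{definition} $\CC_{c|}=(\CC^{\op}_{|c})^{\op}$, and the rest of your argument goes through unchanged. (You might also note, when carrying out the bookkeeping, that taking opposites reverses the direction of the resulting natural transformation, so that $\bar\varphi$ ends up pointing $\FF_{c|}\Rightarrow(\varphi_c)^*\circ\GG_{c|}$; the target category in the first diagram should accordingly be $\DD_{\FF(c)|}$, consistent with the second diagram in the statement.)
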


\subsection{Application: a sufficient condition for cofinality} 

Recall that a functor of $\infty$-categories $\FF : \CC \to \DD$ is called left cofinal (or coinitial), if for any functor to an $\infty$-category $p: \DD \to  \AA$, the induced functor $\AA_{/p}\to \AA_{/p\FF}$ is an equivalence. In particular, the composition with $\FF$ sends a limit cone in $\AA$ to a limit cone in $\AA$. The $\infty$-categorical version of Quillen's theorem A says that a functor of $\infty$-categories $\FF:\CC\to \DD$ is left cofinal if and only if the relative over-category $\CC_{/d}$ is weakly contractible for any object $d$ of $\DD$ \cite[Th.4.4.20]{land2021introduction}.  

\begin{proposition}\label{prop:cofinal}
Let $\CC$ and $\DD$ be $\infty$-categories and $\FF:\CC \to \DD$ be a functor. Assume that:
\begin{enumerate}
    \item there are functors $\Phi: \CC\to \CC$ and $\Psi:\DD \to \DD$ such that the diagram 
    \[ 
    \begin{tikzcd}
        \CC 
        \ar[r,"\FF"]
        \ar[d,"\Phi"]
        & 
        \DD 
        \ar[d,"\Psi"]
        \\
        \CC 
        \ar[r,"\FF"]
        & 
        \DD
    \end{tikzcd}
    \]
    commutes up to  a natural equivalence; 
    \item there is an object $c_0$ of $\CC$ together with natural transformations 
    \[{\sf Const}(c_0) \overset{\sigma} \Longrightarrow \Phi \overset{\xi} \Longleftarrow \Id_\CC, \hspace{5mm} 
    {\sf Const}(d_0) \overset{\tau} 
    \Longrightarrow
    \Psi 
    \overset{\zeta}
    \Longleftarrow
    \Id_\DD
    \]
    where $d_0=\FF(c_0),$ such that the diagrams 
    \[
    \begin{tikzcd}
        \Delta^1 \times \CC 
        \ar[r,"\Delta^1 \times  \FF"]
        \ar[d,"\sigma"]
        & 
        \Delta^1 \times \DD 
        \ar[d,"\tau"]
        \\
         \CC 
        \ar[r,"\FF"]
        & 
        \DD
    \end{tikzcd}
    \hspace{1cm}
    \begin{tikzcd}
    \Delta^1 \times \CC 
    \ar[r,"\Delta^1 \times  \FF"]
    \ar[d,"\xi"]
    & 
    \Delta^1 \times \DD 
    \ar[d,"\zeta"]
    \\
    \CC 
    \ar[r,"\FF"]
    & 
    \DD
    \end{tikzcd}
    \]
    commute up to natural equivalences;
    \item  for each object $d$ of $\DD$, the morphism $\zeta_d : d \to \Psi(d)$ is left invertible.
\end{enumerate}
Then $\FF$ is left cofinal.
\end{proposition}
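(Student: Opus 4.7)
The plan is to invoke the $\infty$-categorical Quillen Theorem A cited just above the statement: it suffices to show that, for every object $d$ of $\DD$, the $\infty$-category $\CC_{/d}\simeq \CC_{|d}$ (the latter defined as the pullback of $\DD_{|d}\to \DD$ along $\FF$) is weakly contractible. I will produce on $\CC_{|d}$ an explicit zigzag of natural transformations
\[ {\sf Const}_{(c_0,\,r_d\tau_d)} \;\Longrightarrow\; G_1 \;\Longleftarrow\; \Id_{\CC_{|d}}, \]
where $r_d\colon \Psi(d)\to d$ denotes a chosen left inverse of $\zeta_d$ supplied by hypothesis~(3), so that the composite $r_d\circ \tau_d\colon d_0 = \FF(c_0)\to d$ upgrades $c_0$ to an object of $\CC_{|d}$.

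The intermediate functor $G_1\colon \CC_{|d}\to \CC_{|d}$ is assembled as follows. The commutative square in~(1), together with the naturality of the over-category construction (applied to $\Psi_{|d}\colon \DD_{|d}\to \DD_{|\Psi(d)}$), produces a lift $(\Phi,\Psi)_{|d}\colon \CC_{|d}\to \CC_{|\Psi(d)}$ covering $\Phi\colon \CC\to\CC$; post-composing with $(r_d)_*\colon \CC_{|\Psi(d)}\to \CC_{|d}$ yields $G_1$, whose value on $(c,\alpha\colon \FF(c)\to d)$ is $(\Phi(c),\, r_d\circ \Psi(\alpha)\circ \varepsilon_c)$, where $\varepsilon_c\colon \FF\Phi(c)\xrightarrow{\simeq} \Psi\FF(c)$ is the equivalence from~(1). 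The functor $G_0$ is the constant functor at $(c_0, r_d\tau_d)$.

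The natural transformation $\Id_{\CC_{|d}}\Rightarrow G_1$ is assembled, via the pullback defining $\CC_{|d}$, from two pieces: the natural transformation $\xi\colon \Id_\CC\Rightarrow \Phi$ on the $\CC$-factor, and, on the $\DD_{|d}$-factor, a natural transformation $\Id_{\DD,|d}\Rightarrow (r_d)_*\circ \Psi_{|d}$ obtained by applying Proposition~\ref{prop:natual-to-natural} to $\zeta\colon \Id_\DD\Rightarrow \Psi$ (yielding $\tilde\zeta\colon (\zeta_d)_*\Rightarrow \Psi_{|d}$), post-composing with $(r_d)_*$, and identifying $(r_d)_*\circ(\zeta_d)_*\simeq \Id_{\DD,|d}$ via $r_d\circ \zeta_d\simeq {\sf id}_d$. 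The two pieces glue because they both project to $\zeta\FF$ after applying $\FF$, which is precisely the content of the compatibility square for $(\xi,\zeta)$ in~(2) combined with the naturality statement in the second part of Proposition~\ref{prop:natual-to-natural}. The transformation $G_0\Rightarrow G_1$ is produced by the same recipe with $(\xi,\zeta)$ replaced by $(\sigma,\tau)$.

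Once the zigzag is in place, the two endofunctors $\Id_{\CC_{|d}}$ and $G_0$ induce the same map on the classifying space of $\CC_{|d}$; since $G_0$ factors through a point, $|\CC_{|d}|$ is contractible, and left cofinality of $\FF$ follows from Quillen's Theorem A. The main obstacle is the gluing step: one must verify that the $\CC$-level and $\DD_{|d}$-level natural transformations are compatible over $\DD$ coherently enough to descend to the pullback $\CC_{|d}$, which is exactly the reason for the functoriality package developed in Propositions~\ref{prop:functorial:over-cat} and~\ref{prop:natual-to-natural}. The use of the left inverse $r_d$ (rather than a two-sided inverse) is precisely what makes the target of the zigzag land back in $\CC_{|d}$, justifying why only a left invertibility hypothesis on $\zeta_d$ is required.
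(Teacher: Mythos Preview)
Your proposal is correct and follows essentially the same route as the paper: reduce to weak contractibility of $\CC_{|d}$ via Quillen's Theorem~A, build the intermediate endofunctor as $(r_d)_*\circ(\Phi,\Psi)_{|d}$ using a left inverse $r_d$ of $\zeta_d$, manufacture the two natural transformations on $\DD_{|d}$ from $\tau$ and $\zeta$ via Proposition~\ref{prop:natual-to-natural}, and glue them with $\sigma$ and $\xi$ on the $\CC$-factor using the compatibility squares in~(2) and the Joyal homotopy pullback description of $\CC_{|d}$. The paper's $\hat\Phi$, $\hat c_0$, $\hat\sigma$, $\hat\xi$ are exactly your $G_1$, $(c_0,r_d\tau_d)$, and the two legs of your zigzag.
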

\begin{proof}  We fix an object $d$ of $\DD$ and prove that $\CC_{/d}$ is weakly contractible. 
The equivalence $\CC_{/d} \simeq  \CC_{|d} $   \eqref{eq:C_d-equiv} implies that it is sufficient to prove that $\CC_{|d}$ is weakly  contractible. In order to prove this, we  construct a functor 
$
\hat \Phi  : \CC_{|d} \to \CC_{| d},
$
an object $\hat c_0$ of $\CC_{|d}$ and natural transformations 
\begin{equation}\label{eq:nat_main}
{\sf Const}(\hat c_0) \overset{\hat \sigma}
\Longrightarrow
\hat \Phi 
\overset{\hat \xi} 
\Longleftarrow
\Id_{\CC_{|d}}.
\end{equation} 
Let us construct this data.

By Proposition \ref{prop:natual-to-natural}, 
we have a functor 
\begin{equation}
\Psi_{|d}:\DD_{|d}\longrightarrow \DD_{|\Psi(d)}   
\end{equation}
and natural transformations 
\begin{equation}
{\sf Const}(\tau_{d}) 
\overset{\tilde \tau} 
\Longrightarrow
\Psi_{|d} 
\overset{\tilde \zeta}
\Longleftarrow
(\zeta_d)_*. 
\end{equation}
Consider a morphism $r:\Psi(d)\to d$ such that $\id_d$ is a composition of $r$ and $\zeta_d$. Taking the composition with the functor 
$r_* : \DD_{|\Psi(d)} \longrightarrow \DD_{|d}$, and using that 
$r_* \circ (\zeta_d)_* \simeq \Id_{\DD_{|d}}$, 
we obtain a functor 
\begin{equation}
\hat \Psi = r_* \circ \Psi_{|d} : \DD_{|d} \longrightarrow \DD_{|d}    
\end{equation}
and natural transformations 
\begin{equation}
{\sf Const}(\hat  d_0) \overset{\hat \tau} \Longrightarrow \hat \Psi \overset{\hat \zeta}
\Longleftarrow \Id_{\DD_{|d}},
\end{equation}
where $\hat d_0 = r_*(\tau_d).$  Using the commutative triangle \eqref{eq:f_*triangle}, we can check that the diagram 
\begin{equation}
\begin{tikzcd}
\CC
\ar[r,"\FF"]
\ar[d,"\Phi"]
& 
\DD 
\ar[d,"\Psi"]
& 
\DD_{| d} 
\ar[l]
\ar[d,"\hat \Psi"]
\\
\CC\ar[r,"\FF"] 
& 
\DD 
& 
\DD_{| d} \ar[l]
\end{tikzcd}
\end{equation}
is commutative up to a natural equivalence. Since $\CC_{|d}$ is the Joyal homotopy pullback of the diagram $\CC\to \DD \leftarrow \DD_{|d} $, this defines a functor 
\begin{equation}
\hat \Phi: \CC_{|d} \longrightarrow \CC_{|d}.
\end{equation}
Similarly, using the commutative diagram \eqref{eq:tildevaphi-proj}, we obtain that the diagrams
\begin{equation}
\begin{tikzcd}
\Delta^1 \times \CC
\ar[r]
\ar[d,"\sigma"]
& 
\Delta^1 \times \DD 
\ar[d,"\tau"]
& 
\Delta^1 \times \DD_{| d} 
\ar[l]
\ar[d,"\hat \tau"]
\\
\CC\ar[r] 
& 
\DD 
& 
\DD_{| d} \ar[l]
\end{tikzcd}
\end{equation}
and
\begin{equation}
\begin{tikzcd}
\Delta^1 \times \CC
\ar[r]
\ar[d,"\xi"]
& 
\Delta^1 \times 
\DD 
\ar[d,"\zeta"]
& 
\Delta^1 \times
\DD_{| d} 
\ar[l]
\ar[d,"\hat \zeta"]
\\
\CC\ar[r] 
& 
\DD 
& 
\DD_{| d} \ar[l]
\end{tikzcd}
\end{equation}
commute up to natural equivalences. 
Using that $\Delta^1 \times \CC_{|d}$ is the Joyal homotopy pullback of the diagram $\Delta^1 \times \CC \to \Delta^1 \times \DD \leftarrow \Delta^1 \times \DD_{|d}$, we obtain the natural transformations \eqref{eq:nat_main}, where $\hat c_0=(c_0,\hat d_0).$
\end{proof}

\section{\bf \texorpdfstring{$\infty$}{}-monad completion} 

\subsection{Definition} Let $\CC$ be an $\infty$-category. A cosimplicial object in $\CC$ is a functor $X:\Ner(\Delta)\to \CC$. The totalization of $X$ is the limit of $X$ in $\CC$ \cite[\href{https://kerodon.net/tag/04QZ}{04QZ}]{kerodon}.
\begin{equation}
\Tot\: X = \lim X.
\end{equation}
We say that $\CC$ admits totalizations if it admits all $\Ner(\Delta)$-indexed limits. Since the inclusion of the semi-simplex category    
$\Ner(\Delta^{\sf inj}) \hookrightarrow \Ner(\Delta)$
is left cofinal \cite[Lemma 3.8]{dror1977long}, \cite[Lemma 6.5.3.7]{lurie2009higher} any totalization can be replaced by the limit over $\Ner(\Delta^{\sf inj})$. 

Let $\CC$ be an $\infty$-category that admits totalizations and 
\begin{equation}
\MM:\Ner(\Delta_+) \longrightarrow \End(\CC)  
\end{equation}
be an $\infty$-monad i.e. an $\infty$-monoid in $\End(\CC)$. We denote by $\MM_0:\CC\to \CC$ the underling endofunctor.
Note that 
$\Ner(\Delta_+) \cong  \{[-1]\} \star \Ner(\Delta).$
By the join-slice adjunction, $\MM$ induces a cosimplicial coaugmented endofunctor 
\begin{equation}
\MM^\bullet : \Ner(\Delta) \longrightarrow \End(\CC)_{\Id_\CC/}.
\end{equation}
Since $\CC$ admits totalizations, $\End(\CC)$ also admits totalizations, and they are computed object-wise \cite[Cor.5.1.2.3]{lurie2009higher}. Hence $\End(\CC)_{\Id_\CC/}$ admits totalizations, which are preserved by the forgetful functor  $\End(\CC)_{\Id_\CC/} \to \End(\CC)$ \cite[Prop.1.2.13.8]{lurie2009higher}. We define the $\MM$-completion as the totalization of $\MM^\bullet$
\begin{equation}
(\widehat{\MM},\hat \eta) = \Tot \:   \MM^\bullet.
\end{equation}

For an object $c$ of $\CC$, we consider the coaugmented cosimplicial object 
\begin{equation}
\MM_c : \Ner(\Delta_+) \longrightarrow \CC
\end{equation}
defined as the composition of $\MM$ and the evaluation functor ${\sf ev}_c: \End(\CC)\to \CC$. 
Then the cosimplicial object 
\begin{equation}
\MM^\bullet_c : \Ner(\Delta) \longrightarrow \CC_{c/} 
\end{equation}
can be defined in two equivalent ways: as the functor corresponding to $\MM_c$ by join-slice adjunction, and as the composition of $\MM^\bullet$ with the evaluation functor $\End(\CC)_{\Id_\CC/}\to \CC_{c/}.$ Using the fact that the limits in $\End(\CC)$ are computed object-wise and the description of limits in under-categories \cite[Prop.1.2.13.8]{lurie2009higher}, we see that the evaluation functor $\End(\CC)_{\Id_\CC/}\to \CC_{c/}$ commutes with limits. Therefore the completion can be computed on objects as follows 
\begin{equation}
(\widehat{\MM}(c),\hat \eta_c) = \lim \MM^\bullet_c. 
\end{equation}

Consider the cosimplicial endofunctor (without coaugmentation)
\begin{equation}
\MM^\circ : \Ner(\Delta) \longrightarrow \End(\CC)
\end{equation}
that can be defined in two equivalent ways: as a composition of $\MM^\bullet$ with the forgetful functor $\End(\CC,\CC)_{\Id_\CC/} \to \End(\CC,\CC)$; or as the restriction of $\MM$ to $\Ner(\Delta) \subseteq \Ner(\Delta_+)$. Since $\End(\CC,\CC)_{\Id_\CC/} \to \End(\CC,\CC)$ preserves small limits, we obtain that the endofunctor $\widehat{\MM}$ can be computed as the totalization
\begin{equation}
\widehat{\MM} = \Tot\: \MM^\circ.
\end{equation}
For an object $c$, we can also consider the composition of $ \MM^\circ$ with the evaluation functor  ${\sf ev}_c:\End(\CC)\to \CC$ 
\begin{equation}
\MM^\circ_c : \Ner(\Delta) \longrightarrow \CC 
\end{equation}
and obtain that 
$\widehat{\MM}(c) = \Tot\: \MM^\circ_c$.

\subsection{A lemma of Bousfield-Kan} \label{section:a_lemma_of_BK}

In this subsection, we prove a key technical lemma for the main theorem of this section (Theorem~\ref{th:monad_completion}). This lemma is a version of a proposition from the book of Bousfield and Kan \cite[§XI.10.5]{bousfield1972homotopy}, and we refer to it as a lemma of Bousfield–Kan. The proof is long and technical, so we devote a separate subsection to it. Before presenting the full proof, we first outline the argument in the setting of ordinary category theory.

Recall that for an $\infty$-monad $\MM$ on an $\infty$-category $\CC$, we denote by $\II(\MM)$ the full $\infty$-subcategory of $\CC$ generated by objects from the image of $\MM_0$, and by  $\RR(\MM)$ the full $\infty$-subcategory of $\CC$ spanned by retracts of objects from $\II(\MM)$. 
Consider a full subcategory $\DD$ such that 
\begin{equation}
\II(\MM)\subseteq \DD \subseteq \RR(\MM).
\end{equation}
Since the image of $\MM^\bullet_c : \Ner(\Delta) \to \CC_{c/}$ lies in $\DD_{c/},$ we can consider the induced functor 
$
\MM^\bullet_{c,\DD} : \Ner(\Delta) \longrightarrow \DD_{c/}.    
$

\begin{lemma}
\label{lemma:BK-cofinal} For any full subcategory $\DD$ of $\CC$ such that $\II(\MM) \subseteq \DD \subseteq \RR(\MM)$, and any object $c$ of $\CC$, the functor
\begin{equation}
  \MM^\bullet_{c,\DD} : \Ner(\Delta) \longrightarrow \DD_{c/} 
\end{equation}
is left cofinal. 
\end{lemma}

For the reader's convenience, before we give the actual proof of the lemma, we give a sketch of the proof in the setting of ordinary category theory. Assume that $\CC=C$ is an ordinary category and $\MM :\Delta_+ \to \End(C)$ is the cobar construction of an ordinary monad $M:C\to C$. The unit of $M$ is denoted by $\eta : \Id \to M.$ The cobar construction $\MM$ applied to the map $[-1]\to [n]$ defines a natural transformation that we denote by 
\begin{equation}
\eta^{[n]}:\Id \longrightarrow M^{n+1}.
\end{equation}
In particular $\eta = \eta^{[0]}$. We denote by $\II(M)$ the full subcategory of $C$ spanned by the objects from the image of $M$, by $\RR(M)$ the full subcategory of $C$ spanned by objects $c$ such that $\eta_c : c\to M(c)$ is left invertible (Lemma \ref{lemma:R(M)}), and by $D$ a full subcategory of $C$ such that $\II\subseteq D \subseteq \RR.$ We need to prove that the functor 
\begin{equation}
M^\bullet_c : \Delta \longrightarrow D_{c/}, \hspace{1cm} [n] \mapsto \eta^{[n]}_c 
\end{equation}
is left cofinal. 

We prove the lemma using Proposition \ref{prop:cofinal}. First we construct functors 
\begin{equation}
\varphi : \Delta \to \Delta, \hspace{1cm} \psi: D_{c/} \to D_{c/} . 
\end{equation}
 The functor $\varphi$ is defined as the join with one point $\varphi=[0]\star -$. The functor $\psi$ is defined on an object $\alpha:c\to x$ of $D_{c/}$ as the composition 
\begin{equation}
\psi(\alpha)  :  c \xrightarrow{\ \eta_c\ } M(c) \xrightarrow{\ M(\alpha)\ } M(x)
\end{equation}
with the obvious definition on morphisms. Since $\eta$ is a natural transformation, $\psi(\alpha)$ can be equivalently defined as the composition 
\begin{equation}
\psi(\alpha): c \xrightarrow{\ \alpha\ } x \xrightarrow{\ \eta_x\ } M(x).
\end{equation}
It is easy to see that 
$\psi(\eta^{[n]}_c) = \eta^{[0]\star [n]}_c.$
Using this, we obtain that the diagram
\begin{equation}
\begin{tikzcd}
\Delta 
\ar[r,"M^\bullet_c"]
\ar[d,"\varphi"]
& 
D_{c/} 
\ar[d,"\psi"]
\\ 
\Delta 
\ar[r,"M^\bullet_c"]
& 
D_{c/} 
\end{tikzcd}
\end{equation}
commutes. Now we construct natural transformations 
\begin{equation}
{\sf Const}([0]) \overset{\sigma}\Longrightarrow \varphi \overset{\xi}\Longleftarrow \Id_\Delta, 
\hspace{5mm} 
{\sf Const}(\eta_c) \overset{\tau}\Longrightarrow \psi \overset{\zeta}\Longleftarrow \Id_{D_{c/}}. 
\end{equation}
The components of the natural transformations $\sigma$ and $\xi$ are defined by the inclusions 
\begin{equation}
[0] \longrightarrow [0] \star [n] \longleftarrow [n].
\end{equation}
The $\alpha$-components of the natural transformations $\tau$ and $\zeta$ are defined by the commutative diagram
\begin{equation}
\begin{tikzcd}[column sep = 15mm,row sep =15mm]
c 
\ar[d,"\eta_c"]
\ar[r,equal]
\ar[rd,phantom, "\tau_\alpha"]
&
c 
\ar[d,"\psi(\alpha)"]
&
c 
\ar[l,equal]
\ar[d,"\alpha"]
\ar[dl,phantom,"\zeta_\alpha"]
\\
M(c)
\ar[r,"M(\alpha)"']
&
M(x) 
& 
x
\ar[l,"\eta_x"]
\end{tikzcd}
\end{equation}
If $\alpha=\eta^{[n]}_c$, we obtain that the components of $\tau$ and $\zeta$ are defined by the diagram
\begin{equation}
\begin{tikzcd}[column sep = 15mm, row sep =15mm]
c 
\ar[r,equal]
\ar[d]
\ar[dr,phantom,"\tau_{\eta^{[n]}_c}"]
&
c
\ar[d]
\ar[dr,phantom,"\zeta_{\eta^{[n]}_c}"]
&
c
\ar[l,equal]
\ar[d]
\\
M(c)
\ar[r,"M(\eta^{[n]}_c)"']
& 
M^{n+2}(c)
&
M^{n+1}(c)
\ar[l,"\eta_{M^{n+1}(c)}"] 
\end{tikzcd}
\end{equation} 
It follows that 
\begin{equation}
\tau M^\bullet_c = M^\bullet_c \sigma, \hspace{1cm} \zeta M^\bullet_c =  M^\bullet_c \xi. 
\end{equation}

Thus, to apply the Proposition \ref{prop:cofinal}, we only need to check that $\zeta_\alpha : \alpha \to \psi(\alpha)$ is left invertible for any $\alpha:c\to x$ in $D_{c/}$. 
Since $x$ is in $\RR(M)$, there exists $r : M(x)\to x$ such that $r \circ \eta_x = \id_x.$  
Therefore, the fact that $\zeta_\alpha$ is left invertible follows from the commutativity of the following diagram. 
\begin{equation}
\begin{tikzcd}[column sep =15mm,row sep=15mm]
c
\ar[r,equal]
\ar[d,"\alpha"] 
\ar[rd,phantom,"\zeta_\alpha"]
& 
c 
\ar[r,equal]
\ar[d,"\psi(\alpha)"]
&
c 
\ar[d,"\alpha"]
\\
x 
\ar[r,"\eta_x"]
\ar[rr,bend right = 1cm,"\id_x"]
& 
M(x) 
\ar[r,"r"]
&
x 
\end{tikzcd}
\end{equation}
Now let us give an actual proof of Lemma \ref{lemma:BK-cofinal}.

\begin{proof}[Proof of Lemma \ref{lemma:BK-cofinal}]
Consider the equivalence  $\DD_{c/} \to  \DD_{c|}$ (see \eqref{eq:C_d-equiv}) and replace $\MM^\bullet_{c,\DD}$ with the composition 
$\MM_c' : \Ner(\Delta) \to \DD_{c|}.$
We prove that $\MM'_c$ is left cofinal using Proposition \ref{prop:cofinal}. The proof consists of the following steps.
\begin{itemize}
    \item[(A)] A construction of functors $\Phi :\Ner(\Delta) \to \Ner(\Delta)$ and $\Psi: \DD_{|c}\to \DD_{|c}$ satisfying the assumption (1) of Proposition \ref{prop:cofinal}. 
    \item[(B)] A construction of natural transformations $\sigma : {\sf Const}([0]) \Rightarrow \Phi$ and $\tau:{\sf Const}(\eta_c^\MM ) \Rightarrow \Psi$ satisfying the  assumption (2) of Proposition \ref{prop:cofinal}. 
    \item[(C)] A construction of natural transformations $\xi : \Id_{\Ner(\Delta)} \Rightarrow \Phi$ and $\zeta: \Id_{\DD_{c|}} \Rightarrow \Psi$ satisfying the assumption (2) of Proposition \ref{prop:cofinal}.
    \item[(D)] Proof of the fact that, for any $f:c\to x$ in $\DD_{c|}$, the map $\zeta_f : f \to \Psi(f)$ is left invertible.
\end{itemize}

Let us complete these steps.

(A) We denote by $\eta^{0} : [-1]\to [0]$ the only map from $[-1]$ to $[0]$. Consider functors of join with one point
\begin{equation}
\varphi: \Delta \to \Delta, 
\hspace{5mm}
\varphi_+: \Delta_+ \to \Delta_+, \hspace{5mm}
\bar \varphi: \Delta_+ \to (\Delta_+)_{[0]/}
\end{equation}
where $\varphi = [0]\star -$ and $\varphi_+ = [0]\star -$, and $\bar \varphi = ([0]\star - )_{[-1]/}.$ Here we  use the identification
\begin{equation}
 \Delta_+\cong (\Delta_+)_{[-1]/}.   
\end{equation}
Then the diagram  
\begin{equation}\label{eq:phi-phi+}
\begin{tikzcd}[column sep = 15mm]
\Delta 
\ar[r,hookrightarrow]
\ar[dd,"{\varphi}"]
& 
\Delta_+ 
\ar[d,"\bar \varphi"']
\ar[dd,bend left = 30mm,"\varphi_+"]
\\
& 
(\Delta_+)_{[0]/}
\ar[d,"{\eta^0_*}"']
\\
\Delta 
\ar[r,hookrightarrow]
& 
\Delta_+
\end{tikzcd}
\end{equation}
is commutative. Further we set 
\begin{equation}
\Phi=\Ner(\varphi), \hspace{5mm} \Phi_+ = \Ner(\varphi_+), \hspace{5mm}  \bar \Phi = \Ner(\bar \varphi).
\end{equation}
Since $\MM:\Ner(\Delta_+)\to \End(\CC)$ is a morphism of simplicial monoids, the diagram
\begin{equation}\label{eq:M-homo}
\begin{tikzcd}[column sep = 12mm]
\Ner(\Delta_+)\times \Ner(\Delta_+)
\ar[d,"{\star}"]
\ar[r,"\MM \times \MM"]
& 
\End(\CC) \times  \End(\CC)
\ar[d,"\circ "]
\ar[r,"{\id \times {\sf ev}_c}"]
& 
\End(\CC) \times \CC 
\ar[d,"{\sf ev}"] 
\\ 
\Ner(\Delta_+) 
\ar[r,"\MM"]
&
\End(\CC)
\ar[r,"{\sf ev}_c"]
& 
\CC
\end{tikzcd}
\end{equation}
is commutative. Using this, we obtain that the diagram 
\begin{equation}\label{eq:0*--M-0}
\begin{tikzcd}[column sep=15mm]
\Ner(\Delta_+)_{[-1]|} 
\ar[r,"{(\MM_c)_{[-1]|}}"]
\ar[d,"{\bar \Phi}"]
&
\CC_{c|}
\ar[d,"{(\MM_0)_{c|}}"]
\\
\Ner(\Delta_+)_{[0]|}
\ar[r,"{(\MM_c)_{[-1]|}}"]
&
\CC_{\MM_0(c)|}
\end{tikzcd}
\end{equation}
is commutative. The diagram  \eqref{eq:F|c:nat} implies that the diagram 
\begin{equation}\label{eq:eta^0-eta^M}
\begin{tikzcd}[column sep = 15mm]
\Ner(\Delta_+)_{[0]|} 
\ar[r,"(\MM_c)_{[0]|}"]
\ar[d,"\eta^0_*"]
& 
\CC_{\MM_0(c)|} 
\ar[d,"(\eta^\MM_c)_*"]
\\
\Ner(\Delta_+)_{[-1]|} 
\ar[r,"(\MM_c)_{[-1]|}"]
& 
\CC_{c|}
\end{tikzcd}
\end{equation}
is commutative up to a natural equivalence. Composing the diagrams \eqref{eq:phi-phi+}, \eqref{eq:0*--M-0} and \eqref{eq:eta^0-eta^M}, we obtain that the diagram 
\begin{equation}
\begin{tikzcd}[column sep=15mm]
\Ner(\Delta) 
\ar[r,"{\MM'_c}"]
\ar[d,"{\Phi}"]
& 
\CC_{c|}
\ar[d,"\Psi"]
\\ 
\Ner(\Delta)
\ar[r,"{\MM'_c}"]
& 
\CC_{c|} 
\end{tikzcd}
\end{equation}
commutes up to a natural equivalence, where 
\begin{equation}
\Psi=(\eta^\MM_c)_* \circ (\MM_0)_{c|}.
\end{equation}
It is easy to see that $\Psi$ restricts to a functor that we denote by the same letter  
\begin{equation}
 \Psi:  \DD_{c|} \to \DD_{c|}.    
\end{equation}
Using that $\DD_{c|}\subseteq \CC_{c|}$ is a full $\infty$-subcategory, we obtain that the diagram
\begin{equation}
\begin{tikzcd}[column sep=15mm]
\Ner(\Delta) 
\ar[r,"{\MM'_c}"]
\ar[d,"{\Phi}"]
& 
\DD_{c|}
\ar[d,"\Psi"]
\\ 
\Ner(\Delta)
\ar[r,"{\MM'_c}"]
& 
\DD_{c|} 
\end{tikzcd}
\end{equation}
commutes up to a natural equivalence. 

(B) The inclusion $[0] \to [0]\star J$ defines natural transformations 
\begin{equation}
\sigma: {\sf Const}([0]) \Rightarrow \varphi, 
\hspace{5mm}
\sigma_+ : {\sf Const}([0]) \Rightarrow \varphi_+, \hspace{5mm} 
\bar \sigma : {\sf Const}(\id_{[0]}) \Rightarrow \bar \varphi.
\end{equation}
It is easy to check that the diagram
\begin{equation}\label{eq:sigma-sigma-plus}
\begin{tikzcd}[column sep = 15mm]
[1] \times \Delta 
\ar[r,"I"]
\ar[dd,"{\sigma}"]
& 
{[1]\times \Delta_+} 
\ar[d,"\bar \sigma"']
\ar[dd,bend left = 25mm,"\sigma_+"]
\\
& 
(\Delta_+)_{[0]/}
\ar[d,"{\eta^0_*}"']
\\
\Delta 
\ar[r,"I"]
& 
\Delta_+
\end{tikzcd}
\end{equation}
is commutative. The identity map $\id_{\MM_0(c)}$ is the initial object of $\CC_{\MM_0(c)|}.$ Therefore, ${\sf Const}(\id_{\MM_0(c)})$ is the initial object of the $\infty$-category of functors $\CC_{c|}\to \CC_{\MM_0(c)|}$.
 Now, consider the natural transformation 
\begin{equation}
    \bar \tau : {\sf Const}(\id_{\MM_0(c)}) \Longrightarrow (\MM_0)_{c|} 
\end{equation} 
which is unique up to a contractible ambiguity. The diagram 
\begin{equation}\label{eq:bar-sigma-bar-tau}
\begin{tikzcd}[column sep=20mm]
\Delta^1 \times \Ner(\Delta_+) 
\ar[r,"\Delta^1\times (\MM_{c})_{[-1]|}"]
\ar[d,"\bar \sigma"]
& 
\Delta^1 \times \CC_{c|} 
\ar[d,"\bar \tau"]
\\
\Ner(\Delta_+)_{[0]|}
\ar[r,"(\MM_c)_{[0]|}"]
&
\CC_{\MM_0(c)|}
\end{tikzcd}
\end{equation}
is commutative up to a natural equivalence, because
\begin{equation}
 {\sf Const}(\id_{\MM_0(c)}) = (\MM_c)_{[0]|} \circ {\sf Const}(\id_{[0]})   
\end{equation}
is the initial object in the category of functors $\Ner(\Delta_+) \to  \CC_{\MM_0(c)|}$. We set $\tau =  (\eta^\MM_c)^* \circ \bar \tau$. Combining the diagrams
\eqref{eq:sigma-sigma-plus}, \eqref{eq:eta^0-eta^M} and 
\eqref{eq:bar-sigma-bar-tau} we obtain that the diagram 
\begin{equation}
\begin{tikzcd}[column sep = 15mm]
\Delta^1 \times \Ner(\Delta_+) 
\ar[r,"\Delta^1\times \MM'_c"]
\ar[d,"\sigma"]
&
\Delta^1 \times \CC_{c|} 
\ar[d,"\tau"]
\\
 \Ner(\Delta_+) 
\ar[r,"\MM'_c"]
&
\CC_{c|}
\end{tikzcd}
\end{equation}
commutes up to a natural equivalence. Using that $\DD_{c|}$ is a full $\infty$-subcategory of $\CC_{c|},$ we obtain that $\tau$ can be restricted to a natural transformation that we denote by the same letter 
\begin{equation}
\tau : \Delta^1 \times \DD_{c|} \longrightarrow \DD_{c|},
\end{equation}
and obtain a commutative diagram 
\begin{equation}
\begin{tikzcd}[column sep = 15mm]
\Delta^1 \times \Ner(\Delta_+) 
\ar[r,"\Delta^1\times \MM'_c"]
\ar[d,"\sigma"]
&
\Delta^1 \times \DD_{c|} 
\ar[d,"\tau"]
\\
 \Ner(\Delta_+) 
\ar[r,"\MM'_c"]
&
\DD_{c|}
\end{tikzcd}
\end{equation}

(C) The inclusion $J\to [0]\star J$ defines natural transformations 
\begin{equation}
\xi: \Id_{\Delta} \Rightarrow \varphi, \hspace{5mm} 
\xi_+ : \Id_{\Delta_+} \Rightarrow 
\varphi_+ 
\end{equation}
such that the diagram 
\begin{equation}\label{eq:xi-xi_+}
\begin{tikzcd}[column sep=15mm]
{[1]\times \Delta} 
\ar[r]
\ar[d,"\xi"]
&
{[1] \times \Delta_+}
\ar[d,"\xi_+"]
\\
\Delta
\ar[r]
& 
\Delta^+
\end{tikzcd}
\end{equation}
is commutative.  Using again the diagram \eqref{eq:M-homo}, and considering the map $\Delta^1 \to \Ner(\Delta_+)$ defined by the morphism $\eta^0: [-1]\to [0]$, we obtain that the diagram 
\begin{equation}
\begin{tikzcd}[column sep=15mm]
\Delta^1 \times \Ner(\Delta_+) 
\ar[r,"\Delta^1\times \MM_c"]
\ar[d,"\xi_+"]
&
\Delta^1 \times \CC
\ar[d,"\eta^\MM"]
\\
\Ner(\Delta_+)
\ar[r,"\MM_c"]
&
\CC
\end{tikzcd}
\end{equation}
is commutative. By Proposition \ref{prop:natual-to-natural-dual}, the natural transformations $\xi_+: \Id_{\Delta_+} \Rightarrow \varphi_+$ 
and
$\eta^\MM : \Id \Rightarrow \MM_0 $ define two natural transformations. The first is just equal to $\xi_+$ after the identification of $\Delta_+$ with $(\Delta_+)_{[-1]/}$. The second is denoted by $\zeta.$ 
\begin{equation}
\xi_+ : \Id_{\Delta_+} \Rightarrow \varphi_+, \hspace{10mm} \zeta : \Id \Rightarrow \Psi.
\end{equation}
Proposition \ref{prop:natual-to-natural-dual} says that the diagram 
\begin{equation}
\begin{tikzcd}[column sep = 20mm]
\Delta^1 \times \Ner(\Delta_+) 
\ar[r,"\Delta^1 \times (\MM_c)_{[-1]|}"] 
\ar[d,"\xi_+"]
& 
\Delta^1 \times \CC_{c|} 
\ar[d,"\zeta"] 
\\ 
\Ner(\Delta_+) 
\ar[r,"(\MM_c)_{[-1]|}"]
&
\CC_{c|}
\end{tikzcd}
\end{equation}
is commutative up to a natural equivalence. 
Composing this with \eqref{eq:xi-xi_+}, we obtain a commutative diagram. 
\begin{equation}
\begin{tikzcd}[column sep = 20mm]
\Delta^1 \times \Ner(\Delta) 
\ar[r,"\Delta^1 \times \MM'_c"] 
\ar[d,"\xi"]
& 
\Delta^1 \times \CC_{c|} 
\ar[d,"\zeta"] 
\\ 
\Ner(\Delta) 
\ar[r,"\MM'_c"]
&
\CC_{c|}.
\end{tikzcd}
\end{equation}
As usual, we can restrict to $\DD_{c|}$ and obtain a commutative diagram  
\begin{equation}
\begin{tikzcd}[column sep = 20mm]
\Delta^1 \times \Ner(\Delta) 
\ar[r,"\Delta^1 \times \MM'_c"] 
\ar[d,"\xi"]
& 
\Delta^1 \times \DD_{c|} 
\ar[d,"\zeta"] 
\\ 
\Ner(\Delta) 
\ar[r,"\MM'_c"]
&
\DD_{c|}.
\end{tikzcd}
\end{equation}

(D) Consider a map $f:c\to x$, where $x$ is an object of  $\RR.$ We need to check that the morphism $\zeta_f : f \to \Psi(f)$ is left invertible in 
$\CC_{c|}$. Since $\Psi=(\eta^\MM_c)_* \circ (\MM_0)_{c|}$, we obtain that $\Psi(f)$ is a composition of $\MM_0(f):\MM_0(c)\to \MM_0(x)$ and $\eta^\MM_c : c\to \MM_0(c).$ Using that the map $\DD_{c/} \to \DD_{c|}$ is an equivalence, it is sufficient to check that the corresponding morphism is left invertible in the ordinary  under-category $\DD_{c/}$. The corresponding morphism in $\DD_{c/}$ is a $2$-simplex of $\CC$ that can be depicted as follows
\begin{equation}
\begin{tikzcd}[column sep = 15mm]
 & 
 x
\ar[rd,"\eta_x^\MM"]
\ar[d,phantom,"\zeta_f"]
 &
 \\
 c
 \ar[ru,"f"]
 \ar[rr,"\Psi(f)"']
 &
 \ 
 &
 \MM_0(x).
\end{tikzcd}
\end{equation}
Since $x$ is from $\RR(\MM)$, there is a map $r: \MM_0(x) \to x$ such that $\id_x$ is a composition of $r$ and $\eta_x^\MM:x\to \MM_0(x)$ (Lemma \ref{lemma:R(M)}). Therefore, the fact that $\zeta_f$ is left invertible follows from the fact that there is a $3$-simplex  $\Delta^3\to \CC$ that can be depicted as follows
\begin{equation}
\begin{tikzcd}[column sep = 20mm, row sep = 15mm]
&
x
\ar[r,"\eta^\MM_x"]
\ar[rrd,"\id_x" description]
&
\MM_0(x)
\ar[rd,"r"]
& 
\\
c 
\ar[rrr,"f"'] 
\ar[ru,"f"]
\ar[rru,"\Psi(f)" description]
& & & x,
\end{tikzcd}
\end{equation}
where the restriction to  $\Delta^{\{0,1,2\}}$ represents $\zeta_f$, the restriction to $\Delta^{\{0,1,3\}}$ represents $\id_f$, and the restriction to  $\Delta^{\{1,2,3\}}$ represents the fact that $\id_x$ is a composition of $r$ and $\eta_x^\MM.$
This $3$-simplex can be obtained as a lifting of the map from the inner horn  $\Lambda^{3}_{1} \to \CC$. 
\end{proof}

\subsection{Monad completion as a codensity \texorpdfstring{$\infty$}{}-monad} 

Recall that for an $\infty$-monad $\MM$ on an $\infty$-category $\CC$, we consider four full subcategories of $\CC$
\begin{equation}\label{eq:iclusions:IASR}
\II(\MM) \subseteq \AA(\MM) \subseteq \SS(\MM) \subseteq \RR(\MM)\subseteq \CC, 
\end{equation}
where $\II(\MM)$ is spanned by objects from the image of $\MM_0$; $\AA(\MM)$ is spanned by objects that admit a structure of an $\MM$-algebra; $\SS(\MM)$ is spanned by objects $c$ such that the coaugmented cosimplicial object $\MM_c$ is right split;  and $\RR(\MM)$ is spanned by retracts of objects of $\II(\MM)$ (Proposition \ref{prop:ISAR}). 

\begin{theorem}\label{th:monad_completion}
Let $\CC$ be an $\infty$-category that admits totalizations and $\MM$ be an $\infty$-monad in $\CC$. Assume that $\DD$ is a full $\infty$-subcategory of $\CC$ such that $\II(\MM)\subseteq \DD \subseteq \RR(\MM)$. Then the codensity $\infty$-monad $T_{\DD}$ exists and is equivalent to the $\MM$-completion 
\begin{equation}
    T_{\DD} \simeq \widehat{\MM}
\end{equation}
as a coaugmented functor. 
\end{theorem}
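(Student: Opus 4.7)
The plan is to invoke the Bousfield--Kan cofinality lemma (Lemma \ref{lemma:BK-cofinal}) after first reducing to the case $\DD = \RR(\MM)$. The reduction is immediate from Proposition \ref{prop:coden:retracts}: every object of $\RR(\MM)$ is a retract of an object of $\II(\MM) \subseteq \DD$, so $T_\DD$ exists if and only if $T_{\RR(\MM)}$ does, and the two are canonically equivalent as coaugmented functors. From now on, I assume $\DD = \RR(\MM)$.

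Next, fix an object $c$ of $\CC$. Unpacking the right Kan extension via the join--slice adjunction, $T_{\RR(\MM)}(c)$ together with its coaugmentation $c \to T_{\RR(\MM)}(c)$ is computed as the limit in $\CC_{c/}$ of the canonical inclusion
\begin{equation}
\RR(\MM)_{c/} \hookrightarrow \CC_{c/}.
\end{equation}
By construction, $(\widehat{\MM}(c), \hat\eta_c)$ is the limit in $\CC_{c/}$ of the cosimplicial object $\accentset{\bullet}\MM_c : \Ner(\Delta) \to \CC_{c/}$, which factors through $\RR(\MM)_{c/}$ as $\accentset{\bullet}\MM_c^{\RR}$. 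Lemma \ref{lemma:BK-cofinal} states that $\accentset{\bullet}\MM_c^{\RR}$ is left cofinal, so the two limits exist simultaneously and are canonically equivalent in $\CC_{c/}$. Since $\CC$ admits totalizations, $\widehat{\MM}(c)$ exists, and therefore $T_{\RR(\MM)}(c)$ exists as well. This gives a pointwise equivalence of coaugmented objects.

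To promote this to an equivalence of coaugmented functors, I appeal to the universal property of Theorem \ref{th:D-preserving-coaugmented}(a): $T_{\RR(\MM)}$ is the terminal object of $\Coaug^{\RR(\MM)}(\CC)$. It suffices to verify that $\widehat{\MM}$ is $\RR(\MM)$-preserving, i.e., that $\hat\eta_d : d \to \widehat{\MM}(d)$ is an equivalence for every $d \in \RR(\MM)$. This follows from the pointwise identification above: when $d \in \RR(\MM)$, the object $\id_d$ is initial in $\RR(\MM)_{d/}$, so $\lim(\RR(\MM)_{d/} \hookrightarrow \CC_{d/}) \simeq (d, \id_d)$, whence $\hat\eta_d$ is an equivalence. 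The canonical map $\widehat{\MM} \to T_{\RR(\MM)}$ in $\Coaug^{\RR(\MM)}(\CC)$ arising from the terminal property is then pointwise an equivalence, hence an equivalence of coaugmented functors. The hard part of the whole argument is really the Bousfield--Kan cofinality lemma, which is already established; after that, the main subtlety here is insisting throughout that limits are taken in $\CC_{c/}$ rather than $\CC$, so that the coaugmentation is part of the data being compared, and then using the universal property of $T_{\RR(\MM)}$ to pass from a pointwise equivalence to a natural one without having to assemble the cofinality equivalence into a functor of $c$ by hand.
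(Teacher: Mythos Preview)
Your overall strategy matches the paper's---reduce to $\RR(\MM)$ via Proposition~\ref{prop:coden:retracts} and invoke Lemma~\ref{lemma:BK-cofinal}---but the final step has a gap. You produce a map $\alpha\colon \widehat{\MM} \to T_{\RR(\MM)}$ from terminality and assert it is pointwise an equivalence, yet this is not justified: knowing that $(\widehat{\MM}(c), \hat\eta_c)$ and $(T_{\RR(\MM)}(c), \eta_c^T)$ are both limits of $\RR(\MM)_{c/} \hookrightarrow \CC_{c/}$ only tells you they are equivalent as objects of $\CC_{c/}$, not that the particular map $\alpha_c$ realises this equivalence. The component $\alpha_c$ arises from the terminal property of $T_{\RR(\MM)}$ as a coaugmented \emph{functor} and carries no a~priori cone structure over $\RR(\MM)_{c/}$; without that, you cannot conclude it is a map between terminal objects of the cone category and hence an equivalence.

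The paper avoids this by reversing the order of operations. It first shows $\widehat{\MM}$ is $\RR(\MM)$-preserving (via Proposition~\ref{prop:split_cosimplicial} on split cosimplicial objects together with closure under retracts, rather than your initial-object route), then uses Proposition~\ref{prop:coaug_equiv} to replace $(\widehat{\MM}, \hat\eta)$ by an equivalent coaugmented functor $(T,\eta)$ that is \emph{literally} the identity on $\RR(\MM)$. Only then does it apply Lemma~\ref{lemma:BK-cofinal} to check that the canonical cone exhibits $T(c)$ as a limit over $\RR(\MM)_{c/}$, so that $T$ satisfies the \emph{definition} of a right Kan extension and therefore \emph{is} $T_{\RR(\MM)}$. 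Your argument is repaired by inserting this strictification step and verifying the Kan-extension condition directly; the detour through Theorem~\ref{th:D-preserving-coaugmented}(a) then becomes unnecessary.
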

\begin{proof}
Let $x$ be an object of $\SS(\MM)$. Then Proposition \ref{prop:split_cosimplicial} implies that $\hat \eta_x : x \to \widehat \MM(x)$ is an equivalence.  Hence $\widehat \MM$ is $\SS(\MM)$-preserving. In particular, it is $\II(\MM)$-preserving. The class of objects $x$ such that $\hat \eta_x$ is an equivalence is closed under retracts. Therefore, $\widehat \MM$ is $\DD$-preserving. By Proposition \ref{prop:coaug_equiv} there is an equivalent coaugmented functor $(T,\eta)\simeq (\widehat \MM,\hat \eta)$ such that $T(x)=x$ and $\eta_x=\id_x$ for $x\in \RR(\MM).$ 
By Lemma \ref{lemma:BK-cofinal} we obtain that, for any object $c$ of $\CC$, the diagram 
\begin{equation}
\begin{tikzcd}
\DD_{c/} 
\ar[r]
\ar[d]
& 
\CC 
\\
(\DD_{c/})^\triangleleft 
\ar[ru]
&
\end{tikzcd}
\end{equation} 
exhibits $T(c)$ as a limit cone. Then $T=T_{\DD}$.
\end{proof}

\begin{corollary}
\label{cor:monad_completion} 
Under the assumption of Theorem \ref{th:monad_completion}, the codensity  $\infty$-monads $T_{\II(\MM)},$ $T_{\AA(\MM)}$, $T_{\SS(\MM)},$ $T_{\RR(\MM)}$ exist and equivalent to the $\MM$-completion 
\begin{equation}
T_{\II(\MM)} \simeq T_{\AA(\MM)} \simeq T_{\SS(\MM)} \simeq T_{\RR(\MM)} \simeq \widehat{\MM}.
\end{equation}
\end{corollary}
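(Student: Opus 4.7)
The plan is to deduce the corollary directly from Theorem \ref{th:monad_completion} by specializing the parameter $\DD$ to each of the four subcategories $\II(\MM)$, $\AA(\MM)$, $\SS(\MM)$, $\RR(\MM)$ in turn. The only thing I need to check is that each of these subcategories satisfies the hypothesis $\II(\MM) \subseteq \DD \subseteq \RR(\MM)$ of that theorem, and this is exactly the content of Proposition \ref{prop:ISAR}, which established the chain
\begin{equation}
\II(\MM) \subseteq \AA(\MM) \subseteq \SS(\MM) \subseteq \RR(\MM) \subseteq \CC.
\end{equation}

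Explicitly, I would first apply Theorem \ref{th:monad_completion} with $\DD = \II(\MM)$ (taking $\II(\MM) \subseteq \II(\MM) \subseteq \RR(\MM)$, which is trivial), obtaining $T_{\II(\MM)} \simeq \widehat{\MM}$ as coaugmented functors. Then I would repeat with $\DD = \AA(\MM)$, $\DD = \SS(\MM)$, and $\DD = \RR(\MM)$; in each case Proposition \ref{prop:ISAR} supplies the two required inclusions. Each application yields existence of the corresponding codensity $\infty$-monad together with an equivalence to $\widehat{\MM}$, and chaining these equivalences through $\widehat{\MM}$ gives the four-way equivalence stated in the corollary.

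There is no genuine obstacle here: the entire content of the corollary is packaged into Theorem \ref{th:monad_completion} and Proposition \ref{prop:ISAR}, so the proof is essentially a one-line invocation. The only stylistic choice is whether to phrase the equivalences purely as underlying functors or as coaugmented functors; since Theorem \ref{th:monad_completion} produces equivalences of coaugmented functors, the corollary inherits that strength automatically.
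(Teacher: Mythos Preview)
Your proposal is correct and matches the paper's own proof essentially verbatim: the paper simply writes that the corollary follows from Theorem \ref{th:monad_completion} together with the chain of inclusions $\II(\MM) \subseteq \AA(\MM) \subseteq \SS(\MM) \subseteq \RR(\MM)$ from Proposition \ref{prop:ISAR}, which is exactly the argument you give.
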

\begin{proof} This 
follows from Theorem \ref{th:monad_completion} and the inclusions \eqref{eq:iclusions:IASR}. 
\end{proof}

Recall that a parallel pair of morphisms $d_0,d_1 : c_1\to c_0$ in an ordinary category $C$ is called reflective, if there is a morphism $s_0 : c_0 \to c_1$ such that $d_0s_0=d_1s_0={\sf id}_{c_0}.$ In other words, a parallel pair is reflective if it can be extended to a $1$-truncated simplicial object $\Delta^\op_{\leq 1}\to C$. A reflective coequalizer is a coequalizer of a reflective pair of morphisms. Equivalently, a reflective coequalizer is a colimit of a functor $\Delta^\op_{\leq 1}\to C$. Here we use the dual notion of a coreflective equalizer. A pair of morphisms $d^0,d^1:c^0 \to c^1$ is coreflective, if there is a morphism $s^0:c^1\to c^0$ such that $s^0d^0 = s^0d^1 = {\sf id}_{c^0}.$ A coreflective equalizer is an equalizer of a coreflective pair of morphisms, and equivalently, a limit of a functor $\Delta_{\leq 1} \to C.$  

\begin{corollary} Let $C$ be an ordinary category that admits coreflective equalizers, and $M$ be a monad on $C$. Assume that $D$ is a full subcategory of $C$ such that $\II(M)\subseteq D \subseteq \RR(M)$.  Then the codensity monad $T_D$ exist and equivalent to the coreflective equalizer of the natural transformations
\begin{equation}
   M\eta,\eta M: M \to M^2.
\end{equation}
\end{corollary}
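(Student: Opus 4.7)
The plan is to deduce the corollary from Corollary~\ref{cor:monad_completion} applied to the $1$-category $C$ viewed as an $\infty$-category via the nerve $\Ner(C)$, and then to compute $\widehat{\MM}$ by hand in the $1$-categorical setting.

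First, I would promote the data to the $\infty$-categorical setting. The monad $M$ becomes an $\infty$-monad $\MM$ on $\Ner(C)$ through the nerve of its cobar construction. Because equivalences and retracts in $\Ner(C)$ are simply isomorphisms and retracts in $C$, the full $\infty$-subcategories $\II(\MM),\RR(\MM)\subseteq \Ner(C)$ coincide with the nerves of the ordinary full subcategories $\II(M),\RR(M)\subseteq C$. Hence the hypothesis $\II(M)\subseteq D\subseteq \RR(M)$ translates exactly to the hypothesis of Corollary~\ref{cor:monad_completion} after passing to nerves.

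Next, I would compute the totalization $\widehat{\MM}$ explicitly. A standard argument shows that the limit in a $1$-category of any cosimplicial object $X^\bullet$ (when it exists) is the equalizer of the two cofaces $d^0,d^1\colon X^0\to X^1$: a cone with apex $L$ is determined by a morphism $f_0\colon L\to X^0$ equalizing $d^0$ and $d^1$, since any two vertex inclusions $[0]\to[n]$ factor through a common edge $[1]\to[n]$ on which the equalizer condition forces agreement, and the higher components are then unambiguous. Such an equalizer is automatically coreflective via $s^0\colon X^1\to X^0$. Using that $\Fun(\Ner(C),\Ner(C))\cong \Ner(\Fun(C,C))$ and that the nerve of a $1$-category has the same limits as the $1$-category itself, the $\infty$-categorical totalization in $\End(\Ner(C))$ agrees with the ordinary $1$-categorical limit in $\End(C)=\Fun(C,C)$, computed pointwise in $C$. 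In our setting $d^0=\eta M$, $d^1=M\eta$, and $s^0=\mu$, with $\mu\circ\eta M=\mu\circ M\eta=\id_M$ by the unit axioms; so the assumption that $C$ admits coreflective equalizers ensures the existence of all totalizations arising from $\MM$ and identifies $\widehat{\MM}$ with the coreflective equalizer of $M\eta,\eta M\colon M\to M^2$. Applying Corollary~\ref{cor:monad_completion} now gives $T_D\simeq \widehat{\MM}$ as coaugmented functors, finishing the argument.

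The main obstacle is the identification of the $\infty$-categorical totalization in $\End(\Ner(C))=\Ner(\End(C))$ with the ordinary $1$-categorical equalizer. This ultimately relies on three standard observations: that $\Fun$ of nerves of $1$-categories is again a nerve, that the nerve preserves limits, and the combinatorial reduction above of cosimplicial limits in $1$-categories to equalizers of the first two cofaces. Each is routine but must be invoked explicitly in order to chain the $\infty$-categorical conclusion of Corollary~\ref{cor:monad_completion} back to an assertion about the ordinary category $C$.
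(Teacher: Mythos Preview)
Your approach is essentially the same as the paper's: both reduce to the main $\infty$-categorical result and then identify the $1$-categorical totalization with the equalizer of $M\eta,\eta M$. The paper phrases the latter step via the left cofinality of $\Delta_{\leq 1}\subset \Delta$ in ordinary category theory, whereas you give a direct cone argument; these are interchangeable. One small correction: since your hypothesis is an arbitrary $D$ with $\II(M)\subseteq D\subseteq \RR(M)$, you should cite Theorem~\ref{th:monad_completion} rather than Corollary~\ref{cor:monad_completion}, which only treats the four specific subcategories.
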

\begin{proof}
The $1$-truncated simplex category $\Delta_{\leq 1}$ is a left cofinal subcategory of $\Delta$ in the sense of ordinary category theory. Therefore a limit of a functor from $\Delta$ to an ordinary category is equal to the coreflective  equaliser.  Therefore $\widehat{M}$ is the equalizer of two maps $M\eta,\eta M: M \to M^2$ and the statement follows from Theorem \ref{th:monad_completion}. 
\end{proof}

\subsection{Bousfield-Kan completion} 

For a commutative ring $R$, there are two versions of the Bousfield-Kan $R$-completion pointed version, which is an endofunctor on the category of pointed spaces $\Spc_*$, and unpointed version, which is an endofunctor on the category of spaces $\Spc$ without a choice of a base-point. In this subsection we show in detail how to present the unpointed version of the Bousfield-Kan $R$-completion as a codensity $\infty$-monad (Theorem \ref{th:BK}), and present a similar result about the pointed version, which can be proven similarly (Theorem \ref{th:BK-pointed}).

Let $R$ be a commutative ring. For a set $X$, we denote by $R\cdot X$ the free $R$-module generated by $X$, and denote by 
\begin{equation}
R_{\sf a} X \subseteq R \cdot X
\end{equation} 
the affine subspace of $R\cdot X$ consisting of linear combinations $\sum_{x\in X} r_x x$ such that $\sum_{x\in X} r_x=1$. It is easy to see that any choice of an element $x_0\in X$ defines a bijection 
\begin{equation}
R_{\sf a} X \cong (R\cdot X)/(R\cdot \{x_0\}).
\end{equation} 
In particular, any choice of an element of $X$ defines a structure of $R$-module on $R_{\sf a}X$. Note that $R\cdot -$ has a natural monad structure and $R_{\sf a}$ is its submonad.

For two sets $X,Y$ we have an isomorphism $R\cdot (X\times Y) \cong (R\cdot X)\otimes_R (R\cdot Y)$, and the bilinear map $(R\cdot X)\times (R\cdot Y) \to R\cdot (X\times Y)$ induces a map
\begin{equation}\label{eq:R-product}
R_{\sf a} X \times R_{\sf a} Y \longrightarrow R_{\sf a} (X\times Y). 
\end{equation}

The component-wise extension of the monad $R_{\sf a}$ to the category of simplicial sets is denoted by the same symbol 
\begin{equation}
R_{\sf a} : \sSet \longrightarrow \sSet.
\end{equation} 
Using the map \eqref{eq:R-product}, for any simplicial sets $X,Y$ and a map $\Delta^J \times X \to Y,$ we can naturally define a map
$
\Delta^J \times R_{\sf a}X \to R_{\sf a}\Delta^J \times R_{\sf a}X \to R_{\sf a}(\Delta^J\times X) \to R_{\sf a}Y.
$
This gives a structure of a simplicial functor on $R_{\sf a}$. Hence  $R_{\sf a}$ is a simplicial monad, and the cobar construction defines a simplicial functor (see Subsection \ref{subsection:monoida_and_monads_in _simplicial_cats})
\begin{equation}
R^\bullet_{\sf a}  : \Delta_+ \longrightarrow \End_\Delta(\sSet), 
\end{equation}
where $\Delta_+$ is treated as a simplicial category with discrete homs. Restricting to $\Delta \subseteq \Delta_+$ and using the adjunction, we obtain a simplicial functor 
$
\tilde R^\bullet_{\sf a} : \sSet \longrightarrow \sSet^{\Delta},
$
where $\sSet^{\Delta} = \Fun_\Delta(\Delta,\sSet)$ is the simplicial category of cosimplicial simplicial sets.

In a simplicial model category $\mathbf{A}$, for a cosimplicial object $A:\Delta \to \mathbf{A}$, there are two different notions that we  distinguish: the totalization of $A$ and the homotopy totalization of $A$. The homotopy totalization is just the homotopy limit 
\begin{equation}
\hTot(A) = \holim\: A.
\end{equation} 
Let us remind the definition of the totalization. Consider a simplicial functor
\begin{equation}
[-,=] : (\sSet^{\Delta})^\op \times \mathbf{A}^\Delta \longrightarrow \mathbf{A} 
\end{equation}
defined on objects as the end $[S,A]= \int_{[n]\in \Delta}(A^n)^{S^n}$ \cite[Example A.2.9.29]{lurie2009higher}.  
The totalization functor is a simplicial functor 
\begin{equation}
\Tot : \mathbf{A}^\Delta \longrightarrow \mathbf{A}
\end{equation}
defined by $\Tot(A)=[\Delta^{(-)},A]$, where $\Delta^{(-)}$ is the cosimplicial object of standard simplices $[n]\mapsto \Delta^{[n]}$. For a cosimplicial object $A$, there is a natural map 
\begin{equation}
\varphi_A : \Tot(A) \longrightarrow \hTot(A),
\end{equation}
which is a weak equivalence whenever $A$ is Ready fibrant \cite[Th.19.8.7]{hirschhorn2003model}, \cite[Cor.A.2.9.30]{lurie2009higher} (see also \cite[\S XI.4.4]{bousfield1972homotopy}).

The $R$-completion functor is a simplicial functor
\begin{equation}
R_\infty : \sSet \longrightarrow \sSet
\end{equation}
defined as the composition $R_\infty = \Tot \circ \tilde R^\bullet_{\sf a}.$ Using that $R_a X$ is either empty or has a structure of a simplicial $R$-module, and that the full subcategory of Kan complexes is closed under limits and ends, we see that $R_\infty X$ is a Kan complex, for any simplicial set  $X$.  The restriction of $R_\infty$ to Kan complexes is denoted by the same symbol   
\begin{equation}
R_\infty : \Kan \longrightarrow \Kan.
\end{equation} 

Denote by $\mathfrak{U}$ the underlying universe that we use and consider a universe $\widehat{\mathfrak{U}}$ such that $\mathfrak{U}\in \widehat{\mathfrak{U}}$. 
We denote by $\widehat \sSet$ and $\widehat{\Kan}$ the simplicial categories of $\widehat{\mathfrak{U}}$-small simplicial sets, and $\widehat{\mathfrak{U}}$-small Kan complexes respectively.

\begin{lemma}\label{lemma:r-comletion:is:lim}
The $R$-completion treated as a simplicial functor $R_\infty : \Kan \to \widehat{\sSet}$ is the homotopy totalization  
\begin{equation}
R_\infty  \simeq  \hTot(R^\bullet_a|_\Delta)
\end{equation}
in the simplicial model category of simplicial functors $\Fun_\Delta(\Kan,\widehat{\sSet})$ with the projective model structure. 
\end{lemma}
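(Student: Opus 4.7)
The plan is to realize $R_\infty$ as the ordinary totalization of the cosimplicial object $R_{\sf a}^\bullet|_\Delta$ in the simplicial model category $\mathbf{A} := \Fun_\Delta(\Kan, \widehat{\sSet})$ with the projective model structure, and then invoke the standard comparison that identifies totalization with homotopy totalization on Reedy fibrant cosimplicial objects.

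First, I would observe that all the ingredients of the totalization formula $\Tot A = \int_{[n]\in\Delta} (A^n)^{\Delta^n}$ --- the cotensor with $\Delta^n$, the cosimplicial object of standard simplices, and the enriched end --- are computed pointwise in the simplicial functor category $\mathbf{A}$, because simplicial tensors, cotensors and small limits in $\mathbf{A}$ are pointwise. Combined with the defining identity $R_\infty X = \Tot(\tilde R_{\sf a}^\bullet(X))$ in $\widehat{\sSet}$ and the adjunction formula $\tilde R_{\sf a}^\bullet(X)([n]) = R_{\sf a}^{n+1}(X)$, this yields a canonical identification $R_\infty \cong \Tot(R_{\sf a}^\bullet|_\Delta)$ in $\mathbf{A}$.

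Second, by the classical comparison theorem \cite[Th.19.8.7]{hirschhorn2003model}, \cite[Cor.A.2.9.30]{lurie2009higher}, the canonical map $\Tot A \to \hTot A$ in any simplicial model category is a weak equivalence whenever $A$ is Reedy fibrant. So it remains to verify that $R_{\sf a}^\bullet|_\Delta$ is Reedy fibrant when viewed as a cosimplicial object in $\mathbf{A}$. Fibrations in the projective model structure on $\mathbf{A}$ are pointwise Kan fibrations, and matching objects are finite limits, hence also computed pointwise; therefore Reedy fibrancy of $R_{\sf a}^\bullet|_\Delta$ in $\mathbf{A}$ is equivalent to the pointwise statement that for every Kan complex $X$, the cosimplicial simplicial set $R_{\sf a}^\bullet X$ is Reedy fibrant in $\widehat{\sSet}$.

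The main obstacle will be this pointwise Reedy fibrancy, but it is essentially a classical observation of Bousfield--Kan \cite[\S X.4]{bousfield1972homotopy}: when $X$ is non-empty, any choice of base-point turns each $R_{\sf a}^{k+1} X$ into a simplicial $R$-module, the cobar coface and codegeneracy maps are $R$-linear, and the matching maps of $R_{\sf a}^\bullet X$ become surjective morphisms of simplicial $R$-modules, hence Kan fibrations; when $X = \emptyset$ the whole cosimplicial object is constantly empty and the claim is trivial. Combining these two steps yields the desired equivalence $R_\infty \simeq \hTot(R_{\sf a}^\bullet|_\Delta)$ in $\mathbf{A}$.
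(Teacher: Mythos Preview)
Your proposal is correct and follows the same route as the paper: identify $R_\infty$ with the ordinary totalization in $\mathbf{A}=\Fun_\Delta(\Kan,\widehat{\sSet})$, reduce the comparison $\Tot\to\hTot$ to pointwise Reedy fibrancy, and appeal to the Bousfield--Kan argument for the cosimplicial resolution $R_{\sf a}^\bullet X$.

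One small inaccuracy worth fixing: it is not true that \emph{all} cobar cofaces are $R$-linear. The coface $d^0=\eta_{R_{\sf a}^{n+1}X}$ is the unit of the monad applied to $R_{\sf a}^{n+1}X$, and this map is not a module homomorphism for the module structure induced by a basepoint. This is exactly why the paper (following \cite[\S X.4.9]{bousfield1972homotopy}) uses the weaker notion of a \emph{group-like} cosimplicial space, requiring only $d^i$ for $i>0$ and all $s^j$ to be homomorphisms. Your conclusion is unaffected: the matching map $A^n\to M^nA$ is assembled from the codegeneracies $s^j$ (all linear), so it is a morphism of simplicial $R$-modules, and its surjectivity is established inductively using only the cofaces $d^i$ with $i\geq 1$, which \emph{are} linear. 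So just replace ``the cobar coface and codegeneracy maps are $R$-linear'' by the correct group-like statement and the argument goes through verbatim.
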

\begin{proof} 
We denote by $\Ner(\Delta_{/-})$ the cosimplicial simplicial set defined by the nerve of the over-category $[n] \mapsto \Ner(\Delta_{/[n]})$. Then, in a simplicial model category $\AAA$, the homotopy limit of a cosimplicial object $A$ can be computed by the formula  $\hTot(A) = [\Ner(\Delta_{/-}), A]$ \cite[Def.18.1.8]{hirschhorn2003model}. There is a natural map of cosimplicial simplicial sets $\Ner(\Delta_{/-}) \to \Delta^{(-)}$, which induces a map $\varphi_A:\Tot(A) \to \hTot(A)$. The map $\varphi_A$ is a weak equivalence, if $A$ is Ready fibrant.  

Since the category $\Kan$ is $\widehat{\mathfrak{U}}$-small,  we can consider the simplicial model category $\AAA=\Fun_\Delta(\Kan,\widehat{\sSet})$ with the projective model structure.  Note that for a cosimplicial functor $F\in \AAA^\Delta$, a simplicial set $X$ and a cosimplicial simplicial set $S$, we have a natural isomorphism $[S,F](X)\cong [S,F(X)].$ In particular,
\begin{equation}
\Tot(F)(X)\cong \Tot(F(X)), \hspace{1cm} \hTot(F)(X) \cong \hTot(F(X)). 
\end{equation}

We need to prove that the natural transformation 
 $\Tot(R^\bullet_a|_\Delta) \to \hTot(R^\bullet_a|_\Delta)$ 
is an weak equivalence. 
It is sufficient to show that the component  
$\Tot(R^\bullet_aX|_\Delta ) \to \hTot(R^\bullet_a X|_\Delta)$ 
of this natural transformation is a weak equivalence, for any simplicial set $X.$ So, we need to prove that $R^\bullet_a X|_\Delta$ is Reedy fibrant cosimplicial simplicial set, for any $X$.

A cosimplicial simplicial set $S$ is called \emph{group-like}, if $S^n$ is equipped with a structure of simplicial group, the co-face maps $d^i : S^n \to S^{n+1}$ are morphisms of simplicial groups for $i\neq 0$, and the degeneracy maps $s^i: S^n \to S^{n-1}$ are morphisms of simplicial groups for all $i$. Group-like cosimplicial simplicial sets are fibrant \cite[\S X.4.9]{bousfield1972homotopy}. It is easy to see that $R^\bullet_a X|_\Delta$ is group-like because any choice of a basepoint $x_0\in X$ defines a structure of $R$-module on ${R}^{n+1}_aX$ that satisfies these conditions. The statement follows. 
\end{proof}

The $\infty$-category of spaces $\Spc$ is the homotopy coherent nerve of the simplicial category of Kan complexes $\Kan$. Composing the homotopy nerve of the cobar construction 
$
R_{\sf a}^\bullet : \Delta_+ \to \End_\Delta(\Kan)
$
with the morphism of simplicial monoids  
\begin{equation}
\Ner_{\Kan,\Kan} : \Ner(\End_\Delta(\Kan)) \longrightarrow \End(\Spc)
\end{equation}
(see Subsection \ref{subsection:monoida_and_monads_in _simplicial_cats}), we obtain an $\infty$-monad that we denote by 
\begin{equation}
\MM^R : \Ner(\Delta_+) \longrightarrow \End(\Spc). 
\end{equation}

\begin{proposition}\label{prop:BK-coden} If we treat the $R$-completion as a simplicial coaugmented endofunctor $R_\infty : \Kan \to \Kan$, then there is an equivalence of coaugmented endofunctors 
\begin{equation}
\widehat \MM_R \simeq \Ner_{\Kan,\Kan}(R_\infty).
\end{equation}
\end{proposition}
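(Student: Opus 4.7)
The strategy is to compute both sides, viewed as coaugmented endofunctors of $\Spc$, as the $\infty$-categorical limit of the cosimplicial object built from the cobar construction of $R_{\sf a}$, and then identify that $\infty$-categorical limit with the homotopy totalization in the simplicial model category of simplicial sets.

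First, I would unpack $\widehat{\MM^R}$. By construction $\widehat{\MM^R}$ is the totalization of $\accentset{\bullet}\MM^R : \Ner(\Delta) \to \End(\Spc)^{\Id_\Spc/}$. Since totalizations in $\End(\Spc)^{\Id/}$ are preserved by the forgetful functor to $\End(\Spc)$ and, as noted in the paper, are computed objectwise, for each $X \in \Spc$ the value $\widehat{\MM^R}(X)$ is the limit in $\Spc$ of $[n]\mapsto R_{\sf a}^{n+1}(X)$. By the definition of $\MM^R$ as $\Ner_{\Kan,\Kan}$ applied to the cobar construction $R_{\sf a}^\bullet$, this cosimplicial space is the image under the homotopy coherent nerve of the cosimplicial Kan complex $R_{\sf a}^\bullet X|_\Delta$.

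Second, I would invoke the standard comparison (Lurie, HTT Theorem 4.2.4.1) between limits in $\Spc = \Ner(\Kan)$ and homotopy limits in $\sSet$. Because $R_{\sf a}^\bullet X|_\Delta$ is Reedy fibrant (it is group-like, as in the proof of Lemma \ref{lemma:r-comletion:is:lim}), its homotopy totalization computes the limit in $\Spc$, giving a natural equivalence
\begin{equation}
\widehat{\MM^R}(X) \simeq \hTot(R_{\sf a}^\bullet X|_\Delta)
\end{equation}
in $X$. Combined with Lemma \ref{lemma:r-comletion:is:lim}, which provides a natural equivalence $R_\infty \simeq \hTot(R_{\sf a}^\bullet|_\Delta)$ in the projective model structure on $\Fun_\Delta(\Kan,\widehat{\sSet})$, this yields, after passing through $\Ner_{\Kan,\Kan}$, an equivalence of underlying endofunctors $\widehat{\MM^R} \simeq \Ner_{\Kan,\Kan}(R_\infty)$.

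Finally, to upgrade this to an equivalence of coaugmented functors, both coaugmentations must be traced back to the unit $\eta_{\sf a}: \Id_{\sSet} \to R_{\sf a}$ of the monad $R_{\sf a}$: on the $\widehat{\MM^R}$ side via the $[-1]\to[n]$ coface of the cobar construction after $\Ner_{\Kan,\Kan}$, and on the $R_\infty$ side via the map $\Id \to \Tot(R_{\sf a}^\bullet|_\Delta)$ built into the totalization of the coaugmented cosimplicial functor. The main obstacle will be this bookkeeping: verifying that the comparisons above lift along the forgetful functor $\End(\Spc)^{\Id/}\to \End(\Spc)$, or equivalently that the coaugmented structures are compatible with the Reedy fibrant model that detects the comparison. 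This ultimately reduces to the routine check that the coaugmentations of $R_{\sf a}^\bullet|_\Delta$ in $\End_\Delta(\Kan)^{\Id/}$ and in $\End(\Spc)^{\Id/}$ are both induced by $\eta_{\sf a}$ and that $\Ner_{\Kan,\Kan}$ is compatible with passage to slice categories under the identity.
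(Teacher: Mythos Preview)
Your proposal assembles the right ingredients (Lemma~\ref{lemma:r-comletion:is:lim} and the comparison between homotopy limits and $\infty$-categorical limits), but there is a genuine gap in how you pass from the objectwise statement to an equivalence of endofunctors. Establishing, for each Kan complex $X$, an equivalence $\widehat{\MM^R}(X)\simeq \hTot(R_{\sf a}^\bullet X|_\Delta)\simeq R_\infty(X)$ does not, by itself, produce a natural equivalence $\widehat{\MM^R}\simeq \Ner_{\Kan,\Kan}(R_\infty)$ in $\End(\Spc)$: in the $\infty$-categorical setting a natural transformation carries infinitely many coherences, and two functors can be objectwise equivalent without being equivalent as functors. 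The reference you invoke (HTT~4.2.4.1) compares a single homotopy limit in a simplicial model category with a single limit in its nerve; it does not supply naturality in an external parameter $X$.

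The paper circumvents this by working one level up, in the simplicial model category $\Fun_\Delta(\Kan,\widehat{\sSet})$ with the projective model structure. It uses \cite[Prop.~4.2.4.4]{lurie2009higher} to identify $\Ner(\Fun_\Delta(\Kan,\widehat{\sSet})^\circ)$ with $\Fun(\Spc,\widehat{\Spc})$, and then \cite[Prop.~A.3.3.12]{lurie2009higher} to say that homotopy limits in the former correspond to limits in the latter. Lemma~\ref{lemma:r-comletion:is:lim}, which is already a statement in that functor category, then transports directly to the desired equivalence of functors; naturality comes for free because the comparison is made inside a category whose objects \emph{are} functors. Your approach could be salvaged by promoting your invocation of HTT~4.2.4.1 to exactly this functor-category level, but at that point it becomes the paper's argument.
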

\begin{proof}
Consider the functor 
\begin{equation}
\Ner_{\Kan,\widehat{\sSet}} : \Ner(\Fun_\Delta(\Kan, \widehat{\sSet})) \longrightarrow \Fun(\Spc,\Ner(\widehat{\sSet}))
\end{equation} 
(see \eqref{eq:Ner_CC'}). 
If we denote by $\Fun_\Delta(\Kan, \widehat{\sSet})^\circ$ the full simplicial subcategory of $\Fun_\Delta(\Kan, \widehat{\sSet})$ generated by fibrant cofibrant functors in the projective model category structure, and use that $\Kan$ is $\widehat{\mathfrak{U}}$-small and locally Kan, we obtain that $\Ner_{\Kan,\widehat{\sSet}}$ can be restricted to an equivalence  of $\infty$-categories
\begin{equation}\label{eq:funct-equiv}
\Ner(\Fun_\Delta(\Kan, \widehat{\sSet})^\circ) \overset{\simeq}\longrightarrow \Fun(\Spc,\widehat{\Spc})
\end{equation}
\cite[Prop.4.2.4.4]{lurie2009higher}. 
Homotopy limits in the simplicial model category $\Fun_\Delta(\Kan,\widehat{\sSet})$ correspond to limits in the $\infty$-category $\Ner(\Fun_\Delta(\Kan,\widehat{\sSet})^\circ)$
\cite[Prop.A.3.3.12]{lurie2009higher}. Then Lemma \ref{lemma:r-comletion:is:lim} implies that the functor \eqref{eq:funct-equiv} sends the fibrant-cofibrant replacement of  $R_\infty$ to a functor equivalent to $\widehat{\MM}^R$. The statement follows.  
\end{proof}

In view of Proposition \ref{prop:BK-coden}, let us then  identify  $\widehat{\MM}^R$ with $R_\infty$ and denote it also by $R_\infty.$ Consider the full $\infty$-subcategory 
\begin{equation}
\KK(R) \subseteq \Spc
\end{equation}
of the $\infty$-category of spaces generated by the empty space and the spaces that homotopy equivalent to a product of Eilenberg–MacLane spaces
 $ \prod_{n\geq 0} K(M_n,n),$ where $M_n$ is an $R$-module. Here we treat $K(M_0,0)$ just as a discrete space. 

\begin{lemma}\label{lemma:K(R)} 
There are inclusions of full subcategories 
\begin{equation}
\II(\MM^R) \subseteq \KK(R) \subseteq \RR(\MM^R). 
\end{equation} 
\end{lemma}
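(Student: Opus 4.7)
The plan is to prove the two inclusions separately, leveraging the fact that for a non-empty space $X$ together with a choice of $0$-simplex $x_0\in X_0$, the identification $R_{\sf a}X\cong(R\cdot X)/(R\cdot\{x_0\})$ endows $R_{\sf a}X$ with the structure of a simplicial $R$-module, and that products of simplicial $R$-modules are again simplicial $R$-modules.

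For the inclusion $\II(\MM^R)\subseteq \KK(R)$, let $X$ be a space. If $X=\emptyset$, then $R_{\sf a}X=\emptyset\in\KK(R)$. Otherwise, a choice of $0$-simplex of $X$ makes $R_{\sf a}X$ into a simplicial $R$-module. I then invoke the classical splitting theorem: any simplicial abelian group $A$ is weakly equivalent, as a simplicial set, to $\prod_{n\geq 0}K(\pi_n A,n)$. This is a consequence of the Dold--Kan correspondence together with the fact that $\mathbb{Z}$ has global dimension one, so every non-negatively graded chain complex of abelian groups is quasi-isomorphic to its homology equipped with zero differentials. Applied to $A=R_{\sf a}X$, whose homotopy groups $\pi_n R_{\sf a}X$ are naturally $R$-modules (coming from the simplicial $R$-module structure), this yields an equivalence $R_{\sf a}X\simeq\prod_{n\geq 0}K(M_n,n)$ with each $M_n$ an $R$-module, placing $R_{\sf a}X$ in $\KK(R)$.

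For the inclusion $\KK(R)\subseteq \RR(\MM^R)$, I exhibit each object of $\KK(R)$ directly as a retract, in the homotopy category of $\Spc$, of an object of $\II(\MM^R)$, bypassing $\AA(\MM^R)$ entirely. The empty space already lies in $\II(\MM^R)$. For $Y=\prod_{n\geq 0}K(M_n,n)$ with each $M_n$ an $R$-module, I model $Y$ in $\Kan$ by the product of the Eilenberg--MacLane simplicial $R$-modules (which are Kan complexes). The $R$-module structure on $Y$ provides a structure map $\mu_Y\colon R_{\sf a}Y\to Y$ in $\Kan$ satisfying $\mu_Y\circ \eta^{R_{\sf a}}_Y=\mathrm{id}_Y$ on the nose, where $\eta^{R_{\sf a}}$ is the unit of the simplicial monad $R_{\sf a}$. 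Passing through the homotopy coherent nerve $\Spc=\Ner(\Kan)$, this strict equality in $\Kan$ yields an equivalence $\mu_Y\circ \eta_Y^{\MM^R}\simeq \mathrm{id}_Y$ in $\Spc$, so $Y$ is a retract of $\MM^R_0(Y)=R_{\sf a}Y$ in the homotopy category, and therefore $Y\in\RR(\MM^R)$.

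The main technical input is the splitting of simplicial abelian groups as products of Eilenberg--MacLane spaces; once this is available, both inclusions follow by direct manipulations with the simplicial monad $R_{\sf a}$, its unit, and the fact that the homotopy coherent nerve sends strict equalities in a simplicial category to equalities in its mapping spaces.
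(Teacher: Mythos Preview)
Your proof is correct and follows essentially the same approach as the paper. For $\II(\MM^R)\subseteq\KK(R)$ the arguments are identical; for $\KK(R)\subseteq\RR(\MM^R)$ both model $\prod K(M_n,n)$ as a simplicial $R$-module and use the module action to split the unit, the only difference being that the paper routes the splitting through $R\cdot X\supseteq R_{\sf a}X$ while you work directly with $R_{\sf a}X$, which is slightly more direct.
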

\begin{proof} Prove that $\II(\MM^R) \subseteq \KK(R).$ 
An object of $\II(\MM^R)$ has the form $R_{\sf a}X$. If $X=\emptyset$, then $R_{\sf a}X = \emptyset $ is an object of $\KK(R).$ If $X\neq \emptyset,$ we can choose a basepoint in $X$ and obtain a structure of a simplicial $R$-module on $R_{\sf a}X$. In particular, $\pi_n(R_{\sf a}X)$ is an $R$-module, for any $n\geq 0.$ Since a chain complex of non-negatively graded abelian groups is quasi-isomorphic to the trivial chain complex of its homology groups, using the Dold-Kan correspondence,  we obtain that $R_{\sf a} X$ is homotopy equivalent to the product of $K(\pi_n(R_{\sf a}X) ,n).$  Therefore, $R_{\sf a} X$ is in $\KK(R)$. 

Prove that $\KK(R)\subseteq \RR(\MM^R)$. A non-empty space from $\KK(R)$ is homotopy equivalent to a space $\prod K(M_n,n)$, where $M_n$ is an $R$-module. Since $\MM^R_0(\emptyset)=\emptyset$, we only need to check that the map $X \to R_{\sf a} X$ is left invertible for $X=\prod K(M_n,n)$. It is sufficient to prove that $X\to R\cdot X$ is left invertible. We denote by ${\sf DK}:{\sf Ch}_{\geq 0} \to {\sf sAb}$ the Dold-Kan functor from the category of non-negatively graded chain complexes of abelian groups to the category of simplicial abelian groups. Then $X$ is homotopy equivalent to ${\sf DK}(M_*),$ where $M_*$ is the chain complex with trivial differentials. The space $R\cdot X$ is equivalent to ${\sf DK}(R\cdot M_*),$ where $R \cdot M_*$ is the chain complex with trivial differentials, whose $n$-th component $(R \cdot  M_*)_n=R \cdot  M_n$ is the free $R$-module generated by the underlying set of $M_n$. Therefore, the result follows from the fact that the inclusion  $M_n \to R \cdot M_n$ is left invertible for any $n$. 
\end{proof}

\begin{theorem}\label{th:BK} For any commutative ring $R$, the  Bousfield-Kan $R$-completion $R_\infty : \Spc \to \Spc$ 
is equivalent to the codensity $\infty$-monad $T_{\KK(R)}$ as a coaugmented functor. 
\end{theorem}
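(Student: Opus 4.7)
The plan is to reduce the theorem to an immediate combination of three results that are already established in the paper: Proposition \ref{prop:BK-coden}, Lemma \ref{lemma:K(R)}, and Theorem \ref{th:monad_completion}. The $\infty$-category $\Spc$ of spaces admits all small limits, hence in particular totalizations of cosimplicial objects, so the $\MM^R$-completion $\widehat{\MM}^R$ is defined. By Proposition \ref{prop:BK-coden}, the Bousfield–Kan $R$-completion functor is equivalent, as a coaugmented endofunctor of $\Spc$, to $\widehat{\MM}^R$. It therefore suffices to exhibit an equivalence of coaugmented functors $\widehat{\MM}^R \simeq T_{\KK(R)}$.

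For this I would apply Theorem \ref{th:monad_completion} with $\MM = \MM^R$ and $\DD = \KK(R)$. The hypothesis of that theorem is the double inclusion $\II(\MM^R) \subseteq \KK(R) \subseteq \RR(\MM^R)$, which is exactly the content of Lemma \ref{lemma:K(R)}. Consequently, the codensity $\infty$-monad $T_{\KK(R)}$ exists and comes with a canonical equivalence of coaugmented endofunctors $T_{\KK(R)} \simeq \widehat{\MM}^R$. Chaining this with the equivalence of Proposition \ref{prop:BK-coden} yields the claimed equivalence $R_\infty \simeq T_{\KK(R)}$.

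There is essentially no obstacle at this stage of the paper: the real work has been done in assembling the three ingredients, namely (i) the identification of the classical Bousfield–Kan construction with the $\infty$-categorical totalization of the cobar of the simplicial monad $R_{\sf a}$, (ii) the abstract theorem identifying $\MM$-completion with a codensity $\infty$-monad of any full subcategory sandwiched between $\II(\MM)$ and $\RR(\MM)$, and (iii) the verification that $\KK(R)$ is such a sandwiched subcategory. The proof of \ref{th:BK} itself is therefore a one-line concatenation of these three facts, and I would present it exactly in that form.
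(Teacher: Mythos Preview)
Your proposal is correct and follows essentially the same route as the paper, which proves the theorem in one line by invoking Theorem \ref{th:monad_completion} and Lemma \ref{lemma:K(R)}. The only cosmetic difference is that you explicitly cite Proposition \ref{prop:BK-coden}, whereas the paper has already absorbed that identification into the notation $R_\infty = \widehat{\MM}^R$ just before stating the theorem.
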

\begin{proof}
It follows from Theorem \ref{th:monad_completion} and Lemma \ref{lemma:K(R)}.  
\end{proof}

\begin{corollary}\label{cor:BK-universal} 
The Bousfield-Kan $R$-completion  satisfies the following two universal properties. 
\begin{itemize}
\item[(a)] $R_\infty$ is the terminal object in the $\infty$-category of $\KK(R)$-preserving coaugmented functors;

\item[(b)] The structure of a coaugmented functor on $R_\infty$ extends to an $\infty$-monad structure uniquely up to a contractible space of choices. This $\infty$-monad is the terminal object in the $\infty$-category of $\KK(R)$-preserving $\infty$-monads.
\end{itemize}
\end{corollary}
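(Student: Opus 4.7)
The plan is to deduce both parts of the corollary as direct consequences of Theorem \ref{th:BK} together with the universal properties of codensity $\infty$-monads established in Theorem \ref{th:D-preserving-coaugmented}. No new geometric input about spaces is needed; everything is formal from the identification of $R_\infty$ with $T_{\KK(R)}$ as a coaugmented functor.

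For part (a), Theorem \ref{th:BK} produces an equivalence $R_\infty \simeq T_{\KK(R)}$ in $\End(\Spc)^{\Id/}$. On the other hand, Theorem \ref{th:D-preserving-coaugmented}(a) asserts that any canonical coaugmentation makes $T_{\KK(R)}$ a terminal object of $\Coaug^{\KK(R)}(\Spc)$. Since terminality in an $\infty$-category is invariant under equivalence, the coaugmented functor $R_\infty$ is itself terminal in $\Coaug^{\KK(R)}(\Spc)$. I would write this argument in two lines.

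For part (b), I would first invoke the results of Section~2 applied to $\EE = \End_{\KK(R)/}(\Spc)$. By Theorem \ref{th:BK}, $R_\infty$ (with any canonical coaugmentation transported across the equivalence) is a terminal object of this strict monoidal $\infty$-category's coaugmented slice, so Theorem \ref{th:terminal:coaug} applies: there is a contractible Kan complex of $\infty$-monoid structures on the coaugmented object $(R_\infty,\eta)$ in $\EE^{\1/}$, and any such structure defines a terminal object of $\mon_\infty(\EE)$. Pushing this through the inclusion $\End_{\KK(R)/}(\Spc) \hookrightarrow \End(\Spc)$ as in the definition of canonical $\infty$-monad structures on a codensity $\infty$-monad yields the contractible Kan complex of canonical $\infty$-monad structures extending the given coaugmentation. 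Then Theorem \ref{th:D-preserving-coaugmented}(b) identifies any such canonical $\infty$-monad structure with a terminal object of $\Mon_\infty^{\KK(R)}(\Spc)$, which is exactly the second claim.

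There is essentially no obstacle here beyond bookkeeping: one must be careful that the equivalence $R_\infty \simeq T_{\KK(R)}$ in $\End(\Spc)^{\Id/}$ provided by Theorem \ref{th:BK} transports the canonical $\infty$-monad structure on $T_{\KK(R)}$ (constructed in Section~5 using Theorem \ref{th:Structures_of_monad_on_terminal} and Theorem \ref{th:terminal:coaug}) to an $\infty$-monad structure on $R_\infty$ without loss of the universal property. This is automatic because the relevant statements in Theorem \ref{th:D-preserving-coaugmented} are about terminal objects of $\infty$-categories and therefore invariant under equivalence; the only thing to verify is that $\mon_\infty$ applied to the equivalence of strict monoidal $\infty$-categories $\End_{\KK(R)/}(\Spc)$-images induces an equivalence of the associated Kan complexes of $\infty$-monoid structures, which follows from Proposition \ref{prop:structures_of_monads_is_Kan} and the fact that the conservative isofibration $\mon_\infty(\EE) \to \EE$ has contractible fibers over terminal objects (Lemma \ref{lemma:term(C)}). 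Assembling these observations gives the corollary.
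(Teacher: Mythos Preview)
Your proposal is correct and follows exactly the paper's approach: invoke Theorem~\ref{th:BK} to identify $R_\infty$ with $T_{\KK(R)}$ as a coaugmented functor, then apply Theorem~\ref{th:D-preserving-coaugmented} to obtain both universal properties. The paper's proof is literally these two sentences; your additional bookkeeping about transporting structures across equivalences is sound but unnecessary, since the relevant statements are already phrased in terms of terminal objects and hence equivalence-invariant.
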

\begin{proof}
Theorem \ref{th:BK} says that $R_\infty$ is the codensity $\infty$-monad $T_{\KK(R)}$. Then the universal properties follow from Theorem \ref{th:D-preserving-coaugmented}. 
\end{proof}

The Bousfield-Kan $R$-completion sends the point space to itself $R_\infty *=*.$ Therefore, for any pointed space $X$, the space $R_\infty X$ has a natural base-point. So, we obtain a pointed version of the Bousfield-Kan $R$-completion functor 
\begin{equation}
R_\infty : \Spc_* \longrightarrow \Spc_*.
\end{equation}
Consider the full $\infty$-subcategory 
\begin{equation}
\KK_*(R) \subseteq \Spc_*
\end{equation}
of the $\infty$-category of pointed spaces generated by the spaces that homotopy equivalent to a product of Eilenberg–MacLane spaces
 $ \prod_{n\geq 0} K(M_n,n),$ where $M_n$ is an $R$-module. The following theorem can be proved similarly to Theorem \ref{th:BK}.

\begin{theorem}\label{th:BK-pointed} For any commutative ring $R$, the pointed version of the Bousfield-Kan $R$-completion $R_\infty : \Spc_* \to \Spc_*$ 
is equivalent to the codensity $\infty$-monad $T_{\KK_*(R)}$ as a coaugmented functor. 
\end{theorem}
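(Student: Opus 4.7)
The plan is to mirror the proof of Theorem \ref{th:BK} in the pointed setting. The first step is to construct a pointed analogue $\MM^R_*$ of the reduced $R$-homology $\infty$-monad on $\Spc_*$. For a pointed simplicial set $(X,x_0)$, the choice of basepoint promotes $R_{\sf a}X$ from an affine space to a simplicial $R$-module (with the basepoint providing the zero element), and in particular to a pointed simplicial set. This gives a simplicial monad $R_{\sf a} : \sSet_* \to \sSet_*$ which preserves Kan complexes, so its cobar construction defines a simplicial functor $R_{\sf a}^\bullet : \Delta_+ \to \End_\Delta(\Kan_*)$. Passing through $\Ner_{\Kan_*,\Kan_*}$ as in Subsection \ref{subsection:monoida_and_monads_in _simplicial_cats}, we obtain the desired $\infty$-monad
\begin{equation}
\MM^R_* : \Ner(\Delta_+) \longrightarrow \End(\Spc_*).
\end{equation}

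The second step identifies the $\MM^R_*$-completion with the pointed $R$-completion. Exactly as in Lemma \ref{lemma:r-comletion:is:lim}, the cosimplicial simplicial set $R_{\sf a}^\bullet X|_\Delta$ is group-like (now for every pointed $X$, without having to choose a basepoint), hence Reedy fibrant, so $\Tot(R_{\sf a}^\bullet X|_\Delta) \simeq \hTot(R_{\sf a}^\bullet X|_\Delta)$ and the pointed $R_\infty$ is the homotopy totalization of $R_{\sf a}^\bullet|_\Delta$ in $\Fun_\Delta(\Kan_*,\widehat{\sSet_*})$ with the projective model structure. The argument of Proposition \ref{prop:BK-coden}, using $\Ner_{\Kan_*,\widehat{\sSet_*}}$ and \cite[Prop.A.3.3.12]{lurie2009higher}, then produces an equivalence of coaugmented endofunctors $\widehat{\MM^R_*} \simeq R_\infty$.

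The third step is the pointed analogue of Lemma \ref{lemma:K(R)}: the inclusions
\begin{equation}
\II(\MM^R_*) \subseteq \KK_*(R) \subseteq \RR(\MM^R_*).
\end{equation}
For the first inclusion, any object of $\II(\MM^R_*)$ is of the form $R_{\sf a}X$ for a pointed $X$, which carries the structure of a simplicial $R$-module; the Dold–Kan correspondence (applied to a chain complex with zero differentials on its homotopy groups) presents $R_{\sf a}X$ as a product of Eilenberg–MacLane spaces $K(M_n,n)$ with $M_n = \pi_n(R_{\sf a}X)$ an $R$-module, so $R_{\sf a}X \in \KK_*(R)$. For the second inclusion, a product $X = \prod_n K(M_n,n)$ is equivalent (via Dold–Kan) to $\mathsf{DK}(M_*)$ for the chain complex $M_*$ with trivial differentials; the unit $X \to R \cdot X$ corresponds to the levelwise inclusion $M_n \hookrightarrow R \cdot M_n$, which is split, so $X$ is a retract of $R \cdot X$ in $\Spc_*$.

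Finally, having checked the sandwich $\II(\MM^R_*) \subseteq \KK_*(R) \subseteq \RR(\MM^R_*)$, we apply Theorem \ref{th:monad_completion} to conclude that $T_{\KK_*(R)}$ exists and is equivalent, as a coaugmented functor, to $\widehat{\MM^R_*} \simeq R_\infty$. There is no genuine new obstacle beyond Theorem \ref{th:BK}; the one point that requires a little care is verifying that the Reedy fibrancy (group-like) argument and the homotopy coherent nerve comparison go through with pointed coefficients, but this is routine since $\sSet_*$ is cartesian enriched over $\sSet_*$ in exactly the way used in Subsection \ref{subsection:monoida_and_monads_in _simplicial_cats}.
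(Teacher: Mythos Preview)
Your proposal is correct and follows exactly the route the paper indicates: the paper does not give a separate proof of Theorem~\ref{th:BK-pointed} but simply says it ``can be proved similarly to Theorem~\ref{th:BK}'', and you have spelled out precisely that parallel argument (constructing the pointed monad $\MM^R_*$, identifying $\widehat{\MM^R_*}$ with $R_\infty$ via the Reedy fibrancy/group-like argument, verifying the sandwich $\II(\MM^R_*)\subseteq \KK_*(R)\subseteq \RR(\MM^R_*)$, and invoking Theorem~\ref{th:monad_completion}). One small wording issue: $\sSet_*$ is not cartesian closed, so your closing remark should instead say that $\Kan_*$ is simplicially enriched (over $\sSet$, via $\map_*$), which is all that is needed for the machinery of Subsection~\ref{subsection:monoida_and_monads_in _simplicial_cats}.
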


\section{\bf Appendix}

\subsection{Technical lemmas about \texorpdfstring{$\infty$}{}-categories}

In this subsection, we collect some general lemmas about $\infty$-categories, which seem to be well known to experts in the field, but we could not find a reference in the literature.

\begin{lemma}\label{lemma:fiber:terminal}
Let $\CC$ be an $\infty$-category and  $t$ be its terminal object. Assume that $n\geq 1$ and $\varphi : \partial\Delta^n\to \CC$ is a map  such that the composition $\Delta^{\{n\}} \hookrightarrow \partial\Delta^n \overset{\varphi}\to \CC $ is defined by $t$. Then the fiber of the restriction map $\CC^{\Delta^n} \to \CC^{\partial \Delta^n}$ over $\varphi$ is a contractible Kan complex. 
\end{lemma}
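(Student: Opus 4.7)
The strategy is to identify the fiber with the fiber of a trivial Kan fibration built from the forgetful functor $\CC_{/t}\to \CC$, and then invoke the fact that fibers of trivial fibrations over vertices are contractible Kan complexes.

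First, I would use the decomposition $\Delta^n = \Delta^{n-1}\star \Delta^{\{n\}}$ together with the pushout presentation
\[
\partial \Delta^n \ =\ \Delta^{n-1}\ \cup_{\partial \Delta^{n-1}}\ \bigl(\partial\Delta^{n-1} \star \Delta^{\{n\}}\bigr),
\]
whose left piece is the face $d_n$ and whose right piece is the horn $\Lambda^n_n$. By the join--slice adjunction, the subcomplex of $\CC^{\Delta^n}$ consisting of maps sending vertex $n$ to $t$ identifies with $(\CC_{/t})^{\Delta^{n-1}}$, and the analogous subcomplex of $\CC^{\partial\Delta^n}$ identifies with
\[
\CC^{\Delta^{n-1}} \times_{\CC^{\partial\Delta^{n-1}}} (\CC_{/t})^{\partial\Delta^{n-1}}.
\]
Since $\varphi$ already sends vertex $n$ to $t$, so does any extension $\tilde\varphi$ of it, and therefore the fiber in question is precisely the fiber of the induced restriction map between these two subcomplexes at the element determined by $\varphi$.

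Second, I would check that, under these identifications, the restriction map is exactly the Leibniz cotensor (pullback--hom) of the boundary inclusion $\partial\Delta^{n-1}\hookrightarrow \Delta^{n-1}$ with the forgetful functor $\CC_{/t}\to \CC$. This step is a direct but slightly finicky unwinding of the join--slice adjunction, and I expect it to be the main bookkeeping obstacle of the proof; once the two pushout decompositions are aligned, the formula falls out.

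Finally, because $t$ is terminal, the forgetful map $\CC_{/t}\to \CC$ is a trivial Kan fibration (a standard characterization of terminal objects in quasi-categories). The general closure property that the Leibniz cotensor of a trivial fibration with a monomorphism of simplicial sets is again a trivial fibration then shows that our restriction map is a trivial fibration. The fiber of a trivial Kan fibration over a vertex inherits the right lifting property against every boundary inclusion $\partial\Delta^k\hookrightarrow \Delta^k$, hence is a contractible Kan complex, which is exactly the desired conclusion.
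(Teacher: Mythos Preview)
Your proposal is correct and follows essentially the same route as the paper: the paper also decomposes $\partial\Delta^n$ as $\Delta^{n-1}\cup_{\partial\Delta^{n-1}}\Lambda^n_n$, applies the join--slice adjunction to rewrite the relevant mapping spaces in terms of $(\CC_{/t})^{\Delta^{n-1}}$ and $(\CC_{/t})^{\partial\Delta^{n-1}}$, and concludes by recognizing the restriction map as the pullback-hom of $\partial\Delta^{n-1}\hookrightarrow\Delta^{n-1}$ against the trivial fibration $\CC_{/t}\to\CC$. The only cosmetic difference is that the paper phrases your ``subcomplex of maps sending vertex $n$ to $t$'' as the pointed mapping space $\map_*(-,\CC)$.
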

\begin{proof}
Let us treat $\Delta^n$, $\partial \Delta^n$ and $\CC$ as pointed simplicial sets with base points $n$ and $t$. Then $\varphi\in \map_*(\partial \Delta^n,\CC)$ and we have the following pullback 
\begin{equation}
\begin{tikzcd}
\map_*(\Delta^n,\CC)\ar[r,hookrightarrow] \ar[d] & \CC^{\Delta^n} \ar[d] \\ 
\map_*(\partial \Delta^n,\CC)\ar[r,hookrightarrow] & \CC^{\partial \Delta^n} 
\end{tikzcd}
\end{equation}
Therefore, the fibers of the vertical maps coincide and it is sufficient to prove that the map $\map_*(\Delta^n,\CC) \to \map_*(\partial \Delta^n,\CC)$ is a trivial fibration. 

Since $\partial \Delta^n \cong  \Delta^{n-1} \sqcup_{\partial \Delta^{n-1}} \Lambda^{n}_n$, we obtain  $\CC^{\partial \Delta^n} \cong  \CC^{\Delta^{n-1}} \times_{\CC^{\partial \Delta^{n-1}}} \CC^{\Lambda^{n}_n}$ and 
\begin{equation}
\map_*(\partial \Delta^n,\CC) \cong \CC^{\Delta^{n-1}} \times_{\CC^{\partial \Delta^{n-1}}} \map_*(\Lambda^n_n,\CC).  
\end{equation}
Using the join-slice adjunction and the formulas $\Delta^n = \Delta^{n-1} \star \Delta^0$ and $\Lambda^n_n = \partial \Delta^{n-1} \star \Delta^0$, we obtain 
\begin{equation}
\map_*(\Delta^n,\CC) \cong (\CC_{/t})^{\Delta^{n-1}}, \hspace{1cm} \map_*(\Lambda^n_n,\CC)\cong (\CC_{/t})^{\partial \Delta^{n-1}}. 
\end{equation}
Hence it is sufficient to note that the map 
\begin{equation}
(\CC_{/t})^{\Delta^{n-1}} \longrightarrow \CC^{\Delta^{n-1}} \times_{\CC^{\partial\Delta^{n-1}}}(\CC_{/t})^{\partial \Delta^{n-1}}
\end{equation}
is a trivial fibration. Indeed, this  follows from the fact that $\CC_{/t}\to \CC$ is a trivial fibration, $\partial \Delta^{n-1} \to \Delta^{n-1}$ is a cofibration and 
 \cite[Prop.11.5]{goerss2009simplicial}. 
\end{proof}

\begin{lemma}\label{lemma:terminal_object_of_slice} 
Let $\CC$ be an $\infty$-category, $c$ be its object and $t$ be its terminal object.  Then any morphism $\eta:c\to t$ is a terminal object of $\CC^{c/}$ and $\CC_{c/}.$ 
\end{lemma}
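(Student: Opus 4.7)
My plan is to verify the terminal-object characterization in each slice by a horn-filling argument in $\CC$, using the standard fact that $t\in\CC$ is terminal if and only if for every $m\geq 1$ every $\partial\Delta^m\to\CC$ sending $\{m\}$ to $t$ extends along $\partial\Delta^m\hookrightarrow\Delta^m$. Treating $\CC_{c/}$ first, I would note that $\eta\colon c\to t$ is terminal iff for every $n\geq 1$ and every $\varphi\colon\partial\Delta^n\to\CC_{c/}$ with $\varphi|_{\Delta^{\{n\}}}=\eta$ there exists an extension to $\Delta^n$. By the join--slice adjunction, such a $\varphi$ corresponds to a map $\tilde\varphi\colon\Delta^0\star\partial\Delta^n\to\CC$ whose joined vertex is sent to $c$; the hypothesis $\varphi|_{\Delta^{\{n\}}}=\eta$ then says that the edge $\Delta^{\{0,n+1\}}$ of $\tilde\varphi$ is $\eta$, and in particular vertex $n+1$ is sent to $t$. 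A direct combinatorial check identifies $\Delta^0\star\partial\Delta^n$ with the outer horn $\Lambda^{n+1}_0\subseteq\Delta^0\star\Delta^n=\Delta^{n+1}$, so the task reduces to extending $\tilde\varphi\colon\Lambda^{n+1}_0\to\CC$ to $\Delta^{n+1}\to\CC$.

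Next I would carry out this extension in two stages, each exploiting terminality of $t$ through the characterization above. In stage one, I restrict $\tilde\varphi$ to the boundary $\partial(d_0\Delta^{n+1})$ of the missing face $d_0\Delta^{n+1}$; this boundary already lies in $\Lambda^{n+1}_0$, and its largest vertex $n+1$ is sent to $t$, so for $m=n\geq 1$ terminality provides an extension over $d_0\Delta^{n+1}$, enlarging the domain to $\partial\Delta^{n+1}$. In stage two, the resulting map $\partial\Delta^{n+1}\to\CC$ still sends vertex $n+1$ to $t$, and now $m=n+1\geq 2$, so a second application of the characterization produces the required filler $\Delta^{n+1}\to\CC$. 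Translating back along the join--slice adjunction gives the desired extension in $\CC_{c/}$, proving that $\eta$ is terminal there.

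For the fat slice $\CC^{c/}$ I plan to invoke the natural comparison functor $\CC_{c/}\to\CC^{c/}$ recalled in Section~2.5 of \cite{land2021introduction}, which is a categorical equivalence over $\CC$ sending $\eta$ to the evident object representing the same morphism; since categorical equivalences preserve terminal objects, terminality in $\CC_{c/}$ transports to terminality in $\CC^{c/}$. The main obstacle I expect is combinatorial rather than conceptual: correctly identifying $\Delta^0\star\partial\Delta^n$ as the left outer horn $\Lambda^{n+1}_0$, and then recognising that, although this horn is outer and so not automatically fillable, the hypothesis that its apex vertex $n+1$ maps to the terminal object $t$ lets us patch in the two missing simplices --- the face $d_0\Delta^{n+1}$ and the top cell --- by invoking terminality twice in succession.
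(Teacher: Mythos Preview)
Your argument is correct but follows a genuinely different route from the paper. The paper reduces (as you do) to $\CC_{c/}$ via the comparison equivalence, but then proceeds by showing that $(\CC_{c/})_{/\eta}\to\CC_{c/}$ is a categorical equivalence: it uses the double join--slice adjunction to identify $(\CC_{c/})_{/\eta}\cong(\CC_{/t})_{\eta/}$, and then observes that since $t$ is terminal the map $\CC_{/t}\to\CC$ is a trivial fibration, whence the induced map $(\CC_{/t})_{\eta/}\to\CC_{c/}$ is a categorical equivalence by \cite[Prop.~1.2.9.3]{lurie2009higher}. Your approach is instead a direct combinatorial verification of the extension property: you identify $\Delta^0\star\partial\Delta^n\cong\Lambda^{n+1}_0$ and fill the outer horn by two successive applications of the terminal-object extension criterion (first for the missing $d_0$-face, then for the top cell). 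The paper's argument is more conceptual and reusable---the double-slice isomorphism is a general fact---while yours is more elementary and self-contained, avoiding any appeal to stability of categorical equivalences under slicing. Both are perfectly fine; your version has the minor advantage of not needing to distinguish Joyal and Kan trivial fibrations.
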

\begin{proof} Since there is a canonical categorical equivalence $\CC_{c/}\to \CC^{c/}$, we need to prove the statement only for $\CC_{c/}$. Note that in general, an object $a$ of an $\infty$-category $\AA$ is terminal if and only if the map $\AA_{/a}\to \AA$ is a categorical equivalence. It is known that $a$ is terminal if and only if the map $\AA_{/a}\to \AA$ is a trivial fibration. On the other hand the map $\AA_{/a}\to \AA$ is a left fibration for any $a$ \cite[Th.1.4.23]{land2021introduction}. In particular, the map $\AA_{/a}\to \AA$ is a categorical fibration for any $a$. So it follows from the fact that a categorical fibration is a categorical equivalence if and only if it is a trivial fibration (trivial fibrations in Joyal model structure coincide with trivial fibrations in Kan model structure).

Therefore, in order to prove that $\eta:c\to t$ is terminal, it is sufficient to prove that $(\CC_{c/})_{/\eta}\to \CC_{c/}$ is a categorical equivalence. Since $t$ is a terminal object, the  functor $\CC_{/t}\to \CC$ is a categorical equivalence. Therefore $(\CC_{/t})_{\eta/}\to \CC_{c/}$ is also a categorical equivalence \cite[Prop.1.2.9.3]{lurie2009higher}. We claim that there is an isomorphism $(\CC_{/t})_{\eta/}\cong (\CC_{c/})_{/\eta}.$ It follows from the isomorphism of hom-sets from any simplicial set $X$
\begin{equation}
\Hom(X, (\CC_{/t})_{\eta/}) \cong \Hom_{\Delta^0\star \Delta^0/} (\Delta^0\star X\star \Delta^0,\CC)\cong \Hom(X, (\CC_{c/})_{/\eta}),
\end{equation}
where the map $\Delta^0\star \Delta^0\cong \Delta^1 \to \CC$ is defined by $\eta$. Therefore the map $(\CC_{c/})_{/\eta}\to \CC_{c/}$ is also a categorical equivalence.
\end{proof}
 
\begin{lemma}\label{lemma:slice_of_isofibrations}
Let $\FF:\CC\to \DD$ be an isofibration of $\infty$-categories, $c$ be an object of $\CC$ and $d=\FF(c)$. Then the induced functors of 
ordinary slice categories $\CC_{c/}\to \DD_{d/}$ and fat slice categories $\CC^{c/}\to \DD^{d/}$ are  isofibrations. 
\end{lemma}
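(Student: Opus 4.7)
My strategy is to factor $\CC_{c/}\to \DD_{d/}$ (and analogously $\CC^{c/}\to \DD^{d/}$) as a composition of two isofibrations, so that the result follows because isofibrations are closed under composition. The intermediate object in both factorizations is the ``joint slice'' $\DD_{d/}\times_\DD \CC$ (respectively $\DD^{d/}\times_\DD \CC$).

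\textbf{Ordinary slice.} By the standard result on slices of inner fibrations (cf.\ HTT Prop.~2.1.2.1; one can also give a direct derivation by inner-horn lifting against the inclusion $\Delta^{\{0\}}\star \Lambda^n_i\hookrightarrow \Delta^{n+1}$), the induced map
\begin{equation*}
\CC_{c/}\longrightarrow \DD_{d/}\times_\DD \CC
\end{equation*}
is a left fibration whenever $\FF$ is an inner fibration. In our setting $\FF$ is an isofibration, hence inner fibration, so this left fibration is in particular an isofibration. The projection $\DD_{d/}\times_\DD \CC\to \DD_{d/}$ is the pullback of $\FF$ along $\DD_{d/}\to \DD$, and so is an isofibration by stability of isofibrations under pullback (Land Cor.~2.1.22). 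The composite $\CC_{c/}\to \DD_{d/}$ is thus a composite of two isofibrations, hence an isofibration.

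\textbf{Fat slice.} For the fat slice, the cleanest self-contained approach is via Land's Prop.~2.2.5: applying the pullback-power construction to the monomorphism $\{0\}\hookrightarrow \Delta^1$ and the isofibration $\FF$ gives that the Leibniz map
\begin{equation*}
\CC^{\Delta^1}\longrightarrow \CC\times_\DD \DD^{\Delta^1}
\end{equation*}
is an isofibration. Under the definition $\CC^{c/} = \{c\}\times_\CC \CC^{\Delta^1}$ and $\DD^{d/} = \{d\}\times_\DD \DD^{\Delta^1}$, a direct unwinding shows that the square
\begin{equation*}
\begin{tikzcd}
\CC^{c/}\ar[r]\ar[d] & \CC^{\Delta^1}\ar[d] \\
\DD^{d/}\ar[r] & \CC\times_\DD \DD^{\Delta^1}
\end{tikzcd}
\end{equation*}
is a pullback, so stability of isofibrations under pullback again concludes. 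The only step I expect to require care is the inner-horn analysis underlying the left-fibration property in the ordinary slice (or the explicit verification that the square above is a strict pullback in the fat-slice case); both are routine but must be unwound carefully. Beyond that, the argument is a formal combination of closure of isofibrations under composition and pullback.
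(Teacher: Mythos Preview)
Your argument is correct, but it takes a genuinely different route from the paper's own proof.

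The paper argues directly via lifting: given a trivial cofibration $A\hookrightarrow B$ in the Joyal model structure, the lifting problem for $\CC^{c/}\to \DD^{d/}$ is translated by the diamond--slice adjunction into a lifting problem for $\Delta^0\diamond A\hookrightarrow \Delta^0\diamond B$ against $\FF$, and one then quotes that the diamond (respectively the join, for ordinary slices) preserves monomorphisms that are categorical equivalences (\cite[Cor.~2.5.17, Cor.~2.5.20]{land2021introduction}). This is a single uniform argument for both slice models, but it relies on the nontrivial compatibility of $\star$ and $\diamond$ with Joyal trivial cofibrations.

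Your approach instead leverages closure properties of isofibrations. For the ordinary slice you factor through $\DD_{d/}\times_\DD\CC$, using that the first leg is a left fibration (HTT 2.1.2.1) and the second is a base change of $\FF$; for the fat slice you realize $\CC^{c/}\to\DD^{d/}$ directly as a pullback of the Leibniz map $\CC^{\Delta^1}\to \CC\times_\DD\DD^{\Delta^1}$, which is an isofibration by \cite[Prop.~2.2.5]{land2021introduction}. This is slightly less uniform (two different mechanisms for the two slice models) but has the advantage of avoiding any appeal to how $\star$ or $\diamond$ interacts with categorical equivalences: everything reduces to the stability of isofibrations under pullback and composition, together with the standard slice/left-fibration result. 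Both proofs are of comparable length and difficulty.
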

\begin{proof} Let us prove the statement for the case of fat slice categories, and leave the proof for ordinary slice categories to the reader. 
Take a monomorphism of simplicial sets $A\to B$ which is a categorical equivalence. We need to show that the lifting problem 
\begin{equation}
\begin{tikzcd}
A\ar[r] \ar[d] & \CC^{c/} \ar[d] \\
B\ar[r]\ar[ru,dashed] & \DD^{d/}
\end{tikzcd}
\end{equation}
has a solution. By the  adjunction, this lifting problem is equivalent to  the following lifting problem. 
\begin{equation}
\begin{tikzcd}
\Delta^0 \ar[r]\ar[rr,"c",bend left = 1cm]  & \Delta^0\diamond  A \ar[r]\ar[d] & \CC \ar[d] \\
&\Delta^0 \diamond B \ar[r] \ar[ru,dashed] & \DD 
\end{tikzcd}
\end{equation}
The latter lifting problem can be solved because $\Delta^0\diamond  A \to \Delta^0 \diamond B$ is also a monomorphism, which is a categorical equivalence  \cite[Cor.2.5.17]{land2021introduction}  (for the case of ordinary slice categories we need to use that $\Delta^0\star A \to \Delta^0 \star B$  is also a monomorphism, which is a categorical equivalence \cite[Cor.2.5.20]{land2021introduction}).  
\end{proof}

\begin{lemma}[{sf. \cite[Prop. 5.2.6]{land2021introduction}}] \label{lemma:local_initial}
Let $\DD$ be a reflective $\infty$-subcategory of an $\infty$-category $\CC$ with a reflector $L: \CC\to \DD$. For any object $c$ of $\CC$, the unit $\eta_c : c \to L(c)$ is an initial object of $\DD_{c/}$. 
\end{lemma}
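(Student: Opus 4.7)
The plan is to show that for any object $\alpha\colon c\to d$ of $\DD_{c/}$, the mapping space $\map_{\DD_{c/}}(\eta_c,\alpha)$ is contractible, and conclude that $\eta_c$ is initial.

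First, I would invoke the standard description of mapping spaces in a slice $\infty$-category: since $\DD$ is a full $\infty$-subcategory of $\CC$, the under-category $\DD_{c/}$ is the full $\infty$-subcategory of $\CC_{c/}$ spanned by morphisms with target in $\DD$, and for two such morphisms $\eta_c\colon c\to L(c)$ and $\alpha\colon c\to d$ there is a homotopy fiber sequence
\begin{equation}
\map_{\DD_{c/}}(\eta_c,\alpha)\longrightarrow \map_\DD(L(c),d)\xrightarrow{\ (\eta_c)^*\ }\map_\CC(c,d),
\end{equation}
where the right-hand map is precomposition with $\eta_c$ and the fiber is taken over the point $\alpha$. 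This is the $\infty$-categorical analogue of the usual formula for hom-sets in a slice and can be extracted from the definition of $\CC_{c/}$ via the join-slice adjunction; the reference to \cite[Prop.~5.2.6]{land2021introduction} already encodes this computation.

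Next, I would use the defining property of the reflector $L\dashv \iota$: the unit $\eta\colon\Id_\CC\Rightarrow \iota L$ exhibits $L(c)$ as the $\DD$-localization of $c$, which in $\infty$-categorical terms means exactly that for every object $d$ of $\DD$ the precomposition map
\begin{equation}
(\eta_c)^*\colon \map_\DD(L(c),d)\xrightarrow{\ \simeq\ }\map_\CC(c,d)
\end{equation}
is a homotopy equivalence of spaces. In particular all its homotopy fibers are contractible.

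Combining the two displays, the fiber $\map_{\DD_{c/}}(\eta_c,\alpha)$ is contractible for every object $\alpha$ of $\DD_{c/}$, so $\eta_c$ is an initial object of $\DD_{c/}$. I do not foresee a genuine obstacle here; the only point that deserves care is the identification of the mapping space in $\DD_{c/}$ with the homotopy fiber above, which is standard because $\DD\hookrightarrow\CC$ is the inclusion of a full $\infty$-subcategory and therefore induces fully faithful functors on all slice categories.
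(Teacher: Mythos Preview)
Your proof is correct and follows essentially the same approach as the paper: reduce to mapping spaces in $\CC_{c/}$ using that $\DD\subseteq\CC$ is full, identify $\map_{\CC_{c/}}(\eta_c,\alpha)$ with the homotopy fiber of $(\eta_c)^*\colon\map_\CC(L(c),d)\to\map_\CC(c,d)$ over $\alpha$, and conclude from the fact that $(\eta_c)^*$ is an equivalence. The only cosmetic difference is that the paper phrases the fiber identification as a homotopy pullback square and cites \cite[Prop.~3.3.18]{land2021introduction} for it rather than \cite[Prop.~5.2.6]{land2021introduction}.
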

\begin{proof} Let $f:c\to d$ be a morphism to an object $d$ of $\DD$. Since $\DD$ is a full $\infty$-subcategory of $\CC$, we have  $\map_{\DD_{c/}}(\eta_c,f) = \map_{\CC_{c/}}(\eta_c,f)$. So we have to check that $\map_{\CC_{c/}}(\eta_c,f)$ is contractible. By \cite[Prop. 3.3.18]{land2021introduction}, the square 
\begin{equation}
\begin{tikzcd}
\map_{\CC_{c/}}(\eta,f)\ar[r] \ar[d] & \map_{\CC}(L(c),d) \ar[d,"\eta^*"] \\
\Delta^0\ar[r,"f"] & \map_{\CC}(c,d) 
\end{tikzcd}
\end{equation}
is a homotopy pullback square. Since the right-hand vertical map $\eta^*$ is a homotopy equivalence, we obtain that $\map_{\CC_{c/}}(\eta_c,f)$ is contractible. 
\end{proof}

\begin{lemma}\label{lemma:straightening}
Let $p:\EE\to \CC$ be a cartesian fibration, $f:x\to y$ be a morphism of $\CC,$ $K$ be a simplicial set and 
\begin{equation}
\FF:K\to \EE_y, \hspace{1cm}  \GG: K \to \EE_x
\end{equation}
be two functors to the fibers over $x$ and over $y$. Assume that there exists a natural transformation $\varphi$ from $\iota_x \circ \GG$ to $\iota_y \circ \FF$  
\begin{equation}
\varphi : \Delta^1 \times K \longrightarrow \EE, 
\end{equation}
where $\iota_x:\EE_x\to \EE$ and $\iota_y:\EE_y\to \EE$ are the canonical inclusions, 
such that for any object $k$ of $K$ the component of $\varphi_k:\GG(k)\to \FF(k)$ is a cartesian morphism over $f$. Then the diagram 
\begin{equation}
\begin{tikzcd}
& K\ar[rd,"\GG"] \ar[ld,"\FF"'] & \\
\EE_y\ar[rr,"f^*"] & & \EE_x
\end{tikzcd}
\end{equation}
is commutative up to a natural equivalence, where $f^*:\EE_y \to \EE_x$ is the map induced by $f$ via the straightening of $p.$  
\end{lemma}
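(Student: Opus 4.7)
The plan is to give $f^*$ a concrete description via cartesian lifts of $f$ and then observe that $\varphi$ exhibits $\GG$ as precisely such a family of lifts of $\FF$.

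First I would form the pullback
\[
\EE^f \;:=\; \Fun(\Delta^1,\EE)\times_{\Fun(\Delta^1,\CC)}\{f\},
\]
the $\infty$-category of edges of $\EE$ lying over $f$, and let $\EE^{f,{\sf cart}}\subseteq\EE^f$ be the full sub-$\infty$-category spanned by those edges that are $p$-cartesian. The two endpoint evaluations yield functors $\mathrm{ev}_0 : \EE^{f,{\sf cart}} \to \EE_x$ and $\mathrm{ev}_1 : \EE^{f,{\sf cart}} \to \EE_y$. Since $p$ is a cartesian fibration, $\mathrm{ev}_1$ is a trivial fibration: every object $z$ of $\EE_y$ admits a $p$-cartesian lift of $f$ ending at $z$, unique up to a contractible space of choices. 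Choosing any section $s : \EE_y \to \EE^{f,{\sf cart}}$ of $\mathrm{ev}_1$, the composite $\mathrm{ev}_0 \circ s$ is, by the standard description of the straightening of a cartesian fibration over $\Delta^1$, equivalent to the functor $f^*$.

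Next, the natural transformation $\varphi : \Delta^1\times K \to \EE$ lies over the edge $f$, so by the universal property of the pullback it adjoins to a functor $\tilde\varphi : K \to \EE^f$ satisfying $\mathrm{ev}_1\circ\tilde\varphi = \FF$ and $\mathrm{ev}_0\circ\tilde\varphi = \GG$. The hypothesis that each component $\varphi_k$ is $p$-cartesian is exactly the statement that $\tilde\varphi$ factors through the full subcategory $\EE^{f,{\sf cart}}$.

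Now both $s\circ\FF$ and $\tilde\varphi$ are lifts of $\FF : K \to \EE_y$ along the trivial fibration $\mathrm{ev}_1 : \EE^{f,{\sf cart}} \to \EE_y$, and are therefore equivalent in $\Fun(K,\EE^{f,{\sf cart}})$. Post-composing with $\mathrm{ev}_0$ yields the desired natural equivalence
\[
f^*\circ\FF \;\simeq\; \mathrm{ev}_0\circ s\circ\FF \;\simeq\; \mathrm{ev}_0\circ\tilde\varphi \;=\; \GG.
\]
The only nontrivial input is the identification $\mathrm{ev}_0\circ s \simeq f^*$, which reduces to the definition of the straightening of a cartesian fibration over $\Delta^1$; the rest of the argument is formal manipulation with trivial fibrations and essentially unique lifts.
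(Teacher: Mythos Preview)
Your argument is correct and follows essentially the same strategy as the paper's proof: both exploit that the space of $p$-cartesian lifts of $f$ with prescribed target is contractible, so that the given $\varphi$ and the ``canonical'' cartesian transformation coming from $f^*$ must coincide over $\FF$. The only difference is packaging: the paper works directly with marked simplicial sets, using that $\{1\}\times K^\flat \hookrightarrow (\Delta^1)^\sharp\times K^\flat$ is marked anodyne to obtain the trivial fibration $\Fun^\flat_{\tilde f}((\Delta^1)^\sharp\times K^\flat,\EE)\to\Fun(K,\EE_y)$, whereas you encode the same contractibility by first forming the $\infty$-category $\EE^{f,{\sf cart}}$ of cartesian arrows over $f$ and observing that $\mathrm{ev}_1:\EE^{f,{\sf cart}}\to\EE_y$ is a trivial fibration (which is the case $K=\Delta^0$ of the paper's statement, combined with closure under $\Fun(K,-)$). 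Your formulation is arguably cleaner, since it isolates the geometric input as a single trivial fibration between $\infty$-categories and then argues formally; the paper's version stays closer to the marked simplicial set machinery and the specific references in Land's book. Either way the substance is identical.
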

\begin{proof}
We denote by $\sSet_+$ the category of simplicial sets. For marked simplicial set $X$ and a marked $\infty$-category $\AA$ we denote by $\Fun^\flat(X,\AA)$ the full subcategory of $\Fun(X,\AA)$ generated by maps sending marked morphisms to marked morphisms. In other words \begin{equation}
\Fun^\flat(X,\AA)_J = \Hom_{\sSet_+}((\Delta^J)^\flat \times X , \AA). 
\end{equation} 

For a cartesian fibration $p:\EE\to \CC$, we consider the simplicial set  $\Fun^\flat(X,\EE)=\Fun^\flat(X,\EE^\natural)$. For a map $f:X\to \CC^\sharp$, we denote by $\Fun^\flat_f(X,\EE)$ the fiber of the map $\Fun^\flat(X,\EE) \to \Fun^\flat(X,\CC^\sharp)$ over $f$.  It is known that, for a marked anodyne map $i:Y \to X$, the map 
\begin{equation}\label{eq:map^flat-trivial}
\Fun^\flat_{f}(X, \EE) \longrightarrow \Fun^\flat_{fi}(Y,\EE)
\end{equation}
is a trivial fibration \cite[Prop.3.2.16]{land2021introduction}.

The inclusion $\{1\} \times K^\flat \to (\Delta^1)^\sharp \times K^\flat$ is marked anodyne \cite[Prop.3.1.2.3]{lurie2009higher}. Therefore the map
\begin{equation}
\Fun^\flat_{\tilde f}((\Delta^1)^\sharp \times K^\flat,\EE)\to \Fun(K,\EE_y)  
\end{equation}
is a trivial fibration, where $\tilde f$ is the composition $\Delta^1 \times K \to \Delta^1 \xrightarrow{f} \CC$. By \cite[Prop.3.3.1]{land2021introduction}, there exists a natural transformation $\psi: \Delta^1 \times \EE_y \to \EE$ from $\iota_x \circ f^*$ to $\iota_y$ such that for any object of $\EE_y$ the component of $\psi$ is a cartesian morphism over $f$. Consider the natural transformation  
\begin{equation}
\varphi' : \Delta^1 \times K \xrightarrow{\Delta^1 \times \GG} \Delta^1 \times \EE_y \xrightarrow{\psi} \EE    
\end{equation}
from $\iota_x \circ f^* \circ \GG$ to $\iota_y \circ \GG$. 
Then $\varphi, \varphi' \in \Fun^\flat((\Delta^1)^\sharp \times K^\flat,\EE)$ are two elements of the fiber over $\iota_y \circ \GG\in \Fun(K,\EE_y).$ Therefore, there is a natural equivalence $\varphi\simeq \varphi'$ over $f,$ which induces a natural equivalence $\iota_x \circ f^* \circ \GG \simeq \iota_x \circ \FF$ over $x.$ It follows that there is a natural equivalence $f^* \circ \GG \simeq  \FF$.  
\end{proof}

\subsection{Walking action object} \label{subsection:walking_monoid}

This subsection is devoted to the proof of Proposition \ref{prop:walking_action} from Section \ref{section:actions}. 

\medskip 

\noindent {\bf Proposition \ref{prop:walking_action}}. {\it Let $(E,C)$ be a strict action category and $(M,x)$ be an action object in $(E,C)$. Then there exists a unique morphism of strict action categories 
\begin{equation}
(\Delta_+,\Delta_\max)\longrightarrow (E,C)   
\end{equation}
sending the action object $([0],[0])$ to $(M,x)$.}  

\begin{proof} 
We know that there exists a unique strict monoidal functor $\varphi:\Delta_+\to E$ sending the monoid $[0]$ to the monoid $M.$ Therefore, we need to prove that there exists a unique functor $\psi:\Delta_\max\to C$ such that $(\varphi,\psi)$ is a morphism of strict action categories sending the action object $([0],[0])$ to $(M,x)$. Denote by $a_x:M \oslash x\to x$ the $M$-action on $x$ and by $a_x^n:M^{\otimes n} \oslash x \to x, n\geq 0$ the map defined recursively by 
\begin{equation}
a_x^{n+1}=a_x \circ ({\sf id}_M  \oslash  a_x^{n}), 
\end{equation}
where $a_x^0={\sf id}_x$. For the case when $M=[0]\in \Delta_+$ and $x=[0]\in \Delta_\max,$ we just set $a^n=a^n_{[0]}:[n]\to [0].$ It is easy to see that any morphism in $\Delta_\max$ can be uniquely presented as $f\star a^n$, where $f$ is a morphism of $\Delta_+$ and $n\geq 0.$ Then for morphism of strict action categories $(\varphi,\psi):(\Delta_+,\Delta_\max)\to (E,C)$ sending the action object $([0],[0])$ to $(M,x)$, we have  
\begin{equation}\label{eq:psi_x}
\psi([n])=M^{\otimes n} \oslash x ,\hspace{1cm} \psi(f \star a^n)=\varphi(f)\oslash a^n_x.   
\end{equation}
It follows that, if $\psi$ exists, then it is unique. 

If there is a functor $\psi:\Delta_\max\to C$ satisfying the formulas \eqref{eq:psi_x}, then $(\varphi,\psi)$ is a morphism of strict action categories sending the action object $([0],[0])$ to $(M,x)$. It follows that it is sufficient to prove that the formulas \eqref{eq:psi_x} define a well defined functor. It is easy to see that $\psi$ defined by the formulas \eqref{eq:psi_x} sends identity morphisms to identity morphisms. Therefore, we only need to prove that $\psi$ defined by the formulas \eqref{eq:psi_x} respects the composition. 

Further in the proof we use the ordinal notations $n=\{0,\dots,n-1\}=[n-1].$ Note that for any morphism $f:n\to m$ in $\Delta_+$ and any decomposition $m=m_1+m_2$, there is a unique decomposition $f=f_1\star f_2,$ where $f_1:n_1\to m_1$ and $f_2:n_2\to m_2$. Then the composition in $\Delta_\max$ is completely defined by the formula
\begin{equation}
(f\star a^s) \circ (g_1 \star g_2 \star a^t) = (f\circ g_1) \star a^{t+l_2}
\end{equation}
where  $n,m,l_1,l_2,s,t\geq 0$ and 
\begin{equation}
 f:m\to n, \hspace{5mm} g_1:l_1\to m, \hspace{5mm} g_2:l_2\to s   
\end{equation}
are morphisms in $\Delta_+$. 
Therefore, it is sufficient to prove the following equation 
\begin{equation}\label{eq:phi-comp}
(\varphi(f)\oslash a^s_x) \circ (\varphi(g_1\star g_2)\oslash a^t_x) \overset{?} = \varphi(f\circ g_1)\oslash a^{t+l_2}_x.
\end{equation}
Since $\oslash$ is a functor, for composable pairs of morphisms $(\alpha,\alpha')$ and $(\beta,\beta')$, we have the equation
\begin{equation}
(\alpha \oslash \beta) \circ (\alpha' \oslash \beta') = (\alpha\circ \alpha') \oslash (\beta \circ \beta'). 
\end{equation}
Then, using that $\varphi(g_1\star g_2)\oslash a^t_x = \varphi(g_1)\oslash (\varphi(g_2) \oslash a^t_x)$, the equation \eqref{eq:phi-comp} can be reduced to the equation 
\begin{equation}\label{eq:phi-comp2}
a^s_x \circ (\varphi(g)\oslash a^t_x) \overset{?} = a^{t+l}_x,
\end{equation}
where $l,s,t\geq 0$ and  $g:l\to s$.

Let us prove the equation \eqref{eq:phi-comp2} for $g={\sf id}_s$ and $l=s$. For $s=0$ and $s=1$ the equation is obvious, and for $s\geq 2$ it can be proved by induction: 
\begin{align}
\label{eq:phi-comp-2-id}
a^s_x \circ ({\sf id}_{M^{\otimes s}}\oslash a^t_x) &= a_x \circ ({\sf id}_M \oslash a_x^{s-1}) \circ ({\sf id}_{M} \oslash ({\sf id}_{M^{\otimes s-1}} \oslash a^{t}_x) ) \\
& = a_x \circ ( {\sf id}_M \oslash( a^{s-1}_x \circ ({\sf id}_{M^{\otimes s-1}}\oslash a^t_x)))\\
& = a_x \circ ({\sf id}_M\circ a_x^{t+s-1}) \\
&= a^{t+s}_x.
\end{align}
Therefore, we obtain that 
\begin{equation}\label{eq:id-a}
a^s_x \circ ({\sf id}_{M^{\otimes s}}\oslash a^t_x) = a^{t+s}_x.
\end{equation}
We claim that, using \eqref{eq:id-a}, the equation 
\eqref{eq:phi-comp2} can be reduced to the equation 
\begin{equation}\label{eq:phi-comp3}
a^s_x \circ (\varphi(g)\oslash {\sf id}_x) \overset{?} = a^{l}_x,
\end{equation}
for any $g:l\to s$. Indeed, 
\begin{align}
 a^s_x \circ (\varphi(g)\oslash a^t_x) & = a^s_x \circ (\varphi(g) \oslash {\sf id}_x) \circ ({\sf id}_{M^{\otimes l}} \oslash a^t_x)\\
 &= a^l_x \circ ({\sf id}_{M^{\otimes l}} \oslash a^t_x) \\
 & = a_x^{t+l}.
\end{align}

Thus, we have reduced the problem to checking the equation \eqref{eq:phi-comp3} for any map $g:l\to s$. It is easy to see that the axioms of the $M$-action \eqref{eq:action:axioms} say that the equation \eqref{eq:phi-comp3} is satisfied for the maps  $g=\mu_{1}:2\to 1$ and $g=\eta_{1}:0\to 1.$ The equation also holds for identity maps. Therefore, it is sufficient to prove that the class of maps, such that the equation \eqref{eq:phi-comp3} is satisfied, is closed under compositions and joins. Indeed, if   $g:l\to s$ and $g':s\to k$ satisfy the equation \eqref{eq:phi-comp3}, then the composition $g'\circ g :l\to k$ satisfies this equation: 
\begin{align}
a^k_x \circ (\varphi(g'\circ g)\oslash {\sf id}_x) &= a^k_x \circ (\varphi(g')\oslash {\sf id}_x) \circ (\varphi(g)\oslash {\sf id}_x)\\
&= a_x^{s} \circ (\varphi(g)\oslash {\sf id}_x) \\
&=a_x^{l}.
\end{align}
And if $g_1:l_1\to s_1$ and $g_2:l_2\to s_2$ satisfy the equation \eqref{eq:phi-comp3}, then their join $g_1\star g_2:l_1+l_2\to s_1+s_2$ satisfies this equation:  
\begin{align}
a^{s_1+s_2}_x\circ (\varphi(g_1\star g_2) \oslash {\sf id}_x) & =  a^{s_1}_x \circ ({\sf id}_{M^{\otimes s_1}} \oslash a_x^{s_2}) \circ (\varphi(g_1) \oslash (\varphi(g_2) \oslash {\sf id}_x ) ) \\
& = a_x^{s_1} \circ ( \varphi(g_1) \oslash (a_x^{s_2} \circ (\varphi(g_2) \oslash {\sf id}_x))) \\ 
& = a_x^{s_1} \circ (\varphi(g_1) \oslash a_x^{l_2}) \\
& =a_x^{l_1+l_2}.
\end{align}
\end{proof}

\end{document}